\setlist[itemize]{leftmargin=*}
\setlist[enumerate]{leftmargin=*,label=\roman*),ref=\roman*)}
\newlist{subenumerate}{enumerate}{2}
\setlist[subenumerate]{leftmargin=*,label=\alph*),ref=\alph*)}
\definecolor{darkblue}{rgb}{0,0,0.6} %
\DeclareFontFamily{T1}{cbgreek}{}
\DeclareFontShape{T1}{cbgreek}{m}{n}{<-6>  grmn0500 <6-7> grmn0600 <7-8> grmn0700 <8-9> grmn0800 <9-10> grmn0900 <10-12> grmn1000 <12-17> grmn1200 <17-> grmn1728}{}
\DeclareSymbolFont{quadratics}{T1}{cbgreek}{m}{n}
\DeclareMathSymbol{\qoppa}{\mathord}{quadratics}{19}
\DeclareMathSymbol{\Qoppa}{\mathord}{quadratics}{21}
\let\old@font@info\@font@info
\def\@font@info#1{%
\expandafter\ifx\csname\detokenize{#1}\endcsname\relax
  \old@font@info{#1}%
\fi
\expandafter\xdef\csname\detokenize{#1}\endcsname{}%
}
\newtheoremstyle{thms}
	{}{}{\itshape}{}{\bfseries }{.}{ }
	{\thmname{#1} \thmnumber{#2}. \thmnote{\bfseries{[#3]}}}
\newtheoremstyle{thms2}
	{}{}{\itshape}{}{\bfseries }{.}{ }
	{}
\newtheoremstyle{ithreethms}
	{}{}{\itshape}{}{\bfseries }{}{ }
	{\thmname{#1} \thmnumber{#2}. \thmnote{\bfseries{[#3]}}}
\newtheoremstyle{name}
	{}{}{\itshape}{}{\bfseries }{.}{ }
	{\thmname{#1}\thmnumber{#2}\thmnote{\bfseries{[#3]}}}
\newtheoremstyle{defs}
	{}{}{\normalfont}{}{\bfseries }{.}{ }
	{\thmname{#1} \thmnumber{#2}. \thmnote{\bfseries{(#3)}}}
\newtheoremstyle{defs2}
	{}{12pt}{\normalfont}{}{\bfseries }{.}{ }
	{\thmname{#1}\thmnumber{#2}. \thmnote{\bfseries{(#3)}}}
\newtheoremstyle{ithreedefs}
	{}{}{\normalfont}{}{\bfseries }{}{ }
	{\thmname{#1} \thmnumber{#2}. \thmnote{\bfseries{(#3)}}}
\newtheoremstyle{rmk}
	{}{}{\normalfont}{}{\itshape }{.}{ }
        {}
\newtheoremstyle{claim}
	{}{}{\normalfont}{}{\itshape}{.}{ }
        {\thmname{#1} \thmnumber{#2}. \thmnote{#3}}
\theoremstyle{thms2}
\theoremstyle{thms2}
\newtheorem*{namedthm}{\namedthmname}
\newcounter{namedthm}
\theoremstyle{rmk}
\theoremstyle{ithreethms}
\newtheorem{ithm}{Theorem}
\newtheorem{icor}[ithm]{Corollary}
\theoremstyle{ithreedefs}
\theoremstyle{thms2}
\newcounter{rthree}
\theoremstyle{defs}
\newtheorem{definition-r-three}{Definition}[rthree]
\newtheorem{notation-r-three}[definition-r-three]{Notation}
\newtheorem{remark-r-three}[definition-r-three]{Remark}
\newtheorem{example-r-three}[definition-r-three]{Example}
\theoremstyle{thms}
\newtheorem{proposition-r-three}[definition-r-three]{Proposition}
\newtheorem{corollary-r-three}[definition-r-three]{Corollary}
\newtheorem{theorem-r-three}[definition-r-three]{Theorem}
\newcounter{rfour}
\theoremstyle{defs}
\newtheorem{definition-r-four}{Definition}[rthree]
\newtheorem{notation-r-four}[definition-r-four]{Notation}
\newtheorem{remark-r-four}[definition-r-four]{Remark}
\newtheorem{example-r-four}[definition-r-four]{Example}
\theoremstyle{thms}
\newtheorem{proposition-r-four}[definition-r-four]{Proposition}
\newtheorem{corollary-r-four}[definition-r-four]{Corollary}
\newtheorem{theorem-r-four}[definition-r-four]{Theorem}
\theoremstyle{thms}
\newtheorem{proposition}{Proposition}[subsection]
\newtheorem{theorem}[proposition]{Theorem}
\newtheorem{lemma}[proposition]{Lemma}
\newtheorem{corollary}[proposition]{Corollary}
\theoremstyle{defs}
\newtheorem{definition}[proposition]{Definition}%
\newtheorem{notation}[proposition]{Notation}
\newtheorem{construction}[proposition]{Construction}
\newtheorem{example}[proposition]{Example}
\newtheorem{remark}[proposition]{Remark}
\theoremstyle{defs2}
\theoremstyle{rmk}
\newtheorem*{Rmk}{Remark}
\theoremstyle{claim}
\newcommand{\defi}[1]{\emph{#1}}                 %
\newcommand{\Del}{\Delta}
\newcommand{\Om}{\Omega}
\newcommand{\Sig}{\Sigma}
\newcommand{\vphi}{\varphi}
\newcommand{\eps}{\epsilon}
\newcommand{\lrar}{\longrightarrow}
\newcommand{\lto}{\longrightarrow}
\newcommand{\hrar}{\hookrightarrow}
\newcommand{\st}{\stackrel}
\newcommand{\adj}{\mathbin{%
\begin{tikzpicture}[baseline,thick] 
\coordinate (source) at (0ex,.5ex);
\coordinate (target) at (3ex,.5ex);
\draw[->] ([yshift=1ex]source) -- ([yshift=1ex]target); 
\draw[->] ([yshift=-.5ex]target) -- ([yshift=-.5ex]source);
\node at (1.5ex,.8ex) {$\scriptscriptstyle \perp$};
\end{tikzpicture}%
}}
\newcommand{\cocolon}{\nobreak \mskip6mu plus1mu \mathpunct{}\nonscript\mkern-\thinmuskip {:}\mskip2mu \relax}
\newcommand{\dovl}[1]{\overline{\dbl@overline{#1}}}
\newcommand{\dbl@overline}[1]{\mathpalette\dbl@@overline{#1}}
\newcommand{\dbl@@overline}[2]{%
  \begingroup
  \sbox\z@{$\m@th#1\overline{#2}$}%
  \ht\z@=\dimexpr\ht\z@-2\dbl@adjust{#1}\relax
  \box\z@
  \ifx#1\scriptstyle\kern-\scriptspace\else
  \ifx#1\scriptscriptstyle\kern-\scriptspace\fi\fi
  \endgroup
}
\newcommand{\dbl@adjust}[1]{%
  \fontdimen8
  \ifx#1\displaystyle\textfont\else
  \ifx#1\textstyle\textfont\else
  \ifx#1\scriptstyle\scriptfont\else
  \scriptscriptfont\fi\fi\fi 3
}
\newcommand{\NN}{\mathbb{N}}               %
\newcommand{\RR}{\mathbb{R}}               %
\newcommand{\CC}{\mathbb{C}}               %
\newcommand{\ZZ}{\mathbb{Z}}               %
\newcommand{\QQ}{\mathbb{Q}}               %
\newcommand{\FF}{\mathbb{F}}               %
\newcommand{\nO}{\mathcal{O}}               %
\newcommand{\im}{\mathrm{im}}              %
\newcommand{\coker}{\mathrm{coker}}        %
\newcommand{\odd}{\mathrm{odd}}            %
\newcommand{\spec}{\mathrm{spec}}          %
\newcommand{\fp}{\mathfrak{p}}             %
\newcommand{\Div}{\mathrm{Div}}            %
\newcommand{\val}{\mathrm{val}}            %
\newcommand{\cd}{\mathrm{cd}}              %
\newcommand{\vcd}{\mathrm{vcd}}            %
\newcommand{\Hom}{\operatorname{Hom}}            %
\newcommand{\Aut}{\operatorname{Aut}}            %
\newcommand{\fib}{\operatorname{fib}}            %
\newcommand{\cof}{\operatorname{cof}}            %
\newcommand{\colim}{\mathop{\mathrm{colim}}}     %
\newcommand{\core}{\mathrm{Cr}}                  %
\newcommand{\op}{^\mathrm{op}}                   %
\newcommand{\cp}{\omega}                         %
\newcommand{\perf}{\mathrm{p}}                   %
\newcommand{\pr}{\mathrm{pr}}                %
\newcommand{\id}{\mathrm{id}}                %
\newcommand{\Spa}{{\mathcal Sp}}             %
\newcommand{\cwedge}{{\scriptscriptstyle\wedge}} %
\renewcommand{\hom}{\operatorname{hom}}        %
\newcommand{\map}{\operatorname{hom}}          %
\renewcommand{\SS}{\mathbb{S}}                 %
\newcommand{\h}{\mathrm{h}}                    %
\newcommand{\Ct}{\mathrm{C_2}}                 %
\newcommand{\hC}{{\h\Ct}}                      %
\newcommand{\tC}{{\mathrm{t}\Ct}}              %
\newcommand{\Ab}{\mathcal{A}b}                   %
\newcommand{\Vect}{\mathrm{Vect}}                %
\newcommand{\Ring}{\mathrm{Ring}}                %
\newcommand{\Mod}{\operatorname{Mod}}            %
\newcommand{\fg}{{\mathrm{f}}}                   %
\newcommand{\Proj}{\mathrm{Proj}}                %
\newcommand{\Free}{\mathrm{Free}}                %
\newcommand{\Modcp}{\Mod^{\cp}}                  %
\newcommand{\Modp}[1]{\Modcp({#1})}              %
\newcommand{\Torf}{\mathrm{Tor}^{\mathrm{f}}}    %
\newcommand{\Unimod}{\mathrm{Unimod}}            %
\newcommand{\Ering}{\mathrm{E}}			 %
\newcommand{\Ek}[1]{{\Ering_{#1}}}               %
\newcommand{\Einf}{\Ek{\infty}}          	 %
\newcommand{\Eone}{\Ek{1}}               	 %
\newcommand{\Ch}{\operatorname{Ch^b}}            %
\newcommand{\DHom}{\mathcal{H}om}                %
\newcommand{\GEM}{\mathrm{H}}                    %
\newcommand{\Der}{\mathcal D}                    %
\newcommand{\Dfree}{{\mathcal D}^\fg}            %
\newcommand{\Dperf}{{\mathcal D}^\perf}          %
\newcommand{\rH}{\mathrm{H}}                     %
\newcommand{\et}{\text{ét}}                      %
\newcommand{\Catp}{\mathrm{Cat}^{\mathrm p}_\infty}     %
\newcommand{\Fun}{\operatorname{Fun}}            %
\newcommand{\Motp}[1][]{{\if\relax\detokenize{#1}\mathrm{Mot^p}\relax\else\mathrm{Mot}_{#1}^{\mathrm{p}}\fi}}  %
\newcommand{\Motpun}[1][]{\mathrm{Mot^p_{un\if\relax\detokenize{#1}\relax\else{,}#1\fi}}}           %
\newcommand{\Motkar}[1][]{{\if\relax\detokenize{#1}\mathrm{Mot^{k}}\relax\else\mathrm{Mot^k_{#1}}\fi}}  %
\newcommand{\Motpbord}[1][]{{\if\relax\detokenize{#1}\mathrm{Mot^{pw}}\relax\else\mathrm{Mot}_{#1}^{\mathrm{pw}}\fi}} %
\newcommand{\Mot}[1][]{{\if\relax\detokenize{#1}\mathrm{Mot}\relax\else\mathrm{Mot_{#1}}\fi}}            %
\newcommand{\MotpE}[2]{\mathrm{Mot^p_{{#2}\if\relax\detokenize{#1}\relax\else{,}#1\fi}}}           %
\newcommand{\MotkwE}[2]{\mathrm{Mot^{kw}_{{#2}\if\relax\detokenize{#1}\relax\else{,}#1\fi}}}           %
\newcommand{\Motwun}[1][]{\mathrm{Mot^w_{un\if\relax\detokenize{#1}\relax\else{,}#1\fi}}}           %
\newcommand{\MotwE}[2]{\mathrm{Mot^w_{{#2}\if\relax\detokenize{#1}\relax\else{,}#1\fi}}}           %
\newcommand{\MotpwE}[2]{\mathrm{Mot^{pw}_{{#2}\if\relax\detokenize{#1}\relax\else{,}#1\fi}}}           %
\newcommand{\Motw}[1][]{{\if\relax\detokenize{#1}\mathrm{Mot^w}\relax\else\mathrm{Mot^w_{#1}}\fi}}  %
\newcommand{\Motpw}[1][]{{\if\relax\detokenize{#1}\mathrm{Mot^{pw}}\relax\else\mathrm{Mot^{pw}_{#1}}\fi}}  %
\newcommand{\Motkw}[1][]{{\if\relax\detokenize{#1}\mathrm{Mot^{kw}}\relax\else\mathrm{Mot^{kw}_{#1}}\fi}}  %
\newcommand{\MotkarE}[2]{\mathrm{Mot^k_{{#2}\if\relax\detokenize{#1}\relax\else{,}#1\fi}}}           %
\newcommand{\GL}{\mathrm{GL}}                    %
\newcommand{\BGL}{\mathrm{BGL}}                  %
\newcommand{\rO}{\mathrm{O}}                     %
\newcommand{\BU}{\mathrm{BU}}                    %
\newcommand{\BO}{\mathrm{BO}}                    %
\newcommand{\Sp}{\mathrm{Sp}}                    %
\newcommand{\BSp}{\mathrm{BSp}}                  %
\newcommand{\Pic}{\operatorname{Pic}}            %
\newcommand{\ko}{\mathrm{ko}}                    %
\newcommand{\K}{\operatorname K}                 %
\newcommand{\topo}{\mathrm{top}}                 %
\renewcommand{\L}{\operatorname L}               %
\newcommand{\short}{\mathrm{short}}              %
\newcommand{\W}{\operatorname W}                 %
\newcommand{\GW}{\operatorname{GW}}              %
\newcommand{\U}{\operatorname{U}}                %
\newcommand{\V}{\operatorname{V}}                %
\newcommand{\fgt}{\mathrm{fgt}}                  %
\newcommand{\fpm}{\lambda}			 %
\newcommand{\gfpm}{{\g\fpm}}
\newcommand{\B}{\mathrm{B}}                      %
\newcommand{\Bil}{\mathrm{B}}               %
\newcommand{\Lin}{\Lambda}               %
\newcommand{\Dual}{\mathrm{D}}              %
\newcommand{\ev}{\mathrm{ev}}                    %
\newcommand{\inv}[1]{\overline{#1}}              %
\newcommand{\HqZ}{\mathrm{H}_{\mathrm{q}}}       %
\newcommand{\Hms}{\mathrm{H}_{-\mathrm{s}}}      %
\newcommand{\qshift}[1]{^{[#1]}}                 %
\newcommand{\Met}{\operatorname{Met}}            %
\newcommand{\met}{\mathrm{met}}                  %
\newcommand{\hyp}{\mathrm{hyp}}                  %
\newcommand{\cl}{\mathrm{cl}}                    %
\newcommand{\sym}{\mathrm{s}}                    %
\newcommand{\s}{\mathrm{s}}                      %
\newcommand{\qdr}{\mathrm{q}}                     %
\newcommand{\g}{\mathrm{g}}                      %
\newcommand{\gs}{\mathrm{gs}}                    %
\newcommand{\gq}{\mathrm{gq}}                    %
\newcommand{\gev}{\mathrm{ge}}                   %
\newcommand{\tate}{\mathrm{t}}                   %
\newcommand{\burn}{\mathrm{b}}                   %
\newcommand{\uni}{\mathrm{u}}                    %
\newcommand{\Poinc}{\mathrm{Pn}}                 %
\newcommand{\pM}{\mathcal{M}}                    %
\newcommand{\I}{\mathcal{I}}               %
\newcommand{\J}{\mathcal{J}}               %
\newcommand{\E}{\mathcal{E}}               %
\newcommand{\M}{\mathcal{M}}               %
\newcommand{\C}{\mathcal C}                %
\newcommand{\Ctwo}{{{\mathcal C}'}}        %
\newcommand{\D}{\mathcal{D}}               %
\newcommand{\F}{\mathcal{F}}               %
\newcommand{\QF}{\Qoppa}                   %
\newcommand{\QFtwo}{{\QF'}}                %
\newcommand{\QFD}{\Phi}                    %
\newcommand{\Qgen}[2]{\QF^{\geq #1}_{#2}}  %
\newcommand{\qone}{q}                      %
\renewcommand{\tocsection}[3]{%
\indentlabel{\@ifnotempty{#2}{\parbox[b]{3ex}{\bfseries\ignorespaces#1 #2}}}\bfseries#3} 
\renewcommand{\tocsubsection}[3]{%
\indentlabel{\@ifnotempty{#2}{\hspace{1.6em}\parbox[b]{5ex}{\ignorespaces#1 #2}}}#3}
\renewcommand{\tocsubsubsection}[3]{%
\indentlabel{\@ifnotempty{#2}{\hspace{3.9em}\parbox[b]{5ex}{\ignorespaces#1 #2}}}#3}
\DeclareRobustCommand{\SkipTocEntry}[5]{} 
\newcommand{\introsubsection}[1]{\addtocontents{toc}{\SkipTocEntry}\subsection*{#1}}
\newcommand{\bibsubsubsection}[1]{\addtocontents{toc}{\SkipTocEntry}\subsubsection*{#1}}
\newcommand{\refthree}[1]{\ref{#1}}
\newcommand{\eqrefthree}[1]{\eqref{#1}}
\newcommand{\refthreeitem}[1]{\ref{#1}}
\newcommand{\refone}[1]{\cite{Part-one}.\ref{I-#1}}
\newcommand{\reftwo}[1]{\cite{Part-two}.\ref{II-#1}}
\newcommand{\reftwoitem}[1]{\ref{II-#1}}
\newcommand{\paperone}{Paper~\cite{Part-one}\xspace}
\newcommand{\papertwo}{Paper~\cite{Part-two}\xspace}
\newcommand{\paperfour}{Paper~\cite{Part-four}\xspace}
\newcommand{\refundefined}[1]{\textcolor{red}{Undefined ref}}
\title[Hermitian K-theory for stable $\infty$-categories III: Grothendieck-Witt groups of rings]{Hermitian K-theory for stable $\infty$-categories III:\\
Grothendieck-Witt groups of rings}
\author[Calmès]{Baptiste Calmès}
\address{Université d'Artois, Laboratoire de Mathématiques de Lens, Lens, France}
\email{baptiste.calmes@univ-artois.fr}
\author[Dotto]{Emanuele Dotto}
\address{University of Warwick, Mathematics Institute, Coventry, United Kingdom}
\email{emanuele.dotto@warwick.ac.uk}
\author[Harpaz]{Yonatan Harpaz}
\address{Université Paris Cité, Sorbonne Université, Paris, France}
\email{harpaz@imj-prg.fr}
\author[Hebestreit]{Fabian Hebestreit}
\address{Universität Bielefeld, Fakultät für Mathematik, Bielefeld, Germany}
\email{hebestreit@math.uni-bielefeld.de}
\author[Land]{Markus Land}
\address{Johannes Gutenberg University Mainz, Institute of Mathematics, Mainz, Germany}
\email{mland@uni-mainz.de}
\author[Moi]{Kristian Moi}
\address{KTH, Institutionen för matematik, Stockholm, Sweden}
\email{kristian.moi@gmail.com}
\author[Nardin]{Denis Nardin}
\address{Universität Regensburg, Mathematisches Institut, Regensburg, Germany}
\author[Nikolaus]{Thomas Nikolaus}
\address{WWU Münster, Mathematisches Institut, Münster, Germany}
\email{nikolaus@uni-muenster.de}
\author[Steimle]{Wolfgang Steimle}
\address{Universität Augsburg, Institut für Mathematik, Augsburg, Germany}
\email{wolfgang.steimle@math.uni-augsburg.de}
\dedicatory{To Andrew Ranicki.}
\date{\today}
\begin{document}

\begin{abstract}
We establish a fibre sequence relating the classical Grothendieck-Witt theory of a ring $R$ to the homotopy $\Ct$-orbits of its K-theory and Ranicki's original (non-periodic) symmetric L-theory.
We use this fibre sequence to remove the assumption that $2$ is a unit in $R$ from various results about Grothendieck-Witt groups. For instance, we solve the homotopy limit problem for Dedekind rings whose fraction field is a number field, 
calculate the various flavours of Grothendieck-Witt groups of $\ZZ$, show that the Grothendieck-Witt groups of rings of integers in number fields are finitely generated, and that the comparison map from quadratic to symmetric Grothendieck-Witt theory of coherent rings of global dimension $d$ is an equivalence in degrees $\geq d+3$. 
As an important tool, we establish the hermitian analogue of Quillen's localisation-dévissage sequence for Dedekind rings and use it to solve a conjecture of Berrick-Karoubi.

\end{abstract}

\maketitle
\tableofcontents

\section*{Introduction}
This paper investigates the hermitian K-theory spectra of non-degenerate symmetric and quadratic forms over a ring $R$ and their homotopy groups: the \emph{higher Grothendieck-Witt groups} of $R$.
Many structural and computational features of the higher Grothendieck-Witt groups of rings $R$ in which 2 is a unit are well understood, prevalently due to extensive work of Karoubi \cite{K1, K2, K3, K4, Karoubi-Le-theoreme-fondamental} and Schlichting \cite{schlichting-exact, schlichting-mv, schlichting-derived, SchlichtinghigherI}. 
Previously, in \papertwo we have used the categorical framework of Poincaré $\infty$-categories to establish some fundamental properties of the higher Grothendieck-Witt groups of rings in which $2$ is not necessarily a unit, most notably a form of Karoubi periodicity \reftwo{corollary:karoubi-fundamental} and the existence of a fibre sequence relating the Grothendieck-Witt theory of any Poincaré $\infty$-category with its algebraic $\K$-theory and  L-theory. This allows for the separation of $\K$-theoretic and $\L$-theoretic arguments, and the theme of this paper is to deduce results about Grothendieck-Witt theory from their counterparts in L-theory.
\medskip

\introsubsection{Main results}
Let $R$ be a unital and associative, but not necessarily commutative ring.
Let $\Dual$ be a duality on the category $\Proj(R)$ of finitely generated projective left $R$-modules.
Then $\Dual$ is necessarily of the form $\Dual P=\hom_R(P,M)$, where $M:=\Dual R$ is an invertible $\ZZ$-module with involution (see Definition~\refthree{definition:modinv}).
An $M$-valued unimodular symmetric form on $P$ is then a self-dual isomorphism $\varphi \colon P \to \Dual P$. Together with their isomorphisms, these form a groupoid $\Unimod(R;M)$, symmetric monoidal under orthogonal direct sum. The classical symmetric Grothendieck-Witt theory of $R$ is its group-completion:
\[
\GW_{\cl}^\sym(R;M)=(\Unimod(R;M),\oplus)^{gp}. 
\]
By construction, $\GW_{\cl}^\sym(R;M)$ is a group-like $\E_\infty$-space, which we view equivalently as a connective spectrum. Its homotopy groups are the higher symmetric Grothendieck-Witt groups $\GW_{\cl,\ast}^\sym(R;M)$ of $R$. Similarly, one can consider $\GW_{\cl}^{\qdr}(R;M)$, the variant of $\GW_{\cl}^\sym(R;M)$ where $M$-valued symmetric bilinear forms are replaced by $M$-valued quadratic forms, whose higher homotopy groups are the higher quadratic Grothendieck-Witt groups.

After inverting $2$, it turns out that $\GW_{\cl}^\qdr(R;M)[\tfrac{1}{2}] \simeq \GW_\cl^\sym(R;M)[\tfrac{1}{2}]$, and the study of the higher Grothendieck-Witt groups of $R$ reduces to the study of the K-groups and Witt groups, as by work of Karoubi, there is a natural splitting 
\[
\GW_{\cl,\ast}^\sym(R;M)[\tfrac12]\cong (\K_\ast(R;M)[\tfrac12])^{\Ct}\oplus (\W_{\ast}(R;M)[\tfrac12]),
\]
see also \cite{BF}. Here $\K(R;M)$ is the K-theory spectrum of $R$ with $\Ct$-action induced by sending  $P$ to its dual $\Dual P$, and the first summand is the subgroup of invariants of its homotopy groups with $2$ inverted. The second summand consists of the Witt groups of symmetric forms and formations, which are $4$-periodic by definition.
The first main result of the present paper combines the general fibre sequence of \papertwo with Ranicki's algebraic surgery to obtain an integral version of this result.

\begin{ithm}
\label{theorem:fiber-sequence-intro-three}%
For every ring $R$ and duality $\Dual=\hom_R(-,M)$ on $\Proj(R)$, there is a fibre sequence of spectra
\[
\K(R;M)_{\hC} \stackrel{\hyp}{\lto} \GW_{\cl}^{\sym}(R;M) \lto \L^{\short}(R;M)
\]
where $\L^{\short}(R;M)$ is a canonical connective spectrum whose homotopy groups are Ranicki's original (non-4-periodic) symmetric L-groups from \cite{RanickiATS1}.
\end{ithm}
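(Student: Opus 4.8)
The plan is to obtain the sequence by specialising the general fibre sequence of \papertwo to the Poincaré $\infty$-category $(\Dperf(R),\QF^{\gs}_M)$ carrying the \emph{genuine symmetric} Poincaré structure attached to the module with involution $M$, and then to identify the resulting three terms. The underlying stable $\infty$-category here is $\Dperf(R)$ with duality the (derived) functor $\hom_R(-,M)$, and the defining property of the genuine symmetric refinement is that a Poincaré object concentrated in degree $0$ is exactly an $M$-valued unimodular symmetric form on a finitely generated projective module, while the hyperbolic functor restricts on hearts to the classical hyperbolic construction. Since the algebraic K-theory of a Poincaré $\infty$-category depends only on the underlying category with duality, we have $\K(\Dperf(R),\QF^{\gs}_M)\simeq\K(R;M)$ as spectra with $\Ct$-action; and by the identification of the classical Grothendieck-Witt spectrum of $R$ with the Poincaré-categorical one from \papertwo, the connective cover of $\GW(\Dperf(R),\QF^{\gs}_M)$ is $\GW_{\cl}^{\sym}(R;M)$, compatibly with the hyperbolic maps. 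As $\K(R;M)_{\hC}$ is already connective, connective covers preserve the fibre sequence of \papertwo in this instance, and so it supplies a fibre sequence of connective spectra
\[
\K(R;M)_{\hC}\stackrel{\hyp}{\lto}\GW_{\cl}^{\sym}(R;M)\lto\tau_{\geq 0}\L(\Dperf(R),\QF^{\gs}_M);
\]
one then sets $\L^{\short}(R;M):=\tau_{\geq 0}\L(\Dperf(R),\QF^{\gs}_M)$, the connective cover of the (automatically $4$-periodic) genuine symmetric L-theory of $R$ (a canonical connective spectrum).

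It then remains to identify $\pi_n\L^{\short}(R;M)=\pi_n\L(\Dperf(R),\QF^{\gs}_M)$ for $n\geq 0$ with Ranicki's original symmetric L-groups $L^n(R;M)$ of \cite{RanickiATS1}. By the construction of L-theory in \paperone this group is the bordism group of $n$-dimensional Poincaré objects in $(\Dperf(R),\QF^{\gs}_M)$, and the goal is to match it with Ranicki's bordism group of $n$-dimensional symmetric algebraic Poincaré complexes over $R$. I would represent each such Poincaré object by a bounded complex of finitely generated projectives and then use Ranicki's algebraic surgery below the middle dimension to replace it, within its bordism class, by a highly connected representative—concentrated around the middle degree—whose underlying datum is then precisely an $n$-dimensional symmetric Poincaré complex in Ranicki's sense; here it is essential that the structure be the \emph{genuine} symmetric one and not the homotopy symmetric one, for only then does the short model carry an honest chain-level symmetric form rather than a homotopy-coherent one. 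Running the same surgery on Poincaré cobordisms (algebraic Poincaré pairs) shows that the comparison map so obtained is well defined, additive, and bijective, which finishes the identification.

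I expect this last step to be the principal obstacle: carrying out surgery below the middle dimension while respecting the genuine, rather than merely homotopy-coherent, symmetric refinement, and reconciling the $\infty$-categorical bordism of Poincaré objects (phrased through Lagrangians) with Ranicki's bordism of symmetric Poincaré complexes and pairs. This is exactly where "Ranicki's algebraic surgery" enters, and it is also the source of the non-$4$-periodicity of $\L^{\short}(R;M)$, in contrast with the $4$-periodic L-theory $\L(\Dperf(R),\QF^{\gs}_M)$ produced by \paperone: passing to the connective cover discards precisely the negative homotopy groups, which encode the difference between the periodic and non-periodic symmetric L-theory of $R$. Relatedly, one could instead model $\L^{\short}(R;M)$ directly by the L-theory of the Poincaré $\infty$-category of \emph{bounded} complexes of finitely generated projectives with its genuine symmetric structure, and check via the localising property of L-theory that the comparison functor to $(\Dperf(R),\QF^{\gs}_M)$ induces an equivalence after taking connective covers.
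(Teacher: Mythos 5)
Your construction matches the paper's: apply the general fibre sequence of \papertwo to $(\Dperf(R),\QF^{\gs}_M)$, identify $\tau_{\geq 0}\GW^{\gs}(R;M)$ with $\GW^{\sym}_{\cl}(R;M)$ via the comparison theorem of \cite{comparison}, use that $\K(R;M)_{\hC}$ is already connective so taking connective covers preserves the fibre sequence, set $\L^{\short}(R;M):=\tau_{\geq 0}\L^{\gs}(R;M)$, and identify the non-negative homotopy of $\L^{\gs}(R;M)$ with Ranicki's symmetric L-groups by algebraic surgery. That last identification is Theorem~\refthree{theorem:Ranicki-coincides}, proved as Theorem~\refthree{theorem:main-theorem-L-theory} via Proposition~\refthree{proposition:surgery-quadratic}, Corollary~\refthree{corollary:surgery-genuine-symmetric}, Lemma~\refthree{lemma:genuine-to-symmetric} and Proposition~\refthree{proposition:alternative-defi}.

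Two of your side claims are wrong, however, and they suggest a misunderstanding of where the non-$4$-periodicity actually comes from. First, $\L^{\gs}(R;M)=\L(\Dperf(R),\QF^{\gs}_M)$ is \emph{not} $4$-periodic: Corollary~\refthree{corollary:periodicity-L} gives $\Sigma^4\L^{\gs}(R;M)\simeq\L^{\gq}(R;M)$, and $\L^{\gs}$ and $\L^{\gq}$ differ in general---only the homotopy symmetric $\L^{\s}$ and the homotopy quadratic $\L^{\qdr}$ are $4$-periodic. Consequently your explanation that the connective cover ``discards the negative homotopy groups which encode the difference between the periodic and non-periodic symmetric L-theory'' is also off: were $\L^{\gs}$ $4$-periodic, its connective cover would still have $4$-periodic non-negative homotopy groups, so the non-periodicity of $\L^{\short}$ must already be present in $\L^{\gs}$ itself. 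Second, ``surgery below the middle dimension to a representative concentrated around the middle degree'' is the quadratic picture, not the symmetric one: since $\QF^{\gs}_M$ is only $0$-quadratic (Example~\refthree{example:symm-quad-qf}), the surgery argument for a class in $\L^{\gs}_n$ only reaches a representative of width $n$, concentrated in degrees $[-n,0]$---exactly an $n$-dimensional complex in Ranicki's sense, but far from the middle for large $n$. This is also the real technical reason the genuine structure is needed: being $0$-quadratic is what makes the surgery of \S\refthree{subsection:surgery-quadratic} applicable, whereas $\QF^{\s}_M$ is generally not $m$-quadratic for any finite $m$; it is then Lemma~\refthree{lemma:genuine-to-symmetric} that identifies genuine and homotopy symmetric forms on the short complexes so produced, rather than any wholesale preference for ``chain-level'' over ``homotopy-coherent'' forms.
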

After inverting 2, this fibre sequence recovers Karoubi's splitting of $\GW_\cl^\sym(R;M)$, but it also allows to efficiently treat the behaviour of Grothendieck-Witt theory at the prime $2$, as we will explain below. Without inverting 2 on the outside, but when $2$ is a unit in $R$, Ranicki's L-groups $\L^{\short}_\ast(R;M)$ are still $4$-periodic and isomorphic to the Witt groups $\W_{\ast}(R;M)$, and in this case the sequence of Theorem~\refthree{theorem:fiber-sequence-intro-three} is due to Schlichting \cite{schlichting-derived}*{\S 7}.
However, if 2 is not invertible in $R$, there are several variants of L-spectra in addition to $\L^{\short}(R;M)$, most notably the 4-periodic symmetric L-theory $\L^{\sym}(R;M)$ used by Ranicki in later work \cite{Ranickiblue}. Our insight is that it is the non-periodic classical symmetric L-theory of Ranicki \cite{RanickiATS1} which makes Theorem~\refthree{theorem:fiber-sequence-intro-three} true for all rings.

Coming back to the 2-local behaviour of Grothendieck-Witt theory, we note that sending a symmetric bilinear form to its underlying finitely generated projective module leads to a canonical map $\GW^{\sym}_\cl(R;M) \to \K(R;M)^{\hC}$. 
The question whether this map is a 2-adic equivalence in positive degrees is known as Thomason's homotopy limit problem \cite{thomason}, which admits a positive solution for many rings in which 2 is invertible, notably by work of Hu, Kriz and Ormsby \cite{HKO-homotopy-limit}, Bachmann and Hopkins \cite{bachmannperiodic}, and Berrick, Karoubi, Schlichting and \O stv\ae r \cite{BKSO-fixed-point}. In \S\refthree{subsection:homotopy-limit} we will show:

\begin{ithm}
\label{theorem:homotopy-limit-intro-three}%
Let $R$ be a Dedekind ring whose fraction field is a number field. Then the canonical map $\GW_{\cl}^{\sym}(R;M) \to \K(R;M)^{\hC}$ is a 2-adic equivalence in non-negative degrees.
\end{ithm}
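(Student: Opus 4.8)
\emph{Reduction to $\L$-theory.} The plan is first to convert the assertion, via Theorem~\ref{theorem:fiber-sequence-intro-three} and the Tate fibre sequence, into a statement purely about $\L$-theory, and then to prove that statement by localisation--dévissage. Sending a unimodular form to its underlying module with the induced $\Ct$-homotopy-fixed-point datum gives the canonical map $\GW_{\cl}^{\sym}(R;M)\to\K(R;M)^{\hC}$, which becomes the norm map upon precomposition with the hyperbolic map of Theorem~\ref{theorem:fiber-sequence-intro-three} (this compatibility being part of the comparison of Grothendieck--Witt with homotopy fixed-point theory from \papertwo). Hence that theorem and the Tate fibre sequence $\K(R;M)_{\hC}\to\K(R;M)^{\hC}\to\K(R;M)^{\tC}$ are the rows of a map of fibre sequences which is the identity on first entries, so $\GW_{\cl}^{\sym}(R;M)\to\K(R;M)^{\hC}$ and an induced map $\L^{\short}(R;M)\to\K(R;M)^{\tC}$ share a cofibre. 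A map out of a connective spectrum is an equivalence in non-negative degrees exactly when its cofibre has vanishing homotopy in non-negative degrees, and both $\GW_{\cl}^{\sym}(R;M)$ and $\L^{\short}(R;M)$ are connective; so it suffices to prove that $\L^{\short}(R;M)\to\K(R;M)^{\tC}$ is a $2$-adic equivalence in non-negative degrees. (Since $\K(R;M)$ is connective, $\K(R;M)^{\tC}$ is already $2$-complete, so no completion is needed on the target.)

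\emph{Localisation--dévissage.} Every Dedekind subring of the number field $F$ containing $\ZZ$ is the ring $\nO_{F,S}$ of $S$-integers for some set $S$ of finite places, and the residue fields $k(\fp)$ at the places $\fp\notin S$ are finite. The idea is to compare Quillen's localisation--dévissage fibre sequence $\K(R;M)\to\K(F;M_F)\to\bigoplus_{\fp\notin S}\Sigma\K(k(\fp))$, to which we apply the exact functor $(-)^{\tC}$, with the hermitian localisation--dévissage fibre sequence established later in this paper --- which holds precisely for the \emph{non-periodic} symmetric $\L$-theory $\L^{\short}$, the contribution of a place $\fp$ being $\L^{\short}$ of $k(\fp)$ with the degree-shifted module-with-involution governing linking forms. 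These fit together, via the comparison map above, into a map of fibre sequences; equivalently, $\L^{\short}(-;M)$ and $\K(-;M)^{\tC}$ both satisfy descent on $\mathrm{Spec}\,R$ (the latter because $\K$ does and $\mathrm{Spec}\,R$ has dimension $1$), so their cofibre is a sheaf and vanishing in non-negative degrees can be checked on stalks. Either way it remains to treat the generic fibre $F$ and each residue field $k(\fp)$, together with the degree shift.

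\emph{The field cases.} Over $F$ the element $2$ is invertible, and the $2$-adic homotopy limit problem is known for fields of characteristic $\neq 2$ of finite virtual mod-$2$ cohomological dimension, in particular for number fields, by Hu--Kriz--Ormsby \cite{HKO-homotopy-limit}, Bachmann--Hopkins \cite{bachmannperiodic}, and Berrick--Karoubi--Schlichting--\O stv\ae r \cite{BKSO-fixed-point}; by the reduction above applied to $F$ this is exactly the assertion that $\L^{\short}(F;M_F)\to\K(F;M_F)^{\tC}$ is an equivalence in non-negative degrees, and the same applies to residue fields of odd characteristic. For $q=2^n$, every higher $\K$-group of $\FF_q$ has odd order, so $\K(\FF_q;M)^{\wedge}_2\simeq\GEM\ZZ$ concentrated in degree $0$ with trivial $\Ct$-action; hence $\K(\FF_q;M)^{\tC}\simeq(\GEM\ZZ)^{\tC}$, whose homotopy is $\ZZ/2$ in even degrees and $0$ in odd, and likewise for the degree-shifted variant. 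One then computes $\L^{\short}(\FF_{2^n};M)$ and its shift from the known structure of classical symmetric and quadratic $\L$-groups of characteristic-$2$ fields and checks that the comparison map is an isomorphism in non-negative degrees.

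\emph{Main obstacle.} The steps above over fields in which $2$ is a unit are formal or quoted, and the essential work is the residue fields of characteristic $2$: one must determine $\L^{\short}(\FF_{2^n};M)$ and its degree shift --- the shift encoding the skew-symmetric linking forms produced by dévissage --- and identify the comparison map with $(\GEM\ZZ)^{\tC}$ in non-negative degrees. The delicate point is the degree-$0$ end, where the connective spectrum $\L^{\short}$ must be matched with the non-connective Tate construction directly rather than in a stable range; keeping careful track of the decoration shifts in the hermitian dévissage sequence, and of the behaviour at the infinitely many places, is a further source of bookkeeping.
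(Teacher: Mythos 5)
Your overall strategy — reduce to a comparison in \(\L\)-theory via the Tate square, then deploy localisation--dévissage and reduce to the fraction field and residue fields, quoting \cite{BKSO-fixed-point} where \(2\) is invertible and computing the characteristic-\(2\) residue fields by hand — is essentially the paper's, except that you perform the Tate-square reduction once at the global level whereas the paper invokes it only for the dyadic residue fields (in Proposition~\refthree{proposition:holim-char-two-s}) and otherwise compares \(\GW\) directly with \(\K^{\hC}\) over \(R[\tfrac12]\). That re-ordering is fine in principle: going all the way to \(F\) (rather than to \(R[\tfrac12]\) as the paper does) produces an infinite direct sum of residue-field contributions, which you can handle in the \(\L\)-versus-\(\K^{\tC}\) comparison because \((-)^{\tC}\) commutes with filtered colimits of uniformly bounded-below spectra; the paper stays with the finite set of dyadic primes precisely to avoid having to say this.

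There is, however, a genuine gap in the middle step. You assert that the hermitian localisation--dévissage sequence ``holds precisely for the non-periodic symmetric \(\L\)-theory \(\L^{\short}\).'' This is backwards: Theorem~\refthree{theorem:devissage} and Corollary~\refthree{corollary:decomposition-s} hold for the \emph{periodic} symmetric Poincaré structure \(\QF^{\s}_M\) (i.e.\ for \(\L^{\s}\) and \(\GW^{\s}\)), and the paper is at pains to point out that this is a special feature of that structure — dévissage genuinely fails for the genuine variants \(\QF^{\geq m}_M\), see Remark~\refthree{remark:no-quadratic-devissage}. Since \(\L^{\short} = \tau_{\geq 0}\L^{\gs}\) is the connective cover of genuine symmetric \(\L\)-theory, your localisation step, taken literally, does not produce a fibre sequence. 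The repair is exactly the ingredient your reduction has bypassed: Corollary~\refthree{corollary:dedekind} (resp.\ Corollary~\refthree{corollary:dedekind-gw}) shows that for Dedekind rings the map \(\L^{\gs}_n(R;M)\to\L^{\s}_n(R;M)\) (resp.\ on \(\GW\)) is an isomorphism for \(n\geq 0\), so the assertion ``\(\L^{\short}(R;M)\to\K(R;M)^{\tC}\) is a \(2\)-adic equivalence in non-negative degrees'' is equivalent to the same statement for \(\L^{\s}(R;M)\), and to that statement dévissage does apply. You need to insert this identification before invoking the localisation sequence; as written the argument uses a fibre sequence that does not exist.

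Two smaller remarks: the claim that \(\L^{\short}\) and \(\K^{\tC}\) ``satisfy descent on \(\mathrm{Spec}\,R\)'' is a loose paraphrase — what you actually have is a single localisation--dévissage fibre sequence, not sheaf descent, and it is cleaner to say so. And in the characteristic-\(2\) residue-field case, the relevant simplification (used in Proposition~\refthree{proposition:holim-char-two-s}) is that both \(\L^{\s}(\FF_q)\) and \(\K(\FF_q)^{\tC}\) are \(2\)-periodic by Corollary~\refthree{corollary:periodicity-L}, reducing the check to \(\pi_0\) and \(\pi_1\); the degree-\(0\) end is not especially delicate once periodicity is in play, and in particular there is no issue with matching a connective object against a non-connective one because one passes to \(\L^{\s}\) (periodic) before doing the local computation.
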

To the best of our knowledge this is the first general result on the homotopy limit problem for a class of rings which are not fields and in which 2 is not assumed to be a unit. The strategy we adopt to prove Theorem~\refthree{theorem:homotopy-limit-intro-three} is to use Theorem~\refthree{theorem:fiber-sequence-intro-three} to reduce it to the case of  $R[\tfrac{1}{2}]$, where it holds by \cite{BKSO-fixed-point}.
For a general ring $R$, we further observe that the failure of 4-periodicity of $\L^{\short}(R;M)$ in high degrees provides a purely L-theoretic obstruction for the homotopy limit problem map $\GW^{\sym}_{\cl}(R;M) \to \K(R;M)^{\hC}$ to be a 2-adic equivalence in positive degrees; see Proposition~\refthree{proposition:obstruction-to-HLP}. 

The polarisation of a quadratic form induces a comparison map $\GW_{\cl}^{\qdr}(R;M) \to \GW_{\cl}^{\sym}(R;M)$, and in \S\refthree{subsection:surgery-symmetric} we show that for coherent rings of finite global dimension, this map is an equivalence in high degrees:
\begin{ithm}
\label{theorem:GW-classical-quad-sym}%
Suppose $R$ is a coherent ring of finite global dimension $d$. Then the map $\GW_{\cl,n}^{\qdr}(R;M) \to \GW_{\cl,n}^{\sym}(R;M)$ is injective for $n \geq d+2$ and an isomorphism for $n \geq d+3$. 
Moreover, if $R$ is 2-torsion free and $M=R$, the map is injective for $n \geq d$ and an isomorphism for $n \geq d+1$. 
\end{ithm}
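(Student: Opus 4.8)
\textit{Proof proposal.}
The plan is to reduce the statement, via the fibre sequences relating Grothendieck--Witt and L-theory, to a purely L-theoretic assertion, and then to exploit the classification of genuine Poincaré structures together with the finite global dimension hypothesis. I would begin by recalling (from \paperone and \papertwo) that $\GW_{\cl}^{\qdr}(R;M)$ and $\GW_{\cl}^{\sym}(R;M)$ are the Grothendieck--Witt spectra of the Poincaré $\infty$-categories $(\Perf(R),\QF^{\gq}_M)$ and $(\Perf(R),\QF^{\gs}_M)$, where $\QF^{\gq}_M=\Qgen{1}{M}$ and $\QF^{\gs}_M=\Qgen{0}{M}$ are the genuine quadratic and genuine symmetric structures refining the common symmetric bilinear part $\Bil_M=\hom_R(-\otimes_R-,M)$, and that the polarisation map is induced by the canonical map of Poincaré structures $\QF^{\gq}_M\to\QF^{\gs}_M$. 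Both Poincaré categories have the same underlying stable $\infty$-category $\Perf(R)$, and since $\QF^{\gq}_M\to\QF^{\gs}_M$ is an equivalence on bilinear parts it induces the identity on the associated duality, hence on the $\K$-theory spectrum and its $\Ct$-action. Plugging this into the fibre sequence $\K(\C)_{\hC}\to\GW(\C,\QF)\to\L(\C,\QF)$ of \papertwo, applied to both structures, yields an equivalence
\[
\fib\big(\GW_{\cl}^{\qdr}(R;M)\to\GW_{\cl}^{\sym}(R;M)\big)\;\simeq\;\fib\big(\L(\Perf(R),\QF^{\gq}_M)\to\L^{\short}(R;M)\big),
\]
using the identification $\L(\Perf(R),\QF^{\gs}_M)\simeq\L^{\short}(R;M)$. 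Writing $F$ for this fibre, the long exact sequence attached to $F\to\GW_{\cl}^{\qdr}(R;M)\to\GW_{\cl}^{\sym}(R;M)$ shows it suffices to prove $\pi_nF=0$ for $n\geq d+2$ (which gives injectivity in degrees $\geq d+2$ and, using also the vanishing in degree $n-1$, surjectivity in degrees $\geq d+3$), and in the special case $\pi_nF=0$ for $n\geq d$.

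To control $F$, I would invoke the classification of genuine Poincaré structures from \paperone: the map $\QF^{\gq}_M=\Qgen{1}{M}\to\Qgen{0}{M}=\QF^{\gs}_M$ is a single step in the genuine tower, and its fibre is, up to a shift, the \emph{linear} functor $X\mapsto\hom_R(X,\widehat{H}^0(\Ct;M))$ concentrated in a single homotopy degree, where $\widehat{H}^{\ast}(\Ct;M)$ is the Tate cohomology of $\Ct$ with coefficients in the discrete module-with-involution $M$, a $2$-torsion discrete $R$-module. The crucial point of working with the genuine (truncated) structures rather than with the fully symmetric and quadratic structures $\QF^{\s}_M=\Qgen{-\infty}{M}$ and $\QF^{\q}_M=\Qgen{\infty}{M}$ is exactly that only this one ``layer'' of the Tate tower contributes, so that $F$ is bounded instead of being $4$-periodic. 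I would then use the description of the L-theory of genuine structures from \papertwo to identify $F$ with a spectrum whose homotopy groups are computed by a spectral sequence whose input involves $\mathrm{Ext}^{\ast}_R(-,\widehat{H}^0(\Ct;M))$ evaluated on perfect complexes; since $R$ has global dimension $d$, these $\mathrm{Ext}$-groups vanish above cohomological degree $d$, and a bookkeeping of the shifts produces $\pi_nF=0$ for $n\geq d+2$, as needed.

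For the sharper range in the $2$-torsion free case with $M=R$, one observes that $\widehat{H}^{\ast}(\Ct;R)$ is then concentrated in even degrees, the odd Tate cohomology vanishing because $R$ has no $2$-torsion. Hence the relevant spectral sequence has strictly fewer nonzero columns, and carrying this through the same shift bookkeeping removes one unit from the estimate, yielding $\pi_nF=0$ for $n\geq d$ and therefore injectivity of $\GW^{\qdr}_{\cl,n}(R;M)\to\GW^{\sym}_{\cl,n}(R;M)$ for $n\geq d$ and an isomorphism for $n\geq d+1$.

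I expect the middle step to be the main obstacle: pinning down the exact homotopy type of $F$ — in particular the precise homotopy degree in which the linear fibre functor of $\QF^{\gq}_M\to\QF^{\gs}_M$ sits, and the way the finite-projective-dimension bound on $\mathrm{Ext}_R^{\ast}$ interacts with the L-theoretic (Tate) bookkeeping to give the sharp constants $d+2$, $d+3$ (and the improved $d$, $d+1$). The conceptual content — that after cancelling the common homotopy $\Ct$-orbits of K-theory, the difference between quadratic and symmetric Grothendieck--Witt theory is a bounded L-theoretic term governed by a single discrete Tate cohomology module — is robust, but extracting the optimal numerical range requires care with the genuine tower and with connectivity estimates for hermitian forms on perfect complexes over a ring of global dimension $d$.
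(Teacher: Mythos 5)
Your initial reduction is correct and is exactly what the paper does: since the three Poincaré structures $\QF^{\gq}_M,\QF^{\gs}_M,\QF^{\s}_M$ share the same underlying duality, the fibre sequence $\K(R;M)_{\hC}\to\GW(\C,\QF)\to\L(\C,\QF)$ makes the square $\GW^{\gq}\to\GW^{\gs}$, $\L^{\gq}\to\L^{\gs}$ cartesian, so the comparison in $\GW$ is controlled by the comparison in $\L$.

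The middle step, however, is where your plan breaks down, for two reasons. First, a minor but telling slip: $\QF^{\gq}_M=\Qgen{2}{M}$, not $\Qgen{1}{M}$ (the latter is the genuine \emph{even} structure), so the step from genuine quadratic to genuine symmetric is two layers in the tower, with fibre built out of both $\widehat{\mathrm{H}}^{-1}(\Ct;M)$ and $\widehat{\mathrm{H}}^{0}(\Ct;M)$, not a single Tate cohomology group. This already affects the bookkeeping you would need. Second, and more fundamentally, there is no Ext-type spectral sequence computing the fibre of a map $\L(\C,\QF)\to\L(\C,\QF')$ induced by a map of Poincaré structures with the same underlying duality. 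L-theory is not an exact (or additive) functor of the quadratic functor $\QF$ for fixed duality; the fibre of the Poincaré-structure map being linear does not make the fibre of L-spectra a ``linear'' invariant computable by $\mathrm{Ext}^\ast_R(-,\widehat{\mathrm{H}}^0(\Ct;M))$. The paper does not try to compute this fibre directly. Instead it proves that both $\L^{\gq}_n(R;M)\to\L^{\s}_n(R;M)$ and $\L^{\gs}_n(R;M)\to\L^{\s}_n(R;M)$ are isomorphisms in suitable ranges by algebraic surgery for $r$-symmetric Poincaré structures (Proposition \refthree{proposition:surgery-global-dim} and Corollary \refthree{corollary:genuine-is-symmetric}), exploiting that $\QF^{\gq}_M$ is $0$-symmetric and $\QF^{\gs}_M$ is $2$-symmetric. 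The finite global dimension enters through the behaviour of the duality with respect to the $t$-structure, not through an Ext spectral sequence; the sharp constants $d+2$, $d+3$ come from these surgery estimates.

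Your instinct for the $2$-torsion-free improvement (one Tate contribution disappears) identifies the correct phenomenon, namely $\widehat{\mathrm{H}}^{-1}(\Ct;R)=0$ and $\widehat{\mathrm{H}}^{0}(\Ct;-R)=0$, but the mechanism the paper uses is the periodicity of Proposition \refthree{proposition:periodcat}: since $\QF^{\gq}\simeq\QF^{\gev}$ and $\QF^{-\gev}\simeq\QF^{-\gs}$ in this case, one gets $\Sigma^2\L^{\gq}(R)\simeq\L^{-\gq}(R)$ and $\Sigma^2\L^{\gs}(R)\simeq\L^{-\gs}(R)$ compatibly with the comparison maps, so the claimed range follows from the first part with a shift by $2$. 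This is a genuinely different argument from ``fewer columns in a spectral sequence,'' and you would need to develop the surgery machinery (or some equally robust substitute) to make your outline into a proof.
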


Here, by slight abuse of terminology, by a coherent ring we mean a left-coherent ring, and likewise finite global dimension refers to left-global dimension. The same result is, however, true for right-coherent rings of finite right-global dimension, see Remark~\refthree{remark:right-coherent-variant}. In addition, similar statements hold for Grothendieck-Witt groups associated to any form parameter in the sense of Bak in place of quadratic forms in the above theorem, see Remark~\refthree{remark:form-paramters-and-symmetric-surgery} for details.

Theorem~\refthree{theorem:fiber-sequence-intro-three} does not only provide a conceptual description of symmetric Grothendieck-Witt spectra, but it can also be used for explicit calculations.
For instance,
when $R=\ZZ$ there are two dualities on $\Proj(\ZZ)$, leading to the symmetric  and symplectic Grothendieck-Witt groups of $\ZZ$, respectively. In \S\refthree{subsection:integers} we explicitly calculate these groups in a range of degrees $<20000$, and beyond that conditionally on the Kummer-Vandiver conjecture in the following sense: Of some Grothendieck-Witt groups, we can only determine the order, and the Kummer-Vandiver conjecture implies that these groups are cyclic. 
\begin{ithm}
\label{theorem:Z}%
The symmetric and symplectic Grothendieck-Witt groups of $\ZZ$ are given in the table of  Theorem~\refthree{theorem:gwsz}. 
\end{ithm}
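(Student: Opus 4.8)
The plan is to feed the two relevant instances of the fibre sequence of Theorem~\refthree{theorem:fiber-sequence-intro-three} --- one for each of the two dualities on $\Proj(\ZZ)$, giving rise to the symmetric and the symplectic theories --- into an explicit computation: this reduces the determination of $\GW_{\cl,\ast}(\ZZ;M)$ to the two independent inputs $\L^{\short}_\ast(\ZZ;M)$ and $\K(\ZZ;M)_{\hC}$, which one then reassembles along the associated long exact sequences. Inverting $2$, the fibre sequence recovers Karoubi's splitting, so morally the odd-primary information is $\Ct$-invariants of $\K_\ast(\ZZ)[\tfrac12]$ together with Witt groups; at the prime $2$ the fibre sequence is genuinely needed, and the computation proceeds prime by prime.

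First I would pin down the L-theoretic input. Ranicki's non-periodic symmetric L-groups $\L^{\short}_\ast(\ZZ;M)$ are to be read off from the classical symmetric L-groups of $\ZZ$ of \cite{RanickiATS1}: since $\ZZ$ is a Dedekind ring, hence of global dimension $1$, the comparison of $\L^{\short}$ with the $4$-periodic symmetric L-theory $\L^{\sym}$ used in \cite{Ranickiblue} is an equivalence in all sufficiently high degrees, so only the finitely many low-degree groups --- governed by forms and formations --- require separate attention, and these are classical in both the symmetric and the symplectic cases. This determines the cofibre term completely.

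Next comes the $\K$-theoretic input, which is the \textbf{main obstacle}. One starts from the known computation of $\K_\ast(\ZZ)$ --- at odd primes via the (now established) Quillen--Lichtenbaum statement, expressed through étale cohomology and hence through Bernoulli numbers and special values of $\zeta$, and $2$-adically via Rognes--Weibel and Voevodsky--Rost --- together with the description of the duality $\Ct$-action, which is induced by $P \mapsto \hom_\ZZ(P,M)$ and acts on homotopy via (a sign times) the Adams operation $\psi^{-1}$, the symmetric and symplectic cases differing by the relevant shift of the Poincaré structure. The homotopy-orbit spectral sequence $H_s(\Ct;\K_t(\ZZ;M)) \Rightarrow \K_{s+t}(\ZZ;M)_{\hC}$ then has to be analysed: the $\Ct$-module structure of each $\K_t(\ZZ)$ must be determined, the few differentials identified, and the resulting extension problems resolved. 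It is precisely here that the Kummer--Vandiver conjecture intervenes --- for the irregular primes it controls whether the relevant $\K$-groups, and hence certain subquotients of $\K(\ZZ;M)_{\hC}$, are cyclic --- so that in its absence one can only read off the order, not the isomorphism type, of some of the contributing groups.

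Finally I would splice the pieces together. The long exact sequences
\[
\cdots \to \K_n(\ZZ;M)_{\hC} \xrightarrow{\hyp} \GW_{\cl,n}(\ZZ;M) \to \L^{\short}_n(\ZZ;M) \to \K_{n-1}(\ZZ;M)_{\hC} \to \cdots
\]
determine $\GW_{\cl,\ast}(\ZZ;M)$ up to extensions; the remaining extension problems I expect to settle by a combination of naturality (the $\L^{\short}_\ast(\ZZ)$-module structure, the map to $\K(\ZZ;M)^{\hC}$, and rationalisation, which must reproduce Karoubi's splitting after inverting $2$), comparison with the classically known low-degree Grothendieck--Witt groups of $\ZZ$, and explicit generators. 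Assembling the outcome into a table yields Theorem~\refthree{theorem:gwsz}, the Kummer--Vandiver-conditional entries being exactly those where the $\K$-theoretic step above left only the order undetermined; the analogous package of inputs for the sign-twisted duality delivers the symplectic column.
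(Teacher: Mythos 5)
Your strategy has the right ingredients --- Theorem~\refthree{theorem:fiber-sequence-intro-three}, the L-groups of $\ZZ$, and $\K_\ast(\ZZ)$ with its duality $\Ct$-action --- but there is a genuine gap in the 2-primary part, and this is where the paper's actual proof does something fundamentally different.

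You propose to compute $\K(\ZZ;M)_{\hC}$ directly from the homotopy-orbit spectral sequence, at all primes, and then splice it against $\L^{\short}_\ast(\ZZ;M)$ via the long exact sequence. Away from $2$ this is fine and matches what the paper does (the spectral sequence collapses to coinvariants and Lemma~\refthree{lemma:kz-inv} determines the $\Ct$-module structure); and your L-theoretic input via Corollary~\refthree{corollary:dedekind} is also correct. But at $p=2$ you give no actual strategy beyond saying ``the few differentials identified, and the resulting extension problems resolved.'' This is not a small omission: at $2$ the $\Ct$-module structure of $\K_\ast(\ZZ)_{(2)}$, the differentials in $H_s(\Ct;\K_t) \Rightarrow \pi_{s+t}\K(\ZZ;M)_{\hC}$, and the extension problems in the homotopy-orbit spectral sequence and again in the LES against L-theory are precisely the hard content of 2-adic hermitian K-theory of $\ZZ$, and there is no shortcut through ``naturality plus low-degree comparison.'' The paper's proof of Theorem~\refthree{theorem:gwsz} circumvents this entirely: 2-locally it invokes Proposition~\refthree{prop:BK-conjecture} (the Berrick--Karoubi conjecture, itself a corollary of the localisation-dévissage fibre sequence of Theorem~\refthree{theorem:localisation-devissage-Dedekind}) together with Corollary~\refthree{corollary:dedekind-gw} to show $\GW^{\s}_{\cl}(\ZZ;\eps) \to \GW^{\s}_\cl(\ZZ[\tfrac{1}{2}];\eps)$ is a 2-local equivalence in positive degrees, and then simply reads off the 2-local answer from \cite{berrick-karoubi}. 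It never touches the 2-primary homotopy-orbit spectral sequence for $\ZZ$ itself. Only away from $2$ does it use the Karoubi splitting $\GW^{\s}(\ZZ;\eps)[\tfrac{1}{2}] \simeq \K(\ZZ;\eps)_{\hC}[\tfrac{1}{2}] \oplus \L^{\s}(\ZZ;\eps)[\tfrac{1}{2}]$, which sidesteps any LES splicing.

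So the missing idea in your proposal is the 2-local reduction to $\ZZ[\tfrac{1}{2}]$ via dévissage. Without it, your 2-primary computation is not carried out and would require a separate and substantial argument. Your identification of where Kummer--Vandiver enters (cyclicity of the odd-torsion of $\K_{4m-2}(\ZZ)$) is correct and compatible with the paper, but this is a red herring relative to the actual obstruction: Vandiver lives at odd irregular primes, while the part of your plan that lacks a workable method is entirely at the prime $2$.
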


Finally, using in addition Theorem~\refthree{theorem:GW-classical-quad-sym} and explicit low dimensional calculations, we also obtain the quadratic and skew-quadratic Grothendieck-Witt groups of $\ZZ$ in Theorems~\refthree{theorem:gwqz} and \refthree{theorem:gw-gqz}.

\introsubsection{Proof strategy and further results}
We approach $\GW_{\cl}^{\sym}$ by investigating Grothendieck-Witt theory in the general context of Poincaré $\infty$-categories, as defined by Lurie \cite{Lurie-L-theory} and further developed in \paperone, \papertwo.
We briefly recall that a Poincaré $\infty$-category consists of a small stable $\infty$-category $\C$ equipped with a Poincaré structure, that is a functor $\QF \colon \C\op \to \Spa$ which is quadratic and satisfies a non-degeneracy condition, which allows to extract an induced duality  $\Dual \colon \C\op \to\C$. 
We refer to \paperone for a general introduction to Poincaré $\infty$-categories, and to \papertwo for the construction of their Grothendieck-Witt and L-spectra and their universal properties.
In \reftwo{corollary:tate-square-L}, we showed that for any Poincaré $\infty$-category $(\C,\QF)$ there is a natural fibre sequence
\begin{equation}
\label{equation:quadsequence}%
 \K(\C,\QF)_{\hC} \lto \GW(\C,\QF) \lto \L(\C,\QF).
\end{equation}
where $\K(\C,\QF)$ is the K-theory spectrum of $\C$ with the $\Ct$-action induced by $\Dual$.
To connect this general fibre sequence to Theorem~\refthree{theorem:fiber-sequence-intro-three}, we will be concerned with studying appropriate Poincaré structures on the derived $\infty$-category of perfect complexes $\Dperf(R)$. 
Some immediate examples of Poincaré structures on  $\Dperf(R)$ are the quadratic and symmetric Poincaré structures given at 
a perfect complex $X$ by the formulae
\begin{equation}
\label{equation:homotopy-forms}%
\QF^{\qdr}_M(X) = \map_{R \otimes R}(X \otimes X,M)_{\hC} \quad \text{ and } \quad \QF^{\sym}_M(X) = \map_{R\otimes R}(X\otimes X,M)^{\hC},
\end{equation}
where $M$ is an invertible $\ZZ$-module with involution over $R$ (see Definition~\refthree{definition:modinv}), and the $\Ct$-action is given by conjugating the flip action on $X \otimes X$ and the $\Ct$-action on $M$. These two Poincaré structures are the homotopy theoretic analogues of quadratic and symmetric forms in algebra, which on a finitely generated projective $R$-module $P$ are respectively the groups of coinvariants and invariants
\begin{equation}
\label{equation:forms}%
\Hom_{R\otimes R}(P\otimes P,M)_{\Ct} \quad \text{ and } \quad \Hom_{R\otimes R}(P \otimes P,M)^{\Ct}
\end{equation}
for the same $\Ct$-action as above.
One insight in our series of papers is that the abstract framework of \paperone, \papertwo allows us to work with Poincaré structures on $\Dperf(R)$ which are more intimately related to algebra than the naive homotopy theoretic constructions of \eqrefthree{equation:homotopy-forms}. 
These are the non-abelian derived functors of the algebraic constructions of \eqrefthree{equation:forms}, which we call the \emph{genuine} quadratic and \emph{genuine}  symmetric Poincaré structures and that we denote respectively by $\QF^{\gq}_M$ and $\QF^{\gs}_M$. There are canonical comparison maps
\[
\QF^{\qdr}_M \lto \QF^{\gq}_M \lto \QF^{\gs}_M \lto \QF^{\sym}_M
\]
relating these Poincaré structures.
When 2 is a unit in $R$, all of them are equivalences, and we showed in Appendix~\reftwo{appendix:AppIIB} that the corresponding Grothendieck-Witt spectra coincide with previous constructions of Grothendieck-Witt spectra due to Schlichting and Spitzweck \cite{schlichting-derived, Spitzweck-GW}. 
In general, when 2 is not necessarily a unit, the fourth and ninth authors \cite{comparison} relate the genuine Grothendieck-Witt spectra $\GW^{\gs}(R;M) := \GW(\Dperf(R);\QF^{\gs}_M)$ and $\GW^{\gq}(R;M) := \GW(\Dperf(R);\QF^{\gq}_M)$ to the classical ones,
 by providing natural equivalences
\[
\GW^{\sym}_{\cl}(R;M) \stackrel{\simeq}{\lto} \tau_{\geq0}\GW^{\gs}(R;M)   \quad \text{ and } \quad \GW_{\cl}^{\qdr}(R;M) \stackrel{\simeq}{\lto} \tau_{\geq0}\GW^{\gq}(R;M),
\]
where $\GW^\qdr_{\cl}(R;M)$ denotes, similary to $\GW^\sym_{\cl}(R;M)$, the group completion of the category of unimodular quadratic forms, and $\tau_{\geq0}$ denotes the connective cover. Writing similarly $\L^{\gs}(R;M)$ for $\L(\Dperf(R);\QF^{\gs}_M)$, we therefore obtain a fibre sequence $\K(R;M)_{\hC} \to \GW^{\gs}(R;M)  \to \L^{\gs}(R;M)$, and
Theorem~\refthree{theorem:fiber-sequence-intro-three} is then implied by the following result, see Theorem~\refthree{theorem:main-theorem-L-theory}.
\begin{ithm}
\label{theorem:Ranicki-coincides}%
For any ring $R$ and non-negative integer $n$, the genuine symmetric L-groups $\L^\gs_n(R;M)$ are canonically isomorphic to Ranicki's original symmetric L-groups from \cite{RanickiATS1}. Thus, in the notation of Theorem~\refthree{theorem:fiber-sequence-intro-three}, we have $\L^{\short}(R;M) = \tau_{\geq 0} \L^\gs(R;M)$.
\end{ithm}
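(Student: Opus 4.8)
The plan is to prove the first assertion — the canonical isomorphism $\L^{\gs}_n(R;M)\cong L^n(R;M)$ for $n\ge 0$ — by unwinding both sides into cobordism groups of chain-level Poincaré complexes and matching them; the second assertion is then a formality, taking $\L^{\short}(R;M):=\tau_{\ge 0}\L^{\gs}(R;M)$. First I would invoke the structural results of \papertwo: the $\L$-spectrum satisfies $\L(\C,\QF^{[n]})\simeq\Sigma^n\L(\C,\QF)$, and $\pi_0\L(\C,\QF)$ is the Witt (cobordism) group of Poincaré objects in $(\C,\QF)$. Hence $\L^{\gs}_n(R;M)$ is the cobordism group of pairs $(C,q)$ with $C\in\Dperf(R)$ and $q\in\pi_0\Omega^\infty\QF^{\gs}_M[n](C)$ inducing an equivalence onto the ($n$-shifted) dual, where $\QF^{\gs}_M[n]$ denotes the shift normalised so that $\L(\Dperf(R),\QF^{\gs}_M[n])\simeq\Sigma^n\L^{\gs}(R;M)$. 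On the other hand, $L^n(R;M)$ as defined by Ranicki in \cite{RanickiATS1} is the cobordism group of $n$-dimensional symmetric algebraic Poincaré complexes: bounded complexes $C$ of finitely generated projective $R$-modules equipped with a class in the symmetric $Q$-group $Q^n(C;M)$ — the $n$-th homology of $\Hom_{\ZZ[\Ct]}$ from the standard \emph{non-negatively graded} free $\ZZ[\Ct]$-resolution $W$ of $\ZZ$ into the ($M$-twisted) flip-equivariant tensor square of $C$ — subject to the Poincaré condition that the duality map $\varphi_0\colon C^{n-*}\to C$ be a quasi-isomorphism.

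The heart of the argument is a natural isomorphism, for $n\ge 0$ and every perfect complex $C$, between the space of $\QF^{\gs}_M[n]$-structures on $C$ and Ranicki's symmetric $Q$-group, i.e.\ $\pi_0\Omega^\infty\QF^{\gs}_M[n](C)\cong Q^n(C;M)$, compatibly with the non-degeneracy conditions. For this I would use the description of $\QF^{\gs}_M$ from \paperone: it is the Poincaré structure on $\Dperf(R)$ obtained by applying the genuine-refinement construction to the classical symmetric-form functor $P\mapsto\Hom_{R\otimes R}(P\otimes P,M)^{\Ct}$ on $\Proj(R)$, and it fits into the comparison $\QF^{\gq}_M\to\QF^{\gs}_M\to\QF^{\sym}_M$ in which the second map exhibits the ``Tate part'' of $\QF^{\gs}_M$ as the connective truncation of the Tate construction appearing in $\QF^{\sym}_M$. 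That connective truncation is modelled, at the level of resolutions, by replacing the complete $\ZZ[\Ct]$-resolution by its non-negative truncation — precisely Ranicki's $W$. Chasing this through produces a natural transformation from the genuine symmetric structure to the functor $C\mapsto\Hom_{\ZZ[\Ct]}(W,-)$ computing $Q^{\bullet}(C;M)$ — after the reindexing $C\mapsto\Dual C$ translating Ranicki's homological grading into the contravariant grading of Poincaré functors — and one checks it is an equivalence in degrees $\ge 0$ by reduction to generators, where on a projective module in degree $0$ both sides recover $\Hom_{R\otimes R}(P\otimes P,M)^{\Ct}$. The restriction $n\ge 0$ enters here: Ranicki's construction sees only the non-negative part of $W$, which is matched by the connective truncation built into $\QF^{\gs}_M$ exactly after a non-negative shift.

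It then remains to match the cobordism relations, which I expect to be formal given the surgery package: using the description of the hermitian cobordism $\infty$-category and of Lagrangians from \paperone, \papertwo, a nullcobordism of a $\QF^{\gs}_M[n]$-Poincaré object is the same datum as an $(n{+}1)$-dimensional symmetric pair over $R$ in Ranicki's sense, so the two cobordism groups coincide and we obtain the natural isomorphism $\L^{\gs}_n(R;M)\cong L^n(R;M)$ for all $n\ge 0$; passing to connective covers yields $\L^{\short}(R;M)=\tau_{\ge 0}\L^{\gs}(R;M)$. The main obstacle is the $Q$-group identification of the previous paragraph: reconciling the homotopy-coherent non-abelian derived functor defining $\QF^{\gs}_M$ with Ranicki's strict chain-level $\mathbf{W}^{\%}$-formula requires care with (a) the resolution model computing the (connectively truncated) homotopy fixed points and the effect of that truncation on it, (b) the dualisation/reindexing dictionary between Ranicki's chain complexes and objects of $\Dperf(R)$, and (c) the bookkeeping of the shift that makes the agreement hold precisely in non-negative degrees; verifying that the comparison respects the Poincaré/non-degeneracy conditions is the last technical point before the cobordism comparison goes through.
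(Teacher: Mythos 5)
Your overall skeleton (identify the group of $n$‑shifted Poincaré forms with Ranicki's $Q$‑groups, then match nullcobordisms) is in the right spirit, but the crucial middle step and a second, non‑formal step are both wrong as stated, and fixing them essentially reproduces the paper's three‑stage argument.

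The first gap is the claimed identification $\pi_0\Omega^\infty\QF^{\gs}_M\qshift{n}(C)\cong Q^n(C;M)$ ``for every perfect complex $C$''. Ranicki's $Q$‑groups are defined via $\Hom_{\ZZ[\Ct]}(W,-)$ with $W$ the standard non‑negative $\ZZ[\Ct]$‑resolution of $\ZZ$, and this computes the \emph{homotopy fixed points} $(-)^{\hC}$ --- i.e.\ the homotopy symmetric Poincaré structure $\QF^{\s}_M$, not $\QF^{\gs}_M$. Your picture in which $W$ models ``the non‑negative truncation of the complete resolution'' (hence the truncated Tate spectrum $\tau_{\geq 0}M^{\tC}$ appearing in $\QF^{\gs}_M$) is a genuine confusion: the non‑negative resolution computes $M^{\hC}$, whose Postnikov truncation differs from $\tau_{\geq 0}M^{\tC}$ already on $\pi_0$ (the kernel of $\pi_0 M^{\hC}\to\pi_0 M^{\tC}$ is the image of $\pi_0 M_{\hC}$, and this need not vanish). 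Consequently, $\pi_0\Omega^n\QF^{\gs}_M(X)$ and $Q^n(\Dual X;M)$ disagree for general perfect $X$; they coincide only when $X$ is representable by a complex in degrees $[0,n]$, by a connectivity estimate on the fibre of $\QF^{\gs}_M\to\QF^{\s}_M$ (which is $(-2)$‑truncated on $P[0]$, as $\QF^{\gs}_M$ is $2$‑symmetric). Checking agreement ``on generators'' cannot detect this: both functors do agree on projectives in degree $0$, but they are not determined by those values.

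The second gap is the claim that matching cobordisms is ``formal'': that a nullcobordism of a $\QF^{\gs}_M\qshift{n}$‑Poincaré object ``is the same datum'' as an $(n{+}1)$‑dimensional symmetric pair in Ranicki's sense. It is not: a Lagrangian in the $\infty$‑categorical sense is an arbitrary perfect complex, whereas Ranicki imposes a strict dimension constraint. To show the unconstrained $\L$‑group coincides with the group of $n$‑dimensional Poincaré complexes modulo $(n{+}1)$‑dimensional Lagrangians, one must perform algebraic surgery to replace both representatives and Lagrangians by ones of bounded width, and this uses precisely that $\QF^{\gs}_M$ is $0$‑quadratic (Proposition~\refthree{proposition:surgery-quadratic} and its consequence Corollary~\refthree{corollary:surgery-genuine-symmetric}). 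In other words, your proof should proceed by (i) identifying Ranicki's short $\L$‑groups with $\L^{n,n+1}_n(R;\QF^{\s}_M)$, i.e.\ with the \emph{homotopy} symmetric structure restricted to short complexes (this is Proposition~\refthree{proposition:alternative-defi}), (ii) transferring to $\QF^{\gs}_M$ on short complexes via the $r$‑symmetric connectivity estimate (Lemma~\refthree{lemma:genuine-to-symmetric}), and (iii) removing the width restriction by surgery (Corollary~\refthree{corollary:surgery-genuine-symmetric}).
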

We recall that the original symmetric L-groups of Ranicki are defined so that elements of the $n$'th L-group are represented by Poincaré chain complexes of length at most $n$, for $n\geq 0$. 
Ranicki then defines negative symmetric L-groups in an ad hoc manner, and we show that these negative L-groups are also canonically isomorphic to the corresponding negative genuine symmetric L-groups: Concretely they are given by
\[
\L^{\gs}_{n}(R;M) = \begin{cases} \L^{\ev}_{n+2}(R;-M) & \text{ if } n=-2,-1 \\ \L^{\qdr}_n(R;M) & \text{ if } n\leq -3, \end{cases}
\]
where $\L^{\ev}_{\ast}$ and $\L^{\qdr}_\ast$ are respectively the even and quadratic L-groups of  \cite{RanickiATS1}.
In particular, Theorem~\refthree{theorem:Ranicki-coincides} and the described addendum show that the classical symmetric L-groups can be realised as the homotopy groups of the non-connective spectrum $\L^\gs(R;M)$. The general form of Karoubi periodicity of \papertwo, which we review in Theorem~\refthree{theorem:Karoubi} below, relates the Poincaré structures $\QF^{\gs}_M$ and $\QF^{\gq}_M$ and their $\GW$ and L-spectra, in particular showing that $\Sigma^4\L^{\gs}(R;M) \simeq \L^{\gq}(R;M)$; see Corollary~\refthree{corollary:periodicity-L}. From the fibre sequence for general Poincaré \(\infty\)-categories, we therefore also obtain a quadratic version of Theorem~\refthree{theorem:fiber-sequence-intro-three}, given by the fibre sequence
\[
\K(R;M)_{\hC}\stackrel{\hyp}{\lto}  \GW_\cl^{\qdr}(R;M)\lto\tau_{\geq 0}(\Sigma^4\L^{\gs}(R;M)).
\]

We prove Theorem~\refthree{theorem:Ranicki-coincides} in \S~\refthree{subsection:surgery-quadratic} using Ranicki's procedure of algebraic surgery, which allows us to compare the L-groups of various Poincaré structures in a range of degrees. We discuss this technique also for connective ring spectra in Corollary~\refthree{corollary:pi-pi}, and in \S~\refthree{subsection:surgery-symmetric}, to obtain the following comparison result. We will write $\GW^\sym(R;M) := \GW^\sym(\Dperf(R);\QF^\sym_M)$ for the homotopy symmetric Grothendieck-Witt theory, and write likewise $\L^\sym(R;M) := \L(\Dperf(R);\QF^\sym_M)$ for periodic symmetric L-theory. 
\begin{ithm}
\label{theorem:gs-and-s-agreement-range}%
Suppose $R$ is a coherent ring of finite global dimension $d$. Then:
\begin{enumerate}
\item
\label{item:range-gs-s}%
the map $\L^{\gs}_n(R;M) \to \L_n^\sym(R;M)$ is injective for $n \geq d-2$ and an isomorphism for $n \geq d-1$,
\item 
the map $\L^{\gq}_n(R;M) \to \L^{\gs}_n(R;M)$ is injective for $n \geq d+2$ and an isomorphism for $n \geq d+3$.
\end{enumerate}
\end{ithm}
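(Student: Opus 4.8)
The plan is to prove both statements by Ranicki's method of algebraic surgery, in the form set up in \S\refthree{subsection:surgery-quadratic} for the proof of Theorem~\refthree{theorem:Ranicki-coincides} and packaged in Corollary~\refthree{corollary:pi-pi}, now applied to the comparison maps $\QF^{\gq}_M \to \QF^{\gs}_M \to \QF^{\sym}_M$ on $\Dperf(R)$ rather than to $\QF^{\qdr}_M \to \QF^{\gq}_M$. The first step is to record what these comparison maps look like infinitesimally. All three Poincaré structures have the same bilinear part $\Bil^{\sym}_M$, hence induce the same duality $\Dual_M = \map_R(-,M)$, and the comparison maps are equivalences on bilinear parts; their fibres are therefore exact functors $\Dperf(R)\op \to \Spa$ measuring the difference of the linear parts, and since the homotopy symmetric structure has vanishing linear part, $\fib(\QF^{\gs}_M \to \QF^{\sym}_M)$ is exactly the linear part of $\QF^{\gs}_M$. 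Evaluating on a finitely generated projective $P$ placed in degree $0$ --- where the genuine structures recover on $\pi_0$ the honest form groups of \eqrefthree{equation:forms} while $\QF^{\sym}_M(P) = \map_{R\otimes R}(P\otimes P,M)^{\hC}$ is the homotopy fixed points of a discrete $\Ct$-module --- one reads off that $\fib(\QF^{\gs}_M \to \QF^{\sym}_M)(P)$ is coconnective with homotopy concentrated in degrees $\le -2$, built from the positive-degree group cohomology of $\Ct$, whereas $\fib(\QF^{\gq}_M \to \QF^{\gs}_M)(P)$ has homotopy concentrated in degrees $\le 0$. Conceptually these fibres are assembled from the successive ``genuine refinement layers'' interpolating between $\QF^{\qdr}_M$ and $\QF^{\sym}_M$, whose restrictions to the heart are controlled by truncated Tate cohomology; the passage from $\QF^{\gq}_M$ to $\QF^{\gs}_M$ comprises two such layers and that from $\QF^{\gs}_M$ to $\QF^{\sym}_M$ effectively one.

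The main work is the surgery step. Since the bilinear parts agree, the map of Poincaré $\infty$-categories $(\Dperf(R),\QF^{\gs}_M) \to (\Dperf(R),\QF^{\sym}_M)$ (and likewise for $\QF^{\gq}_M \to \QF^{\gs}_M$) is the identity on underlying categories and dualities, so comparing $\L$-groups amounts to comparing the possible hermitian refinements of a fixed Poincaré object. As $R$ is left-coherent of global dimension $d$, the duality $\Dual_M$ has amplitude $d$, and Ranicki's surgery procedure puts any $n$-dimensional Poincaré object into a normal form concentrated in a window of homological degrees of width $d$ near $n/2$. On such a normal form the data distinguishing a $\QF^{\gs}_M$-refinement from a $\QF^{\sym}_M$-one lives, by the coconnectivity estimate of the first step (with the operative layer in degree $t=-2$), in degrees falling outside the range seen by the $n$-dimensional form and its cobordisms as soon as $n \ge d-1$ --- the decisive point being that moving one unit into the window costs two degrees, the middle-dimension phenomenon. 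The same analysis with $t=0$ for the worst of the two layers separating $\QF^{\gq}_M$ from $\QF^{\gs}_M$ yields the isomorphism range $n \ge d+3$, and in both cases the injectivity range comes out one degree lower from the half-integral boundary of the window bookkeeping.

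I expect the main obstacle to be exactly this numerical bookkeeping: one has to check that over a merely coherent ring of finite global dimension the surgery moves stay inside $\Dperf(R)$ and terminate in a window of width precisely $d$, and then thread this together with the coconnectivity estimates so as to land on the ranges $n \ge d-1$ and $n \ge d+3$ rather than something weaker --- the factor of $2$ relating the $t$-structure amplitude of the layers to the resulting $L$-theoretic range being just what makes surgery delicate here. A secondary nuisance is that $\End_R(X) \simeq X \otimes_R \Dual_M X$ has amplitude up to $2d$, which must be controlled inside the Tate-flavoured part of the structures.

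Finally, it is worth noting that once the first statement is established, the second can alternatively be deduced from it using the $4$-periodicity of $\L^{\sym}(R;M)$ together with the Karoubi-periodicity equivalence $\Sigma^4\L^{\gs}(R;M) \simeq \L^{\gq}(R;M)$ of Corollary~\refthree{corollary:periodicity-L} --- which also explains why the two ranges differ by exactly $4$ --- although identifying the comparison map $\L^{\gq}(R;M)\to\L^{\gs}(R;M)$ with this periodicity equivalence still requires an argument, so running the surgery uniformly for both comparison maps is the cleaner route.
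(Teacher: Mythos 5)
Your conceptual picture is correct and matches the paper's: put an $n$-dimensional Poincaré object into a normal form concentrated in a window of width $d$, then observe that the difference between the Poincaré structures lives in degrees that fall outside that window once $n$ is large enough. The numerical answers you extract from this are also right. But the tools you cite would not deliver this. The quadratic surgery of \S\refthree{subsection:surgery-quadratic}, and in particular Corollary~\refthree{corollary:pi-pi} (which is a $\pi$--$\pi$ statement for a $1$-connective map of ring spectra $A\to B$, not a device for comparing two Poincaré structures on the same $\Dperf(R)$), both rest on the hypothesis that the Poincaré structure at hand is $m$-quadratic for some finite $m$, i.e.\ that the cofibre of $\QF^{\qdr}_M \to \QF$ on $P[0]$ is bounded below. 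That is exactly what fails for the target of the comparison: $\QF^{\s}_M$ is \emph{not} $m$-quadratic for any finite $m$, because the cofibre is $\GEM M^{\tC}$, which is unbounded below. So the quadratic surgery gives you information about $\L_n$ for $n$ sufficiently \emph{small} relative to $m$, which is the wrong regime for a high-degree comparison to $\L^{\s}$.

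What the paper does instead is develop a dual surgery argument in \S\refthree{subsection:surgery-symmetric}, namely Proposition~\refthree{proposition:surgery-global-dim} / Corollary~\refthree{corollary:surgery-global-dim-for-rings}, valid for $r$-symmetric rather than $m$-quadratic structures. There the role of the connectivity bound coming from $m$-quadraticness is replaced by the amplitude bound $\Dual(\C_{\leq 0})\subseteq\C_{\geq -d}$ on the duality, and \emph{this} is precisely where the coherence and finite global dimension hypotheses enter — as you correctly anticipate in your "bookkeeping" paragraph, but which your cited machinery does not supply. Combined with Lemma~\refthree{lemma:genuine-to-symmetric} (your observation that the structures agree on objects in a short enough window) this yields Corollary~\refthree{corollary:genuine-is-symmetric}, and specializing to $r=2$ ($\QF^{\gs}$) and $r=0$ ($\QF^{\gq}$) gives the two stated ranges in Corollary~\refthree{corollary:improve}; the map $\L^{\gq}\to\L^{\gs}$ is then controlled by two-out-of-three via the composites to $\L^{\s}$. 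So the gap is not conceptual but real: you would need to prove a new surgery proposition for $r$-symmetric structures under the coherence/finite-global-dimension hypotheses rather than invoke the existing quadratic one. Your closing remark about deriving (ii) from (i) via $\Sigma^4\L^{\gs}\simeq\L^{\gq}$ is essentially the paper's Remark~\refthree{remark:improve-two} (where the sign twist by $-M$ is handled under a $2$-torsion-free hypothesis), and the paper agrees with you that running the symmetric surgery uniformly is cleaner.
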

Part~\refthreeitem{item:range-gs-s} of Theorem~\refthree{theorem:gs-and-s-agreement-range}, together with Theorem~\refthree{theorem:Ranicki-coincides}, improve a similar comparison result of Ranicki \cite{RanickiATS1}*{Proposition 4.5}, where he proves injectivity for non-negative $n \geq 2d-3$ and bijectivity for non-negative $n\geq 2d-2$ for Noetherian rings of finite global dimension $d$. Combining Theorem~\refthree{theorem:fiber-sequence-intro-three} and Theorem~\refthree{theorem:gs-and-s-agreement-range}, we obtain Theorem~\refthree{theorem:GW-classical-quad-sym} from above, see also Corollary~\refthree{corollary:improve} and Remark~\refthree{remark:improve-two}.

Furthermore, part \refthreeitem{item:range-gs-s} of Theorem~\refthree{theorem:gs-and-s-agreement-range} implies that the map $\GW^\sym_{\cl}(R;M) \to \tau_{\geq 0}\GW^\sym(R;M)$ is an equivalence if $R$ is a Dedekind domain.
Thus in order to study the classical Grothendieck-Witt groups of Dedekind rings, it suffices to study the homotopy symmetric Grothendieck-Witt theory $\GW^\sym$. This is an interesting invariant in its own right which enjoys pleasant properties not shared with the genuine variant $\GW^\gs$. Most notably, we prove in Theorem~\refthree{theorem:devissage} that 4-periodic symmetric L-theory $\L^\sym$, and hence also $\GW^\sym$, satisfies a dévissage theorem. In particular, we obtain the hermitian analogue of Quillen's famous localisation-dévissage fibre sequence \cite{quillen}, see Corollary~\refthree{corollary:decomposition-s}:
\begin{ithm}
\label{theorem:localisation-devissage-Dedekind}%
Let $R$ be a Dedekind ring, $T\subset R$ a multiplicative subset, and $\mathbb{F}_\mathfrak{p}$ the residue field $R/\mathfrak{p}$ at a maximal ideal $\mathfrak{p} \subseteq R$. Then restriction, localisation, and a choice of uniformiser for every $\mathfrak{p}$ with $\fp \cap T \neq \emptyset$ induce a fibre sequence of spectra
\[
\bigoplus_{\fp\cap T\neq\emptyset}\GW^\sym(\mathbb{F}_\mathfrak{p};(M/\fp)[-1]) \lto \GW^\sym(R;M) \lto \GW^\sym(R[T^{-1}];{R[T^{-1}]\otimes_R M}),
\]
where $R[T^{-1}]$ is obtained from $R$ by inverting the elements of $T$.
\end{ithm}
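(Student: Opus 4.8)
The plan is to deduce the fibre sequence from the corresponding sequences in algebraic $\K$-theory and in $4$-periodic symmetric $\L$-theory, by means of the natural fibre sequence $\K(\C,\QF)_{\hC}\to\GW(\C,\QF)\to\L(\C,\QF)$ of \eqrefthree{equation:quadsequence}. The starting point is the underlying Poincaré-Verdier sequence of stable $\infty$-categories. Let $\Dperf_T(R)\subseteq\Dperf(R)$ be the full subcategory of perfect complexes $X$ with $X\otimes_R R[T^{-1}]\simeq0$, i.e.\ with $T$-torsion homology. Then $\Dual_M=\mathbb{R}\Hom_R(-,M)$ preserves $\Dperf_T(R)$, the Verdier quotient $\Dperf(R)/\Dperf_T(R)$ is canonically $\Dperf(R[T^{-1}])$, and the induced duality on it is $\mathbb{R}\Hom_{R[T^{-1}]}(-,R[T^{-1}]\otimes_R M)$, so the restriction $\QF_T$ of $\QF^\sym_M$ fits into a Poincaré-Verdier sequence
\[
(\Dperf_T(R),\QF_T)\lto(\Dperf(R),\QF^\sym_M)\lto(\Dperf(R[T^{-1}]),\QF^\sym_{R[T^{-1}]\otimes_R M})
\]
in the sense of \paperone. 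Over the Dedekind ring $R$ the left-hand term splits orthogonally as $\bigoplus_{\fp\cap T\neq\emptyset}(\Dperf_{(\fp)}(R),\QF_T)$, where $\Dperf_{(\fp)}(R)$ is the full subcategory of perfect complexes with $\fp$-primary torsion homology (primary decomposition of torsion modules over a Dedekind ring; $\Dual_M$ preserves supports). It therefore remains to (i) observe that $\GW$ of Poincaré $\infty$-categories carries Poincaré-Verdier sequences to fibre sequences, and (ii) produce a dévissage equivalence $\GW(\Dperf_{(\fp)}(R),\QF_T)\simeq\GW^\sym(\FF_\fp;(M/\fp)[-1])$ induced by restriction of scalars along $R\to\FF_\fp$.

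Point (i) is formal: nonconnective $\K$-theory is a Verdier-localising invariant, $4$-periodic symmetric $\L$-theory is a Verdier-localising invariant by \papertwo, and $(-)_{\hC}$ is exact, so \eqrefthree{equation:quadsequence} forces $\GW$ to be Verdier-localising as well. For point (ii), note first that restriction of scalars along $R\to\FF_\fp$ is a Poincaré functor $(\Dperf(\FF_\fp),\QF^\sym_{(M/\fp)[-1]})\to(\Dperf_{(\fp)}(R),\QF_T)$: this is Grothendieck-Serre duality for the finite ring map $R\to\FF_\fp$, under which the right adjoint $\mathbb{R}\Hom_R(\FF_\fp,M)$ of restriction of scalars is identified with $(M/\fp)[-1]$ via the length-one resolution $0\to\fp\to R\to\FF_\fp\to0$, the chosen uniformiser at $\fp$ serving to rigidify this identification of invertible modules-with-involution over $\FF_\fp$. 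This functor is an equivalence on $\K$-theory by Quillen's dévissage theorem \cite{quillen}, compatibly with the duality involutions and hence an equivalence on $\K(-)_{\hC}$; and it is an equivalence on $\L^\sym$ by the dévissage established in Theorem~\refthree{theorem:devissage}. By \eqrefthree{equation:quadsequence} it is then an equivalence on $\GW$, which is (ii). Assembling over $\fp$ and combining (i) with (ii) yields the asserted fibre sequence, whose three maps are exactly restriction of scalars, localisation along $R\to R[T^{-1}]$, and the connecting map.

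I expect the main difficulty to lie in the $\Ct$-equivariant bookkeeping in step (ii): one must match the Poincaré structures on $\Dperf_{(\fp)}(R)$ and on $\Dperf(\FF_\fp)$ precisely with those entering Theorem~\refthree{theorem:devissage}, correctly obtain the degree shift $[-1]$ from duality at the closed point $\fp$, and verify that the choices of uniformisers make the resulting direct-sum splitting canonical rather than only well-defined up to isomorphism. A secondary and more routine point is to check that $\QF^\sym_M$ genuinely restricts to a \emph{Poincaré} (non-degenerate) structure $\QF_T$ on $\Dperf_T(R)$ and that the quotient structure on $\Dperf(R[T^{-1}])$ is the one asserted; this follows from the formalism of Poincaré-Verdier sequences in \paperone once the preceding duality computations are in hand.
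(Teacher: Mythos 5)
Your proposal is correct and follows essentially the same route as the paper: assemble the Poincaré-Verdier localisation sequence for $R \to R[T^{-1}]$ (Proposition~\refthree{proposition:verdier}), identify the torsion fibre by dévissage down to residue fields (Theorem~\refthree{theorem:devissage}, with Corollary~\refthree{corollary:devissage-functor} and Lemma~\refthree{lemma:purity} supplying the degree shift and the uniformiser normalisation you discuss), and transfer the resulting $\K$- and $\L$-theory equivalences to $\GW$ via the fibre/Tate square. Two small elisions are worth flagging: your preliminary step splitting $\Dperf_T(R)$ prime-by-prime via primary decomposition is a mild reorganisation --- the paper instead compares the whole direct sum $\bigoplus_{\fp}\Dperf(\FF_\fp)\to\Dperf_S(R)$ at once, reducing to finite $S$ and then to hearts via Barwick's theorem before invoking Quillen --- and your assertion that the Verdier quotient $\Dperf(R)/\Dperf_T(R)$ is canonically $\Dperf(R[T^{-1}])$ (rather than merely a dense subcategory thereof) requires the surjectivity of $\K_0(R)\to\K_0(R[T^{-1}])$, which the paper verifies using the description of $\Pic$ for Dedekind rings in the proof of Proposition~\refthree{proposition:verdier}.
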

We in fact construct a more general fibre sequence for localisations of $R$ away from a set of non-empty prime ideals of $R$ in a formulation that does not depend on choices of uniformisers, see Corollary~\refthree{corollary:decomposition-s}.
This result establishes a conjecture of Berrick and Karoubi which asserts that the map $\ZZ \to \ZZ[\tfrac{1}{2}]$ induces an equivalence on the positive, 2-localised Grothendieck-Witt groups \cite{berrick-karoubi}. In fact, this result holds for general rings of integers in number fields as we observe in Proposition~\refthree{proposition:BK-conjecture}. 
In \S~\refthree{section:HLP} we then combine Theorem~\refthree{theorem:localisation-devissage-Dedekind} with work of Berrick, Karoubi, Schlichting and \O stv\ae r \cite{BKSO-fixed-point} to deduce Theorem~\refthree{theorem:homotopy-limit-intro-three}, as well as the calculations for the integers of Theorem~\refthree{theorem:Z}.

Finally, we also use Theorem~\refthree{theorem:fiber-sequence-intro-three}, together with a calculation of the symmetric and quadratic L-groups of Dedekind rings to deduce the following finiteness result; see Corollary~\refthree{corollary:finite-generation-GW}. When $M=R$ with the involution given by multiplication by $\eps=\pm1$, we write $\GW^\sym_{\cl,n}(R;\eps)$ for $\GW^\sym_{\cl,n}(R;M)$, and similarly for $\GW^\qdr_{\cl,n}(R;\eps)$. 
\begin{icor}
\label{corollary:finite-generation-GW-number-rings}%
Let $\mathcal{O}$ be a number ring, that is, a localisation of the ring of integers in a number field away from finitely many primes, and $\eps = \pm 1$. Then its classical $\eps$-symmetric and $\eps$-quadratic Grothendieck-Witt groups $\GW^\sym_{\cl,n}(\mathcal{O};\eps)$ and $\GW^\qdr_{\cl,n}(\mathcal{O};\eps)$ are finitely generated. 
\end{icor}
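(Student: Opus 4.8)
The plan is to deduce the statement from the fibre sequence $\K(\mathcal{O};M)_{\hC} \stackrel{\hyp}{\lto} \GW^\sym_\cl(\mathcal{O};M) \lto \L^\short(\mathcal{O};M)$ of Theorem~\refthree{theorem:fiber-sequence-intro-three} together with its quadratic counterpart $\K(\mathcal{O};M)_{\hC} \stackrel{\hyp}{\lto} \GW^\qdr_\cl(\mathcal{O};M) \lto \tau_{\geq 0}(\Sigma^4\L^\gs(\mathcal{O};M))$: if the homotopy groups of the two outer terms in each sequence are finitely generated, then so are those of the middle term, by the two-out-of-three property for finite generation in the associated long exact sequences (finitely generated abelian groups being closed under subgroups, quotients and extensions). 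Here $M$ denotes an invertible $\ZZ$-module with involution over $\mathcal{O}$; the case $M = \mathcal{O}$ with involution given by multiplication by $\eps$ recovers the groups in the statement, but the argument works verbatim for any such $M$.

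First I would handle the $\K$-theory orbits. By Quillen's theorem the groups $\K_n(\mathcal{O})$ are finitely generated for any ring of integers in a number field, and for a localisation away from finitely many primes this follows via Quillen's localisation sequence and the finiteness of the $\K$-groups of the residue fields $\FF_\fp$; since passing to $\K(\mathcal{O};M)$ only alters the $\Ct$-action and not the underlying spectrum, $\pi_*\K(\mathcal{O};M)$ is still degreewise finitely generated. As $\K(\mathcal{O};M)$ is connective, the homotopy orbit spectral sequence
\[
E^2_{s,t} = \rH_s\big(\Ct;\pi_t\K(\mathcal{O};M)\big) \Longrightarrow \pi_{s+t}\big(\K(\mathcal{O};M)_{\hC}\big)
\]
is concentrated in the first quadrant, so each total degree receives contributions from only finitely many bidegrees, and each $E^2$-term is the group homology of the finite group $\Ct$ with finitely generated coefficients, hence finitely generated. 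Thus $\pi_n(\K(\mathcal{O};M)_{\hC})$ is finitely generated for all $n$.

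Next, the $\L$-theory terms. By Theorem~\refthree{theorem:Ranicki-coincides} we have $\L^\short(\mathcal{O};M) = \tau_{\geq 0}\L^\gs(\mathcal{O};M)$, and since $\mathcal{O}$ is a Dedekind ring -- hence coherent of global dimension $d=1$ -- part~\refthreeitem{item:range-gs-s} of Theorem~\refthree{theorem:gs-and-s-agreement-range} gives $\L^\gs_n(\mathcal{O};M) \cong \L^\sym_n(\mathcal{O};M)$ for all $n\geq 0$. For the quadratic sequence, Karoubi periodicity $\Sigma^4\L^\gs(\mathcal{O};M)\simeq\L^\gq(\mathcal{O};M)$ (Corollary~\refthree{corollary:periodicity-L}), together with the explicit description of the negative genuine symmetric $\L$-groups recalled after Theorem~\refthree{theorem:Ranicki-coincides}, identifies $\pi_n\tau_{\geq 0}(\Sigma^4\L^\gs(\mathcal{O};M))$ with $\L^\sym_{n-4}(\mathcal{O};M)$ for $n\geq 4$ and with one of $\L^\qdr_{-4}(\mathcal{O};M)$, $\L^\qdr_{-3}(\mathcal{O};M)$, $\L^\ev_0(\mathcal{O};-M)$, $\L^\ev_1(\mathcal{O};-M)$ for $0\leq n\leq 3$. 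It thus remains to know that the $4$-periodic symmetric and quadratic $\L$-groups, as well as $\L^\ev_0$ and $\L^\ev_1$, of number rings are finitely generated, which is precisely the calculation of the symmetric and quadratic $\L$-groups of Dedekind rings obtained from the dévissage theorem (Theorem~\refthree{theorem:devissage}) and the localisation--dévissage fibre sequence (Theorem~\refthree{theorem:localisation-devissage-Dedekind}), reducing everything to the residue fields $\FF_\fp$ and the fraction field of $\mathcal{O}$, whose Witt and higher $\L$-groups are classically finitely generated.

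Assembling these inputs into the long exact sequences of the two fibre sequences and invoking two-out-of-three then finishes the proof. I expect the main obstacle to be the finite generation of the symmetric, even and quadratic $\L$-groups of $\mathcal{O}$: this is where the arithmetic finiteness of $\mathcal{O}$ -- the class group, the unit group, the Witt groups of its residue fields and the Hasse principle over the number field -- genuinely enters, mediated by the dévissage--localisation machinery developed earlier in the paper. By contrast, the homotopy-orbit spectral sequence estimate, the range comparison of Theorem~\refthree{theorem:gs-and-s-agreement-range}, Karoubi periodicity, and the bookkeeping in the few low degrees are all formal.
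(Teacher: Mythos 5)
Your proposal follows the same overall strategy as the paper: finite generation of algebraic $\K$-groups by Quillen, a homotopy-orbit spectral-sequence argument for $\K(\mathcal{O};M)_{\hC}$, finite generation of the relevant $\L$-groups, and then two-out-of-three in the long exact sequences of the fibre sequences from Theorem~\refthree{theorem:fiber-sequence-intro-three} and its quadratic variant. The $\K$-theoretic part is handled identically, and the reduction to $\L$-theory is the right move. The one substantive inaccuracy is in how you justify finite generation of the $\L$-theoretic inputs. Dévissage (Theorem~\refthree{theorem:devissage}) and the localisation--dévissage sequence (Corollary~\refthree{corollary:decomposition-s}) hold only for the symmetric Poincaré structure; as the paper stresses in Remark~\refthree{remark:no-quadratic-devissage}, dévissage \emph{fails} for quadratic $\L$-theory at the dyadic primes, so it cannot be used to ``reduce everything to the residue fields'' for the quadratic (or even) $\L$-groups in your list. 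The paper instead computes the quadratic $\L$-groups of Dedekind rings via the localisation--completion square of Proposition~\refthree{proposition:LC} together with Wall's Henselian rigidity result (Proposition~\refthree{proposition:hensel}), yielding Proposition~\refthree{proposition:compute-quadratic-L-groups} and Corollary~\refthree{corollary:compute-quadratic-L-groups-global-field}, and it packages the outcome as Corollary~\refthree{corollary:finite-generation}. Once that is in place, the low-degree even $\L$-groups you isolate are not computed directly but absorbed by the uniform ``squeeze'' of Corollary~\refthree{corollary:finite-generation-GWm}: $\L^\qdr_n(\nO;\eps)\to\L_n(\nO;\Qgen{m}{\eps})$ is surjective for $n\le 2m-2$ by Corollary~\refthree{corollary:genuine-is-quadratic}, and $\L_n(\nO;\Qgen{m}{\eps})\to\L^\s_n(\nO;\eps)$ is injective for $n\ge 2m-1$ by Corollary~\refthree{corollary:genuine-is-symmetric}, which is slightly cleaner than unpacking the negative genuine symmetric $\L$-groups term by term. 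If you replace the dévissage citation for the quadratic case by the localisation--completion argument (or simply invoke Corollary~\refthree{corollary:finite-generation} as a black box), your proof goes through; as written, that step would not.
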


In the quadratic case, one can prove this result also through homological stability, but in the generality presented here the argument is not known to carry over to the symmetric case, as we explain in Remark~\refthree{remark:homological-stability}.

\begin{Rmk}
\label{remark:Schlichting-results}%
Some of the results presented above have also been announced in \cite{Schlichting-integers} and its erratum \cite{Schlichting-wrong}:
The calculations of the Grothendieck-Witt groups of the integers of Theorem~\refthree{theorem:Z} in the symmetric, symplectic and quadratic cases, the localisation-dévissage sequence of Theorem~\refthree{theorem:localisation-devissage-Dedekind} in non-negative degrees, and Theorem~\refthree{theorem:GW-classical-quad-sym} for the ring $R=\ZZ$ with the trivial involution.
\end{Rmk}

\introsubsection{Notation and Conventions}
When not stated explicitly otherwise, the symbol $\otimes$ denotes the derived tensor product over $\ZZ$.
We always denote by $\Dual=\hom_R(-,M)$ the dualities on $\Proj(R)$ and $\Dperf(R)$ determined by an invertible $\ZZ$-module with involution $M$.

\introsubsection{Acknowledgements}
For useful discussions about our project,
we heartily thank
Tobias Barthel,
Clark Barwick,
Lukas Brantner,
Mauricio Bustamante,
Denis-Charles Cisinski,
Dustin Clausen,
Diarmuid Crowley,
Uriya First,
Rune Haugseng,
André Henriques,
Lars Hesselholt,
Gijs Heuts,
Geoffroy Horel,
Marc Hoyois,
Max Karoubi,
Daniel Kasprowski,
Ben Knudsen,
Manuel Krannich,
Achim Krause,
Henning Krause,
Sander Kupers,
Wolfgang Lück,
Ib Madsen,
Cary Malkiewich,
Mike Mandell,
Akhil Mathew,
Lennart Meier,
Irakli Patchkoria,
Nathan Perlmutter,
Maxime Ramzi,
Oscar Randal-Williams,
Andrew Ranicki,
George Raptis,
Marco Schlichting,
Peter Scholze,
Stefan Schwede,
Graeme Segal,
Markus Spitzweck,
Jan Steinebrunner,
Georg Tamme,
Ulrike Tillmann,
Michael Weiss,
Christoph Winges,
and
Maria Yakerson.

Furthermore, we owe a tremendous intellectual debt to Jacob Lurie for creating the framework we exploit here, and to Søren Galatius for originally spotting the overlap between various separate projects of ours; his insight ultimately led to the present collaboration.

Finally, it is a pleasure to heartily thank an anonymous referee for many helpful suggestions.

\medskip

The authors would also like to thank the Hausdorff Center for Mathematics at the University of Bonn, the Newton Institute at the University of Cambridge, the University of Copenhagen and the Mathematical Research Institute Oberwolfach for hospitality and support while parts of this project were undertaken.
\medskip

BC was supported by the French National Centre for Scientific Research (CNRS) through a ``délégation'' at LAGA, University Paris 13, and by the french French National Research Agency (ANR) project ``Motivic homotopy, quadratic invariants and diagonal classes'' (ANR grant no.\ 21-CE40-0015) at the University of Burgundy.
ED was supported by the German Research Foundation (DFG) through the priority program ``Homotopy theory and Algebraic Geometry'' (DFG grant no.\ SPP 1786) at the University of Bonn and WS by the priority program ``Geometry at Infinity'' (DFG grant no.\ SPP 2026) at the University of Augsburg. ED was further supported by the Engineering and Physical Sciences Research Council (EPSRC) through the grant ``Characteristic polynomials for symmetric forms'' (EPSRC grant no.\ EP/W019620/1) at the University of Warwick.
YH was supported by the European Research Council (ERC) through the project ``Foundations of Motivic Real K-Theory'' (ERC grant no.\ 949583) at Paris Cité University.
YH and DN were supported by the ANR through the grant ``Chromatic Homotopy and K-theory'' (ANR grant no.\ 16-CE40-0003) at LAGA, University of Paris 13.
FH was a member of the Hausdorff Center for Mathematics (DFG grant no.\ EXC 2047 390685813) at the University of Bonn and TN of the cluster ``Mathematics Münster: Dynamics-Geometry-Structure'' (DFG grant no.\ EXC 2044 390685587) at the University of Münster. 
FH was further supported by the DFG through the collaborative research center ``Integral structures in Geometry and Representation Theory'' (DFG grant no.\ TRR 358 4913924) at the University of Bielefeld, TN through the centre ``Geometry: Deformations and Rigidity" (DFG grant no.\ SFB 1442 427320536) at the University of Münster and ML and DN through the centre ``Higher Invariants'' (DFG grant no.\ SFB 1085 224262486) at the University of Regensburg. 
FH was also supported by the ERC through the project ``Moduli spaces, Manifolds and Arithmetic'' (ERC grant no.\ 682922) of Søren Galatius and KM by the project ``$\K$-theory, $\L^2$-invariants, manifolds, groups and their interactions'' (ERC grant no.\ 662400) of Wolfgang Lück. 
FH, TN and WS were further supported by the EPSRC through the program ``Homotopy harnessing higher structures'' at the Isaac Newton Institute for Mathematical Sciences (EPSRC grants no.\ EP/K032208/1 and EP/R014604/1). 
ML was supported by the research fellowship ``New methods in algebraic K-theory'' (DFG grant no.\ 424239956) and by the Danish National Research Foundation (DNRF) through the Center for Symmetry and Deformation (DNRF grant no.\ 92) and the Copenhagen Centre for Geometry and Topology (DNRF grant no.\ 151) at the University of Copenhagen. 
KM was also supported by the K\&A Wallenberg Foundation at the University of Stockholm.

\section*{Recollection}
In this section we recall some of the material from \paperone and \papertwo on Poincaré structures on the perfect derived $\infty$-category of a ring and their Grothendieck-Witt and L spectra, as we rely on this framework in the rest of the paper. We will also review the general form of Karoubi's periodicity Theorem~\reftwo{corollary:karoubi-fundamental}, which does not require that $2$ is a unit in the base ring.

In \papertwo, we view Grothendieck-Witt theory as an invariant of what we call a \emph{Poincaré} $\infty$-\emph{category}.  A Poincaré $\infty$-category is a pair $(\C,\QF)$ consisting of a small stable $\infty$-category $\C$ equipped with a \emph{Poincaré structure} $\QF$, that is a functor $\QF\colon \C\op \to \Spa$ which is reduced and $2$-excisive in the sense of Goodwillie's functor calculus, and whose symmetric cross-effect $\B\colon \C\op\times \C\op\to\Spa$ is of the form $\B(X,Y)=\map_{\C}(X,\Dual Y)$ for some equivalence of categories $\Dual\colon\C\op {\to}\C$. 
Poincaré $\infty$-categories were introduced by Lurie as a novel framework for Ranicki's L-theory (see \cite{Lurie-L-theory} and \papertwo). A Poincaré structure provides a formal notion of ``hermitian form'' on the objects of $\C$. Indeed, there is as a space of Poincaré objects $\Poinc(\C,\QF)$ which consists of pairs $(X,q)$ where $X$ is an object of $\C$ and $q\in \Omega^{\infty}\QF(X)$ is such that a certain canonical map $X\to \Dual X$ is an equivalence (see Definition~\refone{definition:poinc-forms}). There are then canonical transformations 
\[
\Poinc(\C,\QF)\to \Omega^{\infty}\GW(\C,\QF)\quad\text{and}\quad  \Poinc(\C,\QF)\to \Omega^{\infty}\L(\C,\QF)
\]
which exhibit the Grothendieck-Witt and L-theory functors as the universal approximation of $\Poinc$ by a Verdier localising, respectively a bordism invariant additive, functor (see Observation~\reftwo{observation:univ-GW-space} and Theorem~\reftwo{theorem:lisbordgw}). 
These universal properties are similar to the universal property of the map from the groupoid core to K-theory $\core\C\to \K(\C)$ of a small stable $\infty$-category provided by \cite{BGT}.

In the present paper we will be concerned with the perfect derived $\infty$-category $\Dperf(R)$ of a ring $R$, which is the $\infty$-categorical localisation of the category of bounded chain complexes of finitely generated projective (left) $R$-modules at the quasi-isomorphisms, or equivalently the $\infty$-category of compact objects of the localisation $\D(R)$ of all chain complexes at the quasi-isomorphisms.
 Given a Poincaré structure $\QF\colon \Dperf(R)\op\to \Spa$, we will denote the corresponding Grothendieck-Witt spectrum by
\[
\GW(R;\QF):=\GW(\Dperf(R),\QF).
\]
We are going to consider a specific collection of Poincaré structures on $\Dperf(R)$ associated to modules with involution, which we now introduce. For what follows, let $\otimes^\mathrm{U}$ denote the underived tensor product of rings over $\ZZ$.
Given an $R{\otimes^\mathrm{U}}R$-module $M$, we let $M\op$ denote the $R{\otimes^\mathrm{U}}R$-module defined by $M$ with the module action $r\otimes s\cdot m:=s\otimes r\cdot m$ for all $r,s$ in $R$ and $m$ in $M$.

\begin{definition-r-three}
\label{definition:modinv}%
A $\ZZ$-\defi{module with involution over $R$} is an $R\otimes^\mathrm{U} R$-module $M$ together with an $R{\otimes^\mathrm{U}}R$-module map $\inv{\bullet}\colon M^{op}\to M$ such that $\bar{\bar{m}}=m$. We say that $M$ is \emph{invertible} if it is finitely generated projective for either of its $R$-module structures, and the map 
\[
R\longrightarrow \Hom_R(M,M)
\]
which sends $1$ to $\inv{\bullet}$ is an isomorphism, where $M$ is regarded as an $R$-module via the first $R$-factor in the source, and the second one in the target. 
\end{definition-r-three}

The notion of (invertible) $\ZZ$-modules with involution over $R$ is discussed in \S\refone{subsection:discrete-rings} and appears in Definitions~\refone{definition:ordinary-module-with-involution} and \refone{definition:ordinary-invertible-module}.
In Definition~\refone{definition:module-with-involution}, we have also defined a notion of invertible $\GEM\ZZ$-modules with involution over $R$ which is slightly more general: In the terminology of loc.\ cit.\ we would view $R$ as an $\Eone$-algebra over the $\Einf$-ring $\GEM\ZZ$, or equivalently as a dg-algebra over \(\ZZ\), and form the derived tensor product $R\otimes R$. We write \(\D(R \otimes R)\) for its derived \(\infty\)-category, obtained as in the discrete case by localising the category of \(R \otimes R\)-dg-modules at the quasi-isomorphisms. An invertible $\GEM\ZZ$-module with involution over $R$ is then an $R\otimes R$-module in $\GEM\ZZ$-modules with $\Ct$-action, which is \emph{perfect} in either of its two $R$-module structures, and such that the canonical map $R\to \map_{\D(R)}(M,M)$ is an equivalence. Restricting along the canonical ring map $R \otimes R \to R{\otimes^\mathrm{U}}R$ from the derived to the underived tensor product, an invertible $\ZZ$-module with involution in the sense of gives rise to an invertible $\GEM\ZZ$-module with involution over $R$. In fact, invertible $\ZZ$-modules with involution over $R$ are precisely those invertible $\GEM\ZZ$-modules with involution 
which are finitely generated projective, rather than merely perfect, in either $R$-module structure. This is because the restriction functor $\D(R{\otimes^\mathrm{U}}R) \to \D(R \otimes R)$ is fully faithful on discrete objects and $\map_{\D(R)}(M,M)$ is equivalent to $\Hom_R(M,M)$ if $M$ is projective. We work with the stronger notion of invertible $\ZZ$-modules with involution of because for many of our arguments we need the associated duality on $\Dperf(R)$ to preserve finitely generated projective modules, and this is the case if and only if $M$ itself is finitely generated projective.

\begin{example-r-three}
\label{example:moduleswithinv}%
\
\begin{enumerate}
\item When $R$ is commutative, any line bundle $L$ over $R$ gives rise to an invertible $\ZZ$-module with involution over $R$, with $M=L$ and $\inv{\bullet}=\id$.
\item Let $\epsilon\in R$ be a unit. We recall that an $\eps$-involution on $R$ consists of a ring isomorphism \(\inv{\bullet} \colon R \to R\op\) such that \(\bar{\bar{r}}=\eps r \eps^{-1}\) and \(\inv{\eps}=\eps^{-1}\). In this case $M=R$ equipped with the $R\otimes^\mathrm{U} R$-module structure $r\otimes s\cdot x=rx\inv{s}$ and the involution $\epsilon(\inv{\bullet})$ is an invertible $\ZZ$-module with involution over $R$, that we denote by $R(\eps)$. This is the structure commonly used by Ranicki as input for L-theory \cite{RanickiATS1}.
\item Given a $\ZZ$-module with involution $M$ over $R$, we can define a new $\ZZ$-module with involution over $R$ denoted $-M$, with the same underlying $R\otimes^\mathrm{U} R$-module $M$ but with involution $-(\overline{\bullet})$. In the case where $M=R$ we have by definition that $-R=R(-1)$.
\item
\label{item:4}%
If $M$ is an invertible $\ZZ$-module with involution over $R$, then $M^\vee = \hom_R(M,R)$ is canonically an invertible $\ZZ$-module with involution over $R\op$, see also Remark~\refthree{remark:passing-to-opposite-ring}.
\end{enumerate}
\end{example-r-three}

For every pair of objects $X$ and $Y$ of $\Dperf(R)$, viewing $M$ as an $R\otimes R$-module as explained above, we may form the mapping spectrum
\[
\B(X,Y):=\map_{R\otimes R}(X\otimes Y,M)
\]
in the derived \(\infty\)-category $\D(R\otimes R)$, 
where the tensor product \(X \otimes Y\) is also to be understood as derived over \(\ZZ\).
Then $\B$ is a symmetric bilinear functor, so the spectrum $\B(X,X)$ inherits a $\Ct$-action by conjugating the flip action on \(X \otimes X\) and the involution of $M$; see \S\refone{subsection:modules-with-involution}. 

Given a spectrum with $\Ct$-action $X\colon B\Ct\to \Spa$, we denote by $X^{\hC}$ and $X_{\hC}$ its homotopy fixed points and homotopy orbits, respectively. Similarly, we let $X^{\tC}$ denote its Tate construction, defined as the cofibre of the norm map $N\colon X_{\hC}\to X^{\hC}$ as defined in \cite{HA}*{\S 6.1.6}, see also \cite{NS}*{I.1.11}. We consider the Tate construction \(\GEM M^{\tC}\) of the  Eilenberg-MacLane spectrum associated to \(M\) as an object of  \(\Der(R)\), as follows. The spectrum \(\GEM M^{\tC}\) is equipped with an action of \([\GEM(R \otimes R)]^{\tC} \simeq [\GEM R \otimes_{\GEM \ZZ} \GEM R]^{\tC}\), that we restrict to an action of \(\GEM R\) via the \(\GEM \ZZ\)-linear Tate diagonal \(\GEM R \to [\GEM R \otimes_{\GEM \ZZ} \GEM R]^{\tC}\) (see \S\refone{subsection:genuine-modules}). As an \(\GEM R\)-module spectrum, \(\GEM M^{\tC}\) determines a unique object in \(\Der(R)\), which we denote by the same symbol, via the canonical equivalence \(\Der(R) \simeq \Mod_R\).
As in \S\refone{subsection:discrete-rings}
we then make the following definition:

\begin{definition-r-three}
\label{definition:genuine-structures}%
Let $M$ be an invertible $\ZZ$-module with involution over $R$.
For every \(m\in\ZZ\cup\{\pm\infty\}\), we define a functor $\Qgen{m}{M}\colon\Dperf(R)\op\to\Spa$ as the pullback
\begin{equation*}
\begin{tikzcd}
\Qgen{m}{M}(X) \ar[r] \ar[d] & \map_R(X, \tau_{\geq m}\GEM M^{\tC}) \ar[d] \\
\map_{R\otimes R}(X\otimes X,M)^{\hC} \ar[r] & \map_R(X,\GEM M^{\tC}).
\end{tikzcd}
\end{equation*}
Here, the right hand vertical map is induced by the $m$-connective cover $\tau_{\geq m}\GEM M^{\tC}\to \GEM M^{\tC}$, and the bottom horizontal map is induced by 
the equivalence
\[
\map_{R\otimes R}(X\otimes X,M)^{\tC} \simeq \map_R(X,\GEM M^{\tC}),
\]
of Lemma~\refone{lemma:tate-M}. 
In the special cases where 
$m=\pm\infty$ we will denote these functors by 
\[
\QF^{\qdr}_{M}:=\Qgen{\infty}{M} = \map_{R\otimes R}(X\otimes X,M)_{\hC} 
\quad \text{ and } \quad 
\QF^{\s}_{M}:=\Qgen{-\infty}{M} = \map_{R\otimes R}(X\otimes X,M)^{\hC}.
\]
\end{definition-r-three}
The functors $\Qgen{m}{M}$ are indeed Poincaré structures by Examples~\refone{example:truncation}.
By construction, they all share the same underlying duality
\[
\Dual X=\map_R(X,M),
\]
where the mapping spectrum acquires a residual $R$-module structure from the $R\otimes R$-module structure of $M$.
The canonical connective cover maps $\tau_{\geq m+1}\to \tau_{\geq m}$ define an infinite sequence of natural transformations
\[
\QF^{\qdr}_M=\Qgen{\infty}{M}\to\dots\to \Qgen{(m+1)}{M}\to\Qgen{m}{M}\to\Qgen{(m-1)}{M}\to\dots\to \Qgen{-\infty}{M}=\QF^{\s}_M,
\]
and hence analogous sequences between the corresponding Grothendieck-Witt and L spectra.

\begin{remark-r-three}
\label{remark:genuineeq}%
Let $\hat{\mathrm{H}}^m(\Ct;M)=\pi_{-m}\GEM M^{\tC}$ denote the Tate cohomology of $\Ct$ with coefficients in the underlying $\ZZ[\Ct]$-module of $M$.
When $\hat{\mathrm{H}}^{-m}(\Ct;M)=0$ the map $\tau_{\geq m+1}\GEM M^{\tC}\to \tau_{\geq m}\GEM M^{\tC}$ is an equivalence. Therefore, in this case, $\Qgen{m+1}{M}\to\Qgen{m}{M}$ is an equivalence, and it induces equivalences on the corresponding Grothendieck-Witt and $\L$ spectra
\[
\GW(R;\Qgen{(m+1)}{M})\stackrel{\sim}{\longrightarrow}\GW(R; \Qgen{m}{M})\quad\text{and}\quad \L(R;\Qgen{(m+1)}{M})\stackrel{\sim}{\longrightarrow}\L(R;\Qgen{m}{M}).
\]
Moreover, $\hat{\mathrm{H}}^\ast(\Ct;M)$ is $2$-periodic, so if this happens for $m$ it also does for all $m+2k$.
In particular, if $2\in R$ is a unit all the natural transformations $\Qgen{m+1}{M}\to \Qgen{m}{M}$ are equivalences. If $2$ is not invertible however, the Grothendieck-Witt and L spectra for different $m$ are not generally equivalent, for instance this is the case for $R=\ZZ$.
\end{remark-r-three}

\begin{remark-r-three}
Among the Poincaré structures of Definition~\refthree{definition:genuine-structures}, $\Qgen{2}{M}, \Qgen{1}{M}$ and $ \Qgen{0}{M}$ are the ones which send finitely generated projective $R$-modules $P$ (regarded as chain complexes concentrated in degree zero) to abelian groups (regarded as discrete spectra). The values of $\Qgen{2}{M}$ and $\Qgen{0}{M}$ are the abelian groups of strict coinvariants and invariants, respectively,
\[
\Qgen{2}{M}(P)=\Hom_{R\otimes^\mathrm{U} R}(P\otimes^\mathrm{U} P,M)_{\Ct} 
\quad \text{ and } \quad 
\Qgen{0}{M}(P)=\Hom_{R\otimes^\mathrm{U} R}(P\otimes^\mathrm{U} P,M)^{\Ct},
\]
which are canonically isomorphic to the usual abelian groups of $M$-valued quadratic and symmetric forms on $P$, respectively, see \S\refone{subsection:discrete-rings}. Moreover, the group $\Qgen{1}{M}(P)$ is the image of the norm (or symmetrization) map $\Qgen{2}{M}(P)\to \Qgen{0}{M}(P)$. The functors $\Qgen{2}{M}, \Qgen{1}{M}$ and $ \Qgen{0}{M}$ are the \emph{non-abelian derived functors} of these functors of classical forms on modules, as shown in Proposition~\refone{proposition:classical-derived}. We call them the \emph{genuine} quadratic, \emph{genuine} even, and \emph{genuine} symmetric Poincaré structures respectively, and we denote them by
\[
\QF^{\g\qdr}_M:=\Qgen{2}{M} \quad, \quad  \QF^{\gev}_M:=\Qgen{1}{M} \quad \text{ and } \quad \QF^{\g\s}_M:=\Qgen{0}{M}.
\]
The connective covers of the associated Grothendieck-Witt spectra are the group-completions of the corresponding spaces of forms
\[
\tau_{\geq 0}\GW(R;\QF^{\g\qdr}_M)\simeq \GW^{\qdr}_{\cl}(R;M) 
\ \  , \quad 
\tau_{\geq 0}\GW(R;\QF^{\gev}_M)\simeq \GW^{\ev}_{\cl}(R;M) 
\  \ , \quad 
\tau_{\geq 0}\GW(R;\QF^{\g\s}_M)\simeq \GW^{\s}_{\cl}(R;M) 
\]
by the main result of \cite{comparison}. In particular if $M=R(\epsilon)$ is the $\ZZ$-module with involution defined from an $\epsilon$-involution on $R$ these are the classical Grothendieck-Witt spaces of $\epsilon$-quadratic, $\epsilon$-even, and $\epsilon$-symmetric forms on $R$.
\end{remark-r-three}

There is a periodicity phenomenon that relates the Poincaré structures $\Qgen{m}{M}$, that we now review. We recall that a hermitian morphism of Poincaré $\infty$-categories $(\C,\QF)\to (\Ctwo,\QFtwo)$ consists of an exact functor $f\colon \C\to \Ctwo$ and a natural transformation 
\[
\eta\colon \QF\to f^\ast \QFtwo=\QFtwo\circ f.
\]
We say that a hermitian morphism $(f,\eta)$ is a Poincaré morphism if a canonical induced map $f\Dual\to \Dual f$ is an equivalence; see \S\refone{subsection:hermitian-and-poincare-cats}. A Poincaré morphism $(f,\eta)$ is an equivalence of Poincaré \(\infty\)-categories precisely when $f$ is an equivalence of categories and $\eta$ is a natural equivalence. 
We recall the following proposition (see Proposition~\refone{proposition:general-equivalence-of-poincare-infty-categories} and Corollary~\reftwo{corollary:karoubi-fundamental}), first observed by Lurie in the cases where $m=\pm\infty$. 

\begin{proposition-r-three}
\label{proposition:periodcat}%
For every invertible $\ZZ$-module with involution $M$ over $R$ and $m\in\ZZ\cup{\{\pm\infty\}}$,
the loop functor \(\Om\colon \Dperf(R) \to \Dperf(R)\) extends to an equivalence of Poincaré \(\infty\)-categories
\[
(\Dperf(R),(\Qgen{m}{M})\qshift{2}) \stackrel{\sim}{\longrightarrow} (\Dperf(R),\Qgen{m+1}{-M}),
\]
where $\QF\qshift{k}:=\Sigma^k\QF$ denotes the $k$-fold shift of a Poincaré structure, and $-M$ is the twist by a sign of Example~\refthree{example:moduleswithinv}.
\end{proposition-r-three}

\begin{remark-r-three}
For a commutative ring $R$, we may apply Proposition~\refthree{proposition:periodcat} with $M=R$. If we set $\GW^{[n]}(R) = \tau_{\geq 0}\GW(\Dperf(R);(\Qgen{0}{R})\qshift{n})$, we obtain from \cite{comparison} the equivalences
\[
\GW^{[0]}(R) \simeq \GW^\s_\cl(R) \quad \text{ , } \quad \GW^{[2]}(R) \simeq \GW_\cl^{-\ev}(R) \quad \text{ and } \quad \GW^{[4]}(R) \simeq \GW_\cl^\qdr(R).
\]
These equivalences were also announced by Schlichting, see \cite{Schlichting-integers}*{Theorem 3.1}, where $\GW^{[2]}(R)$ is described in terms of symplectic forms.
Concretely, symplectic forms are those skew-symmetric forms $b \colon P \otimes^\mathrm{U} P \to R$ which vanish on the diagonal, i.e.\ $b(x,x) =0$ for all $x$ in $P$, compare \cite{SchlichtinghigherI}*{Definition 3.8 \& Example 3.11}. This condition is in fact equivalent to admitting a $(-1)$-quadratic refinement, so symplectic forms are precisely the $(-1)$-even forms.

To see this, we claim that the Tate cohomology
 $\hat{\mathrm{H}}^0(\Ct;\Hom_{R\otimes^\mathrm{U} R}(P\otimes^\mathrm{U} P,R(-1)))$ is isomorphic to $\map_R(P,R_2)$ where $R_2$ denotes the 2-torsion in $R$. Combining this isomorphism with the canonical map from ordinary cohomology to Tate cohomology gives a map 
\[
\mathrm{H}^0(\Ct;\Hom_{R\otimes^\mathrm{U} R}(P\otimes^\mathrm{U} P,R(-1))) \lto \map_R(P,R_2).
\]
Elements of the domain are skew-symmetric forms $b$, and they are sent under this map to the map $x \mapsto b(x,x)$. Note that this is an additive map which indeed takes values in the 2-torsion of $R$ if $b$ is skew-symmetric. Hence the obstruction to lifting a skew-symmetric form $b$ along the norm map
\[
\mathrm{H}_0(\Ct;\Hom_{R\otimes^\mathrm{U} R}(P\otimes^\mathrm{U} P,R(-1))) \lto \mathrm{H}^0(\Ct; \Hom_{R\otimes^\mathrm{U} R}(P\otimes^\mathrm{U} P,R(-1))),
\]
is given by the vanishing of $b$ on the diagonal as claimed. Of course, one can also give a direct argument for the existence of a quadratic refinement under the assumption $b(x,x) = 0$.
\end{remark-r-three}

The shifted quadratic functor relates to that of the original Poincaré $\infty$-category by means of the Bott-Genauer sequence, which we now recall. Given a Poincaré $\infty$-category $(\C,\QF)$ we can functorially form an $\infty$-category $\Met(\C,\QF)$ whose underlying $\infty$-category is the $\infty$-category of arrows  in $\C$, where the Poincaré structure is
defined by
\[
\QF_\met(f\colon L \to X) = \fib\left( \QF(f)\colon \QF(X) \to \QF(L)\right),
\]
and with underlying duality $\Dual(f\colon L \to X) = (\Dual(X/L) \to \Dual X)$, see Definition~\refone{definition:metabolic-cat}. 
The Poincaré objects of $\Met(\C,\QF)$ are given by Poincaré objects $X$ of $(\C,\QF)$ equipped with a Lagrangian $L$ (see \S\refone{subsection:metabolic-and-L}); classically, forms equipped with a Lagrangian are called metabolic forms, hence the notation $\Met(\C,\QF)$.
The Bott-Genauer sequence is the sequence of Poincaré $\infty$-categories
\[
(\C,\QF\qshift{-1})\stackrel{}{\longrightarrow}\Met(\C,\QF)\stackrel{}{\longrightarrow}(\C,\QF)
\]
where the underlying functors send an object $L$ of $\C$ to the arrow $L\to0$, and an object $f\colon L\to X$ in the arrow category to its target $X$, respectively; see Lemma~\refone{lemma:maps-with-metabolic-category}. The Bott-Genauer sequence is both a fibre and a cofibre sequence of Poincaré $\infty$-categories, that is a Poincaré-Verdier sequence in the terminology of \papertwo, see Example~\reftwo{example:metabolicfseq}. One of the main results of \papertwo is that Grothendieck-Witt theory is Verdier localising, that is that it sends Poincaré-Verdier sequences to fibre sequences of spectra. There is moreover a natural equivalence 
\[
\GW(\Met(\C,\QF))\simeq \K(\C)
\]
established in Corollary~\reftwo{corollary:GWHyp}. Under this identification the Bott-Genauer sequence induces a fibre sequence of spectra
\[
\GW(\C,\QF\qshift{-1})\stackrel{\fgt}{\longrightarrow}\K(\C)\stackrel{\hyp}{\longrightarrow}\GW(\C,\QF)
\]
where the maps are induced by the projection $\Poinc(\C,\QF\qshift{-1})\to \core \C$ that forgets the form and the map $\core \C\to \Poinc(\C,\QF)$ that sends an object to its hyperbolic form, respectively. By combining these ingredients with the periodicity of Proposition~\refthree{proposition:periodcat} we obtain the following general form of Karoubi's periodicity theorem.
Let $\U(R,\QF)$ and $\V(R,\QF)$ be the fibre of $\hyp$ and $\fgt$, respectively. The fibre sequence above provides an equivalence $\V(R;\QF)\simeq \Omega\U(R;\QF\qshift{2})$ since both are equivalent to $\Omega \GW(R;\QF\qshift{1})$. Combined with Proposition~\refthree{proposition:periodcat} we obtain the following.

\begin{theorem-r-three}[\reftwo{corollary:karoubi-fundamental}]
\label{theorem:Karoubi}%
Let $R$ be a ring and $M$ an invertible $\ZZ$-module with involution over $R$. Then there is a natural equivalence 
\[
\V(R;\Qgen{m}{M}) \simeq \Omega \U(R;\Qgen{(m+1)}{-M})
\]
for every $m\in\ZZ$, where $-M$ is the $R\otimes^\mathrm{U} R$-module $M$ with the involution $(\inv{\bullet})$ replaced by $-(\inv{\bullet})$.
\end{theorem-r-three}

\begin{remark-r-three}
\label{remark:Karoubi}%
If $2\in R$ is a unit, Theorem~\refthree{theorem:Karoubi} is due to Karoubi \cite{Karoubi-Le-theoreme-fondamental}. Since in this case the Poincaré structures $\Qgen{m}{M}$ are all equivalent, it takes the form 
\[
\V(R;\QF^{\s}_M) \simeq \Omega \U(R;\QF^{\s}_{-M}).
\]

There is another case where this theorem simplifies, but where $2$ does not need to be invertible. Let $R$ be a commutative ring which is 2-torsion free, for instance the ring of integers in a number field, and let $M=R$ with the trivial involution. In this case $\hat{\mathrm{H}}^0(\Ct;-R)=0$ and $\hat{\mathrm{H}}^{-1}(\Ct;R)=0$, and by Remark~\refthree{remark:genuineeq} we have that $\QF^{\gev}_{-R}=\QF^{\g\s}_{-R}$ and $\QF^{\g\qdr}_R=\QF^{\gev}_R$. Therefore the periodicity Theorem gives us that
\[
\V(R;\QF^{\g\s}_R) \simeq \Omega \U(R;\QF^{\g\s}_{-R})\quad ,\quad \V(R;\QF^{\g\s}_{-R}) \simeq \Omega \U(R;\QF^{\g\qdr}_R)\quad \text{and}\quad  \V(R;\QF^{\g\qdr}_R) \simeq \Omega \U(R;\QF^{\g\qdr}_{-R}).
\]
Curiously, $\V(R;\QF^{\g\qdr}_{-R}) \simeq \Omega \U(R;\Qgen{3}{R})$, and to the best of our knowledge $\Qgen{3}{R}$ cannot be expressed in terms of classical forms.
\end{remark-r-three}

Given any invariant $\F$ of Poincaré $\infty$-categories which is Verdier localising, the Bott-Genauer sequence induces a fibre sequence upon applying $\F$. If in addition $\F(\Met(\C,\QF)) = 0$ for any Poincaré $\infty$-category $(\C,\QF)$ (i.e.\ $\F$ is in addition bordism invariant in the terminology of \papertwo), one obtains a canonical equivalence
\[
\F(\C,\QF\qshift{n})\simeq \Sigma^n\F(\C,\QF)
\]
for every $n\in \ZZ$ (see \cite{Lurie-L-theory} and Proposition~\reftwo{proposition:bordism-invariant-sig}). Examples of bordism invariant functors are L-theory $\L(\C,\QF)$ and the Tate construction on K-theory $\K(\C,\QF)^{\tC}$, where $\K(\C,\QF)$ denotes the K-theory spectrum of $\C$ with the $\Ct$-action induced by the duality underlying $\QF$. In fact, for $\K(\C,\QF)^{\tC}$ this is an immediate consequence of classical additivity: $\K(\Met(\C,\QF))$ is equivalent to the induced object $\mathrm{ind}_{\{e\}}^\Ct \K(\C)$, so that its Tate construction vanishes.
We can again combine these results with the periodicity of Proposition~\refthree{proposition:periodcat} to obtain the following.

\begin{corollary-r-three}
\label{corollary:periodicity-L}%
Let $R$ be a ring and $M$ an invertible $\ZZ$-module with involution over $R$. Then there are natural equivalences
\[
\L(R;\Qgen{m}{M})\simeq \Om^{2} \L(R;\Qgen{(m+1)}{-M}) \quad\text{and}\quad \K(R;\Qgen{m}{M})^{\tC}\simeq \Om^{2} \K(R;\Qgen{(m+1)}{-M})^{\tC}.
\]
In particular, the spectra $\L(R;\QF^{\s}_M)$, $\L(R;\QF^{\qdr}_M)$ and $\K(R;M)^\tC$ are $4$-periodic, and $2$ periodic if $R$ is an $\FF_2$-algebra.
\end{corollary-r-three}

The last observation on the periodicity of the quadratic and symmetric L-spectra is of course due to Ranicki, and it has been reworked in the present language by Lurie \cite{Lurie-L-theory}.

\begin{remark-r-three}
\label{remark:passing-to-opposite-ring}%
Let $R$ be a ring. We note that the association $X \mapsto \hom_R(X,R)$ refines to an equivalence of stable $\infty$-categories
\[
\hom_R(-,R) \colon \Dperf(R)\op \stackrel{\simeq}{\lto} \Dperf(R\op).
\]
Now let $\QF$ be a Poincaré structure on $\Dperf(R)$. 
By pulling back $\QF$ along its induced duality $\Dual$, we obtain a Poincaré structure on $\Dperf(R)\op$, and further pulling back along the above equivalence a Poincaré structure on $\Dperf(R\op)$ which we denote by $\QF^\vee$. That is, $\QF^\vee$ is the composite 
\[
\QF^\vee\colon \Dperf(R\op)\op \stackrel{\simeq}{\lto} \Dperf(R) \stackrel{\Dual}{\lto} \Dperf(R)\op \stackrel{\QF}{\lto} \Spa.
\]
By construction, there is therefore an equivalence of Poincaré $\infty$-categories
\[
(\Dperf(R),\QF) \simeq (\Dperf(R\op),\QF^{^\vee}).
\]
In particular, we have $\GW(R\op;\QF^\vee) \simeq \GW(R;\QF)$ and likewise $\L(R\op;\QF^\vee) \simeq \L(R;\QF)$. For a ring with involution $R$, the invertible $\ZZ$-module $M=R$ is self dual, so that one finds $\GW(R) \simeq \GW(R\op)$ and likewise $\L(R) \simeq \L(R\op)$.
\end{remark-r-three}

\begin{notation-r-three}
\label{notation:deco}%
Let $\F$ be a functor, such as $\GW$ or $\L$, from the category of Poincaré $\infty$-categories to spectra. We introduce the following compact notation for the value of $\F$ at the perfect derived $\infty$-category of $R$ with one of the Poincaré structures $\QF_{M}^\alpha$ discussed above:
\[
\F^{\alpha}(R;M):=\F(\Dperf(R),\QF^{\alpha}_M)
\]
If $M=R(\eps)$ is the $\ZZ$-module with involution associated to an $\epsilon$-involution on $R$ as in Example~\refthree{example:moduleswithinv}, we write 
\[
\QF^{\alpha}_{\eps}:=\QF^{\alpha}_{R(\eps)}\quad\text{and}\quad \F^{\alpha}(R;\eps):=\F^{\alpha}(R;R(\epsilon))
\]
for any of the decorations $\alpha$ above. In the special cases where $\eps=\pm 1$ we will further write 
\begin{align*}
\QF^{\alpha}&:=\QF^{\alpha}_{1}=\QF^{\alpha}_{R}
\\
\QF^{\alpha}_{-}&:=\QF^{\alpha}_{-1}=\QF^{\alpha}_{R(-1)}
\\
\F^{\alpha}(R)&:=\F^{\alpha}(R;1)=\F^{\alpha}(R;R)
\\
\F^{-\alpha}(R)&:=\F^{\alpha}(R;-1)=\F^{\alpha}(R;R(-1)).
\end{align*}
The homotopy groups of any of these spectra will be denoted by adding a subscript  $\F^{\alpha}_n(R;M):=\pi_n \F^{\alpha}(R;M)$ for every $n\in\ZZ$.
\end{notation-r-three}

\section{L-theory and algebraic surgery}
\label{section:L-theory}%
This section is devoted to exploring \(\L\)-theory in the context of modules with involution. In \S\refthree{subsection:L-preliminaries}
 we recall the generators and relations description of the L-groups, and an important construction which allows to manipulate representatives in such L-groups (without changing the class in L-theory) called \emph{algebraic surgery}. 

In \S\refthree{subsection:surgery-quadratic}, we prove a surgery result for Poincaré structures which we call $m$-quadratic, for $m\in\ZZ$, and use this to represent \(\L\)-theory classes by Poincaré objects which satisfy certain connectivity bounds.
In particular this allows us to show that the \(\L\)-groups $\L_n^{\gs}(R;M)$ coincide with Ranicki's original definition of symmetric L-theory of short complexes, Theorem~\refthree{theorem:Ranicki-coincides} from the introduction.

Finally, in \S\refthree{subsection:surgery-symmetric} we prove a surgery result for Poincaré structures which we call $r$-symmetric, for $r\in \ZZ$, in case the ring under consideration is coherent of finite global dimension. We will use this to show that the genuine symmetric L-groups are isomorphic to the symmetric L-groups in sufficiently high degrees, and consequently the analogous statement for the Grothendieck-Witt groups, which are Theorem~\refthree{theorem:gs-and-s-agreement-range} and Theorem~\refthree{theorem:GW-classical-quad-sym} of the introduction.

\subsection{L-theoretic preliminaries}
\label{subsection:L-preliminaries}%

For the whole section we let $R$ be a ring, $M$ an invertible $\ZZ$-module with involution over $R$, and $\Dual=\hom_R(-,M)$ the corresponding duality on $\Dperf(R)$. We recall that $\QF^{\qdr}_M$ denotes the quadratic Poincaré structure on $\Dperf(R)$, defined as the homotopy coinvariants $\QF^{\qdr}_M(X)=\map_{R\otimes R}(X\otimes X,M)_{\hC}$, and that the symmetric Poincaré structure $\QF^{\s}_M$ is defined in an analogous way by taking homotopy invariants.

\begin{remark}
\label{remark:compatible-structures}%
If a Poincaré structure $\QF\colon\Dperf(R)^{\op}\to\Spa$ has underlying duality $\Dual$, we will say that $\QF$ is \defi{compatible with $M$}. In this case, the canonical map \(\QF^{\qdr}_M \to \QF^{\s}_M\) factors as in Construction~\refone{construction:functors-associated-to-module-with-genuine-involution} into a pair of natural transformations 
\begin{equation*}
\QF^{\qdr}_M \lto \QF \lto \QF^{\s}_M,
\end{equation*}
exhibiting $\QF^{\qdr}_M$ and $\QF^{\s}_M$ respectively as the initial and the final Poincaré structure compatible with $M$, see Corollary~\refone{corollary:hom-universal}.
\end{remark}

We recall that a spectrum $E$ is $m$-connective for some integer $m\in\ZZ$ if $\pi_k E=0$ for all $k<m$, and $m$-truncated if $\pi_k E=0$ for all $k>m$.

\begin{definition}
\label{definition:r-sym}%
For every \(r \in \ZZ\) we will say that \(\QF\) is \defi{\(r\)-symmetric} if for every finitely generated projective module $P \in \Proj(R)$ the fibre of \(\QF(P[0]) \to \QF^{\s}_M(P[0])\) is \((-r)\)-truncated. Dually, for \(m \in \ZZ\) we will say that \(\QF\) is \defi{\(m\)-quadratic} if the cofibre of \(\QF^{\qdr}_M(P[0]) \to \QF(P[0])\) is \(m\)-connective for every \(P \in \Proj(R)\). 
\end{definition}

\begin{remark}
\label{remark:connectivity}%
Note that the fibre of $\QF \to  \QF^{\s}_M$ and the cofibre of $ \QF^{\qdr}_M\to \QF$ are exact (contravariant) functors. It thus suffices to check the conditions in the definition for $m$-quadratic and $r$-symmetric Poincaré structures only in the case where $P=R$.

It also follows that the collection of \(X \in \Dperf(R)\) for which the above fibre is \((-r)\)-truncated for a given \(r \in \ZZ\) is closed under suspensions and extensions. 
In particular, if \(\QF\) is \(r\)-symmetric then the fibre of \(\QF(X) \to \QF^{\s}_M(X)\) is \((-r-k)\)-truncated for every \(k\)-connective \(X\).

Dually, for a given \(m \in \ZZ\) the collection of \(\Dual X \in \Dperf(R)\) for which the above cofibre is \(m\)-connective is closed under suspensions and extensions. In particular, if \(\QF\) is \(m\)-quadratic then the cofibre of \(\QF^{\qdr}_M(X) \to \QF(X)\) is \((m+k)\)-connective whenever \(\Dual X\) is \(k\)-connective.
\end{remark}

\begin{example}
\label{example:symm-quad-qf}%
The symmetric Poincaré structure \( \QF^{\s}_M\) is \(r\)-symmetric for every \(r\) and the quadratic Poincaré structure \( \QF^{\qdr}_M\) is \(m\)-quadratic for every \(m\).
More generally, from the fibre sequences 
\[
\tau_{\leq m-2}\Omega M^{\tC} \to  \Qgen{m}{M}(R) \to \QF^{\s}_M(R)  \quad\text{and}\quad \QF^{\qdr}_M(R) \to \Qgen{m}{M}(R) \to \tau_{\geq m}M^{\tC}
\]
we find that the Poincaré structure \(\Qgen{m}{M}\) is \(m\)-quadratic and \((2-m)\)-symmetric. In particular, \(\QF^{\gs}_M\) is \(2\)-symmetric and \(0\)-quadratic, \(\QF^{\gev}_M\) is \(1\)-symmetric and \(1\)-quadratic and \(\QF^{\gq}_M\) is \(0\)-symmetric and \(2\)-quadratic.
\end{example}

\begin{example}
\label{example:opposite-poincare-structure}%
Let $R$ be a ring and $\QF$ an $M$-compatible $r$-symmetric and $m$-quadratic Poincaré structure on $\Dperf(R)$. Then $\QF^\vee$ is an $M^\vee$-compatible $r$-symmetric and $m$-quadratic Poincaré structure on $\Dperf(R\op)$. To see that $\QF^\vee$ is $m$-quadratic, it suffices to show that the cofibre of the map
\[
\QF_{M^\vee}^{\qdr}(R\op) \lto \QF^\vee(R\op)
\]
is $m$-connective. By Remark~\refthree{remark:passing-to-opposite-ring}, this map is given by the map 
\[
(\QF^\qdr_M)(M[0]) \lto \QF(M[0])
\]
whose cofibre is $m$-connective by the assumption that $\QF$ is $m$-quadratic and that $M$ is finitely generated projective. A similar argument shows that $\QF^\vee$ is $r$-symmetric.
\end{example}

As explained earlier, one goal of this paper is to show that the genuine symmetric L-groups coincide with Ranicki's classical symmetric L-groups of \cite{RanickiATS1}, for which elements can be represented by chain complexes $X$ which are concentrated in a specific range of degrees. The following lemma shows that this can be equivalently phrased in terms of connectivity estimates for $X$ and $\Dual X$. The latter will be more convenient to work with for us.

\begin{lemma}
\label{lemma:range}%
Let \(X \in \Dperf(R)\) a perfect \(R\)-module and \(k \leq l\) integers. Then the following conditions are equivalent:
\begin{enumerate}
\item
\label{item:complex}%
\(X\) can be represented by a chain complex of the form 
\begin{equation*}
 \cdots \to 0 \to P_l \to P_{l-1} \to \cdots \to P_k \to 0 \to \cdots 
\end{equation*}
where each \(P_i\) is a finitely generated projective \(R\)-module concentrated in homological degree \(i\). 
\item
\label{item:connectivity}%
\(X\) is \(k\)-connective and \(\Dual X\) is \((-l)\)-connective.
\end{enumerate}
\end{lemma}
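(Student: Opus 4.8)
The plan is to prove the two implications separately, using the standard dictionary between truncations of chain complexes and connectivity, combined with the fact that $\Dual X = \map_R(X, M)$ where $M$ is finitely generated projective in either variable.

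For the direction \refthreeitem{item:complex} $\Rightarrow$ \refthreeitem{item:connectivity}: if $X$ is represented by a complex with $P_i = 0$ for $i < k$, then $X$ is visibly $k$-connective, since its homotopy groups are computed by a complex concentrated in degrees $\geq k$. For the statement about $\Dual X$, I would first reduce to the case $X = P[i]$ for a single finitely generated projective $P$ and $k \leq i \leq l$, using that the class of $X$ with $\Dual X$ being $(-l)$-connective is closed under (de)suspension in the appropriate range and under extensions — more precisely, one filters $X$ by the stupid (brutal) truncations of the chosen complex, which in $\Dperf(R)$ gives a finite filtration with associated graded the $P_i[i]$, and $\Dual$ sends cofibre sequences to fibre sequences, so connectivity of $\Dual$ on the pieces controls connectivity of $\Dual X$. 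Now for $X = P[i]$ we have $\Dual X = \map_R(P[i], M) = \map_R(P, M)[-i]$, and since $M$ is finitely generated projective as an $R$-module (in the relevant structure) and $P$ is finitely generated projective, $\map_R(P,M) = \Hom_R(P,M)$ is a discrete module, i.e.\ $0$-connective and $0$-truncated. Hence $\Dual X$ is $(-i)$-connective, and since $i \leq l$ we get $-i \geq -l$, so $\Dual X$ is $(-l)$-connective. Assembling along the filtration gives the claim for general $X$ of the given form.

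For the direction \refthreeitem{item:connectivity} $\Rightarrow$ \refthreeitem{item:complex}: since $X$ is $k$-connective and perfect, a standard argument (building a minimal-type resolution, or using that $\Dperf(R)$-objects that are $k$-connective admit a model that is a complex of finitely generated projectives in degrees $\geq k$ — e.g.\ by repeatedly choosing finitely generated projective covers of the homotopy, which terminates because $X$ is perfect) produces a representative $\cdots \to P_{l'} \to \cdots \to P_k \to 0$ concentrated in degrees $\geq k$, with all $P_i$ finitely generated projective, and bounded above in some degree $l'$. It remains to arrange that we may take $l' \leq l$, i.e.\ to use the hypothesis that $\Dual X$ is $(-l)$-connective to cut down the top of the complex. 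Applying $\Dual = \hom_R(-,M)$ to this complex, and using again that $M$ is finitely generated projective so that $\hom_R(P_i, M)$ is discrete and finitely generated projective, we obtain a representative of $\Dual X$ of the form $\hom_R(P_k,M) \to \cdots \to \hom_R(P_{l'}, M)$ concentrated in homological degrees $-k$ down to $-l'$. Since $\Dual X$ is $(-l)$-connective, i.e.\ has no homology below degree $-l$, and the complex $\hom_R(P_\bullet, M)$ is a complex of projectives concentrated in degrees $[-l', -k]$, the brutal truncation in degrees $\geq -l$ is quasi-isomorphic to $\Dual X$; dualising back (using that $\Dual$ is an equivalence $\Dperf(R)\op \simeq \Dperf(R)$ and $\Dual \Dual \simeq \id$) yields a representative of $X$ concentrated in degrees $[k, l]$, as desired.

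The main obstacle is the bookkeeping in the second implication: one must be careful that the cohomological vanishing of $\Dual X$ below degree $-l$ genuinely lets one replace a possibly-too-long projective complex by one truncated at $l$ \emph{while keeping the terms finitely generated projective}. The clean way to do this is precisely to pass through $\Dual$: a complex of finitely generated projectives which is exact below a certain degree can be brutally truncated there at the cost of splitting off a finitely generated projective summand (since the relevant cycle module is a direct summand of a finitely generated projective, hence itself finitely generated projective), and this truncation does not change the quasi-isomorphism type. Making this splitting argument precise — equivalently, invoking that $\Dperf(R)$ is closed under retracts and that a bounded complex of finitely generated projectives with $H_{<-l} = 0$ admits a finitely generated projective model in degrees $\geq -l$ — is the only genuinely non-formal point; everything else is the routine interplay of $t$-structure connectivity, brutal truncations, and the exactness of $\Dual$.
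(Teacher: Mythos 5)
Your proposal is correct and follows essentially the same route as the paper: produce a bounded complex of finitely generated projectives in degrees $\geq k$, pass to $\Dual C$, and use $(-l)$-connectivity of $\Dual X$ to control the top degree of $C$. The paper organises this a bit more cleanly by taking a complex of \emph{minimum length} representing $X$ and showing by contradiction (split off the bottom term using projectivity) that it already lies in degrees $\geq k$; it then applies the identical argument to $\Dual C$ and reads off that $C$ lives in degrees $\leq l$, with no need to dualise back. One small caution in your write-up: the sentence asserting that the \emph{brutal} truncation of $\hom_R(P_\bullet,M)$ in degrees $\geq -l$ is quasi-isomorphic to $\Dual X$ is false as stated — brutal truncation does change $H_{-l}$ — but you correct this yourself in the final paragraph, where you replace the bottom term by the kernel of the outgoing differential (a finitely generated projective summand). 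That corrected step is exactly the paper's splitting argument, so the proofs agree in substance.
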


\begin{proof}
The implication \refthreeitem{item:complex} $\Rightarrow$ \refthreeitem{item:connectivity} is clear. For the other implication, let \(C\) be a complex of finitely generated projective \(R\)-modules of minimum length representing \(X\). We claim that \(C\) is concentrated in the range \([k,l]\). Let \(i\) be the minimal integer such that \(C_i \neq 0\). We claim that \(i \geq k\). Indeed, suppose that \(i < k\). Since \(X\) is \(k\)-connective we have that \(\mathrm{H}_i(C)=0\) and so the differential \(C_{i+1} \to C_{i}\) is a surjection of projective modules, hence a split surjection of projective modules, hence a surjection whose kernel \(N:=\ker(C_{i+1} \to C_i)\) is projective. Removing \(C_i\) and replacing \(C_{i+1}\) with \(N\) thus yields a shorter complex representing \(X\), contradicting the minimality of \(C\). We may hence conclude that \(C\) is concentrated in degrees \(\geq k\).

Let now \(\Dual C\) be the complex given by \((\Dual C)_i := \Dual(C_{-i})\). Since $M$ is finitely generated projective \(\Dual C =\hom_R(C,M) \in \Ch(R)\) represents \(\Dual X \in \Dperf(R)\), and is thus also a complex of minimal length representing \(\Dual X\). Since \(\Dual X\) is assumed to be \((-l)\)-connective, the same argument as above shows that \(\Dual C\) is concentrated in degrees \(\geq -l\). It then follows that \(C\) is concentrated in degrees \(\leq l\), and hence in the range \([k,l]\), as desired. 
\end{proof}

\begin{remark}
\label{remark:addendum}%
Lemma~\refthree{lemma:range} does not really require a duality. In its absence the statement still holds if we treat \(\Dual X=\hom_R(X,R)\) as an object of \(\Dperf(R\op)\). For later use, we also remark that our proof also shows that $X$ is $k$-connective if and only if it can be represented by a chain complex of finitely generated projective modules which are trivial below degree $k$.
\end{remark}

\begin{remark}
\label{remark:more-general}%
If $M$ is moreover free as an $R$-module,
the proof of Lemma~\refthree{lemma:range} works verbatim to show that for \(X\in\Dfree(R)\), condition~\refthreeitem{item:connectivity} above is equivalent to \(X\) being representable by a complex as in Lemma~\refthree{lemma:range} with each \(P_i\) a finitely generated \emph{stably free} \(R\)-module. More generally, if \(\Free(R) \subseteq \C \subseteq \Proj(R)\) is any intermediate full subcategory closed under the duality and under direct sums, and \(X\) can be represented by a bounded complex valued in \(\C\), then the argument in the proof below yields that condition~\refthreeitem{item:connectivity} above is equivalent to \(X\) being representable by a complex as in~\refthreeitem{item:complex}  with each \(P_i\) stably in \(\C\) (that is, such that there exist \(Q_i \in \C\) with \(P_i \oplus Q_i \in \C\)).
\end{remark}

\subsubsection*{L-theory and surgery}

The purpose of this subsection is to recall some fundamental properties of L-theory. For the construction of the L-theory spectra, we refer to \cite{Lurie-L-theory} and \S\reftwo{subsection:L+tate}. However, a key feature of the L-spectrum $\L(\C,\QF)$ is that its homotopy groups have a very simple presentation: They are given by cobordism groups of Poincaré objects. Let us explain what this means precisely, as we rely on this construction throughout the section. We recall that the space $\Poinc(\C,\QF)$ is the space of Poincaré objects, that is of pairs $(X,\qone)$ of an object $X$ in $\C$ and a point $\qone\in \Omega^\infty\QF(X)$ such that a canonically associated map 
\[
\qone_\sharp \colon X \lto \Dual X
\]
is an equivalence. Likewise, there is the space $\Poinc^\partial(\C,\QF)$ of Poincaré pairs, that is of triples $(f\colon L \to X,\qone, \eta)$ with $q \in \Omega^\infty\QF(X)$ and $\eta$ a nullhomotopy of $f^*(q)$, such that the canonically associated map 
\[
\eta_\sharp\colon X/L \lto \Dual L
\]
induced on the quotient $X/L$ by $\eta$
is an equivalence. In this case we say that $L$ is a Lagrangian in $X$ (or also that $L$ is a nullcobordism of $X$). It turns out that $\Poinc^\partial(\C,\QF) = \Poinc(\Met(\C,\QF))$
where $\Met(\C,\QF)$ is the metabolic category associated to $\QF$ as in \S\refone{subsection:metabolic-and-L}.
We find that forgetting the Lagrangian provides a map $\Poinc^\partial(\C,\QF) \to \Poinc(\C,\QF)$, which is induced from the Poincaré functor $\Met(\C,\QF) \to (\C,\QF)$ sending $L \to X$ to $X$.

\begin{definition}
We say that Poincaré objects $(X,\qone)$ and $(X',\qone')$ are cobordant
if $(X\oplus X',\qone\oplus (-\qone'))$ admits a Lagrangian, i.e.\ is nullcobordant. We define the $n$'th L-group $\L_n(\C,\QF)$ as the group of cobordism classes of Poincaré objects $(X,\qone)$ for the Poincaré structure $\QF\qshift{-n}:=\Omega^n\QF$.
\end{definition}

\begin{remark}
\label{remark:L-groups}%
We remark that the cobordism relation is a congruence relation with respect to $\oplus$, and that the diagonal $X \to X\oplus X$ is a canonical Lagrangian for $(X \oplus X, \qone\oplus (-\qone))$, so that $\L_n(\C,\QF)$ is indeed an abelian group. 
In particular, the L-groups fit into an exact sequence of monoids
\[
\pi_0(\Poinc^\partial(\C,\QF{\qshift{-n}})) \stackrel{\partial}{\lto} \pi_0(\Poinc(\C,\QF{\qshift{-n}})) \lto \L_n(\C,\QF) \lto 0.
\]
We also note that the above definition of \(\L\)-groups is equivalent to the one of Definition~\refone{definition:L-groups}, where \(\L_n(\C,\QF)\) was defined as the cokernel of the map \(\partial\) in the category of ordinary commutative monoids. Indeed, by exactness of the above sequence, we obtain a map of monoids $\coker(\partial)\to \L_n(\C,\QF)$ which is surjective and has trivial kernel. Since $\coker(\partial)$ is in fact a group by Lemma~\refone{lemma:L-is-group}, this map is an isomorphism.
\end{remark}

\begin{notation} In the case of the category $\C=\Dperf(R)$, we will denote the L-groups and L-spectra respectively by
\[
\L_n(R;\QF):=\L_n(\Dperf(R),\QF)\quad\text{and}\quad \L(R;\QF):=\L(\Dperf(R),\QF).
\]
When $\QF=\QF^\alpha$ is one of the genuine functors associated to an invertible $\ZZ$-module with involution $M$ analysed in the previous section, we use the notation $\L^\alpha(R;M)$ established in Notation~\refthree{notation:deco} for  the corresponding L-groups.
\end{notation}

\begin{remark}
It is immediate from the definition that $\L^{\qdr}(R;M)$ and $\L^{\s}(R;M)$ are respectively the usual quadratic and symmetric $\L$-theory spectra of $R$ of \cite{Lurie-L-theory} which also agree with the $\L$-spectra of Ranicki \cite{Ranickiblue}. The other variants are, however, more mysterious, and their study is the focus of this section.
\end{remark}

We note that given a Lagrangian for $(X,\qone)$, i.e.\ a Poincaré object of the metabolic category, through the eyes of L-theory, we may replace $(X,\qone)$ by $0$. Such a procedure in fact works more generally if we start with only a hermitian object for the metabolic category and is the content of algebraic surgery.
We recall that the hermitian objects of the metabolic category consist of triples $(f \colon L \to X, \qone,\eta)$ such that $\qone\in \Omega^\infty\QF(X)$ and $\eta$ is a nullhomotopy of $f^*(\qone)$. In many cases of interest, the object $(X,q)$ is Poincaré, and in this situation
we will refer to $L$, or more precisely to $(f,\eta)$, as a \emph{surgery datum} on $(X,q)$.
The non-degeneracy condition for this triple to be a Poincaré object for the metabolic category is that the map $ \eta_\sharp\colon X/L \lto \Dual L $ defined above is an equivalence, i.e.\ if its fibre $X'$ is $0$. In general, $X'$ need not vanish, but nevertheless acquires a canonical Poincaré form $\qone'$ induced from $(f,q,\eta)$.
In fact, we have the following result; see \S\reftwo{subsection:algebraic-surgery} for a general discussion of algebraic surgery.

\begin{proposition}
\label{proposition:naive-surgery}%
Let $(X,\qone)$ be a Poincaré object for $\QF$ with surgery datum $(f\colon L\to X,\eta)$. Then the object $X'$ carries a canonical Poincaré form $\qone'$ such that $(X,\qone)$ and $(X',\qone')$ are cobordant. 
\end{proposition}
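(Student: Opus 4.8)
The plan is to obtain this as an instance of the algebraic surgery machinery developed in \S\reftwo{subsection:algebraic-surgery}, the main content being the identification of $X'$ together with its form with the ``effect of surgery'' constructed there. The starting observation is that the triple $(f\colon L\to X,\qone,\eta)$ is precisely a \emph{hermitian} object of the metabolic Poincaré $\infty$-category $\Met(\C,\QF)$ whose image under the forgetful Poincaré functor $\Met(\C,\QF)\to(\C,\QF)$ is the given Poincaré object $(X,\qone)$: unwinding $\QF_\met(f\colon L\to X)=\fib(\QF(X)\to\QF(L))$, a point of $\Omega^\infty\QF_\met(f)$ is exactly a form $\qone\in\Omega^\infty\QF(X)$ together with a nullhomotopy $\eta$ of $f^\ast\qone$. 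The point of using $(f,\eta)$ as a surgery datum is then that the non-degeneracy defect of this hermitian object as an object of $\Met(\C,\QF)$ is measured by the object $X'=\fib(\eta_\sharp\colon X/L\to\Dual L)$: it vanishes precisely when $(f,\qone,\eta)$ is itself Poincaré in $\Met(\C,\QF)$, i.e.\ a genuine Lagrangian.

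First I would construct the form $\qone'$ on $X'$. The key is that $X'$ arises as the associated graded piece of a canonical two-step filtration of $X$ built from $f$ and $\eta$: the nullhomotopy $\eta$ exhibits a factorisation of $\qone_\sharp\circ f\colon L\to\Dual X$ through $\fib(\Dual f)\simeq\Dual(X/L)$, which identifies $X'$ with $\fib(X\to\Dual L)/L$ (for the composite $X\to X/L\xrightarrow{\eta_\sharp}\Dual L$), and the equivalence $\qone_\sharp\colon X\to\Dual X$ (valid since $(X,\qone)$ is Poincaré) induces on this graded piece a self-duality $X'\to\Dual X'$. To promote this symmetric bilinear form to a genuine $\QF$-form $\qone'\in\Omega^\infty\QF(X')$ one uses the full datum $(\qone,\eta)$, not merely $\eta_\sharp$: since $\QF$ is $2$-excisive, $\qone$ and $\eta$ together furnish compatible points of the linear part and of the symmetric bilinear part of $\QF$ evaluated at $X'$, hence a point of $\Omega^\infty\QF(X')$. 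This is what the surgery construction of \S\reftwo{subsection:algebraic-surgery} does, and its output is canonical by construction.

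It then remains to exhibit the cobordism relating $(X,\qone)$ and $(X',\qone')$, that is a Poincaré object of $\Met(\C,\QF)$ — equivalently a point of $\Poinc^\partial(\C,\QF)$ — lying over $(X\oplus X',\qone\oplus(-\qone'))$; this is provided by the trace of the surgery of loc.\ cit., whose underlying object is assembled from $f$ together with the canonical map $X'\to X/L$. \textbf{The main difficulty}, as I see it, lies not in this object-level bookkeeping, which is classical and goes back to Ranicki, but in arranging the whole package — the form $\qone'$ together with the nullcobordism — coherently in the $\infty$-categorical setting: one has to propagate the single nullhomotopy $\eta$ simultaneously through the $2$-excisive approximation of $\QF$ and through the metabolic duality, and then check the non-degeneracy condition for the trace, namely that the $\eta_\sharp$-type map attached to it is an equivalence. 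After the reductions above this last point is a diagram chase with the octahedral axiom, turning the hypothesis that $\qone_\sharp\colon X\to\Dual X$ is already an equivalence into the required equivalence for the cobordism.
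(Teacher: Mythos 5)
Your proposal is correct and takes essentially the same approach as the paper: the text here does not prove the proposition but simply recalls the algebraic surgery construction from \S\reftwo{subsection:algebraic-surgery}, and your proposal likewise defers to that machinery while sketching its content. Your gloss — $X'$ as the non-degeneracy defect of the hermitian object $(f,\qone,\eta)$ in $\Met(\C,\QF)$, the self-duality on $X'$ induced from $\qone_\sharp$, and $\chi(f)=\fib(\Dual f\circ\qone_\sharp)$ supplying the nullcobordism — matches what is summarised in Remark~\refthree{remark:surgerydiag}, though your informal description of how $\qone'$ is assembled from ``compatible points of the linear and bilinear parts'' compresses what in \papertwo is done more structurally via the hermitian structure of the metabolic category.
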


\begin{remark}
\label{remark:surgerydiag}%
The underlying object of $X'$ and the of the cobordism $\chi(f)$ between $X$ and $X'$ are summarised in the following surgery diagram consisting of horizontal and vertical fibre sequences.
\begin{equation}
\label{equation:surgery-diagram}%
\begin{tikzcd}
L \ar[r,equal] \ar[d] & L \ar[r] \ar[d] & 0 \ar[d] \\
\chi(f) \ar[r] \ar[d] & X \ar[r,"{\Dual f\circ \qone_\sharp}"] \ar[d] & \Dual L \ar[d,equal] \\
X' \ar[r] & X/L \ar[r,"\eta_\sharp"] & \Dual L
\end{tikzcd}
\end{equation}
\end{remark}

We can use this to perform the following construction, which we will refer to as \emph{Lagrangian surgery}.
\begin{construction}
\label{construction:lagrangian-surgery}%
Let $(L \to X,\qone,\eta)$ be a Lagrangian for a Poincaré object $(X,\qone)$. Equivalently, we may view $(L\to X,\qone,\eta)$ as a Poincaré object of the metabolic category $\Met(\C,\QF)$.
Now, given a surgery datum for this Poincaré object, i.e.\ a commutative diagram
\[
\begin{tikzcd}
Z \ar[r] \ar[d] & W \ar[d] \\
L \ar[r] & X
\end{tikzcd}
\]
and a null-homotopy of $\Phi^*(\qone,\eta)$ in $\QF_\met(Z \to W)$, we may thus perform surgery by Proposition~\refthree{proposition:naive-surgery} to obtain a new Poincaré object $(L'\to X',\qone',\eta')$ of $\Met(\C,\QF)$. 
We observe that the map $W \to X$ is canonically a surgery datum on $(X,\qone)$ and that $(X',\qone')$ is the result of surgery with this surgery datum. In particular, if $W = 0$, then $(X',\qone')$ is canonically equivalent to $(X,\qone)$. Moreover, by diagram~\eqrefthree{equation:surgery-diagram} the new Lagrangian $L'$ sits inside a fibre sequence
\[
L' \lto L/Z \lto \Omega \Dual Z.
\]
We will refer to such surgery data as \emph{Lagrangian surgery data} and refer to the surgery as a \emph{Lagrangian surgery}. We will then also say that \(L\) is cobordant to \(L'\) relative to \(X\). For future reference, we notice that the underlying map of a Lagrangian surgery datum is equivalently described by a map $Z \to N = \fib(L \to X)$. If we denote by $N'$ the fibre of the map $L' \to X$, then we obtain likewise a fibre sequence $N' \to N/Z \to \Omega \Dual Z$.
\end{construction}

\subsection{Surgery for $m$-quadratic structures}
\label{subsection:surgery-quadratic}%

In this section we will show how to apply algebraic surgery to Poincaré structure which are sufficiently quadratic and use this to show that the genuine symmetric L-groups coincide with Ranicki's symmetric L-groups of short complexes; see Theorem~\refthree{theorem:main-theorem-L-theory}. We also show that  in sufficiently small degrees, the L-groups of an $m$-quadratic functor coincide with the quadratic L-groups; see Corollary~\refthree{corollary:genuine-is-quadratic}.
The surgery arguments we present below are designed to replace (shifted) Poincaré objects and Lagrangians by cobordant counterparts which are suitably connective. The following definition summarises the kind of connectivity we seek:

\begin{definition}
\label{definition:connective-L}%
Let $M$ be an invertible $\ZZ$-module with involution over $R$, \(\QF\) a Poincaré structure on \(\Dperf(R)\) compatible with $M$. Let $n,a,b \in \ZZ$ be such that $a,b \geq -1$, $b \geq a-1$, 
and $(n+a)$ is even. 
\begin{enumerate}
\item
\label{item:one}%
We denote by \(\Poinc_{n}^{a}(R,\QF) \subseteq \Poinc(\Dperf(R),\QF{\qshift{-n}})\) the subspace spanned by those Poincaré objects \((X,q)\) such that \(X\) is \((\frac{-n-a}{2})\)-connective. 
\item
\label{item:two}%
We denote by \(\pM_{n}^{a,b}(R,\QF) \subseteq \Poinc^{\partial}(\Dperf(R),\QF{\qshift{-n}})\) the subspace spanned by those Poincaré pairs \((L \to X,q,\eta)\) such that \(X\) is \((\frac{-n-a}{2})\)-connective, \(L\) is \(\lceil \frac{-n-1-b}{2} \rceil\)-connective and \(N:=\fib(L \to X) \simeq \Om^{n+1}\Dual L\) is \(\lfloor \frac{-n-1-b}{2} \rfloor\)-connective. We refer to such an $L$ as an \emph{allowed Lagrangian} for $(X,\qone)$.
\end{enumerate}
Finally, we define 
\[
\L^{a,b}_n(R;\QF) = \coker\left( \pi_0\pM_{n}^{a,b}(R,\QF) \to \pi_0\Poinc_{n}^{a}(R,\QF)\right)
\]
as the cokernel in the category of monoids of the map that forgets the Lagrangian.
\end{definition}

\begin{remark}
\label{remark:intervalsLab}%
The definitions are made such that $\L_n^{a,b}(R;\QF)$ is the monoid of $n$-dimensional Poincaré objects of width $a$ modulo those which admit a Lagrangian of width $b$. 

More precisely,
the connectivity assumption of case~\refthreeitem{item:one} guarantees that $X$ can be represented by a complex 
concentrated in the range  
$[\frac{-n-a}{2},\frac{-n+a}{2}]$, and consequently having width $a$. This uses Lemma~\refthree{lemma:range} and that $\Dual X \simeq X[n]$ is \((\frac{n-a}{2})\)-connective. In particular, as an example we have $\Poinc_{n}^{-1}(R,\QF) \simeq \ast$. 

In case~\refthreeitem{item:two}, $X$ can again be represented by a complex concentrated in degrees $[\frac{-n-a}{2},\frac{-n+a}{2}]$, and the assumptions on $L$ and $N$ can be divided into the following two cases.
If $b+n$ is odd, the connectivity assumptions on $X$ and $N$ imply the one on $L$, and both $L$ and $N$ can be represented by complexes in degrees  $[ \frac{-n-1-b}{2} , \frac{-n-1+b}{2} ]$. Here for the upper bound on $L$ we use the connectivity of $N$ and that $\Dual L = N[n+1]$.
If however $b+n$ is even, then necessarily we have \(b \geq a\), and it is the connectivity assumptions on $X$ and $L$ which imply the one on $N$. Then $L$ can be represented by a complex in degrees  $[ \frac{-n-b}{2} , \frac{-n+b}{2} ]$ whereas $N$ by one in degrees $[ \frac{-n-b}{2}-1 , \frac{-n+b}{2}-1 ]$.

In the current section on quadratic surgery we exclusively use the case where $b+n$ is odd. However, in the next section on symmetric surgery we will have to consider both parities of $b+n$.
\end{remark}

\begin{remark}
\label{remark:diagonaladmissible}%
If $b\geq a$, the diagonal inside $(X,\qone)\oplus (X,-\qone)$ is an allowed Lagrangian, so that $\L_n^{a,b}(R;\QF)$ is in fact a group. In Proposition~\refthree{proposition:surgery-quadratic} we will show, under additional hypotheses on the Poincaré structure $\QF$, that $\L_n^{a,a-1}(R;\QF)$ is also a group.
\end{remark}

\begin{remark}
\label{remark:Witt-group}%
Let $\QF\colon \Dperf(R)\op \to \Spa$ be a Poincaré structure whose associated duality preserves $\Proj(R) \subseteq \Dperf(R)$. A Poincaré object $(P,q) \in \pi_0\Poinc_0^0(R,\QF)$, i.e.\ one where $P \in \Proj(R) \subseteq \Dperf(R)$, is said to be strictly metabolic if there exists a submodule $L \subseteq P$ such that $L$ is projective, $q_{|L}$ vanishes in $\pi_0\QF(L)$ and the sequence 
\begin{equation}
\label{equation:strict-metabolic}%
0 \lto L \lto P \lto \Dual L \lto 0
\end{equation}
is exact. One defines the Witt group $\W(\Proj(R);\QF)$ as the quotient the monoid $\pi_0\Poinc_0^0(R,\QF)$ by the submonoid generated by the strictly metabolic objects. We note that if $\QF$ restricts to a functor $\QF \colon \Proj(R)\op \to \Ab \subseteq \Spa$, this is the classical Witt group as defined for instance in \cite{knebusch}. 

We now argue that there is a canonical isomorphism $\W(\Proj(R);\QF) \cong \L_0^{0,0}(R;\QF)$. Indeed, it suffices to verify that a Poincaré object $(P,q) \in \pi_0\Poinc_0^0(R,\QF)$ is strictly metabolic if and only if if it admits an allowed Lagrangian. The ``if'' direction is clear, and to see the ``only if'' part observe that if $(P,q)$ is strictly metabolic then
any null homotopy $\eta$ of $q_{|L}$ makes the pair $(L \to X, \eta)$ an allowed Lagrangian, because the question whether or not the square 
\[
\begin{tikzcd}
	L \ar[r] \ar[d] & P \ar[d] \\
	0 \ar[r] & \Dual L
\end{tikzcd}
\]
is cartesian is equivalent to the exactness of the sequence \eqrefthree{equation:strict-metabolic} and hence independent of the chosen null homotopy $\eta$.
\end{remark}

\begin{remark}
\label{remark:surgery-1-quadratic}%
Let us briefly digress about strictly metabolic objects for $1$-quadratic Poincaré structures. So suppose that $P$ is a finitely generated projective $R$-module, and that $(P,q)$ is a strictly metabolic Poincaré object with respect to a 1-quadratic Poincaré structure.
Let $L \to P$ be a strict Lagrangian, so that the fibre of the map $L \to P$ is equivalent to $(\Dual L)[-1]$ and that $P \cong L \oplus \Dual L$. By the algebraic Thom isomorphism \refone{corollary:algebraic-thom-iso}, the space of Poincaré structures on the object $L \to P$ of the metabolic category is equivalently described by the space of shifted forms $\Omega \QF(\Dual L[-1])$, which is connected as $\QF$ is 1-quadratic. It follows that $(P,q)$ is equivalent to $\hyp(L)$, the hyperbolic form on $L$. This recovers the well-known classical fact that a strictly metabolic quadratic form on a finitely generated projective module is hyperbolic. 
\end{remark}

For the remainder of the section we fix an invertible $\ZZ$-module with involution $M$ over $R$, and we consider only Poincaré structures $\QF$ on $\Dperf(R)$ which are compatible with $M$, and we denote the underlying duality by $\Dual=\hom_R(-,M)$.
To put the assumptions of the next result into context, recall that the Poincaré structure $\Qgen{m}{M}$ is $m$-quadratic.

\begin{proposition}[Surgery for \(m\)-quadratic Poincaré structures]
\label{proposition:surgery-quadratic}%
Let \(\QF\) be an $m$-quadratic Poincaré structure on \(\Dperf(R)\). 
Fix an $n\in \ZZ$ and let \(a,b \geq 0\) be two non-negative integers with $b\geq a-1$, and such that $n+a$ and $n+1+b$ are even.
\begin{enumerate}
\item
\label{item:qsurjobject}%
If $a \geq n-2m$ then every Poincaré object in \((\Dperf(R),\QF\qshift{-n})\) is cobordant to one which is \(\big(\tfrac{-n-a}{2}\big)\)-connective.
\item
\label{item:qsurjLagrangian}%
If $b \geq n-2m+1$ then every Lagrangian \(L \to X\) of a \(\big(\tfrac{-n-a}{2}\big)\)-connective Poincaré object \((X,q) \in \Poinc(\Dperf(R),\QF\qshift{-n})\) is cobordant relative to \(X\) to a Lagrangian \(L'\to X\) such that both \(L'\) and \(\fib[L'\to X]\) are \(\big(\tfrac{-n-1-b}{2}\big)\)-connective. 
\end{enumerate}
In particular, if both inequalities above hold the monoid $\L^{a,b}_n(R;\QF)$ is a group and the canonical map $\L^{a,b}_n(R;\QF) \to \L_n(R;\QF)$ is an isomorphism.
\end{proposition}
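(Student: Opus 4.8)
The plan is to prove parts~\refthree{item:qsurjobject} and~\refthree{item:qsurjLagrangian} by iterated algebraic surgery: each step kills the lowest cell of the object (resp.\ Lagrangian) at hand, and the inequalities on $a$ and $b$ are precisely what guarantees that the required surgery datum exists, using that $\QF$ is $m$-quadratic. The concluding statement on $\L^{a,b}_n(R;\QF)$ then follows formally from these two parts.

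For~\refthree{item:qsurjobject}, let $(X,q)$ be a Poincaré object of $(\Dperf(R),\QF\qshift{-n})$, so $\Dual X\simeq X[n]$ and by Lemma~\refthree{lemma:range} we may assume $X$ is concentrated in a symmetric range $[\alpha,-n-\alpha]$ with $\pi_\alpha X\neq 0$. If $\alpha<\tfrac{-n-a}{2}$ we perform surgery: choose a finitely generated free module $P$ surjecting onto $\pi_\alpha X$ and let $f\colon L:=P[\alpha]\to X$ realise this surjection. The obstruction to a surgery datum supported on $f$ is the class of $f^*q$ in $\pi_0\QF\qshift{-n}(L)=\pi_n\QF(L)$, and this group vanishes: on one hand $\QF^\qdr_M(L)=\B(L,L)_{\hC}$ with $\B(L,L)=\hom_R(P,\Dual P)[-2\alpha]$ concentrated in degree $-2\alpha\geq n+a+2>n$, so $\pi_n\QF^\qdr_M(L)=0$; on the other hand $\Dual L=\hom_R(P,M)[-\alpha]$ is $(-\alpha)$-connective, so by Remark~\refthree{remark:connectivity} and $m$-quadraticity the cofibre of $\QF^\qdr_M(L)\to\QF(L)$ is $(m-\alpha)$-connective, which is $(n+1)$-connective because $a\geq n-2m$ forces $\alpha\leq m-n-1$. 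Hence $f^*q=0$ and, by Proposition~\refthree{proposition:naive-surgery} and the surgery diagram~\eqrefthree{equation:surgery-diagram}, we obtain a cobordant Poincaré object $(X',q')$ with $X'$ being $(\alpha+1)$-connective, hence again (being Poincaré of dimension $n$) concentrated in $[\alpha+1,-n-\alpha-1]$. Since $X$ is perfect this terminates after finitely many steps in an object of $\Poinc^a_n(R,\QF)$.

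For~\refthree{item:qsurjLagrangian}, regard $(L\to X,q,\eta)$ as a Poincaré object of $\Met(\Dperf(R),\QF\qshift{-n})$ with $N=\fib(L\to X)\simeq\Dual L[-n-1]$. By Remark~\refthree{remark:intervalsLab}, using $b\geq a-1$ and that $X$ is $\tfrac{-n-a}{2}$-connective, once $N$ is $\big(\tfrac{-n-1-b}{2}\big)$-connective then so is $L$; thus it suffices to raise the connectivity of $N$ to this value. If $N$ is not yet that connective, let $c$ be maximal with $N$ $c$-connective; then $c<\tfrac{-n-a}{2}$, hence $\pi_cX=0$ and $\pi_cN\twoheadrightarrow\pi_cL$, so a finitely generated free $Q$ surjecting onto $\pi_cN$ defines a Lagrangian surgery datum with underlying map $Z:=Q[c]\to N$ and $W=0$ in the sense of Construction~\refthree{construction:lagrangian-surgery}, leaving $(X,q)$ unchanged. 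Its obstruction lies in $\pi_0\QF_\met\qshift{-n}(Z\to 0)=\pi_{n+1}\QF(Z)$, and vanishes by the computation of the previous paragraph, now using $b\geq n-2m+1$: since $\Dual Z$ is $(-c)$-connective this makes the cofibre of $\QF^\qdr_M(Z)\to\QF(Z)$ be $(n+2)$-connective, while $\QF^\qdr_M(Z)=\B(Z,Z)_{\hC}$ with $\B(Z,Z)$ concentrated in degree $-2c\geq n+3+b>n+1$. Performing the surgery replaces $L$ by a new Lagrangian $L'$ of $(X,q)$ with $N'=\fib(L'\to X)$, and the fibre sequence $N'\to N/Z\to\Omega\Dual Z$ shows $N'$ is $(c+1)$-connective. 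Iterating — finitely often, as the connectivity of $N$ strictly increases each time — brings $N$, and hence $L$, into the required range, producing an allowed Lagrangian.

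Finally, part~\refthree{item:qsurjobject} shows that every class of $\L_n(R;\QF)$ is represented by an object of $\Poinc^a_n(R,\QF)$, so the canonical map $\L^{a,b}_n(R;\QF)\to\L_n(R;\QF)$ is surjective. If $(X,q)\in\Poinc^a_n(R,\QF)$ maps to zero in $\L_n(R;\QF)$ then, by the definition of cobordism, it admits a Lagrangian, which by part~\refthree{item:qsurjLagrangian} may be replaced by an allowed one; hence its class in $\L^{a,b}_n(R;\QF)$ also vanishes, so the map is injective. Since $\L^{a,b}_n(R;\QF)$ is a commutative monoid mapping bijectively onto the group $\L_n(R;\QF)$, it is itself a group and the map is an isomorphism. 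I expect the main difficulty to be the vanishing of the surgery obstructions — correctly tracking the shift conventions for $\QF\qshift{-n}$ and $\QF_\met\qshift{-n}$, verifying that $m$-quadraticity collapses the relevant form groups onto quadratic form groups on complexes coconnective enough to kill them, and, in part~\refthree{item:qsurjLagrangian}, the bookkeeping that keeps $N$ (and thereby $L$) under control while leaving $X$ fixed.
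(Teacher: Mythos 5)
Your proof is correct and follows essentially the same strategy as the paper's: iterated surgery killing the lowest cell, with $m$-quadraticity (via the connectivity estimate on $\QF^\qdr_M \to \QF$) guaranteeing the vanishing of the obstruction group $\pi_n\QF(L)$, resp.\ $\pi_{n+1}\QF(Z)$, and with Lagrangian surgery keeping $X$ fixed in part (ii). The one cosmetic wrinkle is a notational inconsistency in $\Dual$: in the fibre sequence $N'\to N/Z\to\Omega\Dual Z$ you are implicitly using the duality of $\QF\qshift{-n}$, i.e.\ $\Omega^n\Dual$, whereas in the $m$-quadraticity estimate ``$\Dual Z$ is $(-c)$-connective'' you mean the duality of $\QF$; stating the first sequence as $N'\to N/Z\to\Omega^{n+1}\Dual Z$ (with $\Dual$ the $\QF$-duality, as in the paper) removes the ambiguity. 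Your final deduction is a small but correct variation: rather than first showing $\L^{a,b}_n$ is a group via the diagonal Lagrangian and part (ii) (as the paper does), you show the map has trivial kernel and is surjective, and observe that a surjective monoid map onto a group with trivial kernel is automatically injective (by uniqueness of inverses in a commutative monoid), whence the source is a group; this is logically equivalent and arguably cleaner.
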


\begin{remark}
\label{remark:recall-intervalsLab}%
Recalling Remark~\refthree{remark:intervalsLab} we note that the Poincaré objects appearing in~\refthreeitem{item:qsurjobject} above are concentrated in degrees \([\frac{-n-a}{2},\frac{-n+a}{2}]\) and the Lagrangians in~\refthreeitem{item:qsurjLagrangian} are concentrated in degrees \( \frac{-n-1-b}{2} , \frac{-n-1+b}{2} ]\).
\end{remark}

Before diving into the proof of this proposition, let us give some immediate consequences.
\begin{example}
The quadratic Poincaré structure \(\QF^{\qdr}_M\) is \(m\)-quadratic for every \(m\). Given \(n = 2k \in \ZZ\), we may apply Proposition~\refthree{proposition:surgery-quadratic} to \(\QF^{\qdr}_M\) with \((a,b) = (0,1)\) and deduce that every class in \(\L^{\qdr}_n(R;M)\) can be represented by a Poincaré object which is concentrated in degree \(-k\), and that such a Poincaré object represents zero in \(\L^{\qdr}_n(R;M)\) if and only if it admits a Lagrangian which is concentrated in degrees \([-k-1,-k]\). On the other hand, if \(n=2k+1\) is odd we may apply Proposition~\refthree{proposition:surgery-quadratic} to \(\QF^{\qdr}_M\) with \((a,b) = (1,0)\) and get that every class in \(\L^{\qdr}_n(R;M)\) can be represented by a Poincaré object which is concentrated in degree \([-k-1,-k]\), and that such a Poincaré object represents zero in \(\L^{\qdr}_n(R;M)\) if and only if it admits a Lagrangian which is concentrated in degree \(-k-1\). This is often referred to in the literature as \emph{surgery below the middle dimension}. In fact, Proposition~\refthree{proposition:surgery-quadratic} gives this statement for any \(m\)-quadratic Poincaré structure, as long as we take \(n \leq 2m\).
\end{example}

\begin{corollary}
\label{corollary:surgery-genuine-symmetric}%
For any $n\geq 0$,
every class in \(\L^{\gs}_n(R;M)\) can be represented by a Poincaré object which is concentrated in degrees \([-n,0]\), and such a Poincaré object represents zero in \(\L^{\gs}_n(R;M)\) if and only if it admits a Lagrangian which is concentrated in degrees \([-n-1,0]\).
In particular, the canonical map $\L^{n,n+1}_n(R;\QF^{\gs}_M) \to \L_n(R;\QF^{\gs}_M)=\L^{\gs}_n(R;M)$ is an isomorphism for all $n\geq 0$.
\end{corollary}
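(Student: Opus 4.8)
The plan is to derive this directly from Proposition~\refthree{proposition:surgery-quadratic}, applied to the Poincaré structure $\QF^{\gs}_M$: by Example~\refthree{example:symm-quad-qf} this structure is $0$-quadratic, so one may take $m = 0$ there. Given $n \geq 0$, I would choose the extremal parameters $a = n$ and $b = n+1$. One first checks that these meet the numerical hypotheses of the proposition and of Definition~\refthree{definition:connective-L}: indeed $a, b \geq 0$, $b \geq a-1$, and both $n+a = 2n$ and $n+1+b = 2n+2$ are even; furthermore $a = n \geq n - 2m$ and $b = n+1 \geq n - 2m + 1$, so both inequalities appearing in parts~\refthreeitem{item:qsurjobject} and~\refthreeitem{item:qsurjLagrangian} are satisfied.

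Granting this, part~\refthreeitem{item:qsurjobject} shows that every Poincaré object of $(\Dperf(R), (\QF^{\gs}_M)\qshift{-n})$ is cobordant to one which is $\tfrac{-n-a}{2} = -n$-connective; for such an object the dual $\Dual X \simeq X[n]$ is then $\tfrac{n-a}{2} = 0$-connective, so Lemma~\refthree{lemma:range} (compare Remark~\refthree{remark:intervalsLab}) shows it is represented by a complex concentrated in degrees $[-n, 0]$. Since $\L^{\gs}_n(R;M) = \L_n(R;\QF^{\gs}_M)$ is by definition the group of cobordism classes of such objects, this yields the first claim of the corollary.

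For the second claim, recall that $(X,q)$ represents zero in $\L^{\gs}_n(R;M)$ exactly when it is nullcobordant, i.e.\ admits a Lagrangian. If it does, then since $X$ is $(-n)$-connective, part~\refthreeitem{item:qsurjLagrangian} replaces this Lagrangian by one $L' \to X$ (cobordant to it relative to $X$) with both $L'$ and $\fib(L' \to X)$ being $\tfrac{-n-1-b}{2} = -n-1$-connective; as $b+n = 2n+1$ is odd, Remark~\refthree{remark:intervalsLab} then identifies $L'$ with a complex concentrated in degrees $[-n-1, 0]$. Conversely, a Lagrangian concentrated in $[-n-1, 0]$ is in particular a Lagrangian, so $(X,q)$ is nullcobordant. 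Finally, the ``in particular'' statement is immediate from the concluding sentence of Proposition~\refthree{proposition:surgery-quadratic}: since both inequalities hold for $(a,b) = (n, n+1)$, the monoid $\L^{n,n+1}_n(R;\QF^{\gs}_M)$ is a group and the comparison map $\L^{n,n+1}_n(R;\QF^{\gs}_M) \to \L_n(R;\QF^{\gs}_M) = \L^{\gs}_n(R;M)$ is an isomorphism.

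I do not expect a substantive obstacle here: everything of depth is contained in Proposition~\refthree{proposition:surgery-quadratic}, and the corollary is just its specialisation at $m = 0$ with the extremal weights $a = n$, $b = n+1$. The only point requiring a little care is the bookkeeping — verifying that these weights simultaneously obey the constraints of Definition~\refthree{definition:connective-L} and of the proposition, and reading off the degree windows $[-n,0]$ and $[-n-1,0]$ from the connectivity estimates through Lemma~\refthree{lemma:range}.
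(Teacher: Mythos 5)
Your proposal is correct and is essentially the same argument as the paper's own proof: both apply Proposition~\refthree{proposition:surgery-quadratic} with $m=0$ (since $\QF^{\gs}_M$ is $0$-quadratic) and parameters $(a,b)=(n,n+1)$, then read off the degree windows via Remark~\refthree{remark:intervalsLab}. Your version merely spells out the bookkeeping more explicitly.
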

\begin{proof}
The genuine symmetric Poincaré structure \(\QF^{\gs}_M\) is \(0\)-quadratic. Given \(n \geq 0 \in \ZZ\) we may thus apply Proposition~\refthree{proposition:surgery-quadratic} to \(\QF^{\gs}_M\) with \((a,b) = (n,n+1)\) so that \(\L(R;\QF^{\gs}_M) \cong \L^{n,n+1}_n(R;\QF^{\gs}_M)\). 
\end{proof}

The proof of Proposition~\refthree{proposition:surgery-quadratic} will require the following connectivity estimate:
\begin{lemma}
\label{lemma:connectivity-2}%
Suppose that \(\QF\) is an \(m\)-quadratic Poincaré structure on \(\Dperf(R)\). Then for every projective module \(P \in \Proj(R)\) and every \(k \in \ZZ\) the spectrum \(\QF(P[k])\) is \(\min(-2k,m-k)\)-connective.
\end{lemma}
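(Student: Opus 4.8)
The strategy is to squeeze $\QF(P[k])$ between the quadratic Poincaré structure $\QF^{\qdr}_M$, whose value at $P[k]$ we can compute directly, and the cofibre of the canonical comparison map $\QF^{\qdr}_M \to \QF$, whose connectivity is governed by the $m$-quadratic hypothesis in the form recorded in Remark~\ref{remark:connectivity}.

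First I would pin down the connectivity of $\QF^{\qdr}_M(P[k])$. Since $\QF$ is compatible with $M$, its symmetric cross-effect is $\B(X,Y) \simeq \map_R(X,\Dual Y)$ for $\Dual = \hom_R(-,M)$, and by construction $\QF^{\qdr}_M(X) \simeq \B(X,X)_{\hC}$. Because $M$ is invertible the duality $\Dual$ is exact and preserves $\Proj(R)$, so $\Dual(P[k]) = (\Dual P)[-k]$ is a finitely generated projective module sitting in homological degree $-k$; hence
\[
\B(P[k],P[k]) \simeq \map_R(P[k],(\Dual P)[-k]) \simeq \Hom_R(P,\Dual P)[-2k]
\]
is concentrated in degree $-2k$. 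Homotopy orbits preserve connectivity, so $\QF^{\qdr}_M(P[k])$ is $(-2k)$-connective.

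Next, the same computation shows $\Dual(P[k])$ is $(-k)$-connective, so Remark~\ref{remark:connectivity} — applied with its connectivity parameter equal to $-k$ — shows that the cofibre $C$ of the canonical map $\QF^{\qdr}_M(P[k]) \to \QF(P[k])$ is $(m-k)$-connective. The fibre sequence $\QF^{\qdr}_M(P[k]) \to \QF(P[k]) \to C$ then exhibits $\QF(P[k])$ as an extension of the $(m-k)$-connective spectrum $C$ by the $(-2k)$-connective spectrum $\QF^{\qdr}_M(P[k])$, and reading off the long exact sequence on homotopy groups gives that $\QF(P[k])$ is $\min(-2k,m-k)$-connective, as claimed.

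The argument is short and I do not anticipate a genuine obstacle; the only point that repays a little care is the connectivity of $\QF^{\qdr}_M(P[k])$, where it is cleaner to use the cross-effect description $\B(X,Y) \simeq \map_R(X,\Dual Y)$ rather than the model $\map_{R\otimes R}(X \otimes X,M)_{\hC}$ directly, since the derived tensor product $P \otimes_{\ZZ} P$ may a priori carry a $\Tor_1$-term — which is, however, absorbed harmlessly by the tensor--hom adjunction because $P$ is projective over $R$.
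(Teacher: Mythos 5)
Your proof is correct and follows the same two-step strategy as the paper: bound $\QF^{\qdr}_M(P[k])$ below by $-2k$, bound the cofibre of $\QF^{\qdr}_M(P[k])\to\QF(P[k])$ below by $m-k$ via the $m$-quadratic hypothesis and Remark~\ref{remark:connectivity}, and conclude from the fibre sequence. The only small difference is bookkeeping: the paper reads off the connectivity of the quadratic part directly from $\QF^{\qdr}_M(P[k]) = (\map_{R\otimes R}(P\otimes P,M)[-2k])_{\hC}$, and the $\Tor_1$-term you worry about is in fact harmless there too, since $P\otimes P$ is a finitely generated projective $R\otimes R$-module (a retract of a free one), making the mapping spectrum a retract of $M^{n^2}$ and hence discrete — but your detour through the cross-effect $\B(X,Y)\simeq \map_R(X,\Dual Y)$ reaches the same conclusion with less to check.
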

\begin{proof}
The cofibre of the map $\QF^\qdr_M(P[k]) \to \QF(P[k])$
is $(m-k)$-connective by the assumption that $\QF$ is $m$-quadratic, see Remark~\refthree{remark:connectivity}. Furthermore, $\QF^\qdr_M(P[k]) = (\map_{R\otimes R}(P \otimes P,M)[-2k])_{\hC}$
and is thus $(-2k)$-connective as $P$ is projective and homotopy orbits preserve connectivity.
\end{proof}
 
\begin{proof}[Proof of Proposition~\refthree{proposition:surgery-quadratic}]
To prove~\refthreeitem{item:qsurjobject}, suppose that \((X,q)\) is a Poincaré object in \((\Dperf(R),\QF{\qshift{-n}})\).
If \(X\) itself is \((\frac{-n-a}{2})\)-connective, we are done. Otherwise, since \(X\) is perfect, there exists some \(k< \frac{-n-a}{2}\) such that \(X\) is \(k\)-connective. 
By Lemma~\refthree{lemma:range}
the object \(X\) can be represented by a chain complex of projectives concentrated in degrees \(\geq k\) and so there exists a projective module \(P\) and a map \(f\colon P[k] \to X\) which is surjective on $\mathrm{H}_k$.
By Lemma~\refthree{lemma:connectivity-2}, the spectrum \(\QF(P[k])\) is \(\min(-2k,m-k)\)-connective and since \(k < \frac{-n-a}{2}\) we have that
\[
\min(-2k,m-k) > \min\left(n+a,\frac{2m+n+a}{2}\right) \geq n
\]
by the inequalities in our assumptions.
It then follows that \(\Om^{\infty+n}\QF(P[k])\) is connected and hence \(q\) restricted to \(P[k]\) is null-homotopic, so that any nullhomotopy $\eta$, makes $(P[k] \to X,\qone,\eta)$ a hermitian form for the metabolic category. We may therefore apply Proposition~\refthree{proposition:naive-surgery} and perform surgery along \(f\colon P[k] \to X\) to obtain a cobordant Poincaré object \(X'\), given by the fibre of the induced map $X/P[k] \to \Dual(P)[-k-n]$. Since $-2k-1 > n+a \geq n$ (here we use that $n+a$ is even) we have that $-k-n > k+1$ and so 
\[
\mathrm{H}_{k'}(X')=\mathrm{H}_{k'}(X) = 0 \quad\text{for } k'<k \quad\text{and}\quad \mathrm{H}_{k}(X') \cong \coker[P \to \mathrm{H}_{k}(X)] = 0 ,
\]
which means that \(X'\) is \((k+1)\)-connective. Proceeding inductively we may thus obtain a Poincaré object \((X'',q'')\) which is cobordant to \((X,q)\) and which is \((\frac{-n-a}{2})\)-connective. 
Let us now prove Claim~\refthreeitem{item:qsurjLagrangian}. Let $(X,q)\in\Poinc(\Dperf(R),\QF{\qshift{-n}})$, and suppose that $X$ is $(\frac{-n-a}{2})$-connective and that it admits a Lagrangian $(L\to X,\eta)$.
Let $N$ be the fibre of the map $L\to X$. If $N$ is $(\frac{-n-1-b}{2})$-connective, then, since $b\geq a-1$ so is $L$ and we are done.  
Otherwise, let $l<\frac{-n-1-b}{2}$ be such that $N$ is $l$-connective.
We can then find a projective module $P$ and a map $P[l] \to N$ which is surjective on $\mathrm{H}_l$. We may view the map $P[l] \to N$ equivalently as a map $(P[l] \to 0) \to (L \to X)$ in the metabolic category. We claim that this map extends to a Lagrangian surgery datum in the sense of Construction~\refthree{construction:lagrangian-surgery}, 
for which it suffices to see that $\QF_\met(P[l]\to 0) \simeq \Omega \QF(P[l])$ is $(n+1)$-connective.
By Lemma~\refthree{lemma:connectivity-2}, the spectrum $\QF(P[l])$ is then $\min(-2l,m-l)$-connective and since $l < \tfrac{-n-1-b}{2}$ we have that
\[
\min(-2l,m-l) > \min\left(n+1+b,\frac{2m+n+1+b}{2}\right) \geq n+1
\]
by the inequalities in our assumptions. We may therefore perform Lagrangian surgery along \(P[l] \to L\), see Construction~\refthree{construction:lagrangian-surgery}, to obtain a new Lagrangian \(L' \to X\) such that the fibre $N'$ of the map $L' \to X$ fits in a fibre sequence
\[
N' \lto N/P[l] \lto \Dual(P)[-l-n-1].
\]
Since $2l < -n-1-b$ and $n+1+b$ is even, we have that \(-2l-1 > n+1+b \geq n+1\). Thus \(-l-n-1 > l+1\), and so 
\[
\mathrm{H}_{l'}(N')=\mathrm{H}_{l'}(N) = 0 \quad\text{for } l'<l \quad\text{and}\quad \mathrm{H}_{l}(N') \cong \coker[P \to \mathrm{H}_{l}(N)] = 0,
\]
which means that \(N'\) is \((l+1)\)-connective.
Proceeding inductively we may thus obtain a Lagrangian $L'' \to X$ for which $N''$, and thus $L''$, is \((\frac{-n-1-b}{2})\)-connective.

To see the final claim, we now recall that the diagonal defines a Lagrangian for $(X\oplus X,q\oplus (-q))$, and so it follows from~\refthreeitem{item:qsurjLagrangian} that the commutative monoid $\L_n^{a,b}(R;\QF)$ is a group. The surjectivity and injectivity of the homomorphism $\L^{a,b}_n(R;\QF) \to \L_n(R;\QF)$ then follow from~\refthreeitem{item:qsurjobject} and~\refthreeitem{item:qsurjLagrangian}, respectively.
\end{proof}

\begin{remark}
\label{remark:cofibre-is-fibre}%
Under the assumptions of Proposition~\refthree{proposition:surgery-quadratic}, the 
sequence 
\[
\pi_0\pM_{n}^{a,b}(R,\QF) \lto \pi_0\Poinc_{n}^{a}(R,\QF) \lto \L^{a,b}_n(R;\QF)
\]
is exact in the middle, just as in the case of ordinary L-groups; see Remark~\refthree{remark:L-groups}.
Indeed if a Poincaré object of $\Poinc_{n}^{a}(R,\QF)$ represents zero in $\L^{a,b}_n(R;\QF)$, and therefore in $\L_n(R;\QF)$, it admits a Lagrangian, and by Part~\refthreeitem{item:qsurjLagrangian} also a Lagrangian with the connectivity assumptions required to define an element of $\pi_0\pM_{n}^{a,b}(R,\QF)$.
\end{remark}

\begin{corollary}
\label{corollary:genuine-is-quadratic}%
If \(\QF\) is \(m\)-quadratic (e.g., \(\QF=\Qgen{m}{M}\)) then the natural map
\[
\L^{\qdr}_n(R;M) =\L_n(R;\QF^{\qdr}_M) \lto \L_n(R;\QF)
\]
is an isomorphism for \(n \leq 2m-3\) and surjective for \(n = 2m-2\). 
\end{corollary}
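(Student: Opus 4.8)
The plan is to combine the surgery result Proposition~\refthree{proposition:surgery-quadratic} with a connectivity estimate for the fibre of the comparison transformation. The map in question is induced by the universal map $\QF^{\qdr}_M \to \QF$ of Remark~\refthree{remark:compatible-structures}; write $\F := \fib(\QF^{\qdr}_M \to \QF)$, an exact contravariant functor to $\Spa$. Since the cofibre of $\QF^{\qdr}_M(X) \to \QF(X)$ is $\Sigma\F(X)$, the $m$-quadraticity of $\QF$ together with Remark~\refthree{remark:connectivity} gives that $\F(X)$ is $(m+k-1)$-connective whenever $\Dual X$ is $k$-connective. Fix $n$ and set $(a,b) = (0,1)$ for $n$ even and $(a,b) = (1,0)$ for $n$ odd; these meet the numerical requirements of Definition~\refthree{definition:connective-L}, and for $n \leq 2m$ also the hypotheses of Proposition~\refthree{proposition:surgery-quadratic} applied both to $\QF$ and to $\QF^{\qdr}_M$ (which is $m$-quadratic for every $m$). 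Hence $\L^{\qdr}_n(R;M) \cong \L^{a,b}_n(R;\QF^{\qdr}_M)$ and $\L_n(R;\QF) \cong \L^{a,b}_n(R;\QF)$ compatibly, and it suffices to compare the two groups $\L^{a,b}_n$ via representatives of the prescribed width; throughout, non-degeneracy questions will be automatic, since $\QF^{\qdr}_M$ and $\QF$ have the same underlying bilinear functor and duality.

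For surjectivity I would start from a Poincaré object $(X,q)$ of $\QF\qshift{-n}$ with $X$ of width $a$. Then $\Dual X \simeq X[n]$ is $\tfrac{n-a}{2}$-connective, so $\F\qshift{-n}(X) = \Om^n\F(X)$ is $(m-\tfrac{n+a}{2}-1)$-connective; when $n+a \leq 2m-2$ this is $(-1)$-connective, so $\pi_{-1}\F\qshift{-n}(X) = 0$ and $q$ lifts to some $\tilde q \in \pi_0\QF^{\qdr}_M\qshift{-n}(X)$. Because the bilinear data coincide, $\tilde q_\sharp$ agrees with $q_\sharp$, so $(X,\tilde q)$ is again a width-$a$ Poincaré object and it maps to the class of $(X,q)$. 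This establishes surjectivity of $\L^{\qdr}_n(R;M) \to \L_n(R;\QF)$ whenever $n+a \leq 2m-2$.

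For injectivity I would take a width-$a$ Poincaré object $(X,\tilde q)$ of $\QF^{\qdr}_M\qshift{-n}$ whose image $(X,q)$ is null-cobordant over $\QF$, and, using Proposition~\refthree{proposition:surgery-quadratic}\refthreeitem{item:qsurjLagrangian}, a Lagrangian $(L\to X,q,\eta)$ with $L$ and $N:=\fib(L\to X)\simeq\Om^{n+1}\Dual L$ of width $b$. Then $\Dual L \simeq N[n+1]$ is $\tfrac{n+1-b}{2}$-connective, so $\F(L)$ is $(m+\tfrac{n+1-b}{2}-1)$-connective, which for $n+b \leq 2m-3$ is $(n+1)$-connective; hence $\pi_n\F(L) = 0$. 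The long exact sequence of $\F\qshift{-n}(L) \to \QF^{\qdr}_M\qshift{-n}(L) \to \QF\qshift{-n}(L)$ then forces $f^*(\tilde q) = 0$ in $\pi_0$ and makes $\pi_1\QF^{\qdr}_M\qshift{-n}(L) \to \pi_1\QF\qshift{-n}(L)$ surjective. Since the space of null-homotopies of $f^*(\tilde q)$ is a nonempty torsor over $\Om^\infty\QF^{\qdr}_M\qshift{-n-1}(L)$ mapping equivariantly to the corresponding torsor for $f^*(q)$, the null-homotopy $\eta$ lifts to a null-homotopy $\tilde\eta$ of $f^*(\tilde q)$. As the bilinear data agree, $\tilde\eta_\sharp = \eta_\sharp$ is again an equivalence, so $(L\to X,\tilde q,\tilde\eta)$ is an allowed Lagrangian for $(X,\tilde q)$, which is therefore null-cobordant over $\QF^{\qdr}_M$. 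This gives injectivity of $\L^{\qdr}_n(R;M) \to \L_n(R;\QF)$ whenever $n+b \leq 2m-3$.

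Finally, putting the two parities together: for $n \leq 2m-3$ one has both $n+a \leq 2m-2$ and $n+b \leq 2m-3$ (for $n$ even, $(a,b)=(0,1)$ and $n \leq 2m-4$; for $n$ odd, $(a,b)=(1,0)$), yielding an isomorphism; for $n = 2m-2$, which is even with $(a,b)=(0,1)$, only $n+a \leq 2m-2$ holds, yielding surjectivity. I expect the injectivity step to be the main obstacle: one must check that the full Lagrangian datum, not merely the form, lifts along $\QF^{\qdr}_M \to \QF$ — which reduces to lifting a null-homotopy and then observing that non-degeneracy of the lifted Lagrangian is automatic because the two Poincaré structures share the same bilinear part — and one has to keep careful track of the parity conventions in order to match the connectivity bounds $n+a \leq 2m-2$ and $n+b \leq 2m-3$ with the stated ranges $n = 2m-2$ and $n \leq 2m-3$.
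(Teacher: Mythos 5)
Your proof takes essentially the same route as the paper: reduce to width-$a$ Poincaré objects (resp.\ width-$b$ Lagrangians) via Proposition~\refthree{proposition:surgery-quadratic}, then use the connectivity estimate coming from $m$-quadraticity to lift the form and then the null-homotopy, with non-degeneracy for free since $\QF^{\qdr}_M$ and $\QF$ share the same bilinear part. One small slip in the surjectivity step: with the paper's convention that $k$-connective means $\pi_j=0$ for $j<k$, the inequality $n+a\leq 2m-2$ makes $\F\qshift{-n}(X)$ $0$-connective rather than merely $(-1)$-connective, and it is $0$-connectivity (not $(-1)$-connectivity) that gives $\pi_{-1}\F\qshift{-n}(X)=0$; your conclusion is correct but the stated intermediate bound is off by one.
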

\begin{proof}
Fix an \(n \leq 2m-2\) and let \(a \in \{0,1\}\) be such that \(n+a\) is even. Then \(a \geq n-2m\) and so 
by Proposition~\refthree{proposition:surgery-quadratic}~\refthreeitem{item:qsurjobject} every class in either \(\L^{\qdr}_n(R;M)\) or \(\L_n(R;\QF)\) can be represented by a Poincaré object which is \(\big(\tfrac{-n-a}{2}\big)\)-connective. 
To prove that the map \(\L_n(R;\QF^{\qdr}_M) \lto \L_n(R;\QF)\) is surjective in this range it will then suffice to show that the monoid map
\[
\pi_0\Poinc_{n}^{a}(R,\QF^{\qdr}_M) \lto \pi_0\Poinc_{n}^{a}(R,\QF)
\]
is surjective. 
Let $X$ be $(\frac{-n-a}{2})$-connective and equipped with a Poincaré form $\qone$ for $\QF\qshift{-n}$. As $\QF$ is $m$-
quadratic and $\Dual X \simeq \Sig^n X$ is $(\frac{n-a}{2})$-connective 
the cofibre of the map $\QF^{\qdr}_M(X) \to \QF(X)$ is $(m+\frac{n-a}{2})$-connective; see Remark~\refthree{remark:connectivity}. Since $m+(\frac{n-a}{2}) = \frac{2m+n-a}{2} \geq n+1$ (where we note that \(a=0\) when \(n=2m-2\)) it follows that the map \(\QF^{\qdr}_M(X) \to \QF(X)\) is surjective on \(\pi_n\), and so the surjectivity part of the statement is established.

To prove injectivity, let us now assume that \(n \leq 2m-3\) and let \(b \in \{0,1\}\) be such that \(n+1+b\) is even. Then \(b \geq n-2m+1\) and so by Proposition~\refthree{proposition:surgery-quadratic}~\refthreeitem{item:qsurjLagrangian} every \(\big(\tfrac{-n-a}{2}\big)\)-connective Poincaré object \((X,q)\) in \((\Dperf(R),\QF\qshift{-n})\) which admits a Lagrangian, also admits a Lagrangian \(L \to X\) such that \(L\) and \(\fib[L \to X]\) are \(\big(\tfrac{-n-1-b}{2}\big)\)-connective. 
Then $\Dual L \simeq \Sigma^{n+1}N$ is $\big(\frac{n+1-b}{2}\big)$-connective and so by Remark~\refthree{remark:connectivity} and the assumption that $\QF$ is $m$-quadratic we deduce that the cofibre of the map $\QF^{\qdr}_M(L) \to \QF(L)$ is $\big(\frac{2m+n+1-b}{2}\big)$-connective. Since $n \leq 2m-3$, we have $\frac{2m+n+1-b}{2} \geq n+2$ (where we note that \(b=0\) when \(n=2m-3\)) 
and so the map
\[
\QF^{\qdr}_M(L) \lto \QF(L)
\]
is bijective on \(\pi_n\) and surjective on \(\pi_{n+1}\). We conclude that \(L\) can be refined to a Lagrangian of \((X,q)\) with respect to \(\QF^{\qdr}_M\) and so the map \(\L_n(R;\QF^{\qdr}_M) \lto \L_n(R;\QF)
\) is also injective (hence bijective) when \(n \leq 2n-3\), as desired.
\end{proof}

\begin{remark} The range in which the map of Corollary~\refthree{corollary:genuine-is-quadratic} is an isomorphism is essentially optimal. For example for $R=\ZZ$ and $m=0$, the map
\[
\ZZ/2 \cong \L^{\qdr}_{-2}(\ZZ) \lto \L_{-2}^{\gs}(\ZZ) = 0
\]
is not an isomorphism, see Example~\refthree{example:L-of-integers} for the calculation of these groups.
\end{remark}

\begin{proposition}
\label{proposition:00-01}%
Let $\QF$ a Poincaré structure on $\Dperf(R)$ and $n\in \ZZ$. Then for every \(a \geq 0\) such that \(a+n\) is even the canonical map 
\[
\L_{n}^{a,a}(R;\QF) \lto \L_{n}^{a,a+1}(R;\QF)
\]
is an isomorphism.
\end{proposition}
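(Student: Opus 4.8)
\emph{Setup and surjectivity.} Put $k=\tfrac{n+a}{2}\in\ZZ$, using that $n+a$ is even. Unwinding Definition~\ref{definition:connective-L}, a Poincaré pair $(L\to X,q,\eta)$ lies in $\pM_n^{a,a+1}(R,\QF)$ exactly when $X$ is $(-k)$-connective and both $L$ and $N:=\fib(L\to X)\simeq\Omega^{n+1}\Dual L$ are $(-k-1)$-connective, whereas it lies in $\pM_n^{a,a}(R,\QF)$ exactly when $X$ is $(-k)$-connective, $L$ is $(-k)$-connective, and $N$ is $(-k-1)$-connective; the only difference is that in the second case $L$ is asked to be one step more connective. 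Hence $\pM_n^{a,a}(R,\QF)\subseteq\pM_n^{a,a+1}(R,\QF)$, so the canonical map $\L_n^{a,a}(R;\QF)\to\L_n^{a,a+1}(R;\QF)$ — induced by the identity of $\pi_0\Poinc_n^a(R,\QF)$ — is surjective, and by Remark~\ref{remark:diagonaladmissible} both sides are abelian groups.

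\emph{Reduction of injectivity.} Being a surjection of abelian groups, the map is an isomorphism once we know that every $(X,q)\in\pi_0\Poinc_n^a(R,\QF)$ which admits a Lagrangian in $\pM_n^{a,a+1}(R,\QF)$ vanishes in $\L_n^{a,a}(R;\QF)$: indeed, if $(X,q)$ maps to $0$ in $\L_n^{a,a+1}(R;\QF)$ then $(X,q)\oplus(X_1,q_1)\simeq(X_2,q_2)$ in $\pi_0\Poinc_n^a(R,\QF)$ for some $(X_i,q_i)$ admitting Lagrangians in $\pM_n^{a,a+1}$, and if these already vanish in the group $\L_n^{a,a}(R;\QF)$ then so does $(X,q)$. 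Since $\L_n^{a,a}(R;\QF)$ is a group with inverses realised by diagonal Lagrangians, this reduces further to the claim: \emph{for such an $(X,q)$ there is a $(Y,p)\in\pi_0\Poinc_n^a(R,\QF)$ admitting a Lagrangian in $\pM_n^{a,a}(R,\QF)$ such that $(X,q)\oplus(Y,p)$ also admits a Lagrangian in $\pM_n^{a,a}(R,\QF)$.}

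\emph{Improving the Lagrangian.} Fix $(X,q)$ with a Lagrangian $(L\to X,q,\eta)\in\pM_n^{a,a+1}(R,\QF)$; then $N=\fib(L\to X)$ is perfect and $(-k-1)$-connective. As $\pi_{-k-1}N$ is a finitely generated $R$-module, choose a finitely generated projective $P$ and a map $Z:=P[-k-1]\to N$ surjective on $\pi_{-k-1}$; since $N\to L\to X$ is null, this is a candidate Lagrangian-surgery datum in the sense of Construction~\ref{construction:lagrangian-surgery}. Carrying out this surgery with $W=0$ is governed by an obstruction in $\pi_{n+1}\QF(Z)$, which in general cannot be made to vanish for a non-quadratic Poincaré structure (already $\pi_{n+1}\QF^\s_M(Z)\neq 0$ in general). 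We therefore first stabilise: replace $(X,q)$ by $(X,q)\oplus\hyp(P[-k])$. This still lies in $\pi_0\Poinc_n^a(R,\QF)$ because $\Omega^n\Dual(P[-k])\simeq\Dual(P)[a-k]$ is concentrated in degree $a-k=\tfrac{-n+a}{2}$, and its tautological Lagrangian $P[-k]\hookrightarrow\hyp(P[-k])$ lies in $\pM_n^{a,a}(R,\QF)$. Performing the Lagrangian surgery along $Z\to N$ on the stabilised object — the hyperbolic summand absorbing the obstruction — produces, by Construction~\ref{construction:lagrangian-surgery} and the surgery diagram of Remark~\ref{remark:surgerydiag}, a Lagrangian $L'$ of $(X,q)\oplus\hyp(P[-k])$ whose associated fibre $N'$ fits in a fibre sequence $N'\to N/Z\to\Dual(P)[a-k]$. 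As $Z\to N$ is surjective on $\pi_{-k-1}$, the cofibre $N/Z$ is $(-k)$-connective, hence so is $N'$ (the target being concentrated in degree $a-k\ge -k$); and then $\pi_jL'=0$ for $j\le -k-1$ follows from the fibre sequence $\Omega\big(X\oplus\hyp(P[-k])\big)\to N'\to L'$, since $\pi_jN'=0$ and $\pi_{j-1}\Omega\big(X\oplus\hyp(P[-k])\big)=\pi_j\big(X\oplus\hyp(P[-k])\big)=0$ there. Thus $(L'\to X\oplus\hyp(P[-k]))$ lies in $\pM_n^{a,a}(R,\QF)$, and taking $(Y,p)=\hyp(P[-k])$ settles the claim, proving the proposition.

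\emph{Main obstacle.} The delicate point is the surgery step. Unlike in Proposition~\ref{proposition:surgery-quadratic}, no quadraticity of $\QF$ is available, so the obstruction to killing $\pi_{-k-1}(L)$ by a surgery that keeps $X$ literally unchanged is genuinely nonzero. The technical heart of the argument is therefore to check that, after stabilising by the particular hyperbolic $\hyp(P[-k])$ — chosen precisely so as to remain of width $a$ and to carry a width-$a$ Lagrangian of its own — the stabilised metabolic surgery datum has trivial obstruction (i.e. that the form on $Z\to W$ pulled back from the metabolic Poincaré object is canonically nullhomotopic), and that the surgered Lagrangian has exactly the connectivity demanded in Definition~\ref{definition:connective-L}.
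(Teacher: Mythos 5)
Your reduction of the statement to showing that any $(X,q)\in\pi_0\Poinc_n^a(R,\QF)$ admitting a Lagrangian in $\pM_n^{a,a+1}(R,\QF)$ vanishes in $\L_n^{a,a}(R;\QF)$ is correct and matches the paper's proof, as does the surjectivity argument. The gap is exactly where you flag it in your closing paragraph: you never show that stabilising by $\hyp(P[-k])$ kills the obstruction class in $\pi_{n+1}\QF(Z)$ to performing Lagrangian surgery with $W=0$ on $Z=P[-k-1]\to N$. This is not a bookkeeping point one can wave away. The hyperbolic Lagrangian $P[-k]\hookrightarrow\hyp(P[-k])$ has \emph{zero} restricted form by construction, so routing the map $Z\to L$ additionally through $P[-k]$ does not visibly alter the form $\Phi^*(q,\eta)$ you need to null-homotope; and the form on the ambient $\hyp(P[-k])$ is fixed, not a free parameter you can tune. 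Without a concrete mechanism for absorbing the obstruction, the surgery in your argument simply cannot be carried out, and the proof does not close.

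The paper's proof avoids the obstruction entirely by a different choice of surgery datum, and this is the genuinely new idea you are missing. Instead of trying to kill the bottom homotopy of $L$ by a datum of the form $Z\to 0$, it first splits $L$ via a fibre sequence $L''\to L\to L'$ with $L''=P[k-1]$ projective and $L'$ $k$-connective (using the chain-complex description of Remark~\ref{remark:intervalsLab}), and then performs surgery on the metabolic Poincaré object $(L\to X)$ along the datum given by the square with identity top row $\id_{L''}\colon L''\to L''$ mapping into $L\to X$. Since $\QF_\met(\id_{L''})$ is contractible, this diagram canonically refines to a surgery datum — there is nothing to null-homotope. The output is a Lagrangian $L'\to X'$ with $X'$ still $k$-connective; and because $L''\to L\to X$ is null ($L''$ sits in degree $k-1$, $X$ is $k$-connective), the surgery datum on $X$ decomposes as the trivial datum plus a datum $L''\to 0$, so that $(X',q')\simeq(X,q)\oplus(Z,z)$ where $(Z,z)$ admits $\Omega^{n+1}\Dual L''=(\Dual P)[-n-k]$ as an allowed Lagrangian in $\pM_n^{a,a}$. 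Hence $[X,q]=[X',q']=0$ in $\L_n^{a,a}(R;\QF)$. If you want to salvage your route, you would need to explain precisely which nullhomotopy the stabilised datum acquires and why; the paper's choice of $W=L''$ instead of $W=0$ is what makes that question disappear.
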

\begin{proof} 
The map in question is clearly surjective, as both groups are generated by the same Poincaré objects. To show injectivity, let us first consider $(L \to X) \in \pi_0\M_n^{a,a+1}(R,\QF)$. We will show momentarily that then $X$ represents the trivial element of $\L^{a,a}_n(R;\QF)$, and argue first how this statement implies the injectivity of the map in question. So assume $X$ is an element in the kernel of the map $\L_n^{a,a}(R;\QF) \to \L_n^{a,a+1}(R;\QF)$. This means that there exists an element $(L' \to X') \in \pi_0\M_n^{a,a+1}(R,\QF)$ such that $X \oplus X'$ is the boundary of further element $(L\to X\oplus X') \in \pi_0\M_n^{a,a+1}(R,\QF)$. Then $X'$ as well as $X \oplus X'$ represent the trivial element of $\L_n^{a,a}(R;\QF)$, and hence so does $X$.

To prove the remaining claim, consider again $(L \to X) \in \pi_0\M_n^{a,a+1}(R,\QF)$. Let us write $k= \tfrac{-n-a}{2}$. Then, by definition, $X$ is $k$-connective, and $L$ and $N= \fib(L\to X)$ are $k-1$-connective. By Remark~\refthree{remark:intervalsLab}, $L$ can be represented by a chain complex concentrated in degrees $[k-1,k+a]$. 
There is consequently a fibre sequence
\[
L'' \lto L \lto L'
\]
such that $L''$ is a projective module $P$ concentrated in degree $k-1$ and $L'$ is $k$-connective. Let us consider the diagram
\[
\begin{tikzcd}
	L'' \ar[r, equal] \ar[d] & L'' \ar[d] \\
	L \ar[r] & X
\end{tikzcd}
\]
as a morphism (drawn vertically) in $\Met(\Dperf(R);\QF)$ and recall that $L \to X$ is canonically a Poincaré object in the metabolic category. Since $\QF_\met(\id_{L''})$ is contractible, we find that the above diagram canonically refines to a surgery datum on $L \to X$. Tracing through the definition of algebraic surgery, the surgery output is then a Lagrangian $L' \to X'$, with \(X'\) still \(k\)-connected (since \(\Om^n\Dual L''\) is concentrated in degree \(-k+1-n \geq k+1\)),
and so $(L'\to X') \in \pi_0\M_n^{a,a}(R,\QF)$.
We now analyse the Poincaré form $q'$ on $X'$. By construction, $(X',q')$ is the output of surgery along the surgery datum $L'' \to X$.
We note that the composite $L'' \to L \to X$ is null homotopic since $L'' = P[k-1]$ and $X$ is $k$-connective.
We deduce that the restriction of $q$ along the map $L'' \to X$ is null homotopic in two ways, and therefore determines a loop in $\Om^{n}\QF(L'')$. The surgery datum \(L'' \to X\) is then equivalent to the direct sum of the trivial surgery datum \(0 \to X\) and the surgery data \(L'' \to 0\) determined by the above loop.
Consequently, $(X',q')$ is the orthogonal sum of $(X,q)$ with the output $Z$ of surgery on $L'' \to 0$. Being a summand of $X'$ we find that $Z$ is also $k$-connective, and moreover by construction $\Omega^{n+1} \Dual L''$ is a Lagrangian in $Z$. Since $\Omega^{n+1}\Dual L'' = (\Dual P)[-n-k]$ is also $k$-connective (since $-k-n \geq k$ by definition of $k$), we see that $\Omega^{n+1} \Dual L'' \to Z$ belongs to $\M_n^{a,a}(R,\QF)$. Therefore we deduce that $[X,q] = [X',q'] = 0 \in \L_n^{a,a}(R;\QF)$ as claimed.
\end{proof}

\begin{proposition}
\label{proposition:witt}%
Let \(\QF\) be a $0$-quadratic Poincaré structure on \(\Dperf(R)\).
Then the canonical map
\[
\W(\Proj(R);\QF)  \lto \L_0(R;\QF)
\]
is an isomorphism. 
\end{proposition}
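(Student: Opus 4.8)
The plan is to realise $\W(\Proj(R);\QF)$ as one of the connectivity-restricted L-monoids $\L_0^{a,b}(R;\QF)$ and then to vary the decoration $(a,b)$ using the surgery results proved above, without ever having to touch the geometry of the forms directly.

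First I would record that the hypotheses of Remark~\refthree{Witt-group} are satisfied: since we work with a Poincaré structure $\QF$ compatible with $M$, the underlying duality is $\Dual = \hom_R(-,M)$, and because $M$ is finitely generated projective this duality preserves $\Proj(R) \subseteq \Dperf(R)$. Remark~\refthree{Witt-group} then provides a canonical isomorphism $\W(\Proj(R);\QF) \cong \L_0^{0,0}(R;\QF)$, and by construction this isomorphism is compatible with the respective maps down to $\L_0(R;\QF)$: both $\W(\Proj(R);\QF)$ and $\L_0^{0,0}(R;\QF)$ are quotients of the monoid $\pi_0\Poinc_0^0(R,\QF)$, the canonical map $\W(\Proj(R);\QF) \to \L_0(R;\QF)$ of the statement being the one induced on this quotient by sending a Poincaré form on a projective module to its cobordism class.

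Next I would apply Proposition~\refthree{proposition:00-01} with $n = 0$ and $a = 0$, which shows that the decoration-changing map $\L_0^{0,0}(R;\QF) \to \L_0^{0,1}(R;\QF)$ is an isomorphism. Finally, the hypothesis that $\QF$ be $0$-quadratic means precisely that $\QF$ is $m$-quadratic for $m = 0$, so Proposition~\refthree{proposition:surgery-quadratic} applies with $n = 0$ and $(a,b) = (0,1)$: one checks the numerical conditions $b \geq a - 1$, that $n + a = 0$ and $n + 1 + b = 2$ are both even, and that $a \geq n - 2m = 0$ and $b \geq n - 2m + 1 = 1$ both hold, whence the concluding assertion of that proposition gives that the canonical map $\L_0^{0,1}(R;\QF) \to \L_0(R;\QF)$ is an isomorphism. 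Composing the three isomorphisms, each of which sits in an evident triangle over $\L_0(R;\QF)$, yields that $\W(\Proj(R);\QF) \to \L_0(R;\QF)$ is an isomorphism.

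There is no genuine obstacle here, as all of the real content --- the ``surgery below the middle dimension'' estimates of Proposition~\refthree{proposition:surgery-quadratic} and the re-decoration argument of Proposition~\refthree{proposition:00-01} --- has already been carried out. The only points requiring care are bookkeeping ones: confirming that the isomorphism produced by Remark~\refthree{Witt-group} is compatible with the canonical map $\W(\Proj(R);\QF) \to \L_0(R;\QF)$ named in the statement, and verifying that the parity and inequality hypotheses of Proposition~\refthree{proposition:surgery-quadratic} are indeed met for the particular triple $(n,a,b) = (0,0,1)$.
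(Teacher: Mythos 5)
Your proof is correct and follows exactly the same route as the paper: identify $\W(\Proj(R);\QF)$ with $\L_0^{0,0}(R;\QF)$ via Remark~\refthree{Witt-group}, pass to $\L_0^{0,1}(R;\QF)$ by Proposition~\refthree{proposition:00-01}, and then to $\L_0(R;\QF)$ by Proposition~\refthree{proposition:surgery-quadratic} with $(n,a,b)=(0,0,1)$. The additional bookkeeping you spell out (verifying the duality preserves $\Proj(R)$, checking the parity and inequality hypotheses, and noting compatibility of the identification with the map to $\L_0(R;\QF)$) is accurate and consistent with what the paper leaves implicit.
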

\begin{proof}
First we recall from Remark~\refthree{remark:Witt-group} that $\W(\Proj(R);\QF) \cong \L_0^{0,0}(R;\QF)$. Under this isomorphism, the map under consideration factors as
\[
\L^{0,0}_0(R;\QF) \to \L^{0,1}_0(R;\QF) \to \L_0(R;\QF).
\]
The first map is an isomorphism by Proposition~\refthree{proposition:00-01} and the second map is an isomorphism by Proposition~\refthree{proposition:surgery-quadratic} since $\QF$ is $0$-quadratic. 
\end{proof}

\begin{corollary}
\label{corollary:L-zero}%
\ %
\begin{enumerate}
\item
\label{item:L-groups-one}%
\(\L^{\gs}_0(R;M)\) is naturally isomorphic to the Witt group 
of $M$-valued symmetric forms over \(R\).
\item
\label{item:L-groups-two}%
\(\L^{\gev}_0(R;M)\) is naturally isomorphic to the Witt group of \(M\)-valued even forms over \(R\).
\item
\label{item:L-groups-three}%
\(\L_0(R;\Qgen{m}{M})\) is naturally isomorphic to the Witt group 
of \(M\)-valued quadratic forms over \(R\) for every \(m \geq 2\).
\end{enumerate}
\end{corollary}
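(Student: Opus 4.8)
The plan is to deduce all three identifications from Proposition~\refthree{proposition:witt}, which asserts that for a $0$-quadratic Poincaré structure $\QF$ on $\Dperf(R)$ whose underlying duality preserves $\Proj(R)$ the canonical map $\W(\Proj(R);\QF) \to \L_0(R;\QF)$ is an isomorphism, combined with Remark~\refthree{Witt-group}, which recognises $\W(\Proj(R);\QF)$ as a classical Witt group whenever $\QF$ restricts to an $\Ab$-valued functor on $\Proj(R)$. For the quadratic case with $m \geq 3$ these two inputs do not suffice on their own, and I would supplement them with the connectivity comparison of Corollary~\refthree{corollary:genuine-is-quadratic}.

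First I would verify the hypotheses of Proposition~\refthree{proposition:witt} for the structures $\QF^{\gs}_M = \Qgen{0}{M}$, $\QF^{\gev}_M = \Qgen{1}{M}$ and $\Qgen{2}{M}$. Each $\Qgen{m}{M}$ is $m$-quadratic by Example~\refthree{example:symm-quad-qf}, and being $m$-quadratic for some $m \geq 0$ implies being $0$-quadratic, since if the cofibre of $\QF^{\qdr}_M(P[0]) \to \Qgen{m}{M}(P[0])$ is $m$-connective then it is also $0$-connective; their common underlying duality $\Dual = \hom_R(-,M)$ preserves $\Proj(R)$ because $M$ is finitely generated projective over $R$. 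Moreover, as recalled in \S\refthree{section:recollection-poincare} (see also \S\refone{subsection:discrete-rings} and \cite{comparison}), the functors $\Qgen{0}{M}$, $\Qgen{1}{M}$, $\Qgen{2}{M}$ restrict to $\Ab$-valued functors on $\Proj(R)$, whose values at a finitely generated projective $P$ are the abelian groups of $M$-valued symmetric forms, of $M$-valued even forms (the image of the symmetrisation map), and of $M$-valued quadratic forms on $P$, respectively. Applying Remark~\refthree{Witt-group} and then Proposition~\refthree{proposition:witt} to each in turn yields isomorphisms $\L^{\gs}_0(R;M) \cong \W(\Proj(R);\QF^{\gs}_M)$, $\L^{\gev}_0(R;M) \cong \W(\Proj(R);\QF^{\gev}_M)$ and $\L_0(R;\Qgen{2}{M}) \cong \W(\Proj(R);\Qgen{2}{M})$, whose targets are the classical Witt groups of $M$-valued symmetric, even, and quadratic forms over $R$. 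This establishes parts~\refthreeitem{item:L-groups-one} and~\refthreeitem{item:L-groups-two}, and the case $m = 2$ of part~\refthreeitem{item:L-groups-three}.

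To complete part~\refthreeitem{item:L-groups-three} for $m > 2$, I would invoke Corollary~\refthree{corollary:genuine-is-quadratic}: since $\Qgen{m}{M}$ is $m$-quadratic, the natural comparison map $\L^{\qdr}_0(R;M) \to \L_0(R;\Qgen{m}{M})$ is an isomorphism whenever $0 \leq 2m-3$, that is for every $m \geq 2$. Specialising to $m = 2$ and combining with the previous paragraph identifies $\L^{\qdr}_0(R;M)$ with the classical Witt group of $M$-valued quadratic forms over $R$; then, for arbitrary $m \geq 2$, the composite $\L_0(R;\Qgen{m}{M}) \cong \L^{\qdr}_0(R;M) \cong \W(\Proj(R);\Qgen{2}{M})$ gives the assertion. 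Finally I would note that all the isomorphisms above are assembled from natural transformations --- the comparison maps $\QF^{\qdr}_M \to \Qgen{m}{M} \to \QF^{\s}_M$, the Lagrangian-forgetting maps $\Poincdel \to \Poinc$, and passage to $\pi_0$ --- and are therefore natural in the pair $(R,M)$, as claimed.

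I expect the only genuine point of care to be the treatment of $m \geq 3$ in part~\refthreeitem{item:L-groups-three}: for such $m$ the Poincaré structure $\Qgen{m}{M}$ no longer takes discrete values on $\Proj(R)$, so Remark~\refthree{Witt-group} does not directly present $\W(\Proj(R);\Qgen{m}{M})$ as the classical Witt group of quadratic forms, and a naive application of Proposition~\refthree{proposition:witt} would not close the argument; this is precisely what the reduction through $\L^{\qdr}_0(R;M)$ via Corollary~\refthree{corollary:genuine-is-quadratic} circumvents. Everything else is routine bookkeeping of results already established earlier in the paper.
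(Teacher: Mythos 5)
Your argument matches the paper's proof in essentially every step: apply Proposition~\refthree{proposition:witt} together with the explicit discrete description of $\Qgen{m}{M}$ on $\Proj(R)$ for $m=0,1,2$, then reduce the case $m>2$ to $m=2$ via Corollary~\refthree{corollary:genuine-is-quadratic}. The small extra verifications you spell out — that $m$-quadratic with $m\geq 0$ gives $0$-quadratic, and that the duality preserves $\Proj(R)$ since $M$ is finitely generated projective — are implicit in the paper and correct.
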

\begin{proof}
For \(\L_0(R;\Qgen{m}{M})\) with \(m=0,1,2\) we simply apply Proposition~\refthree{proposition:witt} and invoke the explicit description of \(\Qgen{m}{M}(P[0])\) for \(m=0,1,2\) in terms of symmetric, even and quadratic forms, respectively. The case of \(\L_0(R;\Qgen{m}{M})\) for \(m > 2\) reduces to that of \(m=2\) since the natural map
\[
\L^{\qdr}_0(R;M) \lto  \L_0(R;\Qgen{m}{M})
\]
is an isomorphism for \(m \geq 2\) by Corollary~\refthree{corollary:genuine-is-quadratic}.
\end{proof}

\begin{remark}
\label{remark:L-one}%
Combining Corollary~\refthree{corollary:L-zero} and the equivalences
\(\L^{\gs}_{-2k}(R;M((-1)^k)) \cong \L_0\big(R;\Qgen{k}{M}\big)\)
given by Corollary~\refthree{corollary:periodicity-L} we obtain a description of all the \emph{even non-positive} genuine symmetric \(\L\)-groups of \(R\) in terms of Witt groups of symmetric, even or quadratic forms, see Theorem~\refthree{theorem:main-theorem-L-theory}. A similar description can be obtained for the corresponding \emph{odd} \(\L\)-groups of degrees \(\leq 1\) in terms of symmetric (in degree \(1\)), even (in degree \(-1\)) and quadratic (in odd degrees \(\leq -3\)) \emph{formations}. We leave the details to the motivated reader. 
\end{remark}

\begin{remark}
When $M$ is free as an $R$-module, the results of this section apply equally well if we restrict attention to the full subcategory \(\Dfree(R) \subseteq \Dperf(R)\) of finite free complexes. Indeed, in the proof of Proposition~\refthree{proposition:surgery-quadratic} we may simply choose \(P\) to be free, in which case the algebraic surgery procedure stays within \(\Dfree(R)\). In addition, the connectivity bounds obtained by that proposition translate into the same type of representation by complexes concentrated in certain intervals, only that now these complexes consist of stably free modules, see Remark~\refthree{remark:more-general}. 
For example, if \(\QF\) is a \(0\)-quadratic Poincaré structure then any element of \(\L_n(\Dfree(R),\QF)\) with \(n \geq 0\) can be represented by a Poincaré form on a complex of stable free \(R\)-modules concentrated in degrees \([-n,0]\), and such a Poincaré complex represents zero if and only if it admits a Lagrangian represented by a complex of stable free modules concentrated in degrees \([-n-1,0]\). Similarly, the proof of Proposition~\refthree{proposition:witt} can be run verbatim with stably free modules instead of projective modules, and so we get that in the situation of that proposition, \(\L_0(\Dfree(R),\QF)\) is isomorphic to the corresponding Witt groups of stably free Poincaré objects. More generally, one can take any intermediate subcategory \(\Dfree(R) \subseteq\C\subseteq \Dperf(R)\)  which is closed under the duality.
A typical such \(\C\) is the full subcategory of objects whose class in \(\K_0(R)\) lies in a given involution-closed subgroup of \(\K_0(R)\). 
We will consider this framework again in \S\refthree{section:localisation-devissage} when we will discuss \emph{control} on $\GW$ and $\L$ spectra.
\end{remark}

\subsubsection*{Genuine symmetric L-theory} 
In this subsection, we use the previous surgery results to  identify the genuine symmetric L-groups 
\[
\L^{\gs}_n(R;M):=\L_n(R;\QF^{\gs}_M)
\]
with Ranicki's original definition of symmetric L-groups, which we recall now.

 We let \(\Ch(\Proj(R))\) be the category of bounded chain complexes of finitely generated projective \(R\)-modules. We will say that \(C \in \Ch(\Proj(R))\) is \defi{\(n\)-dimensional} if it is concentrated in the range \([0,n]\), that is, if \(C_i = 0\) whenever \(i < 0\) or \(i > n\).  
Recall that the \(\infty\)-category \(\Dperf(R)\) of perfect left \(R\)-modules can be identified with the \(\infty\)-categorical localisation \(\Ch(\Proj(R))[W^{-1}]\) of \(\Ch(\Proj(R))\) with respect to the collection \(W\) of quasi-isomorphisms.

\begin{definition}
We let $n \geq 0$ be a non-negative integer. An \defi{\(n\)-dimensional Poincaré complex} in the sense of~\cite{RanickiATS1}*{\S 1} is a pair \((C,q)\) where \(C \in \Ch(\Proj(R))\) is an \(n\)-dimensional complex and \(q\) is an element of \(\mathrm{H}_n(\DHom_R(\Dual C, C)^{\hC})\) whose image in \(\mathrm{H}_n(\DHom_R(\Dual C, C)) = [\Dual(C)[n],C]\) is an isomorphism \(\Dual(C)[n] {\to} C\) in the homotopy category of \(\Ch(\Proj(R))\). Here, \(\DHom_R(-,-)\) denotes the internal Hom complex and \((-)^{\hC}\) is the homotopy fixed point construction (described explicitly in~\cite{RanickiATS1} using the standard projective resolution of \(\ZZ\) as a trivial \(C_2\)-module). An \defi{$(n+1)$-dimensional Poincaré pair} is a pair \((f,\eta)\) where \(f\colon C \to C'\) is a map in \(\Ch(\Proj(R))\) from an \(n\)-dimensional complex to an \((n+1)\)-dimensional complex and \(\eta\) is an element of 
\[
\mathrm{H}_n(\fib[\DHom_R(\Dual C, C) \to \DHom_R(\Dual C', C')]^{\hC})
\]
whose respective images in \([\Dual(C)[n], C]\) and \([\cof(\Dual C' \to \Dual C)[n], C']\) are isomorphisms in the homotopy category. Every Poincaré pair \((C \to C',\eta)\) determines, in particular, a Poincaré complex \((C,\eta|_C)\), and we say that a Poincaré complex is \defi{null-cobordant} if it is obtained in this way. Similarly, two \(n\)-dimensional Poincaré complexes \((C,q),(C',q')\) are said to be \defi{cobordant} if \((C \oplus C',q \oplus -q')\) is null-cobordant. The set of equivalence classes of \(n\)-dimensional Poincaré complexes modulo the cobordism relation above forms an abelian group  \(\L^{\short}_{n}(R;M)\) under direct sum, with the inverse of \((C,q)\) given by \((C,-q)\). We will refer to these groups as the  \defi{short} symmetric \(\L\)-groups of \(R\). 
\end{definition}

\begin{remark}\
\begin{enumerate}
\item 
The groups $\L^{\short}_n(R;M)$ are formally defined in \cite{RanickiATS1} only when $M$ is of the form $R(\epsilon)$ for some $\eps$-involution on $R$. However, the definition only makes use of the induced duality on chain complexes and it therefore makes sense for any invertible $\ZZ$-module $M$ with involution. 
\item
In \cite{RanickiATS1} Ranicki extends the definition of the classical symmetric L-groups to negative integers as follows:
\[
\L^{\short}_n(R;M) = \begin{cases} 
\L^{\short,\ev}_{n+2}(R;-M) 
& \text{ for } n=-2,-1 \\ 
\L^{\qdr}_{n}(R;M)
 & \text{ for } n \leq -3 \end{cases}
\]
Here $\L^{\short,\ev}_{n}(R;M)$ are the even L-groups
from \cite{RanickiATS1}*{\S3}.
We recall that an $n$-dimensional Poincaré complex $(C,\varphi)$ is called even in \cite{RanickiATS1} if a certain Wu class $v_0(\phi) \colon \mathrm{H}^n(C) \to \hat{\mathrm{H}}^0(\Ct;M)$ vanishes. Likewise, an $(n+1)$-dimensional Poincaré pair $f\colon C \to C'$ is called even if its relative Wu class $\mathrm{H}^{n+1}(f) \to \hat{\mathrm{H}}^0(\Ct;M)$ vanishes. The short even $\L$-groups $\L^{\short,\ev}_n(R;M)$ are then the cobordism groups of $n$-dimensional even complexes. For $n\geq 0$ we will also show that they are equivalent to our genuine even L-groups, see the proof of Theorem~\refthree{theorem:main-theorem-L-theory}. 
\end{enumerate}
\end{remark}

\begin{remark}
Two Poincaré complexes \((C,q),(C',q')\) are said to be quasi-isomorphic if there exists a quasi-isomorphism \(f \colon C \to C'\) such that \(f_*q = q'\). This yields an equivalence relation which is finer than cobordism: Given a quasi-isomorphism \(f\colon (C,q) \to (C',q')\) one can construct a Poincaré pair of the form \((\id,f)\colon C \to C \oplus C'\) witnessing  \((C,q)\) and \((C',q')\) as cobordant. In particular, if \(\Poinc^{\short}_n(R)\) denotes the monoid of quasi-isomorphism classes of \(n\)-dimensional Poincaré complexes and 
\(\pM^{\short}_n(R)\) the monoid of $n$-dimensional Poincaré pairs,
then \(\L^{\short}_{n}(R;M)\) is naturally isomorphic to the cokernel of $\pM^{\short}_n(R)\to \Poinc^{\short}_n(R)$ in the category of commutative monoids. 
\end{remark}

The remainder of this subsection is devoted to a proof of the following theorem.
\begin{theorem}
\label{theorem:main-theorem-L-theory}%
Let $R$ be a ring and $M$ an invertible $\ZZ$-module with involution over $R$. Then for all integers $n$, there is a natural isomorphism
\[
\L^{\short}_{n}(R;M) \cong \L^{\gs}_n(R;M)
\]
between Ranicki's classical symmetric L-groups and the genuine symmetric L-groups.
\end{theorem}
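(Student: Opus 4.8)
The plan is to prove the isomorphism for $n\ge 0$ by a surgery-and-truncation argument, and then to deduce the remaining (ad hoc) cases $n<0$ formally. For the latter, Corollary~\ref{corollary:periodicity-L} gives $\L^{\gs}_n(R;M)\cong\L_{n+2}(R;\QF^{\gev}_{-M})=\L^{\gev}_{n+2}(R;-M)$. When $n\in\{-1,-2\}$ the target lies in the even range $\{0,1\}$ treated below, and this matches Ranicki's definition $\L^{\short}_n(R;M)=\L^{\short,\ev}_{n+2}(R;-M)$. When $n\le-3$ we have $n+2\le -1=2\cdot 1-3$, so Corollary~\ref{corollary:genuine-is-quadratic} applied to the $1$-quadratic $\QF^{\gev}_{-M}$ identifies $\L_{n+2}(R;\QF^{\gev}_{-M})$ with $\L^{\qdr}_{n+2}(R;-M)$, which by the sign-twist periodicity $\L^{\qdr}(R;M)\simeq\Om^2\L^{\qdr}(R;-M)$ of Corollary~\ref{corollary:periodicity-L} equals $\L^{\qdr}_n(R;M)$; since $\L^{\qdr}$ is Ranicki's quadratic $\L$-spectrum and he sets $\L^{\short}_n(R;M)=\L^{\qdr}_n(R;M)$ for $n\le-3$, this case is done. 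So it suffices to treat $n\ge 0$, together with the even variant $\L^{\gev}_n(R;M)\cong\L^{\short,\ev}_n(R;M)$ for $n\ge 0$.

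Fix $n\ge 0$. Since $\QF^{\gs}_M=\Qgen{0}{M}$ is $0$-quadratic (Example~\ref{example:symm-quad-qf}), Corollary~\ref{corollary:surgery-genuine-symmetric} identifies $\L^{\gs}_n(R;M)$ with $\L^{n,n+1}_n(R;\QF^{\gs}_M)=\coker\!\big(\pi_0\pM_{n}^{n,n+1}(R,\QF^{\gs}_M)\to\pi_0\Poinc_{n}^{n}(R,\QF^{\gs}_M)\big)$, the cobordism classes of Poincaré objects for $\QF^{\gs}_M\qshift{-n}$ representable by complexes of finitely generated projectives in degrees $[-n,0]$, modulo those admitting an allowed Lagrangian representable in degrees $[-n-1,0]$. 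Given such a Poincaré object $(X,q)$, the reindexing $C:=X[n]$ produces an $n$-dimensional complex in Ranicki's sense, and I would establish a natural $\Ct$-equivariant equivalence $\DHom_R(\Dual C,C)\simeq\map_{R\otimes R}(X\otimes X,M)$ of complexes-with-involution (using the invertibility of $M$ and biduality), under which Ranicki's group $\mathrm{H}_n(\DHom_R(\Dual C,C)^{\hC})$ becomes $\pi_n\QF^{\s}_M(X)$ and Ranicki's non-degeneracy condition (that the image of $q$ in $[\Dual(C)[n],C]$ be an isomorphism) corresponds to that of the $\infty$-categorical Poincaré objects ($q_\sharp\colon X\to\Sigma^{-n}\Dual X$ an equivalence). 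Moreover $\QF^{\gs}_M$ agrees with the homotopy symmetric $\QF^{\s}_M$ on $\pi_n$ of such short complexes: the fibre of $\QF^{\gs}_M\to\QF^{\s}_M$ is the exact functor $X\mapsto\map_R(X,\tau_{\le-2}\Om M^{\tC})$ with coefficient spectrum in degrees $\le-2$ (this is the assertion that $\QF^{\gs}_M$ is $2$-symmetric), hence concentrated in degrees $\le n-2$ on a $[-n,0]$-complex, so $\pi_n\QF^{\gs}_M(X)\xrightarrow{\sim}\pi_n\QF^{\s}_M(X)$; and since $\QF_\met(f\colon L\to X)=\fib(\QF(X)\to\QF(L))$ the same holds for the pair data with $L$ in $[-n-1,0]$, where the relevant fibre is in degrees $\le n-1$. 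Combining these (and using that $\pi_0$ of the core of the full subcategory of $\Dperf(R)$ on $[-n,0]$-complexes is the set of quasi-isomorphism classes of such complexes) identifies $\pi_0\Poinc_{n}^{n}(R,\QF^{\gs}_M)$ with Ranicki's monoid $\Poinc^{\short}_n(R)$ and $\pi_0\pM_{n}^{n,n+1}(R,\QF^{\gs}_M)$ with $\pM^{\short}_n(R)$, compatibly with the Lagrangian-forgetting maps; passing to cokernels gives $\L^{\gs}_n(R;M)\cong\L^{\short}_n(R;M)$.

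For the even variant I would run the same argument with $\QF^{\gev}_M=\Qgen{1}{M}$, which is $1$-quadratic, so Proposition~\ref{proposition:surgery-quadratic} again presents $\L^{\gev}_n(R;M)$ for $n\ge 0$ by short complexes. Here the fibre of $\QF^{\gev}_M(X)\to\QF^{\gs}_M(X)$ is $\map_R(X,\hat{\mathrm{H}}^0(\Ct;M)[0])$, so for $X$ in $[-n,0]$ the map $\pi_n\QF^{\gev}_M(X)\to\pi_n\QF^{\gs}_M(X)$ is injective with image the kernel of a boundary homomorphism that one identifies with Ranicki's Wu class $v_0\colon\mathrm{H}^n(C)\to\hat{\mathrm{H}}^0(\Ct;M)$; thus $\QF^{\gev}_M$-Poincaré objects of dimension $n$ correspond exactly to Ranicki's even $n$-dimensional Poincaré complexes, and likewise for Poincaré pairs (via the relative Wu class), yielding $\L^{\gev}_n(R;M)\cong\L^{\short,\ev}_n(R;M)$ for $n\ge 0$.

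I expect the main obstacle to be the comparison with Ranicki's point-set model in the middle step: one must construct the natural $\Ct$-equivariant equivalence identifying $\map_{R\otimes R}(C\otimes C,M)^{\hC}$ with Ranicki's $\DHom_R(\Dual C,C)^{\hC}$ (the latter defined via the standard projective resolution of $\ZZ$), taking care of the grading conventions, and — more delicately — verify that the $\infty$-categorical cobordism relation, restricted to short complexes, literally coincides with Ranicki's: that a metabolic Poincaré pair in $\Dperf(R)$ supported on short complexes is the same datum as one of Ranicki's Poincaré pairs, with matching non-degeneracy conditions and "underlying complex" maps, and that passing to $\pi_0$ (quasi-isomorphism classes on Ranicki's side versus components of the space of Poincaré objects on ours) loses nothing on either side. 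The surgery inputs (Corollary~\ref{corollary:surgery-genuine-symmetric}, Proposition~\ref{proposition:surgery-quadratic}) and the truncation estimates (from Example~\ref{example:symm-quad-qf}) are, by contrast, formal consequences of what is already established.
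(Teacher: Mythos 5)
Your proposal follows the same strategy as the paper: for $n \ge 0$, use quadratic surgery (Corollary~\ref{corollary:surgery-genuine-symmetric}) to reduce $\L^{\gs}_n(R;M)$ to the bounded groups $\L^{n,n+1}_n(R;\QF^{\gs}_M)$, pass to $\QF^{\s}_M$ on short complexes via the $2$-symmetry estimate (the paper's Lemma~\ref{lemma:genuine-to-symmetric}), and identify the result with Ranicki's monoid of $n$-dimensional Poincaré complexes (the paper's Proposition~\ref{proposition:alternative-defi}); the even variant is treated via the Wu class exactly as you sketch, and for $n\le -3$ the paper applies Corollary~\ref{corollary:genuine-is-quadratic} directly to the $0$-quadratic $\QF^{\gs}_M$, which is equivalent to your slightly longer route through $\QF^{\gev}_{-M}$ and a second periodicity. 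One small slip worth fixing: the fibre of $\QF^{\gev}_M(X)\to\QF^{\gs}_M(X)$ is $\map_R\bigl(X,\hat{\mathrm{H}}^0(\Ct;M)[-1]\bigr)$, not $\map_R\bigl(X,\hat{\mathrm{H}}^0(\Ct;M)\bigr)$; it is the \emph{cofibre} that lives in degree $0$, and the Wu class arises as the boundary map into $\pi_n$ of that cofibre.
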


The proof will proceed in several steps. We first compare, for $n\geq0$, Ranicki's classical L-group $\L^{\short}_{n}(R;M)$ to the group \(\L^{n,n+1}_n(R;\QF^{\s}_M)\) of Definition~\refthree{definition:connective-L} as follows.
Let \(\I_n \subseteq \Ch(\Proj(R))\) be the subcategory consisting of the \(n\)-dimensional complexes and quasi-isomorphisms between them, and let  \(\J_{[0,n]} \subseteq \Dperf(R)^{\simeq}\) be the full sub-\(\infty\)-groupoid spanned by those perfect \(R\)-modules which can be represented by a complex in \(\Proj(R)\) concentrated in degrees \([0,n]\). The canonical localisation map then restricts to a functor \(\pi\colon \I_n \to \J_{[0,n]}\), and it induces isomorphisms
\[
\rho\colon \mathrm{H}_n(\DHom_R(\Dual(C), C)^{\hC})\stackrel{\cong}{\longrightarrow} \pi_n(\map_R(\Dual(\pi C), \pi C)^{\hC})\cong\pi_0\Omega^n\QF^{\s}_{M}(\pi \Dual C)
\]
natural in the object $C$ of $\I_n$. We can then define a map of sets
\[
\Poinc^{\short}_n(R)\longrightarrow \pi_0\Poinc_{n}^{n}(R;\QF^{\s}_{M})
\]
by sending an $n$-dimensional Poincaré complex $(C,q)$ to the component determined by $(\pi (\Dual C),\rho(q))$. This map is in fact an isomorphism, since the functor $\pi\colon \I_n \to \J_{[0,n]}$ is an equivalence on homotopy categories, and $\rho$ is an isomorphism.
Since the localisation functor \(\Ch(R) \to \Dperf(R)\) preserves direct sums this is moreover an isomorphism of monoids.

\begin{proposition}
\label{proposition:alternative-defi}%
For every \(n \geq 0\), the previously defined map induces a group isomorphism
\[
\L^{\short}_{n}(R;M) \cong \L^{n,n+1}_n(R;\QF^{\s}_{M}).
\]
\end{proposition}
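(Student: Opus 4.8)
The plan is to upgrade the monoid isomorphism $\Poinc^{\short}_n(R) \cong \pi_0\Poinc_n^n(R;\QF^{\s}_M)$ constructed above to an isomorphism on cokernels, by exhibiting a parallel identification $\pM^{\short}_n(R) \cong \pi_0\pM_n^{n,n+1}(R;\QF^{\s}_M)$ compatible with the boundary maps. Recall from Remark~\refthree{remark:L-groups} and Corollary~\refthree{corollary:surgery-genuine-symmetric} (applied to $\QF^{\s}_M$, which is $0$-quadratic, with $(a,b)=(n,n+1)$) that both $\L^{\short}_n(R;M)$ and $\L^{n,n+1}_n(R;\QF^{\s}_M)$ are computed as cokernels of boundary maps from Poincaré pairs to Poincaré objects; so it suffices to match up the relevant spaces of Poincaré pairs. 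Concretely, an $(n+1)$-dimensional Poincaré pair in Ranicki's sense is a map $f\colon C \to C'$ from an $n$-dimensional to an $(n+1)$-dimensional complex in $\Ch(\Proj(R))$ together with a class $\eta$ in $\mathrm{H}_n$ of the relevant homotopy-fixed-point fibre, with two non-degeneracy conditions. Dualising via $\Dual$ and applying the localisation functor $\pi$, the map $f$ becomes a map $\Dual C' \to \Dual C$ in $\Dperf(R)$ where $\Dual C'$ is concentrated in degrees $[-n-1,0]$ and $\Dual C$ in degrees $[-n,0]$; I would reorganise this into the data $(L \to X, q, \eta)$ of a Poincaré pair in $\Met(\Dperf(R);\QF^{\s}_M\qshift{-n})$ with $X = \Dual C$, and with $L$ and $N = \fib(L \to X)$ satisfying exactly the connectivity bounds of Definition~\refthree{definition:connective-L}~\refthreeitem{item:two} for $(a,b)=(n,n+1)$ — namely $X$ is $(-n)$-connective, $L$ is $\lceil(-n-1-(n+1))/2\rceil = (-n-1)$-connective and $N \simeq \Om^{n+1}\Dual L$ is $\lfloor(-2n-2)/2\rfloor = (-n-1)$-connective. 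The algebraic identity that makes this work is the usual reindexing in which a length-$(n+1)$ complex dualises to something living in degrees $[-n-1,0]$, together with the fact that the cofibre $\cof(\Dual C' \to \Dual C)$ in Ranicki's second non-degeneracy condition is precisely $\Om^{-1}$ of the fibre $N$ after dualising, so the two non-degeneracy conditions of a Ranicki Poincaré pair translate into: (i) $(X,q)$ is Poincaré, and (ii) $(L \to X, q, \eta)$ is a Poincaré pair, i.e.\ $\eta_\sharp\colon X/L \to \Dual L$ is an equivalence.

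The key steps, in order, would be: (1) recall the comparison isomorphism $\rho$ on the level of forms is already established, and note it is the value on objects of a natural comparison of the relevant (fixed-point) spectra — specifically $\mathrm{H}_n$ of $\fib[\DHom_R(\Dual C, C) \to \DHom_R(\Dual C', C')]^{\hC}$ maps isomorphically to $\pi_0\Om^n\QF_\met^{\s}$ of the corresponding arrow, by the same argument (equivalence of homotopy categories $\I_n \simeq \J_{[0,n]}$ plus the explicit homotopy-fixed-point comparison of Lemma~\refone{lemma:tate-M}-type results, extended to the arrow category via the fibre sequence defining $\QF_\met$); (2) check that an $(n+1)$-dimensional complex $C'$ dualises to an object concentrated in degrees $[-n-1,0]$ and deduce via Lemma~\refthree{lemma:range} that the image lands in $\pi_0\pM_n^{n,n+1}(R;\QF^{\s}_M)$, and conversely that every element of the latter is represented by a strict such map after replacing $L \to X$ by a quasi-isomorphic strict model (using Remark~\refthree{remark:addendum} to get strict complexes in the prescribed ranges); (3) verify that Ranicki's two non-degeneracy conditions correspond exactly to the two conditions ``$(X,q)$ Poincaré'' and ``$\eta_\sharp$ an equivalence'', using that $\cof(\Dual C' \to \Dual C)$ is $\Om^{-1} N$ and $\Dual L$ is as computed; (4) observe the boundary maps commute: Ranicki's ``restrict $\eta$ to $C$'' corresponds to the forgetful map $\Poinc^{\partial} \to \Poinc$, $(L \to X, q, \eta) \mapsto (X,q)$, under the identifications, because both amount to discarding the data of $L$ and its nullhomotopy; (5) conclude by passing to cokernels of monoids, using that both targets are already known to be groups (the source by Ranicki's definition, the target by Proposition~\refthree{proposition:surgery-quadratic} since $b \geq a$), so the cokernel-of-monoids coincides with the honest quotient group.

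The main obstacle I anticipate is step~(3) together with the bookkeeping in step~(2): one must be genuinely careful that the three complexes $C$, $C'$, and the various fibres/cofibres/duals land in precisely the homological ranges demanded by Definition~\refthree{definition:connective-L}, and that Ranicki's ``relative Wu''-free formulation of non-degeneracy for pairs (the condition on $[\cof(\Dual C' \to \Dual C)[n], C']$) is literally the same as the condition that the metabolic Poincaré object $(L \to X, q, \eta)$ is non-degenerate, i.e.\ $\eta_\sharp\colon X/L \to \Dual L$ is an equivalence — this is a matter of matching Ranicki's explicit homotopy-fixed-point cochain-level formulas with the $\infty$-categorical $\QF_\met$ via the fibre sequence $\QF_\met(f) = \fib(\QF(X) \to \QF(L))$, and unwinding the algebraic Thom isomorphism of \refone{corollary:algebraic-thom-iso}. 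A secondary subtlety is making everything natural in $C$ (so that the isomorphism of $\L$-groups is natural in $R$ and $M$), which follows because all the comparison maps are restrictions of functors on $\I_n$ and $\Met(\Dperf(R);\QF^{\s}_M)$, but deserves an explicit remark. Once these identifications are in place the passage to cokernels is formal.
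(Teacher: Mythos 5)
Your plan follows the paper's proof: transport Poincaré pairs along the same localisation--dualisation comparison used just before the proposition for Poincaré objects, check compatibility with the boundary maps, and pass to cokernels. The bookkeeping in your step~(2) for the ranges and for the identification $N \simeq \Om^{n+1}\Dual L$ is correct, as is the translation of Ranicki's two non-degeneracy conditions in step~(3).

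One caveat: you announce an \emph{isomorphism} $\pM^{\short}_n(R) \cong \pi_0\pM_n^{n,n+1}(R;\QF^{\s}_M)$, but this is more than is needed or established. To conclude that the induced map on cokernels is an isomorphism it suffices (given that the map on Poincaré objects is an isomorphism of monoids) that the map on pairs is merely \emph{surjective}, since then the two boundary maps have the same image under the top comparison. Surjectivity is exactly what your ``conversely'' clause in step~(2) gives, via strict lifts of arrows in $\Dperf(R)$. Injectivity on $\pi_0$ of pair spaces, by contrast, would require that the localisation from strict arrow categories to $\Ar(\Dperf(R))$ detects the equivalence relation on pairs, which is a genuinely more delicate statement about the interaction of $\Ar(-)$ and cores with localisation; you should simply drop that claim and argue surjectivity, as the paper does.
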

\begin{proof}
By replacing \(\Ch(R)\) and \(\Dperf(R)\) by their arrow categories a similar construction provides a morphism of monoids
\[
\pM^{\short}_n(R) \longrightarrow \pi_0\pM_{n}^{n,n+1}(R;\QF^{\s}_{M})
\]
which is compatible with the morphism \(\Poinc^{\short}_n(R) \to \pi_0\Poinc_{n}^{n}(R;\QF^{\s}_{M})\).
Thus we obtain a well-defined group homomorphism on L-groups. Since every arrow in \(\Dperf(R)\) can be lifted to a map of chain complexes, an argument similar to the one above shows that this map is also surjective. This suffices to induce an isomorphism on quotients.
\end{proof}

The next step for the proof of Theorem~\refthree{theorem:main-theorem-L-theory} is to see that on $n$-dimensional complexes, the datum of a symmetric form is the same as the datum of a genuine symmetric form. We record here the corresponding statement for L-groups, For the following result, we keep in mind that $\QF^{\gs}_M$ is 2-symmetric:
\begin{lemma}
\label{lemma:genuine-to-symmetric}%
Let \(\QF\) be \(r\)-symmetric for \(r \in \ZZ\). 
Let $a,b,n \in \ZZ$ be as in Definition~\refthree{definition:connective-L}, and suppose additionally that
\(a \leq n+2r-4\) and \(b \leq n+2r-3\). Then the map $\L^{a,b}_n(R;\QF) \to \L^{a,b}_n(R;\QF^{\s}_M)$
is an isomorphism. In particular, the map 
\[
\L_n^{n,n+1}(R;\QF^{\gs}_M) \lto \L_n^{n,n+1}(R;\QF^{\s}_M)
\]
is an isomorphism for every $n\geq 0$.
\end{lemma}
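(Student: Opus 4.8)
The plan is to show that the natural transformation $\QF\to\QF^{\s}_M$ induces bijections on $\pi_0$ of the spaces of Poincaré objects and Poincaré pairs appearing in Definition~\refthree{definition:connective-L}; since, by definition, $\L^{a,b}_n(R;\QF)$ is the cokernel in commutative monoids of $\pi_0\pM_{n}^{a,b}(R,\QF)\to\pi_0\Poinc_{n}^{a}(R,\QF)$, functorially in $\QF$, the isomorphism will follow by passing to cokernels. Two facts are used throughout. First, as $\QF$ is $M$-compatible, the map $\QF\to\QF^{\s}_M$ is the canonical one of Remark~\refthree{remark:compatible-structures} and is the identity on bilinear parts (Corollary~\refone{corollary:hom-universal}); hence for a form $q\in\pi_n\QF(Y)$ with image $q'$ the associated maps $q_\sharp$ and $q'_\sharp$ coincide, and likewise the relative maps $\eta_\sharp$ attached to nullhomotopies agree with their images — so all the nondegeneracy conditions entering the definitions of Poincaré objects and Poincaré pairs are transported back and forth along $\QF\to\QF^{\s}_M$. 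Second, since $\QF$ is $r$-symmetric, Remark~\refthree{remark:connectivity} shows that for every $k$-connective $Y\in\Dperf(R)$ the fibre $G(Y):=\fib\big(\QF(Y)\to\QF^{\s}_M(Y)\big)$ is $(-r-k)$-truncated.

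First I would treat the Poincaré objects. An element of $\pi_0\Poinc_{n}^{a}(R,\QF)$ is represented by a pair $(X,q)$ with $X$ being $\big(\tfrac{-n-a}{2}\big)$-connective and $q\in\pi_n\QF(X)$ a Poincaré form. By the second fact $G(X)$ is $\big(\tfrac{n+a}{2}-r\big)$-truncated, and the hypothesis $a\le n+2r-4$ gives $\tfrac{n+a}{2}-r\le n-2$, so $\pi_nG(X)=\pi_{n-1}G(X)=0$. The long exact sequence of $G(X)\to\QF(X)\to\QF^{\s}_M(X)$ then shows that $\pi_n\QF(X)\to\pi_n\QF^{\s}_M(X)$ is an isomorphism. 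Together with the preservation of the condition that $q_\sharp$ is an equivalence — and the fact that the underlying object and the connectivity requirement are unchanged — this yields that $\pi_0\Poinc_{n}^{a}(R,\QF)\to\pi_0\Poinc_{n}^{a}(R,\QF^{\s}_M)$ is a bijection: it is surjective because any $\QF^{\s}_M$-Poincaré form lifts uniquely to a $\QF$-form, which is automatically Poincaré, and injective because an equivalence $X_0\simeq X_1$ matching the images of two $\QF$-forms also matches the forms themselves.

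The argument for the relative spaces $\pM_{n}^{a,b}$ is parallel, now run over $X$ and over the Lagrangian $L$. Given a triple $(f\colon L\to X,q,\eta)$, the lifting and rigidity of $q$ over $X$ is exactly as above (again using $a\le n+2r-4$). For $L$, which is $\lceil\tfrac{-n-1-b}{2}\rceil$-connective, the second fact shows $G(L)$ is $\big(\lfloor\tfrac{n+1+b}{2}\rfloor-r\big)$-truncated, and $b\le n+2r-3$ gives $\lfloor\tfrac{n+1+b}{2}\rfloor-r\le n-1$, hence $\pi_nG(L)=\pi_{n+1}G(L)=0$. Thus $\pi_n\QF(L)\to\pi_n\QF^{\s}_M(L)$ is injective and $\pi_{n+1}\QF(L)\to\pi_{n+1}\QF^{\s}_M(L)$ is an isomorphism. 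The first forces $f^{*}q$ to vanish once $f^{*}q'$ does; the spaces of nullhomotopies of $f^{*}q$ and $f^{*}q'$ are then torsors (once nonempty) over $\pi_{n+1}\QF(L)$ and $\pi_{n+1}\QF^{\s}_M(L)$, so the second shows nullhomotopies lift uniquely. Since $\eta_\sharp$ agrees with $\eta'_\sharp$, the Poincaré-pair condition is preserved, and the allowed-Lagrangian conditions on $L$ and on $N=\fib(f)$ refer only to the underlying objects and are hence automatically respected. Therefore $\pi_0\pM_{n}^{a,b}(R,\QF)\to\pi_0\pM_{n}^{a,b}(R,\QF^{\s}_M)$ is bijective; taking cokernels of the resulting commuting square proves the general statement. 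The final assertion is the case $\QF=\QF^{\gs}_M$, which is $2$-symmetric by Example~\refthree{example:symm-quad-qf}, with $(a,b)=(n,n+1)$: the inequalities $a\le n+2r-4=n$ and $b\le n+2r-3=n+1$ hold with equality, and for $n\ge 0$ the remaining constraints of Definition~\refthree{definition:connective-L} ($a,b\ge-1$, $b\ge a-1$, $n+a$ even) hold.

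The point requiring care is the index bookkeeping that produces the precise bounds: over $X$ one needs \emph{two consecutive} homotopy groups of $G(X)$ to vanish, which forces the stronger bound $a\le n+2r-4$, while over $L$ the two relevant groups $\pi_n$ and $\pi_{n+1}$ are killed by the single truncation statement coming from $r$-symmetry. Beyond that, the proof is a formal consequence of the truncation estimate of Remark~\refthree{remark:connectivity} and the compatibility of $\QF\to\QF^{\s}_M$ with bilinear parts; in particular it requires no surgery, unlike the surjectivity statements of Proposition~\refthree{proposition:surgery-quadratic}.
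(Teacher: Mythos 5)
Your proof is correct and proceeds in essentially the same way as the paper's: both reduce to showing that the maps on $\pi_0\Poinc^a_n$ and $\pi_0\pM^{a,b}_n$ are bijections, and both deduce this from the truncation estimate of Remark~\refthree{remark:connectivity} applied to $X$ (using $a\le n+2r-4$) and to $L$ (using $b\le n+2r-3$), together with the observation that the bilinear parts and hence the non-degeneracy conditions are unaffected by $\QF\to\QF^\s_M$. The only cosmetic difference is that the paper phrases the relative case as "the total fibre of the square is $(n-2)$-truncated," while you unpack that same statement into the three explicit isomorphisms/injections on $\pi_n\QF(X)$, $\pi_n\QF(L)$, and $\pi_{n+1}\QF(L)$ and then lift forms and nullhomotopies by hand; these are equivalent formulations of the same argument.
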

\begin{proof}
It will suffice to show that the monoid homomorphisms 
\[
\pi_0\Poinc_{n}^{a}(R;\QF) \to \pi_0\Poinc_{n}^{a}(R;\QF^{\s}_M) \quad\text{and}\quad \pi_0\pM_{n}^{a,b}(R;\QF) \to \pi_0\pM_{n}^{a,b}(R;\QF^{\s}_M)
\]
are isomorphisms. Now the left homomorphism is an isomorphism since \(\QF\) is \(r\)-symmetric and so the map \(\Om^{\infty+n}\QF(X) \to \Om^{\infty+n}\QF^{\s}_M(X)\) is an equivalence by Remark~\refthree{remark:connectivity} whenever \(X\) is \(\left(\frac{-n-a}{2}\right)\)-connective, taking into account that \(-r-\frac{-n-a}{2} = \frac{-2r+n+a}{2} \leq n-2\) by our assumption. Concerning the right map, it will suffice to show that whenever \(L \to X\) is such that \(L\) is \(\lceil \frac{-n-1-b}{2} \rceil\)-connective and \(X\) is \(\left(\frac{-n-a}{2}\right)\)-connective the map
\[
\QF_{\Met}(L \to X) = \fib[\QF(X) \to \QF(L)] \to \fib[\QF^{\s}_M(X) \to \QF^{\s}_M(L)] = \QF^{\s}_{\Met}(L \to X)
\]
has an \((n-2)\)-truncated fibre. Equivalently, this is the same as saying that the square
\[
\begin{tikzcd}
\QF(X) \ar[r] \ar[d] & \QF^{\s}_M(X) \ar[d]\\
\QF(L) \ar[r] & \QF^{\s}_M(L)
\end{tikzcd}
\]
has \((n-2)\)-truncated total fibre. As in the first part of the proof, we find that the top horizontal map is \((n-2)\)-truncated and the bottom horizontal map is \((n-1)\)-truncated since \(L\) is \(\lceil \frac{-n-1-b}{2} \rceil\)-connective and \(-r - \lceil \frac{-n-1-b}{2} \rceil \leq \frac{-2r+n+1+b}{2} \leq n-1\).
\end{proof}

\begin{proof}[Proof of Theorem~\refthree{theorem:main-theorem-L-theory}]
First, we consider the case $n \geq 0$ where we simply combine the isomorphisms
\[
\L^{\short}_{n}(R;M) \cong \L^{n,n+1}_n(R;\QF^{\s}_{M}) \cong \L^{n,n+1}_n(R;\QF^{\gs}_{M}) \cong \L_n(R;\QF^{\gs}_{M})
\]
of Proposition~\refthree{proposition:alternative-defi}, Lemma~\refthree{lemma:genuine-to-symmetric} and Corollary~\refthree{corollary:surgery-genuine-symmetric}, respectively.

The case $n \leq -3$ is covered by Corollary~\refthree{corollary:genuine-is-quadratic}. It then suffices to treat the case $n=-2,-1$. Here, we use the ``periodicity''
\[
\L_n^{\gs}(R;M) \cong \L_{n+2}^{\gev}(R;-M)
\]
of Corollary~\refthree{corollary:periodicity-L} and will now argue more generally that for $n\geq 0$, a canonical map $\L^{\short,\ev}_n(R;M) \to \L^{\gev}_n(R;M)$ is an isomorphism. To construct the map, we consider an $n$-dimensional even complex $(C,\varphi)$ and obtain from the construction preceding Proposition~\refthree{proposition:alternative-defi} a canonical element $(X,q)$ of $\L_n^{n,n+1}(R;\QF^\gs_M)$, represented by $\Dual C$. We want to argue that $(X,q)$ refines to an element of $\L_n^{n,n+1}(R;\QF^\gev_M)$. Let us consider the fibre sequence
\[
\Omega^n\QF^\gev(X) \lto \Omega^n \QF^\gs(X) \lto  \map_R(X;\hat{\mathrm{H}}^0(\Ct;M)[-n]).
\]
We note the equivalence $\Omega^\infty \map_R(X;\hat{\mathrm{H}}^0(\Ct;M)[-n]) \simeq \mathrm{Hom}_R(\mathrm{H}_{-n}(X),\hat{\mathrm{H}}^0(\Ct;M))$. Tracing through the definitions, the symmetric structure $q$ is sent to the Wu class $v_0(\varphi)$ which is zero by assumption. We deduce that $(X,q)$ canonically refines to an element of $\L_n^{n,n+1}(R;\QF^\gev_M)$. Likewise, an $(n+1)$-dimensional even pair gives rise to a genuine even structure on the associated cobordism. Reversing the above argument, we deduce that the map
\[
\L^{\short,\ev}_n(R;M) \lto \L_n^{n,n+1}(R;\QF^\gev_M)
\]
is an isomorphism. Combining this with the isomorphism
\[
\L^{n,n+1}_n(R;\QF^\gev_M) \lto \L_n(R;\QF^\gev_M)
\]
obtained from Proposition~\refthree{proposition:surgery-quadratic} just as Corollary~\refthree{corollary:surgery-genuine-symmetric}, the theorem follows.
\end{proof}

\begin{remark}
In~\cite{RanickiATS1} Ranicki also defines, for \(n \geq 0\), the \(n\)'th \emph{quadratic \(\L\)-group}, using quadratic Poincaré complexes of dimension \(n\). The same argument as above shows that this group coincides with \(\L^{n,n+1}_n(R;\QF^{\qdr}_{M})\), and hence with \(\L_n(R;\QF^{\qdr}_{M})\) by Proposition~\refthree{proposition:surgery-quadratic}. As shown in~\cite{RanickiATS1}, these are also the same as the quadratic \(\L\)-groups of Wall~\cite{wall}, which arise in manifold theory as the natural recipient of surgery obstructions. We warn the reader that these groups do not agree with $\L_n(R;\QF^{\gq}_{M})$, which by periodicity are isomorphic to $\L_{n-4}(R;\QF^{\gs}_{M})$.
\end{remark}

\subsubsection*{Surgery for connective ring spectra}

In this section we apply the surgery arguments previously developed to the case where $R$ is replaced with a connective ring spectrum \(A\) (that is, a connective $\Eone$-algebra in the monoidal $\infty$-category of spectra). The symbol $\otimes$ here consequently refers to the tensor product over the sphere spectrum. The perfect derived category \(\Dperf(R)\) is then replaced with the stable $\infty$-category $\Mod_A^\omega$ of compact $A$-module spectra. 
We call a compact \(A\)-module \emph{projective} if it is a retract of \(A^n\) for some \(n\), and write \(\Proj(A) \subseteq \Modp{A}\) for the full subcategory spanned by the projective \(A\)-modules.
We now fix for the remainder of this section a Poincaré structure on \(\Modp{A}\). The duality \(\Dual\) underlying $\QF$ on $\Mod_A^\omega$ is then induced by an invertible module with involution $M$ as in Definition~\refone{definition:module-with-involution}, which we assume to be projective in either of its two $A$-module structures, so that 
\(\Dual\) preserves $\Proj(A)$. We refer to \S\refone{section:modules} for a complete treatment of Poincaré structures on  categories of module spectra.

As in Definition~\refthree{definition:r-sym}, we say that $\QF$ is $m$-quadratic if the cofibre of the map 
\[
\QF^\qdr_M(X)=\hom_{A\otimes A}(X\otimes X,M)_{\hC} \lto \QF(X)
\]
sends $A$, or equivalently the full subcategory $\Proj(A) \subseteq \Modp{A}$, to $m$-connective spectra. 
For \(k \in \ZZ\) we say that an \(A\)-module \(X\) is \(k\)-connective if its underlying spectrum is \(k\)-connective, that is, has vanishing homotopy groups in degrees \(< k\). With this notion of connectivity, the definition of $\L_n^{a,b}(A;\QF)$ from~\refthree{definition:connective-L} carries over verbatim to this more general situation. Proposition~\refthree{proposition:surgery-quadratic} does as well:

\begin{proposition}[Surgery for connective ring spectra]
\label{proposition:surgery-quadratic-Eone}%
Suppose that \(\QF\) is $m$-quadratic. 
Fix an $n\in \ZZ$ and let \(a,b \geq 0\) be two non-negative integers with $b\geq a-1$, and such that $n+a$ and $n+1+b$ are even.
\begin{enumerate}
\item
\label{item:surgery-ring-1}%
If $a \geq n-2m$ then any Poincaré object in \((\Modp{A},\QF\qshift{-n})\) is cobordant to one which is \(\big(\tfrac{-n-a}{2}\big)\)-connective.
\item
\label{item:surgery-ring-2}%
If $b \geq n-2m+1$ then any Lagrangian \(L \to X\) of a \(\big(\tfrac{-n-a}{2}\big)\)-connective Poincaré object \((X,q) \in \Poinc(\Modp{A},\QF\qshift{-n})\) is cobordant relative to \(X\) to a Lagrangian \(L'\to X\) such that both \(L'\) and \(\fib[L'\to X]\) are \(\big(\tfrac{-n-1-b}{2}\big)\)-connective. 
\end{enumerate}
In particular, if both inequalities of \refthreeitem{item:surgery-ring-1} and \refthreeitem{item:surgery-ring-2} hold, the monoid $\L^{a,b}_n(A;\QF)$ is a group and the canonical map $\L^{a,b}_n(A;\QF) \to \L_n(A;\QF)$ is an isomorphism.
\end{proposition}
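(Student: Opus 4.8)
The plan is to observe that the proof of Proposition~\refthree{proposition:surgery-quadratic} is essentially insensitive to whether the base is a discrete ring or a connective $\Eone$-ring spectrum, and to re-run it in $\Modp{A}$, flagging the one place where the chain-complex model of $\Dperf(R)$ must be replaced by a purely homotopical input. The surgery machinery itself---algebraic surgery of Proposition~\refthree{proposition:naive-surgery}, Lagrangian surgery of Construction~\refthree{construction:lagrangian-surgery}, and the surgery diagram~\eqrefthree{equation:surgery-diagram}---is formulated for arbitrary Poincaré $\infty$-categories, so it applies to $(\Modp{A},\QF)$ without change. Likewise the definition of $\L^{a,b}_n(A;\QF)$ and the reductions of Remark~\refthree{remark:connectivity} are formal and carry over.

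First I would record the analogue of the connectivity estimate of Lemma~\refthree{lemma:connectivity-2}: for $P \in \Proj(A)$ and $k \in \ZZ$ the spectrum $\QF(P[k])$ is $\min(-2k,m-k)$-connective. The argument is verbatim: by $m$-quadraticity the cofibre of $\QF^{\qdr}_M(P[k]) \to \QF(P[k])$ is $(m-k)$-connective, using that $\Dual$ preserves $\Proj(A)$ so that $\Dual(P[k])$ is $(-k)$-connective; while $\QF^{\qdr}_M(P[k]) = (\hom_{A \otimes A}(P \otimes P, M)[-2k])_{\hC}$ is $(-2k)$-connective because $P \otimes P$ is a projective $A\otimes A$-module, $M$ is connective, and homotopy orbits preserve connectivity.

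The one genuinely new ingredient replaces the appeal to Lemma~\refthree{lemma:range}. I would use the elementary fact that if $Y \in \Modp{A}$ is $k$-connective then $\pi_k Y$ is a finitely generated $\pi_0 A$-module---because $Y$ is perfect and $A$ is connective---so that lifting a finite set of generators produces a finitely generated projective $A$-module $P$ together with a map $f\colon P[k] \to Y$ surjective on $\pi_k$. Applied with $Y = X$ in part~\refthreeitem{item:surgery-ring-1}, and with $Y = N = \fib(L \to X)$ in part~\refthreeitem{item:surgery-ring-2}, this furnishes exactly the surgery data used in the proof of Proposition~\refthree{proposition:surgery-quadratic}: the restriction of the (shifted) form to $P[k]$, respectively $P[l]$, is nullhomotopic by the connectivity estimate above, one performs (Lagrangian) surgery, and one computes the homotopy groups of the output from~\eqrefthree{equation:surgery-diagram} exactly as in that proof---with the homology groups $\mathrm{H}_i$ there now read as homotopy groups $\pi_i$---descending inductively on connectivity. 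The concluding assertion, that $\L^{a,b}_n(A;\QF)$ is a group (via the diagonal Lagrangian) and that the comparison map $\L^{a,b}_n(A;\QF) \to \L_n(A;\QF)$ is an isomorphism, then follows formally from parts~\refthreeitem{item:surgery-ring-1} and~\refthreeitem{item:surgery-ring-2} as before.

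I do not anticipate a serious obstacle here; the proof is a transcription. The only point that genuinely uses the connective hypothesis on $A$, rather than allowing an arbitrary ring spectrum, is the finite generation of the bottom homotopy group of a connective perfect module in the third step---everything else, including the surgery constructions, the connectivity bookkeeping, and the group-theoretic formalities, is identical to the discrete case.
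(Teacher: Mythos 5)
Your proposal matches the paper's proof essentially verbatim: it isolates the same two issues (porting the connectivity estimate of Lemma~\refthree{lemma:connectivity-2}, and replacing the use of Lemma~\refthree{lemma:range} by the observation that the bottom homotopy group of a compact connective $A$-module is finitely generated, cf.\ Lemma~\refthree{lemma:finitely-generated}) and correctly observes that all other ingredients are already stated for general Poincaré $\infty$-categories. No further comments are needed.
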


The proof of Proposition~\refthree{proposition:surgery-quadratic-Eone} will use the following standard fact.
\begin{lemma}
\label{lemma:finitely-generated}%
Let \(X\) be a compact connective \(A\)-module. Then \(\pi_0 X\) is finitely presented as a \(\pi_0 A\)-module. In particular, there exists a map \(A^n \to X\) whose cofibre is \(1\)-connective.
\end{lemma}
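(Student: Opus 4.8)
The plan is to reduce everything to the following sub-claim, which I would prove first: for any compact connective $A$-module $Y$, the $\pi_0 A$-module $\pi_0 Y$ is finitely generated. To establish it, write $\pi_0 Y$ as the filtered colimit $\colim_i P_i$ of its finitely generated $\pi_0 A$-submodules, and regard each $P_i$, as well as $\pi_0 Y$ itself, as a discrete $A$-module via restriction of scalars along the ring map $A \to \pi_0 A$. Since filtered colimits in $\Mod_A$ are computed on underlying spectra, the natural map $\colim_i P_i \to \pi_0 Y$ formed in $\Mod_A$ is an equivalence, so the truncation map $Y \to \pi_0 Y$ may be regarded as a map $Y \to \colim_i P_i$. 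As $Y$ is compact, this map factors through some $P_i$, say as $Y \xrightarrow{g} P_i \xrightarrow{h} \pi_0 Y$. Applying $\pi_0$, the composite $\pi_0 Y \xrightarrow{\pi_0 g} P_i \xrightarrow{\pi_0 h} \pi_0 Y$ is the identity; hence $\pi_0 h$ is a split epimorphism and $\pi_0 Y$ is a quotient of the finitely generated module $P_i$, thus finitely generated.

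Granting the sub-claim, I would finish as follows. Apply it to $Y = X$: a choice of generators of $\pi_0 X$ over $\pi_0 A$ produces a map $f \colon A^n \to X$ which is surjective on $\pi_0$. Let $C = \cof(f)$. Since $A^n$ and $X$ are connective, so is $C$, and the long exact sequence on homotopy groups together with the surjectivity of $\pi_0(f)$ gives $\pi_0 C = 0$, so $C$ is $1$-connective; this is already the ``in particular'' assertion. Furthermore $C$ is compact, being the cofibre of a map of compact objects, so $C[-1]$ is again a compact connective $A$-module, and the sub-claim applied to it shows that $\pi_1 C = \pi_0(C[-1])$ is finitely generated over $\pi_0 A$. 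Feeding this into the exact portion $\pi_1 C \to (\pi_0 A)^n \to \pi_0 X \to 0$ of the long exact sequence of $f$ exhibits $\pi_0 X$ as the cokernel of a map between finitely generated $\pi_0 A$-modules, hence as a finitely presented $\pi_0 A$-module.

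The one delicate point is the bookkeeping inside the sub-claim: one must check that restriction of scalars along $A \to \pi_0 A$ carries the discrete $\pi_0 A$-modules $P_i$ and $\pi_0 Y$ to the corresponding discrete $A$-modules and commutes with the filtered colimit in question — both facts follow from the observation that this functor does not change underlying spectra and that a filtered colimit of discrete connective modules is again discrete. Beyond that, the proof is entirely formal, using only the long exact sequence on homotopy groups, the connectivity of $A$, and the stability of compactness under finite colimits and suspension; no input beyond the basic theory of modules over a connective $\Eone$-ring spectrum is required.
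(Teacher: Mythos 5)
Your proof is correct, but it follows a genuinely different route from the paper's. The paper base-changes along $A \to \pi_0 A$: the module $\pi_0 A \otimes_A X$ is a perfect connective $\pi_0 A$-module with $\pi_0(\pi_0 A \otimes_A X) \cong \pi_0 X$, so the claim reduces to the case of a discrete ring, where it follows from the classical fact that a perfect connective complex over a discrete ring can be represented by a finite complex of finitely generated projectives in non-negative degrees (cf.\ Lemma~\refthree{lemma:range}). You instead work directly over $A$ with no base change: you first establish finite generation of $\pi_0$ of any compact connective $A$-module $Y$ by writing $\pi_0 Y$ as a filtered colimit of finitely generated submodules (regarded as discrete $A$-modules) and using compactness to factor the truncation map $Y \to \pi_0 Y$ through a finite stage, and then you bootstrap: the cofibre $C$ of the resulting $A^n \to X$ is compact and $1$-connective, so $C[-1]$ is again compact connective, hence $\pi_1 C$ is finitely generated, and the long exact sequence exhibits $\pi_0 X$ as finitely presented. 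Your approach avoids both the base-change step and the appeal to the structure theory of perfect complexes over a discrete ring, at the cost of making the compactness argument explicit and adding a bootstrap; the paper's version is more concise but imports that classical input.
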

\begin{proof}
We note that $\pi_0(A) \otimes_A X$ is a connective and perfect $\pi_0(A)$-module with $\pi_0(\pi_0(A) \otimes_A X) \cong \pi_0(X)$. We may therefore assume that $A$ is discrete, in which case it follows from the fact that connective and perfect modules over a discrete ring can be represented by finite chain complexes of finitely generated projective $A$-modules concentrated in non-negative degrees, see also Lemma~\refthree{lemma:range}.
\end{proof}

\begin{proof}[Proof of Proposition~\refthree{proposition:surgery-quadratic-Eone}]
We first note that the statement of Lemma~\refthree{lemma:connectivity-2}, which is used in the proof of Proposition~\refthree{proposition:surgery-quadratic}, holds in the present setting with the exact same proof, where \(P[k]\) is understood as the \(k\)-fold suspension \(\Sig^k P \in \Modp{A}\) of a projective \(A\)-module \(P\). All the other results referred to in the proof of Proposition~\refthree{proposition:surgery-quadratic} are formulated for a general Poincaré \(\infty\)-category 
except for Lemma~\refthree{lemma:range}, which is used solely for the following purpose: to show that if \(X\) is \(k\)-connective then there exists a projective module \(P\) and a map \(P[k] \to X\) whose cofibre is \((k+1)\)-connective. This last statement holds in the present setting as well (taking again \(P[k]\) to mean the \(k\)-fold suspension of \(P\)) by Lemma~\refthree{lemma:finitely-generated}. 
Having taken care of all these preliminaries, the proof of Proposition~\refthree{proposition:surgery-quadratic-Eone} now proceeds verbatim as that of Proposition~\refthree{proposition:surgery-quadratic}.
\end{proof}

\begin{remark}
\label{remark:weight}%
Using Remark~\refthree{lemma:range} we observed in Remark~\refthree{remark:intervalsLab} that Poincaré objects satisfying the connectivity requirements for \(\L_n^{a,b}(A;\QF)\) are actually representable by complexes concentrated in certain degrees, and similarly for Lagrangians. We may recognize some aspects of this behaviour in the present more general setting:
\begin{enumerate}
\item
If \(X \in \Modp{A}\) is projective then so is \(\Dual X\), and in particular both \(X\) and \(\Dual X\) are connective. Conversely, if both \(X\) and \(\Dual X\) are connective then \(X\) is projective. To see this, let 
\[
A^n \to X \xrightarrow{f} Y
\]
be a fibre sequence such that \(Y\) 1-connective (such a sequence exists by Lemma~\refthree{lemma:finitely-generated}). Now the condition that \(M\) is an invertible \(A \otimes_{\SS} A\)-module implies that \(\map_A(X,Y) \simeq \map_A(M,Y) \otimes_A \Dual X\), where we consider \(\map_A(M,Y)\) as a right \(A\)-module via pre-composition by the second \(A\)-action on \(M\). Since \(\Dual X\) is connective and \(Y\) is 1-connective we conclude that \(\map_A(X,Y)\) is 1-connective, and hence the map \(f\) is null-homotopic. It follows that the map \(A^n \to X\) admits a section up to homotopy, so that \(X\) is a retract of \(A^n\). More generally, if \(X\) is \(k\)-connective and \(\Dual X\) is \((-k)\)-connective then \(X\) is the \(k\)-fold suspension of a projective module, and vice versa.
\item
If \(X\) is connective and \(\Dual(X)\) is \((-1)\)-connective then \(X\) might fail to be projective, but only mildly so: in this case, we can write \(X\) as a cofibre of a map \(P \to Q\) between two projective \(A\)-module spectra. If we compare with the situation of perfect derived categories of discrete rings, this property corresponds to being a representable by a complex of projectives concentrated in degrees \([0,1]\). To see that this holds in our setting let us again consider a fibre sequence as above 
with \(Y\) 1-connective. Then \(P := \Om Y\) is connective and \(\Dual_M P\) is the cofibre of the dual map \(\Dual_M X \to \Dual_M A^n = M^n\) and is hence connective. It then follows by the previous point that \(P\) is projective, and so \(X\) is the cofibre of a map \(P \to A^n\) between projective modules, as claimed. More generally, if \(X\) is \(k\)-connective and \(\Dual X\) is \((-k-1)\)-connective then \(X\) is the \(k\)-fold suspension of the cofibre of a map of projective modules, and vice versa.
\end{enumerate}
We spell out these two points since we will make use of them just below, but let us point out that these are just two steps in an exhaustive filtration on connective \(A\)-module spectra obtained by requiring weaker and weaker connectivity conditions on the duals (or more generally, a bi-indexed filtration on all of \(\Modp{A}\)). This can be conveniently axiomatised in the framework of \emph{weight structures}, see~\cite{comparison}*{\S 3} for a treatment in the setting of Poincaré \(\infty\)-categories.
\end{remark}

Using surgery methods we now obtain the following result, see also \cite{Lurie-L-theory}*{Lecture 14} for a proof of the algebraic $\pi$-$\pi$-theorem, Corollary~\refthree{corollary:pi-pi}\refthreeitem{item:cor-pi-pi-one} below. We say that a map of spectra is $k$-connective for some $k\in\ZZ$ if its fibre is. As in~\paperone and~\papertwo, we write $\Lin_\QF$ for the linear part of a quadratic functor $\QF$.

\begin{proposition}
\label{proposition:pi-pi}%
Fox \(m,p \in \ZZ\). Let $f\colon A\to B$ be a \(1\)-connective map of connective ring spectra, \(M,N\) projective invertible modules with involution over \(A,B\) respectively, and $\QF_M$ and $\QF_N$ Poincaré structures on $\Mod_A^\omega$ and $\Mod_B^\omega$ which are \(m\)-quadratic with respect to \(M,N\), respectively. Suppose given a refinement of the extension of scalars functor $f_!$ to a Poincaré functor $(\Mod_A^\omega,\QF_M) \to (\Mod_B^\omega,\QF_N)$ such that the induced map \(M = \Bil_{\QF_M}(A,A) \to \Bil_{\QF_N}(B,B) = N\) is \(1\)-connective and the induced map
$\Lin_{\QF_M}(A) \to \Lin_{\QF_N}(B)$ is $p$-connective. Then the map
\begin{equation}
\label{equation:pi-pi-map}%
\L_n(A;\QF_M) \lto \L_n(B;\QF_N) 
\end{equation}
is an isomorphism for $n\leq \min(2p-1,2m)$ and surjective for $n=\min(2p,2m+1)$.
\end{proposition}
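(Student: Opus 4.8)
The plan is to run the same strategy as in the proof of Corollary~\refthree{corollary:genuine-is-quadratic}, but now comparing two different connective rings: use $m$-quadratic surgery to represent $\L$-classes by Poincaré objects and Lagrangians of minimal width, lift their underlying objects and maps from $B$ to $A$ along $f$, and then lift the hermitian data using the connectivity hypotheses on $M \to N$ and on $\Lin_{\QF_M}(A) \to \Lin_{\QF_N}(B)$. The recurring point is that a $1$-connective map of connective ring spectra induces an isomorphism $\pi_0 A \xrightarrow{\simeq} \pi_0 B$, which has three consequences I will use repeatedly: every projective $B$-module is of the form $f_!P$ for a projective $A$-module $P$ (projectives correspond to idempotents over $\pi_0$, which lift to the $\infty$-categorical level); the maps $\map_A(P,Q) \to \map_B(f_!P, f_!Q)$ are $1$-connective for $P,Q$ projective (because $Q \to Q\otimes_A B$ is $1$-connective and $\map_A(P,-)$ preserves this); and any map of compact $A$-modules which becomes an equivalence after $f_!$ is already an equivalence (its cofibre $C$ is compact with $C\otimes_A B \simeq 0$, and the lowest nonvanishing homotopy group of $C$ would survive base change since $\pi_0 A \cong \pi_0 B$). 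Fix $n$ and, as in the earlier surgery arguments, pick $a,b\in\{0,1\}$ with $n+a$ and $n+1+b$ even; in the ranges under consideration these satisfy the numerical hypotheses of Proposition~\refthree{proposition:surgery-quadratic-Eone}, so by that proposition together with Remark~\refthree{remark:weight} every $\L$-class over $A$ or $B$ is represented by a $\big(\tfrac{-n-a}{2}\big)$-connective Poincaré object built from projective cells in $a+1$ consecutive degrees, and it vanishes precisely when it admits a Lagrangian $L\to X$ with $L$ and $\fib(L\to X)$ of connectivity $\tfrac{-n-1-b}{2}$, built from cells in $b+1$ consecutive degrees.

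For surjectivity of \eqrefthree{equation:pi-pi-map} I would take such a representative $(Y,q)$ over $B$, lift its cells (projective modules, and the single attaching map when $a=1$) to $A$ to obtain a compact $A$-module $X$ with $f_!X\simeq Y$, and then lift $q$ along $\pi_n\QF_M(X) \to \pi_n\QF_N(f_!X)$. The essential computation is the connectivity of the fibre of $\eta_X\colon \QF_M(X)\to \QF_N(f_!X)$: using the natural fibre sequence $\Bil_{\QF}(X,X)_{\hC} \to \QF(X) \to \Lin_{\QF}(X)$ expressing a reduced quadratic functor through its linear part and the homotopy orbits of its bilinear part, the fibre of $\eta_X$ sits in a fibre sequence whose outer terms are $\hom_{A\otimes A}(X\otimes X, \fib(M\to N))_{\hC}$ and $\fib(\Lin_{\QF_M}(X) \to \Lin_{\QF_N}(f_!X))$. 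Since $\fib(M\to N)$ is $1$-connective and $\fib(\Lin_{\QF_M}(A)\to\Lin_{\QF_N}(B))$ is $p$-connective, and since $X$, hence $X\otimes X$, is built from cells in an explicit band of degrees determined by $\tfrac{-n-a}{2}$ and $a$, these outer terms are $n$-connective exactly in the claimed range; then $q$ lifts to $\tilde q\in\pi_n\QF_M(X)$, and $\tilde q$ is automatically Poincaré because $\tilde q_\sharp$ becomes an equivalence after $f_!$ (the Poincaré functor condition gives $f_!\Dual_M X \simeq \Dual_N f_!X$). Thus $[X,\tilde q]$ maps to $[Y,q]$ and surjectivity holds.

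For injectivity I would start with a $\big(\tfrac{-n-a}{2}\big)$-connective Poincaré object $(X,q)$ over $A$ whose image over $B$ is null-cobordant, apply surgery over $B$ to obtain a width-$b$ Lagrangian $L\to f_!X$, lift $L$ and the map $L\to f_!X$ to a map $\tilde L\to X$ over $A$ (again using the $\pi_0$-isomorphism and a connectivity estimate on $\map_A(\tilde L,X)\to\map_A(\tilde L, f_!X)$), and finally lift the nullhomotopy $\eta$ of $q|_{\tilde L}$; this last step requires $\fib(\QF_M(\tilde L)\to\QF_N(f_!\tilde L))$ to be $(n+1)$-connective, which the same bilinear/linear analysis provides precisely in the claimed range, and non-degeneracy of the resulting triple follows once more by base change. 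Chasing the parity of $n$ and the forced values of $a$ and $b$ through these estimates yields exactly $n\leq \min(2p-1,2m)$ for bijectivity and $n=\min(2p,2m+1)$ for surjectivity. The main obstacle is precisely this bookkeeping: one must interlock the width bounds produced by $m$-quadratic surgery with the connectivity of $\fib(\eta_X)$ expressed through the connectivities of $M\to N$ and $\Lin_{\QF_M}(A)\to\Lin_{\QF_N}(B)$, and keep the two parities of $n$ separate because the optimal widths $a,b$ differ between them; by contrast the transport of non-degeneracy along $f$ is painless, being a direct consequence of $\pi_0 A\cong\pi_0 B$.
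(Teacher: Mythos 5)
Your proposal is correct and runs essentially the same argument as the paper: you decompose the fibre of $\QF_M(X)\to\QF_N(f_!X)$ into a bilinear and a linear piece via $\QF^{\qdr}\to\QF\to\Lin_{\QF}$ (the paper records the resulting connectivity as $\min(2r+1,\,r+p)$ when $\Dual X$ is $r$-connective), reduce via $m$-quadratic surgery (Proposition~\refthree{proposition:surgery-quadratic-Eone} together with Remark~\refthree{remark:weight}) to Poincaré objects and Lagrangians of minimal width, and lift cells and maps along the $\pi_0$-isomorphism $\pi_0 A\cong\pi_0 B$ using Remark~\refthree{remark:homotopy-category-Proj}. The one place you are a bit schematic is the lift of the map $L\to f_!X$ in the injectivity step: the paper factors this map as $L\to U\to f_!Q\to f_!X$ after presenting $L=\fib(U\to V)$ and $X=\cof(P\to Q)$ with $U,V,P,Q$ (shifts of) projectives, so that the only lift needed is of a single map between projectives, whereas you propose estimating $\map_A(\tilde L,X)\to\map_B(f_!\tilde L,f_!X)$ directly; your estimate does go through (the forced parity $a+b=1$ makes this fibre $1$-connective), but the factorization through projectives is somewhat less delicate and is also the reason the paper explicitly picks $P=0$ or $V=0$ depending on the parity of $n$.
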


Taking \(f\) to be the identity of $A$ and \(\QF_M\) to be \(\QF_M^{\qdr}\), we recover a generalisation of Corollary~\refthree{corollary:genuine-is-quadratic} to the present setting:
\begin{corollary}
\label{corollary:genuine-is-quadratic-Eone}%
Let \(\QF_M\) be a Poincaré structure on \(\Modp{A}\) which is $m$-quadratic with respect to a projective invertible module with involution \(M\). Then the canonical map 
\[
\L_n^\qdr(A;M) = \L_n(A;\QF^\qdr_M) \lto \L_n(A;\QF_M)
\]
is an isomorphism for \(n \leq 2m-3\) and surjective for \(n = 2m-2\). \end{corollary}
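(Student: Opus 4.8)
The plan is to obtain this as a direct specialisation of Proposition~\ref{proposition:pi-pi}. Concretely, I would apply that proposition to the identity map $f = \id_A$, with source Poincaré structure $\QF^{\qdr}_M$ on $\Modp{A}$ and target Poincaré structure $\QF_M$, so that in the notation of Proposition~\ref{proposition:pi-pi} one has $B = A$ and $N = M$. The required refinement of $f_! = \id$ to a Poincaré functor $(\Modp{A},\QF^{\qdr}_M) \to (\Modp{A},\QF_M)$ is the canonical natural transformation $\QF^{\qdr}_M \to \QF_M$ — the very map used in the definition of ``$m$-quadratic'' — which is a Poincaré morphism since both structures have underlying duality $\Dual = \hom_A(-,M)$ and the induced comparison of dualities is the identity. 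Both structures are $m$-quadratic: $\QF^{\qdr}_M$ is $m'$-quadratic for every $m'$ since the cofibre of its identity map vanishes, and $\QF_M$ is $m$-quadratic by hypothesis.

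Next I would verify the two connectivity hypotheses of Proposition~\ref{proposition:pi-pi}. On bilinear parts the induced map $M \simeq \Bil_{\QF^{\qdr}_M}(A,A) \to \Bil_{\QF_M}(A,A) \simeq M$ is the identity, hence $1$-connective, as required. For the linear parts I would use the natural fibre sequence $\Bil_\QF(X,X)_{\hC} \to \QF(X) \to \Lin_\QF(X)$ associated to a quadratic functor, see~\cite{Part-one}: for $\QF = \QF^{\qdr}_M$ this forces $\Lin_{\QF^{\qdr}_M} = 0$, and applying it to the transformation $\QF^{\qdr}_M \to \QF_M$ (which is the identity on bilinear parts) identifies $\Lin_{\QF_M}(A)$ with $\cof(\QF^{\qdr}_M(A) \to \QF_M(A))$, which is $m$-connective precisely because $\QF_M$ is $m$-quadratic. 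Hence the fibre of $\Lin_{\QF^{\qdr}_M}(A) \to \Lin_{\QF_M}(A)$ is $\Om$ of an $m$-connective spectrum, i.e.\ the map is $(m-1)$-connective, and one may take $p = m-1$ in Proposition~\ref{proposition:pi-pi}.

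With $p = m-1$, Proposition~\ref{proposition:pi-pi} then gives that $\L_n(A;\QF^{\qdr}_M) \to \L_n(A;\QF_M)$ is an isomorphism for $n \leq \min(2p-1, 2m) = 2m-3$ and surjective for $n = \min(2p, 2m+1) = 2m-2$, which is exactly the assertion. The substantive content sits entirely in Proposition~\ref{proposition:pi-pi}; the main (modest) obstacle is simply the bookkeeping needed to pin down the connectivity of the map on linear parts so that the numerics $2m-3$ and $2m-2$ come out on the nose, in parallel with the discrete case of Corollary~\ref{corollary:genuine-is-quadratic}.
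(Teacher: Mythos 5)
Your proof is correct and takes exactly the route the paper takes: the paper states that Corollary~\ref{corollary:genuine-is-quadratic-Eone} follows by specialising Proposition~\ref{proposition:pi-pi} to $f = \id_A$ with source $\QF^\qdr_M$ and target the given $m$-quadratic $\QF_M$. The only content beyond that specialisation is pinning down the connectivity $p$ of the map on linear parts, which the paper leaves implicit and you have spelled out correctly: $\Lin_{\QF^\qdr_M} = 0$, $\Lin_{\QF_M}(A) \simeq \cof(\QF^\qdr_M(A) \to \QF_M(A))$ is $m$-connective by hypothesis, so $0 \to \Lin_{\QF_M}(A)$ has $(m-1)$-connective fibre $\Om\Lin_{\QF_M}(A)$, giving $p = m-1$ and hence the stated ranges $\min(2p-1,2m) = 2m-3$ and $\min(2p,2m+1) = 2m-2$.
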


\begin{example}
Corollary~\refthree{corollary:genuine-is-quadratic-Eone} can be applied for the universal Poincaré structure $\QF^\uni$ on $\Mod_\SS^\omega$ from Example~\refone{example:universal-category} which is 0-quadratic as its linear part is given by \(\map(-,\SS)\), to obtain that $\L^\qdr_n(\SS) \to \L^\uni_n(\SS)$ is an equivalence in degrees $\leq -3$. Interestingly, since $\SS^{\tC}\simeq \SS^\cwedge_2$ is connective by Lin's Theorem \cite{lin}, the symmetric Poincaré structure $\QF^\s$ on $\Mod_\SS^\omega$ is also $0$-quadratic.
Thus, the map $\L^\qdr_n(\SS) \to \L^\s_n(\SS)$ is also an isomorphism for $n\leq -3$.
Both of these observations also follow  from work of Weiss-Williams \cite{WWIII} who give an explicit formula for the cofibre of the maps in question. 
\end{example}

\begin{remark}
\label{remark:homotopy-category-Proj}%
In the proof of Proposition~\refthree{proposition:pi-pi}, we will frequently make use of the following fact \cite{HA}*{Corollary 7.2.2.19}. Namely, for a map $f\colon A \to B$ between connective ring spectra which induces an isomorphism on $\pi_0$, in particular for $1$-connective $f$ as in Proposition~\refthree{proposition:pi-pi}, the extension of scalars functor $f_! \colon \Proj(A) \to \Proj(B)$ induces an equivalence on homotopy categories. In particular, we can lift (uniquely up to homotopy) projective $B$-modules and maps between such to projective $A$-modules and maps between such. Consequently, we can also lift perfect $B$-modules presented as (co)fibres of maps between projective $B$-modules to perfect $A$-modules of the same kind.
\end{remark}

\begin{proof}[Proof of Proposition~\refthree{proposition:pi-pi}]
First, we show that for an \(A\)-module spectrum $X \in \Modp{A}$ such that $\Dual X$ is $r$-connective the map $\QF_M(X) \to \QF_N(f_!X)$ is $\min(2r+1,r+p)$-connective.
To see this, let us consider the diagram of horizontal cofibre sequences
\[
\begin{tikzcd}
\QF_M^\qdr(X) \ar[r] \ar[d] & \QF_M(X) \ar[r] \ar[d] & \Lin_{\QF_M}(X)[r] \ar[d] \\
\QF_N^\qdr(f_!X) \ar[r] & \QF_N(f_!X) \ar[r] & \Lin_{\QF_N}(f_!X)[r].
\end{tikzcd}
\]
Then the connectivity assumptions on the maps induced on bilinear and linear parts imply that the left vertical map is $(2r+1)$-connective and that the right vertical map is $(r+p)$-connective, respectively, and so the claim follows. In particular, the map $\QF_M(X) \to \QF_N(f_!X)$ is \((2r+1)\)-connective when $r \leq p-1$, and \(2r\)-connective when \(r=p\).

Let us now fix an \(n \leq \min(2p,2m+1)\) and let \(a \in \{0,1\}\) be such that \(n+a\) is even. Then \(a \geq n-2m\) (where we note that if \(n=2m+1\) then \(a=1\)) and so by Proposition~\refthree{proposition:surgery-quadratic-Eone}~\refthreeitem{item:surgery-ring-1} every class in either \(\L_n(A;\QF_M)\) or \(\L_n(B;\QF_N)\) can be represented by a Poincaré object which is \(\big(\tfrac{-n-a}{2}\big)\)-connective. 
To prove the surjectivity part of the statement it will then suffice to show that the monoid map
\[
\pi_0\Poinc_{n}^{a}(A,\QF_M) \lto \pi_0\Poinc_{n}^{a}(B,\QF_N)
\]
is surjective. 
Now by Remark~\refthree{remark:weight} we have that if \(X \in \Modp{B}\) 
is a $(\frac{-n-a}{2})$-connective \(B\)-module such that \(\Dual X\) is \(\big(\frac{n-a}{2}\big)\)-connective, 
then the \(\big(\tfrac{n-a}{2}\big)\)-fold suspension of \(X\) is 
a projective module if \(a=0\) and the fibre of a map between projective modules if \(a=1\). 
By Remark~\refthree{remark:homotopy-category-Proj}
such \(B\)-modules can be lifted to \(A\)-modules of the same type, and so it will suffice to prove that for every $(\tfrac{-n-a}{2})$-connective \(A\)-module \(X \in \Modp{A}\) the map 
\[
\QF_M(X) \lto \QF_N(f_!X)
\]
is surjective on \(\pi_n\). Indeed, this holds by the argument in the beginning of the proof since \(\Dual X \simeq \Sig^n X\) is \(r\)-connective with \(r:= \tfrac{n-a}{2} \leq p\), and so the map in question 
is at least \(n\)-connective (where we note that \(a=0\) if \(r=p\)). 

To prove the injectivity of the map~\eqrefthree{equation:pi-pi-map}, let us now assume further that \(n \leq \min(2p-1,2m)\) and let \(b \in \{0,1\}\) be such that \(n+1+b\) is even (keeping \(a\) as above). 
Then \(b \geq n-2m+1\) (where we note that if \(n=2m\) then \(b=1\)) and so by Proposition~\refthree{proposition:surgery-quadratic-Eone}~\refthreeitem{item:surgery-ring-2} every \(\big(\tfrac{-n-a}{2}\big)\)-connective Poincaré object $(Y,q)$ in \((\Modp{B},\QF_N\qshift{-n})\) which admits a Lagrangian, also admits a Lagrangian \(L \to Y\) such that \(L\) and \(\fib[L \to Y]\) are \(\big(\tfrac{-n-1-b}{2}\big)\)-connective. 
Let us therefore assume that \((X,q)\) is a \((\frac{-n-a}{2})\)-connective Poincaré object in \((\Modp{A},\QF_M\qshift{-n})\) whose image in \((\Modp{B},\QF_N\qshift{-n})\) is equipped with a Lagrangian \(L \to f_!X\) such that \(L\) and $N := \fib(L \to  f_!X)$ are \(\big(\tfrac{-n-1-b}{2}\big)\)-connective. We will then show that \(L \to f_!(X)\) can be lifted to a Lagrangian of \((X,q)\) with respect to \(\QF_M\qshift{-n}\). Indeed, we first note that \(L\) is a \(\big(\tfrac{-n-1-b}{2}\big)\)-connective \(B\)-module whose dual \(\Dual L \simeq \Sig^{n+1}N\) is \(r' := \big(\tfrac{n+1-b}{2}\big)\)-connective, and so by Remark~\refthree{remark:weight} 
we may write $L = \fib(U \to V)$ for $U,V \in \Proj(B)[-r']$ and similarly we may write $X= \cof(P\to Q)$ for $P,Q \in \Proj(A)[-r']$. In addition, we may choose $P = 0$ if $a=0,b=1$ and $V = 0$ if $a=1,b=0$. 
It is a direct consequence of the projectivity of $P,Q,U$, and $V$ that the map $L \to f_!X)$ factors as the composition
\[
L \lto U \stackrel{\alpha}{\lto} f_!Q \lto f_!X.
\]
Using Remark~\refthree{remark:homotopy-category-Proj} we may lift $U \to V$ to a map $U' \to V'$,  and the map $\alpha$ to $\alpha' \colon U' \to Q$. We then define $L'$ as the fibre of $U'\to V'$ and therefore we lift the map $L \to f_!X$ to the composite $L' \to U' \to Q \to X$.
In order to extend this lift to a lift of the entire surgery datum we claim that the map
\[
\QF_M(L') \lto \QF_N(f_!L')
\]
is injective on \(\pi_n\) and surjective on \(\pi_{n+1}\). Indeed, this follows from the argument at the beginning of the proof since \(\Dual L \simeq \Sig^{n+1} N\) is \(r'\)-connective, and so the map \(\QF_M(L') \to \QF_N(f_!L')\) is 
at least \((n+1)\)-connective (where we note that \(b=0\) when \(r'=p\)).
\end{proof}

We can now apply Proposition~\refthree{proposition:pi-pi} to compare the L-spectra of connective ring spectra to the L-spectra of their $\pi_0$'s. Let $A$ be a connective ring spectrum and $M$ be a projective invertible module with involution over $A$. Associated to it, we have the Poincaré structures $\QF_M^\qdr$ and $\QF_M^\sym$ given by  
\[
\QF_M^\qdr(X) = \map_{A\otimes A}(X\otimes X,M)_{\hC} \quad \text{ and } \quad \QF_M^\sym(X) = \map_{A\otimes A}(X\otimes X,M)^{\hC}.
\]
In addition, we have the Poincaré structure $\QF_M^{\gs} = \QF_M^{\geq 0}$, see \refone{notation:genuine-zero-one-two-quadratic-functors} and for an $\Einf$-ring spectrum $B$, the Tate Poincaré structure $\QF^\tate_B$, see Example~\refone{example:tate-structure}. Explicitly, these are given by the following pullbacks
\[
\begin{tikzcd}
	\QF^{\gs}_M(X) \ar[r] \ar[d] & \map_A(X,\tau_{\geq 0}M^{\tC}) \ar[d] & & \QF^{\tate}_B(Y) \ar[r] \ar[d] & \map_B(Y,B) \ar[d] \\
	\QF^\sym_M(X) \ar[r] & \map_A(X,M^{\tC}) & & \QF^\sym_B(Y) \ar[r] & \map_B(Y,B^{\tC})
\end{tikzcd}
\]
where the right most vertical map is induced by the Tate-valued Frobenius $B\to B^{\tC}$. The maps $A \to \pi_0(A)$ and  $M \to \pi_0(M)$ induce canonical Poincaré functors $(\Mod_A^\omega,\QF_M) \to (\Mod_{\pi_0(A)}^\omega,\QF_{\pi_0(M)})$ for $\QF_M$ denoting both of the Poincaré structures $\QF^\qdr_M$ and $\QF^\gs_M$. 
Likewise, $B\to \pi_0(B)$ induces a canonical Poincaré functor $(\Mod_B^\omega,\QF^\tate_B) \to (\Mod_{\pi_0(B)}^\omega,\QF^\tate_{\pi_0(B)})$.

\begin{corollary}
\label{corollary:pi-pi}%
For a connective ring spectrum $A$, a projective invertible module with involution $M$ over $A$, and a connective $\Einf$-ring spectrum $B$, we have that:
\begin{enumerate}
\item
\label{item:cor-pi-pi-one}%
The map $\L^\qdr(A;M) \to \L^\qdr(\pi_0 A;\pi_0(M))$ is an equivalence. 
\item
\label{item:cor-pi-pi-two}%
The map $\L^\tate_n(B) \to \L^\tate_n(\pi_0(B))$ is an isomorphism for $n\leq 0$ and surjective for $n=1$.
\item
\label{item:cor-pi-pi-three}%
The map $\L^\gs_n(A;M) \to \L^\gs_n(\pi_0(A);\pi_0(M))$ is an isomorphism for $n\leq -3$ and surjective for $n=-2$. If the $\Ct$-action on the spectrum underlying $M$ is trivial, the map $\L^\gs_n(A;M) \to \L^\gs_n(\pi_0(A);\pi_0(M))$ is an isomorphism for $n \leq -1$ and surjective for $n=0$.
\end{enumerate}
\end{corollary}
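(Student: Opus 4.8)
The plan is to deduce all three statements from the algebraic $\pi$-$\pi$-theorem, Proposition~\refthree{proposition:pi-pi}, applied to the $1$-connective truncation maps $f\colon A\to\pi_0(A)$, respectively $B\to\pi_0(B)$, together with the canonical Poincaré functors recalled just before the statement. In every case the induced map on bilinear parts is $M\to\pi_0(M)$ (resp.\ $B\to\pi_0(B)$), whose fibre $\tau_{\geq 1}M$ (resp.\ $\tau_{\geq 1}B$) is $1$-connective, as required by Proposition~\refthree{proposition:pi-pi}. It then remains, for each Poincaré structure involved, to determine its quadraticity parameter $m$ and the connectivity $p$ of the induced map on linear parts, and to read off the asserted range from the conclusion of Proposition~\refthree{proposition:pi-pi}, that the map on $\L_n$ is an isomorphism for $n\leq\min(2p-1,2m)$ and surjective for $n=\min(2p,2m+1)$. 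Here we use repeatedly that for an $M$-compatible Poincaré structure $\QF$ there is a natural fibre sequence $\QF^{\qdr}_M(X)\to\QF(X)\to\Lin_{\QF}(X)$, so that ``$m$-quadratic'' means exactly that $\Lin_{\QF}(A)$ is $m$-connective, and that the map on linear parts is the map induced on the cofibres of $\QF^{\qdr}_M\to\QF$.

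For~\refthreeitem{item:cor-pi-pi-one} the structure $\QF^{\qdr}_M$ is purely quadratic — it is the homotopy orbits of its own bilinear part — so $\Lin_{\QF^{\qdr}_M}=0$; in particular it is $m$-quadratic for every $m$, and the induced map on linear parts is $0\to 0$, which is $p$-connective for every $p$. Taking $m$ and $p$ arbitrarily large, Proposition~\refthree{proposition:pi-pi} shows that $\L^{\qdr}_n(A;M)\to\L^{\qdr}_n(\pi_0A;\pi_0M)$ is an isomorphism in every degree, i.e.\ an equivalence of spectra.

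For~\refthreeitem{item:cor-pi-pi-two} and~\refthreeitem{item:cor-pi-pi-three} I would first identify the linear parts using the defining pullback squares. From the norm fibre sequence $\QF^{\qdr}_B(X)\to\QF^{\sym}_B(X)\to\map_B(X,B^{\tC})$ and the pullback defining $\QF^{\tate}_B$ one gets a fibre sequence $\QF^{\qdr}_B(X)\to\QF^{\tate}_B(X)\to\map_B(X,B)$, whence $\Lin_{\QF^{\tate}_B}(X)=\map_B(X,B)$, and since $B$ is connective $\QF^{\tate}_B$ is $0$-quadratic (and likewise over $\pi_0B$). The map on linear parts at $B$ is then $B\to\pi_0(B)$, which is $1$-connective, so $p=1$; with $m=0$, Proposition~\refthree{proposition:pi-pi} yields an isomorphism for $n\leq 0$ and surjectivity for $n=1$. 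Entirely analogously, the pullback defining $\QF^{\gs}_M$ together with the same norm sequence gives a fibre sequence $\QF^{\qdr}_M(X)\to\QF^{\gs}_M(X)\to\map_A(X,\tau_{\geq 0}M^{\tC})$, so $\Lin_{\QF^{\gs}_M}(X)=\map_A(X,\tau_{\geq 0}M^{\tC})$ and $\QF^{\gs}_M$ is $0$-quadratic; the map on linear parts at $A$ is $\tau_{\geq 0}M^{\tC}\to\tau_{\geq 0}(\pi_0M)^{\tC}$, whose fibre is automatically $(-1)$-connective since both source and target are connective. Taking $p=-1$ and $m=0$, Proposition~\refthree{proposition:pi-pi} gives an isomorphism for $n\leq -3$ and surjectivity for $n=-2$.

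The last point is the refinement when the $\Ct$-action on the spectrum underlying $M$ is trivial: here I claim the map $\tau_{\geq 0}M^{\tC}\to\tau_{\geq 0}(\pi_0M)^{\tC}$ is in fact $0$-connective, equivalently that $\pi_0 M^{\tC}\to\pi_0(\pi_0M)^{\tC}$ is surjective; granting this we may take $p=0$ and obtain an isomorphism for $n\leq -1$ and surjectivity for $n=0$. To see the surjectivity, I would use that a $\Ct$-spectrum $X$ with trivial action satisfies $X^{\hC}\simeq\map(\Bs\Ct_+,X)$, so that the collapse $\Bs\Ct_+\to\ast$ induces a natural ``constant'' map $X\to X^{\hC}$; applying this to the $\Ct$-equivariant map $M\to\pi_0(M)$ and post-composing with $(-)^{\hC}\to(-)^{\tC}$ produces a commuting square whose composite $\pi_0(M)\to\pi_0(\pi_0M)^{\tC}=\pi_0(M)/2\pi_0(M)$ along the top and right edges is the evident quotient map, hence surjective, forcing the bottom edge $\pi_0 M^{\tC}\to\pi_0(\pi_0M)^{\tC}$ to be surjective as well. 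The main obstacle I anticipate is precisely this connectivity analysis for $\QF^{\gs}_M$ — in particular pinning down that the triviality of the involution is exactly what upgrades $p$ from $-1$ to $0$; the rest is bookkeeping around Proposition~\refthree{proposition:pi-pi}.
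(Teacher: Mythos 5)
Your argument is correct and follows essentially the same route as the paper's proof: all three parts are deduced from Proposition~\refthree{proposition:pi-pi} by computing the quadraticity parameter $m$ and the connectivity $p$ of the map on linear parts for the relevant Poincaré structures, with the quadratic structure having zero linear part (so $m=p=\infty$), the Tate and genuine symmetric structures being $0$-quadratic with $p=1$ respectively $p=-1$ in general, and $p=0$ when the underlying $\Ct$-action on $M$ is trivial. Your treatment of the trivial-involution refinement differs from the paper's only in bookkeeping: the paper first uses the retract $M\to M^{\hC}\to M$ to show $\pi_0(M^{\hC})\to\pi_0((\pi_0M)^{\hC})$ is surjective and then chases a square comparing $\hC$ and $\tC$, whereas you collapse these two steps by composing the constant map with the canonical map $(-)^{\hC}\to(-)^{\tC}$ and chasing a single square with vertices $M$, $\pi_0M$, $M^{\tC}$, $(\pi_0M)^{\tC}$; the two arguments are equivalent, since the "constant'' map is precisely the section in the retract.
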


\begin{proof}

The claims follow from Proposition~\refthree{proposition:pi-pi}. For \refthreeitem{item:cor-pi-pi-one} this is because the quadratic Poincaré structure $\QF^\qdr_M$ is $m$-quadratic for every $m$, and it has trivial linear part. For \refthreeitem{item:cor-pi-pi-two} we use that $\QF_B^\tate$ is $0$-quadratic, and that the map induced on linear parts is $B \to \pi_0(B)$ which is 1-connective.
Similarly, for \refthreeitem{item:cor-pi-pi-three} we use that $\QF_M^\gs$ is $0$-quadratic, that $A\to \pi_0(A)$ is $1$-connective, and that the map on linear parts  $\tau_{\geq 0} M^{\tC} \to \tau_{\geq 0}(\pi_0(M)^{\tC})$ is $(-1)$-connective. If moreover the involution on $M$ is trivial, the map $\pi_0(M^{\hC})\to \pi_0((\pi_0M)^{\hC})$ is surjective, since in this case the target  is $\pi_0M$ and $M$ is a retract of $M^{\hC}$. Then by the commutativity of the diagram
\[
\begin{tikzcd}
	\pi_0(M^{\hC}) \ar[r] \ar[d] &\pi_0(M^{\tC}) \ar[d]  \\
	\pi_0((\pi_0M)^{\hC}) \ar[r] &\pi_0((\pi_0M)^{\tC})
\end{tikzcd}
\]
and the fact that the bottom horizontal map is surjective, it follows that the right vertical map is surjective and thus that  the map on linear parts  $\tau_{\geq 0} M^{\tC} \to \tau_{\geq 0}(\pi_0(M)^{\tC})$ is in fact $0$-connective.
\end{proof}

\begin{remark}
We note that $\SS^{2-2\sigma}\otimes \ko$ is an example of an invertible module with involution over $\ko$ where the map $(\SS^{2-2\sigma}\otimes \ko)^{\tC} \to (\pi_0(\SS^{2-2\sigma}\otimes\ko))^{\tC} \cong \ZZ^{\tC}$ is not $\pi_0$-surjective, so that the map on linear terms relevant for part~\refthreeitem{item:cor-pi-pi-three} of Corollary~\refthree{corollary:pi-pi} is indeed only $(-1)$-connective in general. To see this, we use that $\pi_*(\ko^{\tC}) = \ZZ^\cwedge_2[u^{\pm}]$ with $|u|=4$, so that $(\SS^{2-2\sigma}\otimes \ko)^{\tC} \simeq \Sigma^2 \ko^{\tC}$ has trivial $\pi_0$, whereas $\ZZ^{\tC}$ has non-trivial $\pi_0$. Another example is given by $\SS^{2-2\sigma}$ as an invertible module with involution over $\SS$.
\end{remark}

We finish this section with further examples related to the sphere spectrum.
\begin{example}
\label{example:pi-pi}%
First, we note that the universal Poincaré structure $\QF^\uni$ agrees with the Tate Poincaré structure $\QF^\tate$ on $\Mod_\SS^\omega$, so we obtain that $\L^\uni_n(\SS) \to \L^\tate_n(\ZZ)$ is an isomorphism for $n\leq 0$ and surjective for $n=1$. %

We may also consider the canonical Poincaré functor $(\Mod_\SS^\omega,\QF^\uni) \to (\Dperf(\ZZ),\QF^\gs_\ZZ)$. Both $\QF^\uni $ and $\QF^\gs_\ZZ$ are $0$-quadratic, and the map to investigate on linear parts in order to apply Proposition~\refthree{proposition:pi-pi} is given by $\SS \to \tau_{\geq 0} \ZZ^{\tC}$, which is 0-connective. We deduce from Proposition~\refthree{proposition:pi-pi} that the map $\L^\uni_n(\SS) \to \L_n^{\gs}(\ZZ)$ is an isomorphism for $n\leq -1$ and a surjection for $n=0$.
\end{example}

\subsection{Surgery for $r$-symmetric structures}
\label{subsection:surgery-symmetric}%

In this section, we prove a comparison result between genuine symmetric and symmetric L-theory. Algebraic surgery for symmetric Poincaré structures is not as straightforward as for the quadratic ones, and we will need to further assume that the base ring is left-coherent of finite left-global dimension. We recall that a ring is called left-coherent, if its finitely presented left modules form an abelian category. From what follows we omit the word \emph{left} from the notation, and stress here that in the present section, commutativity of $R$ is not needed. We note that Noetherian rings are coherent, and that a standard example of coherent but not necessarily Noetherian rings are valuation rings.

Let $R$ be a ring and $M$ an invertible $\ZZ$-module with involution over $R$. For an integer $d\geq0$, we recall that \(R\) has global dimension \(d\) if every \(R\)-module $N$ has a projective resolution of length at most \(d\). 
When \(R\) is in addition coherent, one can find such a resolution where the modules are moreover finitely generated, provided $N$ is finitely presented. 
In this case the connective cover functor \(\tau_{\geq 0}\) and the truncation functor \(\tau_{\leq 0}\) preserve perfect \(R\)-modules, and so \(\Dperf(R)\) inherits from \(\D(R)\) its Postnikov \(t\)-structure, so that \(\Dperf(R)_{\geq 0}\) consists of the \(0\)-connective perfect \(R\)-modules and \(\Dperf(R)_{\leq 0}\) of the \(0\)-truncated perfect \(R\)-modules. This uses that the lowest non-trivial homotopy group of a perfect complex is finitely presented. 
The duality
 \(\Dual\colon \Dperf(R)\op \to \Dperf(R)\) induced by $M$ interacts with the $t$-structure as follows:
\[
\Dual(\Dperf(R)_{\geq 0}) \subseteq \Dperf(R)_{\leq 0}
\quad\text{and}\quad
 \Dual(\Dperf(R)_{\leq 0}) \subseteq \Dperf(R)_{\geq -d}.
\]
The first inclusion is immediate from Remark~\refthree{remark:addendum}, and the second one follows from the Universal Coefficient spectral sequence computing $\mathrm{H}_\ast\map_R(X,M)$, since $\mathrm{Ext}_{R}^i=0$ for every $i\geq d+1$ as
 $R$ has global dimension $\leq d$.
 
We will cast the algebraic surgery argument for symmetric Poincaré structures in the setting of a general Poincaré \(\infty\)-category \((\C,\QF)\) equipped with a \(t\)-structure which interacts with the underlying duality \(\Dual\colon \C\op \to \C\) in the way described above, as we shall use this more general setup for a category of torsion modules, which is not itself given by modules over a discrete ring, in Theorem~\refthree{theorem:devissage} below. Similarly to Definition~\refthree{definition:r-sym} (see also Remark~\refthree{remark:connectivity}), we will say that the Poincaré structure \(\QF\) is \defi{\(r\)-symmetric} if the fibre of \(\QF(X) \to \QF^{\s}_{\Dual}(X)\) is \((-r)\)-truncated for every \(X \in \C_{\geq 0}\), where 
\[
\QF^{\s}_{\Dual}(X)=\map_{\C}(X,\Dual X)^{\hC}
\]
is the symmetric Poincaré structure associated to the duality $\Dual$. Given integers \(a,b \geq -1\) we define \(\Poinc_n^a(\C,\QF), \pM^{a,b}_n(\C,\QF)\) and \(\L^{a,b}_n(\C,\QF)\) as in Definition~\refthree{definition:connective-L}, where we interpret the connectivity requirement on Poincaré objects and Lagrangians as pertaining to the given \(t\)-structure on \(\C\). For a ring coherent ring $R$ of finite global dimension, $\L^{a,b}_n(\Dperf(R);\QF)$ coincides by definition with the earlier defined $\L^{a,b}_n(R;\QF)$. Moreover, as in Remark~\refthree{remark:diagonaladmissible}, $\L^{a,b}_n(\C,\QF)$ is a group if $b\geq a$.

\begin{proposition}[Surgery for \(r\)-symmetric Poincaré structures]
\label{proposition:surgery-global-dim}%
Let $\C$ be a stable $\infty$-category with a bounded  \(t\)-structure \(\C_{\geq 0},\C_{\leq 0}\). Let $\QF$ be an $r$-symmetric Poincaré structure on $\C$ with duality \(\Dual\colon \C \to \C\op\) such that \(\Dual(\C_{\leq 0}) \subseteq \C_{\geq -d}\) for some integer $d\geq 0$.
Fix an \(n \in \ZZ\) and let $a\geq d-1$, $b\geq d$ be integers with $b\geq a$, and such that $n+a$ is even and $a\geq -n+2d-2r$.
Then:
\begin{enumerate}
\item
\label{item:ssurjobject}%
Every Poincaré object in $\Poinc(\Dperf(R),\QF{\qshift{-n}})$ is cobordant to one which is \(\big(\frac{-n-a}{2}\big)\)-connective.
\item
\label{item:ssurjLagrangian}%
Every Lagrangian \(L \to X\) of a \(\big(\tfrac{-n-a}{2}\big)\)-connective Poincaré object \((X,q) \in \Poinc(\Dperf(R),\QF\qshift{-n})\) is cobordant relative to \(X\) to a Lagrangian \(L'\to X\) such that \(L'\) is \(\lceil \frac{-n-1-b}{2} \rceil\)-connective and \(\fib[L'\to X]\) is \(\lfloor\tfrac{-n-1-b}{2}\rfloor\)-connective. 
\end{enumerate}
In particular, the canonical map $\L^{a,b}_n(\C,\QF) \to \L_n(\C,\QF)$ is an isomorphism. 
\end{proposition}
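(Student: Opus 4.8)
The plan is to deduce the final assertion formally from parts~\refthreeitem{item:ssurjobject} and~\refthreeitem{item:ssurjLagrangian}, reusing the bookkeeping at the end of the proof of Proposition~\refthree{proposition:surgery-quadratic}. Since $b\geq a$, the diagonal $X\to X\oplus X$ is an allowed Lagrangian for $(X,q)\oplus(X,-q)$ (compare Remark~\refthree{remark:diagonaladmissible}), so $\L^{a,b}_n(\C,\QF)$ is already an abelian group and the canonical map to $\L_n(\C,\QF)$ is a group homomorphism. Surjectivity is immediate from~\refthreeitem{item:ssurjobject}: every class in $\pi_0\Poinc(\C,\QF\qshift{-n})$ is represented by a $(\tfrac{-n-a}{2})$-connective Poincaré object, i.e.\ by an element of $\pi_0\Poinc_n^a(\C,\QF)$. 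For injectivity, if $(X,q)\in\pi_0\Poinc_n^a(\C,\QF)$ becomes zero in $\L_n(\C,\QF)$ then it admits a Lagrangian, and~\refthreeitem{item:ssurjLagrangian} replaces that Lagrangian, up to cobordism relative to $X$, by one satisfying the connectivity bounds of an allowed Lagrangian; since cobordant-relative-to-$X$ Lagrangians do not change the class of $(X,q)$, this Lagrangian exhibits $(X,q)$ as zero already in $\L^{a,b}_n(\C,\QF)$. (As in Remark~\refthree{remark:cofibre-is-fibre}, this simultaneously shows exactness of $\pi_0\pM^{a,b}_n(\C,\QF)\to\pi_0\Poinc_n^a(\C,\QF)\to\L^{a,b}_n(\C,\QF)$ in the middle.)

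The substance is therefore~\refthreeitem{item:ssurjobject} and~\refthreeitem{item:ssurjLagrangian}, which I would prove by imitating the inductive surgery arguments of Proposition~\refthree{proposition:surgery-quadratic}, but with the surgery performed ``from the top'' rather than ``from the bottom''. For~\refthreeitem{item:ssurjobject}: given a Poincaré object $(X,q)$ for $\QF\qshift{-n}$ which is only $k$-connective with $k<\tfrac{-n-a}{2}$, the Poincaré identity $\Dual X\simeq\Sig^n X$ together with the interaction of $\Dual$ with the $t$-structure forces $X$ to be $l$-truncated for some $l>\tfrac{-n+a}{2}$, so $X$ genuinely ``sticks out'' at the top and $\tau_{\geq l}X\simeq\mathrm{H}_l(X)[l]$ sits in a fibre sequence $\mathrm{H}_l(X)[l]\to X\to\tau_{\leq l-1}X$. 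I would then perform one step of algebraic surgery (Proposition~\refthree{proposition:naive-surgery}) using as surgery datum $L:=\mathrm{H}_l(X)[l]$ — or, to absorb the $\mathrm{Ext}$-corrections coming from $\Dual$, a finite projective resolution of it of length $\leq d$, which is where the finite global dimension enters. The key point is that $L$ is $l$-connective with $l$ large, so by $r$-symmetry $\fib(\QF(L)\to\QF^{\s}_{\Dual}(L))$ is highly truncated while $\QF^{\s}_{\Dual}(L)=\map_\C(L,\Dual L)^{\hC}$ is highly coconnective (homotopy fixed points preserve coconnectivity); hence $\Om^{\infty+n}\QF(L)$ is connected and $q|_L$ carries a nullhomotopy automatically, with no Tate obstruction. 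Choosing this nullhomotopy so that the induced boundary map $\eta_\sharp$ also removes the lowest homotopy, the surgery output is a cobordant Poincaré object concentrated in a strictly smaller range, and finitely many iterations (using that $X$ is bounded) establish~\refthreeitem{item:ssurjobject}. Part~\refthreeitem{item:ssurjLagrangian} is proved in the same way, now running the argument on the Lagrangian $L\to X$ inside the metabolic category via Lagrangian surgery (Construction~\refthree{construction:lagrangian-surgery}); this is where the slightly larger hypothesis $b\geq d$ is needed, in place of $a\geq d-1$.

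The hard part — and the reason this cannot be copied verbatim from Proposition~\refthree{proposition:surgery-quadratic} — is that the symmetric Poincaré structure is \emph{not} bounded below, so the naive ``surgery below the middle dimension from the bottom'' fails: on a bottom cell $P[k]$ with $k$ very negative the restricted form $q|_{P[k]}\in\pi_n\QF(P[k])$ has a Tate-cohomology component that need not vanish. Inverting the surgery to act at the top, where the surgery datum is forced to be highly connective so that the restricted form vanishes for free, is the new ingredient; the price is that one must control, via the finite global dimension $d$, both the $\mathrm{Ext}$-corrections relating $X$ and $\Dual X$ and the homotopy that surgery at the top reintroduces at the bottom. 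Checking that the net effect really is a decrease in width, and that the connectivity of $L$ stays large enough throughout the induction for $\Om^{\infty+n}\QF(L)$ to remain connected, is exactly what forces the numerical hypotheses $a\geq -n+2d-2r$, $a\geq d-1$ and $b\geq d$, and verifying this numerology is the main obstacle I would expect.
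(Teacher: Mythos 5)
Your high-level intuition is right — one performs surgery along a \emph{highly connective} object (so that $r$-symmetry forces the restricted form to vanish, with no Tate obstruction) rather than along a bottom cell as in the quadratic case. But the concrete surgery datum you propose does not make the induction close, and this is precisely where the argument lives.

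You suggest taking $L = \mathrm H_l(X)[l]$ (the top cell of $X$), or "a finite projective resolution of it of length $\leq d$". There are two problems. First, the proposition is stated for a general stable $\infty$-category $\C$ with a bounded $t$-structure, subject only to $\Dual(\C_{\leq 0})\subseteq\C_{\geq -d}$; in that generality there is no notion of projective resolution, and this generality is genuinely used later (for the torsion category in the dévissage theorem). Second, and more seriously, even in $\Dperf(R)$ the naive top cell does not work: with $L=\tau_{\geq l}X$ the surgery output is $X'=\fib\bigl(\tau_{\leq l-1}X\to\Om^n\Dual L\bigr)$, and $\Om^n\Dual L$ sits in degrees $[k-d,k]$ (where $k$ is the bottom degree of $X$), so $X'$ can acquire homotopy in degree $k-d-1$. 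That is, killing the top cell \emph{lowers} the bottom by up to $d$, and the width need not shrink — exactly the danger you flag in your last paragraph but do not actually rule out. Choosing the nullhomotopy differently cannot repair this, since the nullhomotopy only affects the comparison map $\eta_\sharp$, not the range of $\Om^n\Dual L$.

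What the paper actually does is different: the surgery datum is $W=\Om^n\Dual(\tau_{\leq k}X)$, the (shifted) dual of the \emph{bottom} truncation, with $W\to X$ the dual of $X\to\tau_{\leq k}X$ (transported through $q_\sharp$). This is not a resolution of $\mathrm H_l(X)[l]$; it is a single, duality-packaged object that automatically carries the $\operatorname{Ext}$ spread from $d$. The key features are: (a) $W$ is $(-n-k-d)$-connective while $\Dual W\simeq\Sig^n\tau_{\leq k}X$ is exactly $(n+k)$-truncated, and feeding \emph{both} bounds into $r$-symmetry gives that $\Om^n\QF(W)$ is $\max(n+2k+d,\,k+d-r)$-truncated, which the hypotheses $a\geq d-1$ and $a\geq -n+2d-2r$ force to be negative; (b) the composite $W\to X\to\tau_{\leq k}X$ is null because $W$ is $(k+1)$-connective (here $a\geq d-1$ is used again), so the surgery output is literally $X'=\operatorname{cof}(W\to\tau_{\geq k+1}X)$ and hence $(k+1)$-connective with no slippage. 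Your choice of datum gives neither (a) with the right numerics nor the clean cofibre description (b). The Lagrangian version (ii) runs identically on $N'=\Om^{n+1}\Dual(\tau_{\leq l}L)$. The derivation of "the map $\L^{a,b}_n\to\L_n$ is an isomorphism" from (i) and (ii), which you sketch, is fine and matches the paper.
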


\begin{remark}
\label{remark:recall-intervalsLab-sym}%
Recalling Remark~\refthree{remark:intervalsLab} we note that the Poincaré objects appearing in~\refthreeitem{item:ssurjobject} above are concentrated in degrees \([\frac{-n-a}{2},\frac{-n+a}{2}]\) and the Lagrangians in~\refthreeitem{item:ssurjLagrangian} are concentrated in degrees \([\lceil \frac{-n-1-b}{2}\rceil , \lceil \frac{-n-1+b}{2} \rceil]\).
\end{remark}

Before giving a proof of Proposition~\refthree{proposition:surgery-global-dim}, we establish some of its consequences and
start by specialising Proposition~\refthree{proposition:surgery-global-dim} to the case \(\C = \Dperf(R)\):
\begin{corollary}
\label{corollary:surgery-global-dim-for-rings}%
Let $M$ be an invertible $\ZZ$-module with involution over $R$ and suppose that \(R\) is coherent of finite global dimension \(d\). Let \(\QF\) be an $r$-symmetric compatible Poincaré structure on \(\Dperf(R)\), for \(r \in \ZZ\).
Then for \(n \geq d-2r\) the following holds: 
\begin{enumerate}
\item 
If \(n+d\) is even, the canonical map
\[
\L^{d,d}_n(R;\QF) \lto \L_n(R;\QF)
\]
is an isomorphism.
\item
If \(n+d\) is odd, the canonical map
\[
\L^{d-1,d}_n(R;\QF) \lto \L_n(R;\QF)
\]
is an isomorphism.
\end{enumerate}
\end{corollary}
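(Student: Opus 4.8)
The plan is to deduce the statement directly from Proposition~\refthree{proposition:surgery-global-dim}, applied to the stable $\infty$-category $\C = \Dperf(R)$ equipped with its Postnikov $t$-structure. As recalled in the discussion preceding that proposition, for $R$ coherent of finite global dimension $d$ this $t$-structure is bounded, the connective cover and truncation functors preserve perfect complexes, and the induced duality $\Dual = \hom_R(-,M)$ satisfies $\Dual(\Dperf(R)_{\leq 0}) \subseteq \Dperf(R)_{\geq -d}$. Hence the standing hypotheses of Proposition~\refthree{proposition:surgery-global-dim} on $(\C,\QF)$ are all met, and by construction $\L^{a,b}_n(\Dperf(R);\QF)$ coincides with the group $\L^{a,b}_n(R;\QF)$ appearing in the statement. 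So the task is purely to choose the parameters $a,b$ matching the desired decorations and to check the numerical hypotheses.

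In the case where $n+d$ is even I would take $a = b = d$. Then $n+a = n+d$ is even, $a = d \geq d-1$, $b = d \geq d$, and $b \geq a$; the only remaining condition $a \geq -n + 2d - 2r$ reduces to $n \geq d - 2r$, which is precisely the hypothesis. Proposition~\refthree{proposition:surgery-global-dim} then gives that $\L^{d,d}_n(R;\QF) \to \L_n(R;\QF)$ is an isomorphism. In the case where $n+d$ is odd I would instead take $a = d-1$ and $b = d$, so that $n+a = n+d-1$ is even, $a = d-1 \geq d-1$, $b = d \geq d$, and $b \geq a$; the remaining condition $a \geq -n + 2d - 2r$ now reads $n \geq d + 1 - 2r$. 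This is slightly stronger than the assumed $n \geq d - 2r$, but the gap is closed by a parity observation: $(d-2r)+d = 2d-2r$ is even while $n+d$ is odd, so $n \neq d - 2r$, and since $n$ is an integer with $n \geq d - 2r$ this forces $n \geq d - 2r + 1 = d + 1 - 2r$. Proposition~\refthree{proposition:surgery-global-dim} then gives that $\L^{d-1,d}_n(R;\QF) \to \L_n(R;\QF)$ is an isomorphism.

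Since everything reduces to bookkeeping of inequalities once Proposition~\refthree{proposition:surgery-global-dim} is in hand, I do not anticipate any substantive obstacle. The only point requiring a moment of care is exactly the parity argument in the odd case, which upgrades the hypothesis $n \geq d - 2r$ to the strict estimate $n \geq d + 1 - 2r$ needed to run the proposition with $a = d-1$; it is worth stating this explicitly in the write-up so the reader sees why the single bound $n \geq d - 2r$ suffices to cover both parities.
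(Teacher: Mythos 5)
Your proof is correct and matches the paper's (unwritten) argument: the corollary is stated as an immediate specialization of Proposition~\refthree{proposition:surgery-global-dim} to $\C = \Dperf(R)$, and your choices $(a,b)=(d,d)$ and $(a,b)=(d-1,d)$, together with the parity observation that $n+d$ odd and $n \geq d-2r$ force $n \geq d+1-2r$, are exactly the bookkeeping needed. No gap.
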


\begin{corollary}
\label{corollary:d-zero}%
Let $M$ be an invertible $\ZZ$-module with involution over $R$ and assume that  \(R\) is coherent of finite global dimension \(d\). Let \(\QF\) be an $r$-symmetric compatible Poincaré structure on \(\Dperf(R)\), for \(r \in \ZZ\). Then the following holds:
\begin{enumerate}
\item
\label{item:cor-one}%
If $d= 0$, then \(\L_{2k-1}(R;\QF)=0\) whenever \(k \geq 1-r\);
\item
\label{item:cor-two}%
If $d\leq1$, then \(\L_{2k}(R;\QF) \cong \W(\Proj(R);\Omega^{2k}\QF(\Omega^k-))\) whenever \(k \geq 1-r\). 
For $\QF=\Qgen{m}{M}$, the latter is isomorphic to the classical Witt group of $(-1)^k$-quadratic forms for $k=m-2$, $(-1)^k$-even forms for $k=m-1$, and $(-1)^k$-symmetric forms for all $k\geq m$.
\end{enumerate}
\end{corollary}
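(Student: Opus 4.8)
The plan is to deduce everything from the surgery results already in hand: Corollary~\ref{corollary:surgery-global-dim-for-rings} (and Proposition~\ref{proposition:00-01}), the identification of $\L^{0,0}_0$ with a Witt group in Remark~\ref{Witt-group}, and the periodicity equivalence Proposition~\ref{proposition:periodcat}.

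\emph{Part (i).} Fix $k\geq 1-r$ and put $n=2k-1$, which is odd; as $d=0$ the sum $n+d$ is odd, and $n=2k-1\geq -2r=d-2r$, so Corollary~\ref{corollary:surgery-global-dim-for-rings} yields an isomorphism $\L^{-1,0}_{2k-1}(R;\QF)\cong\L_{2k-1}(R;\QF)$. By Remark~\ref{remark:intervalsLab} a Poincaré object in $\Poinc^{-1}_n(R,\QF)$ would be concentrated in the empty interval $[\tfrac{-n+1}{2},\tfrac{-n-1}{2}]$; hence $\Poinc^{-1}_n(R,\QF)\simeq\ast$ and $\L^{-1,0}_{2k-1}(R;\QF)=0$.

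\emph{Part (ii), the isomorphism with a Witt group.} Fix $k\geq 1-r$ and $d\leq 1$, and put $n=2k$. Applying Corollary~\ref{corollary:surgery-global-dim-for-rings} (its even case when $d=0$, its odd case when $d=1$), followed by Proposition~\ref{proposition:00-01} to pass from $\L^{0,1}_{2k}$ to $\L^{0,0}_{2k}$ when $d=1$, gives $\L_{2k}(R;\QF)\cong\L^{0,0}_{2k}(R;\QF)$. Now set $\QF':=\Omega^{2k}\QF(\Omega^k(-))$. Unwinding definitions, $\QF'=(\Omega^k)^\ast(\QF\qshift{-2k})$; since $\QF\qshift{-2k}$ has underlying duality $\Sigma^{-2k}\Dual$, a direct computation shows that $\QF'$ has underlying duality $\Dual=\hom_R(-,M)$ itself, so $\QF'$ is a Poincaré structure whose duality preserves $\Proj(R)$ (as $M$ is finitely generated projective). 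The loop functor $\Omega^k\colon\Dperf(R)\to\Dperf(R)$ carries $\QF'$ to $\QF\qshift{-2k}$, hence is an equivalence of Poincaré $\infty$-categories; as it shifts the Postnikov $t$-structure by $k$, it restricts to bijections $\pi_0\Poinc^0_0(R,\QF')\cong\pi_0\Poinc^0_{2k}(R,\QF)$ and $\pi_0\pM^{0,0}_0(R,\QF')\cong\pi_0\pM^{0,0}_{2k}(R,\QF)$ compatible with forgetting Lagrangians, whence $\L^{0,0}_0(R;\QF')\cong\L^{0,0}_{2k}(R;\QF)$. Finally Remark~\ref{Witt-group} identifies $\L^{0,0}_0(R;\QF')$ with $\W(\Proj(R);\QF')=\W(\Proj(R);\Omega^{2k}\QF(\Omega^k-))$, and concatenating proves the claim.

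\emph{Part (ii), specialisation to $\QF=\Qgen{m}{M}$.} Iterating the periodicity equivalence of Proposition~\ref{proposition:periodcat} identifies $\QF'=\Omega^{2k}\Qgen{m}{M}(\Omega^k-)$ with $\Qgen{m-k}{M'}$ as Poincaré structures, where $M'$ is $M$ for $k$ even and $-M$ for $k$ odd; thus $M'$-valued forms are exactly the $(-1)^k$-twisted ones. When $m-k=2$ (resp.\ $m-k=1$) this is the genuine quadratic (resp.\ even) structure, restricting on $\Proj(R)$ to the functor of $M'$-valued quadratic (resp.\ even) forms, so $\W(\Proj(R);\QF')$ is the corresponding classical Witt group; this gives the case $k=m-1$, which lies in the range $k\geq 1-r=m-1$. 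When $m-k\leq 0$ the natural map $\QF^{\gs}_{M'}=\Qgen{0}{M'}\to\Qgen{m-k}{M'}$ has, on each projective, fibre concentrated in homological degrees $\leq-2$ (from the defining pullback square, cf.\ Remark~\ref{remark:genuineeq}), hence is a $\pi_0$-isomorphism over $\Proj(R)$ and induces an isomorphism of Witt groups; as $\QF^{\gs}_{M'}$ restricts to $M'$-valued symmetric forms, $\W(\Proj(R);\QF')$ is the classical Witt group of $(-1)^k$-symmetric forms, covering $k\geq m$ (inside the range). It remains to treat $k=m-2$, one below the range: here one uses that $\Qgen{m}{M}$ is $m$-quadratic and $2k=2m-4\leq 2m-3$, so Corollary~\ref{corollary:genuine-is-quadratic} gives $\L_{2m-4}(R;\Qgen{m}{M})\cong\L^\qdr_{2m-4}(R;M)$; iterating Corollary~\ref{corollary:periodicity-L} and applying Proposition~\ref{proposition:witt} to the $0$-quadratic structure $\QF^{\qdr}_{M'}$ identifies this with the classical Witt group of $M'$-valued quadratic forms, which equals $\W(\Proj(R);\QF')$ since $\Qgen{2}{M'}=\QF^{\gq}_{M'}$ has the same quadratic Witt group.

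\emph{Main difficulty.} The substance is the loop/shift comparison in Part (ii) — recognising $\Omega^{2k}\QF(\Omega^k(-))$ as a pullback of $\QF\qshift{-2k}$ whose underlying duality is again $\hom_R(-,M)$ (so that Remark~\ref{Witt-group} applies), and matching the $t$-structure connectivity constraints under the loop equivalence — together with correctly pairing the three boundary values $k=m-2,\ m-1,\ \geq m$ with the quadratic, even and symmetric Witt groups; the only genuinely separate point is that $k=m-2$ sits just outside the symmetric-surgery range and must be handled via quadratic surgery.
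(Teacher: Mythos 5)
Your proof is correct and takes essentially the same approach as the paper's own: reduce via Corollary~\ref{corollary:surgery-global-dim-for-rings} and Proposition~\ref{proposition:00-01} to $\L^{0,0}_{2k}(R;\QF)$, identify this with $\W(\Proj(R);\Omega^{2k}\QF(\Omega^k-))$ via the $\Omega^k$-shift and Remark~\ref{Witt-group}, and compute $\pi_0$ of $\Qgen{m-k}{(-1)^k M}$ on projectives for the specialisation. The only divergence is your dedicated treatment of $k=m-2$ through Corollary~\ref{corollary:genuine-is-quadratic} and quadratic surgery — the paper's proof reads that clause as identifying only the Witt group (settled by the same $\pi_0$-inspection), leaving the connection with $\L_{2(m-2)}$ (which falls outside the symmetric surgery range $k\geq m-1$) implicit — so your extra step is a sound strengthening rather than a necessity.
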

\begin{proof}
For part~\refthreeitem{item:cor-one}, we apply Proposition~\refthree{proposition:surgery-global-dim} in case $d=0$ with $(a,b) = (-1,0)$. For part~\refthreeitem{item:cor-two}, we apply Corollary~\refthree{corollary:surgery-global-dim-for-rings} in case $d=1$, combined with Proposition~\refthree{proposition:00-01} and obtain isomorphisms
\[
\L_{2k}(R;\QF) \cong \L_{2k}^{0,1}(R;\QF) \cong \L_{2k}^{0,0}(R;\QF).
\]
Then, we observe that there is a canonical isomorphism $\W(\Proj(R);\Omega^{2k}\QF(\Omega^k -)) \cong \L_{2k}^{0,0}(R;\QF)$ induced by sending an element represented by $(P,q)$ to the element represented by $(P[-k],q)$.
The remaining claim follows by inspection of $\pi_{2k}(\Qgen{m}{M}(P[-k])) \cong \pi_0(\Qgen{m-k}{(-1)^kM}(P))$.
\end{proof}

\begin{remark}
\label{remark:odd-vanishing}%
We notice that a ring $R$ is of global dimension 0 if and only if it is semisimple, and that semisimple rings are Noetherian and thus coherent. Part~\refthreeitem{item:cor-one} above hence recovers Ranicki's result that the odd-dimensional symmetric and quadratic L-groups of semisimple rings vanish \cite{Ranickiblue}*{Proposition 22.7}: Indeed, the symmetric case follows from the above since $\QF^\s$ is $r$-symmetric for every $r$, and hence $\L_{2k-1}(R;\QF^\s)=0$ for all $k$. For the quadratic case, by Corollary~\refthree{corollary:periodicity-L} applied to $\QF^{\qdr}_{M}$, it suffices to show that $\L^{\qdr}_{-3}(R;M) = 0$. But by Corollary~\refthree{corollary:genuine-is-quadratic} we have that $\L^{\qdr}_{-3}(R;M) \cong \L^{\gs}_{-3}(R;M) = \L_{-3}(R;\QF^{\gs}_{M})$. Now, $\QF^\gs$ is $2$-symmetric, so \refthreeitem{item:cor-one} applies for $k=-1$.

For completeness, we note that if $K$ is a field of characteristic different from $2$, also $\L^\qdr_{4k+2}(K) \cong \L^\s_{4k+2}(K)$ vanishes: By Corollary~\refthree{corollary:d-zero} it is given by the Witt group of anti-symmetric forms over $K$, but any such form admits a symplectic basis and hence a Lagrangian.
\end{remark}

\begin{proof} [Proof of \refthree{proposition:surgery-global-dim}]
For part~\refthreeitem{item:ssurjobject}, it suffices to show that every Poincaré object \((X,q) \in \Poinc(\C,\QF\qshift{-n})\) is cobordant to one whose underlying object is \((\frac{-n-a}{2})\)-connective. Let $k = \tfrac{-n-a-2}{2}$, define $W :=  \Om^n\Dual \tau_{\leq k}X$ and let
\[
f\colon W \lto \Om^n\Dual X \st{q}{\simeq} X
\]
be the map dual to the truncation map \(X \to \tau_{\leq k}X\). Since \(\Dual(\C_{\leq 0}) \subseteq \C_{\geq -d}\) we have that \(W\) is \((-n-k-d)\)-connective. Since \(\Dual W \simeq \Sig^n\tau_{\leq k}X\) is \((n+k)\)-truncated we conclude that \(\Om^n\map_{\C}(W,\Dual W)^{\hC}\) is \((n+2k+d)\)-truncated. On the other hand, since $\QF$ is $r$-symmetric, the fibre of the map 
\begin{equation*}
\Om^n\QF(W) \lto \Om^n\QF^{\s}_{\Dual}(W)=\Om^n\map_{\C}(W,\Dual W)^{\hC}
\end{equation*} 
is \((k+d-r)\)-truncated, so that \(\Om^{n}\QF(W)\) is \(\max(n+2k+d,k+d-r)\)-truncated. Spelling out the definition of $k$ and using the estimates in the assumptions, we find that
\[
\max(n+2k+d,k+d-r) < 0.
\]
We hence get that \(\Om^n\QF(W)\) is \((-1)\)-truncated and so \(\Om^{\infty+n}\QF(W) \simeq \ast\). The restriction of \(q\) to \(W\) is consequently null-homotopic, and we may therefore perform surgery along \(f\colon W \to X\) to obtain a new Poincaré object \((X',q')\), given by the cofibre of the resulting map \(W \to \tau_{\geq k+1}X\) (see diagram \eqrefthree{equation:surgery-diagram}). Since \(W\) is \((-n-k-d)\)-connective it is in particular \((k+1)\)-connective (since \(-2k = n+a+2 \geq n+d+1\)), and so \(X'\) is \((k+1)\)-connective. Since $k+1 = \frac{-n-a}{2}$, part \refthreeitem{item:ssurjobject} is shown.

To prove part \refthreeitem{item:ssurjLagrangian}, let \((X,q)\) be a Poincaré object such that \(X\) is \((\frac{-n-a}{2})\)-connective, and let  \((L \to X,q,\eta)\) be a Lagrangian.
Let \(N := \fib(L \to X)\), 
so that \(L \simeq \Om^{n+1}\Dual N\). If \(L\) is \(\lceil \frac{-n-1-b}{2} \rceil\)-connective then \(N\) is \(\lfloor \frac{-n-1-b}{2} \rfloor\)-connective (since \(X\) is \((\frac{-n-a}{2})\)-connective and \(b \geq a\)) and we are done. Otherwise, 
let $l = \lceil \frac{-n-1-b}{2} \rceil -1$, define \(N' := \Om^{n+1}\Dual \tau_{\leq l}L\) and let 
\[
f\colon N' \lto N
\]
be the map dual to the truncation map \(L \to \tau_{\leq l}L\). We may view this map as a map $(N' \to 0) \to (L \to X)$ in the metabolic category, and we claim that it extends to a Lagrangian surgery datum for which it suffices to show that $\QF_\met(N' \to 0) \simeq \Omega \QF(N')$ is $(n-1)$-truncated.
Since \(\Dual(\C_{\leq 0}) \subseteq \C_{\geq -d}\) we have that \(N'\) is \((-n-1-l-d)\)-
connective. Since \(\Dual N' \simeq \Sig^{n+1}\tau_{\leq l}L\) is \((n+1+l)\)-truncated we have that \(\Om^{n+1}\map_{\C}(N',\Dual N')^{\hC}\) is \((n+1+2l+d)\)-truncated. On the other hand since $\QF$ is $r$-symmetric the fibre of the map 
\begin{equation*}
\Om^{n+1}\QF(N') \lto \Om^{n+1}\map_{\C}(N',\Dual N')^{\hC}
\end{equation*} 
is \((l+d-r)\)-truncated, so that \(\Om^{n+1}\QF(N')\) is \(\max(n+1+2l+d,l+d-r)\)-truncated. Now by definition of $l$ and the estimates in the assumptions we have that 
\[
\max(n+1+2l+d,l+d-r) <  0
\]
We hence get that
\(\Om^{n+1}\QF(N')\) is \((-1)\)-truncated as needed.
We may therefore perform Lagrangian surgery along \(N' \to N\) to obtain a new Lagrangian \(L'\to X\), such that \(L'\) is given by 
the cofibre of the resulting map \(N' \to \tau_{\geq l+1}L\). Since \(N'\) is \((-n-1-l-d)\)-connective it is in particular \((l+1)\)-connective (since \(-2l \geq n+b+2 \geq n+d+2\)), and so \(L'\) is \((l+1)\)-connective. 
This proves part \refthreeitem{item:ssurjLagrangian}.

Finally, the bijectivity of the map $\L^{a,b}_n(\C,\QF) \to \L_n(\C,\QF)$ follows just as in the proof of Proposition~\refthree{proposition:surgery-quadratic}.

\end{proof}

\begin{remark}
\label{remark:cofibre-is-fibre-two}%
Similarly to Remark~\refthree{remark:cofibre-is-fibre}, 
Proposition~\refthree{proposition:surgery-global-dim} allows us to conclude that the sequence 
\[
\pi_0\pM_{n}^{a,b}(\C,\QF) \lto \pi_0\Poinc_{n}^{a}(\C,\QF) \lto \L^{a,b}_n(\C,\QF)
\]
is exact in the middle, when $a,b$ and $n$ satisfy the assumptions of Proposition~\refthree{proposition:surgery-global-dim}. In particular for $a=b=n=0$ we find that, for every coherent ring $R$ of global dimension $0$,
every symmetric, even or quadratic $M$-valued Poincaré object in \(\Proj(R)\) which is zero in the Witt group is strictly metabolic.
\end{remark}

We now use Proposition~\refthree{proposition:surgery-global-dim} in the case \(d=0\) to describe the symmetric L-groups of stable $\infty$-categories with a $t$-structure in terms of Witt groups associated to the heart of the $t$-structure.
So let \(\C\) be a stable \(\infty\)-category with a \(t\)-structure \((\C_{\geq 0},\C_{\leq 0})\) and a duality \(\Dual\colon\C \to \C^{{\op}}\) such that \(\Dual\) sends \(\C_{\leq 0}\) to \(\C_{\geq 0}\) and vice versa. In particular, \(\Dual\) induces a duality \(\Dual^{\heartsuit}\colon \C^{\heartsuit} \to \C^{\heartsuit}\) on the heart of \(\C\), and we may consider \(\C^{\heartsuit}\) an abelian category with duality. 
As in the previous section, we let $\QF_{\Dual^{\heartsuit}}^{\g\s}\colon \C^{\heartsuit}\to \Ab$ be the quadratic functor that takes $A\in  \C^{\heartsuit}$ to the abelian subgroup of strict invariants
\[
\QF_{\Dual^{\heartsuit}}^{\g\s}(A):=\hom_{\C^{\heartsuit}}(A,\Dual^{\heartsuit}\!A)^{\Ct}.
\]
We let $\W(\C^{\heartsuit},\QF_{\Dual^{\heartsuit}}^{\g\s})$ be the corresponding symmetric Witt group, defined as the quotient of the monoid \(\pi_0\Poinc(\C^{\heartsuit},\QF_{\Dual^{\heartsuit}}^{\g\s})\) by the submonoid of strictly metabolic objects. Thus two elements of $\pi_0\Poinc(\C^{\heartsuit},\QF_{\Dual^{\heartsuit}}^{\g\s})$ are identified in the Witt group if they are isomorphic after adding strictly metabolic objects. 
By letting \(\QF^{\s}_{\Dual}\) be the symmetric Poincaré structure associated to \(\Dual\), we observe that since $\pi_0 \QF^{\s}_{\Dual}(A)=\QF_{\Dual^{\heartsuit}}^{\g\s}(A)$ for every object $A$ of $\C^{\heartsuit}$, there is a group isomorphism 
\[
\W(\C^{\heartsuit},\QF_{\Dual^{\heartsuit}}^{\g\s})\cong \L^{0,0}_0(\C,\QF^{\s}_{\Dual})
\]
analogous to the isomorphism for the Witt group of Remark~\refthree{remark:Witt-group}.

\begin{example}
If $R$ has global dimension $0$, the heart of \(\C = \Dperf(R)\) agrees with $\Proj(R)$ and by definition
\[
\W(\Dperf(R)^{\heartsuit},\QF_{\Dual_M^{\heartsuit}}^{\g\s})=\W(\Proj(R);\QF_M^{\g\s})
\]
where the latter is the classical Witt group of symmetric forms as defined in Remark~\refthree{remark:Witt-group}.

Another example is given by certain torsion-modules over a 1-dimensional ring, as we elaborate on and exploit in Theorem~\refthree{theorem:devissage}.
\end{example}

Let us also write $-\Dual^{\heartsuit}$ for the duality on $\C^{\heartsuit}$ defined by the functor $\Dual^{\heartsuit}$ but where we replace the isomorphism $\eta\colon \id\to (\Dual^{\heartsuit})\op\Dual^{\heartsuit}$ with $-\eta$.

\begin{corollary}
\label{corollary:dim-zero}%
In the above situation, let \(\QF^{\s}_{\Dual}\colon \C \to \Spa\) be the symmetric Poincaré structure associated to \(\Dual\). Then there are canonical isomorphisms
\[
\L_n(\C;\QF^{\s}_{\Dual}) 
\cong 
\begin{cases} 
\W(\C^{\heartsuit},\QF_{\Dual^{\heartsuit}}^{\g\s}) & \text{ for } n \equiv 0 \text{ mod } 4, \\
\W(\C^{\heartsuit},\QF_{-\Dual^{\heartsuit}}^{\g\s}) & \text{ for } n \equiv 2 \text{ mod } 4, \\
0 & \text{ else.}
\end{cases}
\]
In particular, every element of $\pi_0\Poinc(\C^{\heartsuit},\QF_{\Dual^{\heartsuit}}^{\g\s})$ which is zero in \(\W(\C^{\heartsuit},\QF_{\Dual^{\heartsuit}}^{\g\s})\) is metabolic.
\end{corollary}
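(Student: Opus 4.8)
The plan is to apply the symmetric surgery result Proposition~\refthree{proposition:surgery-global-dim} to the Poincaré structure $\QF^{\s}_{\Dual}$. It is $r$-symmetric for every $r\in\ZZ$, since the fibre of $\QF^{\s}_{\Dual}\to\QF^{\s}_{\Dual}$ vanishes (compare Example~\refthree{example:symm-quad-qf}), and the standing hypothesis $\Dual(\C_{\leq 0})\subseteq\C_{\geq 0}$ is exactly the condition $\Dual(\C_{\leq 0})\subseteq\C_{\geq -d}$ with $d=0$. Hence for every $n\in\ZZ$ and every pair $(a,b)$ admissible for Proposition~\refthree{proposition:surgery-global-dim} with $d=0$, choosing $r$ sufficiently large gives an isomorphism $\L^{a,b}_n(\C,\QF^{\s}_{\Dual})\cong\L_n(\C,\QF^{\s}_{\Dual})$.

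For $n$ odd I would take $(a,b)=(-1,0)$; this satisfies $a\geq d-1$, $b\geq d$, $b\geq a$, $n+a$ even, and $a\geq -n+2d-2r$ once $r\gg 0$. Since a Poincaré object in $\Poinc^{-1}_n$ would have to be concentrated in an empty range of degrees, $\Poinc^{-1}_n(\C,\QF^{\s}_{\Dual})\simeq\ast$ (Remark~\refthree{remark:intervalsLab}), and therefore $\L_n(\C,\QF^{\s}_{\Dual})\cong\L^{-1,0}_n(\C,\QF^{\s}_{\Dual})=0$, which is the ``else'' case.

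For $n$ even I would take $(a,b)=(0,0)$, giving $\L_n(\C,\QF^{\s}_{\Dual})\cong\L^{0,0}_n(\C,\QF^{\s}_{\Dual})$. By Remark~\refthree{remark:intervalsLab}, $\pi_0\Poinc^0_n(\C,\QF^{\s}_{\Dual})$ is the monoid of Poincaré objects concentrated in the single degree $-n/2$, that is, of objects $A[-n/2]$ with $A\in\C^{\heartsuit}$ together with a non-degenerate form; and $\pi_0\pM^{0,0}_n(\C,\QF^{\s}_{\Dual})$ consists of Lagrangian data whose underlying arrow is of the form $B[-n/2]\to A[-n/2]$ with $B\in\C^{\heartsuit}$ and fibre concentrated in degree $-n/2-1$. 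Unwinding the non-degeneracy conditions exactly as in Remark~\refthree{Witt-group}, such data is the same as exhibiting $(A,q)$ as strictly metabolic in the abelian category with duality $\C^{\heartsuit}$. The remaining point is to identify the form datum: on an object concentrated in a single degree, a Poincaré form for $\QF^{\s}_{\Dual}\qshift{-n}$ amounts, after the usual sign bookkeeping for shifting symmetric Poincaré structures, to a non-degenerate form for the duality $(-1)^{n/2}\Dual^{\heartsuit}$, so that $\L^{0,0}_n(\C,\QF^{\s}_{\Dual})\cong\W(\C^{\heartsuit},\QF^{\g\s}_{(-1)^{n/2}\Dual^{\heartsuit}})$, giving the answers for $n\equiv 0$ and $n\equiv 2$ mod $4$. (Alternatively, the case $n=0$ is already recorded just before the statement, and the general even case can be reduced to it via the periodicity $\L_{n+2}(\C,\QF^{\s}_{\Dual})\cong\L_n(\C,\QF^{\s}_{-\Dual})$.) The last sentence then follows from the exactness in the middle of $\pi_0\pM^{0,0}_0\to\pi_0\Poinc^0_0\to\L^{0,0}_0$ guaranteed by Remark~\refthree{remark:cofibre-is-fibre-two} applied with $a=b=n=0$: an element of $\pi_0\Poinc(\C^{\heartsuit},\QF^{\g\s}_{\Dual^{\heartsuit}})$ which vanishes in $\W(\C^{\heartsuit},\QF^{\g\s}_{\Dual^{\heartsuit}})\cong\L^{0,0}_0$ lies in the image of $\pi_0\pM^{0,0}_0$, hence is strictly metabolic.

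The main obstacle I expect is the sign bookkeeping: identifying $\QF^{\s}_{\Dual}\qshift{-n}$, restricted to objects concentrated in a single degree, with the symmetric structure attached to $(-1)^{n/2}\Dual^{\heartsuit}$, i.e.\ getting the $n\bmod 4$ dichotomy exactly right, and checking that the auxiliary inputs used here (the single-degree description of Poincaré objects and Lagrangians from Remark~\refthree{remark:intervalsLab}, and the periodicity shortcut) are valid in the general setting of a stable $\infty$-category with a bounded $t$-structure rather than only for $\Dperf(R)$.
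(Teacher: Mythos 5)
Your proposal takes exactly the paper's route: the paper's proof is the one-liner ``Apply Proposition~\ref{proposition:surgery-global-dim} in the case of $r=\infty$, $d=0$ and take $(a,b)$ to be $(0,0)$ when $n$ is even and $(-1,0)$ when $n$ is odd,'' which is precisely your choice of parameters (your ``take $r$ sufficiently large for each fixed $n$'' is a more pedantically correct reading of the paper's informal $r=\infty$, since the proposition as stated only allows $r\in\ZZ$). You go on to fill in the details the paper leaves implicit — the emptiness of $\Poinc^{-1}_n$, the identification of $\L^{0,0}_n$ with the shifted Witt group with the $(-1)^{n/2}$-twisted duality, and the use of Remark~\ref{remark:cofibre-is-fibre-two} for the final metabolicity statement — and your sign bookkeeping $(-1)^{(n/2)^2}=(-1)^{n/2}$ comes out correctly; the flagged concerns are legitimate but all work out as you expect.
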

\begin{proof}
Apply Proposition~\refthree{proposition:surgery-global-dim} in the case of \(r=\infty, d=0\) and take \((a,b)\) to be \((0,0)\) when \(n\) is even and \((-1,0)\) when \(n\) is odd. 
\end{proof}

Our next goal is to use the surgery results above in order to identify the \(\L\)-groups of an \(r\)-symmetric structure with the corresponding symmetric \(\L\)-groups in a suitable range. 
The following corollary should be compared with Corollary~\refthree{corollary:genuine-is-quadratic} above:

\begin{corollary}
\label{corollary:genuine-is-symmetric}%
Let $M$ be an invertible $\ZZ$-module with involution over $R$ and suppose that \(R\) is coherent  of finite global dimension \(d\). Let \(\QF\) be an $r$-symmetric Poincaré structure on \(\Dperf(R)\) compatible with $M$, for \(r \in \ZZ\). Then the canonical map
\[
\L_n(R;\QF) \lto \L_n(R;\QF^{\s}_M)=\L^{\s}_n(R;M)
\]
is injective for \(n \geq d-2r+2\) and bijective for \(n \geq d-2r+3\).
\end{corollary}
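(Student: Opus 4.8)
The plan is to reduce the statement, via the surgery results of this subsection, to the comparison of the width‑restricted $\L$‑groups of Definition~\ref{definition:connective-L} along the canonical natural transformation $\QF\to\QF^{\s}_M$ (which exists since $\QF^{\s}_M$ is terminal among $M$‑compatible Poincaré structures, Remark~\ref{remark:compatible-structures}), proceeding much as in the proof of Lemma~\ref{lemma:genuine-to-symmetric}, but tracking injectivity and bijectivity separately.

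First I would fix $n\geq d-2r+2$ and set $(a,b)=(d,d)$ if $n+d$ is even and $(a,b)=(d-1,d)$ if $n+d$ is odd. In either case $a,b,n$ satisfy the hypotheses of Proposition~\ref{proposition:surgery-global-dim}: one has $a\geq d-1$, $b=d\geq a$, $n+a$ even, and $a\geq -n+2d-2r$ (the last reduces to $d\geq d-2$, resp. $d-1\geq d-3$). Since $n\geq d-2r$, Corollary~\ref{corollary:surgery-global-dim-for-rings} applies to the $r$‑symmetric structure $\QF$ and, using Example~\ref{example:symm-quad-qf} (so that $\QF^{\s}_M$ is $r$‑symmetric as well), also to $\QF^{\s}_M$; hence the map in question is identified with the map $\L^{a,b}_n(R;\QF)\to\L^{a,b}_n(R;\QF^{\s}_M)$ induced by $\QF\to\QF^{\s}_M$.

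For $n\geq d-2r+3$ the chosen pair $(a,b)$ satisfies $a\leq n+2r-4$ and $b\leq n+2r-3$: this is immediate when $n+d$ is odd, and when $n+d$ is even one in fact has $n\geq d-2r+4$ since $d-2r+3$ has the wrong parity. Thus Lemma~\ref{lemma:genuine-to-symmetric} applies verbatim and yields that $\L^{a,b}_n(R;\QF)\to\L^{a,b}_n(R;\QF^{\s}_M)$ is an isomorphism, giving bijectivity in this range. It remains to prove injectivity in the single further degree $n=d-2r+2$, where $n+d$ is even, so $(a,b)=(d,d)$ and Lemma~\ref{lemma:genuine-to-symmetric} just misses (one has $d=n+2r-2$). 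So suppose a Poincaré object $(X,q)$ of $\QF\qshift{-n}$ with $X$ being $\big(\tfrac{-n-d}{2}\big)$‑connective maps to zero in $\L^{d,d}_n(R;\QF^{\s}_M)$. By the exactness statement of Remark~\ref{remark:cofibre-is-fibre-two} its image admits an allowed $\QF^{\s}_M$‑Lagrangian $(f\colon L\to X,q',\eta')$, and by Remark~\ref{remark:intervalsLab} $L$ is again $\big(\tfrac{-n-d}{2}\big)$‑connective. Since $\QF$ is $r$‑symmetric, the fibre of $\QF(L)\to\QF^{\s}_M(L)$ is $\big(\tfrac{n+d-2r}{2}\big)$‑truncated, and $\tfrac{n+d-2r}{2}=n-1$ when $n=d-2r+2$; hence $\QF(L)\to\QF^{\s}_M(L)$ is injective on $\pi_n$ and an isomorphism on $\pi_{n+1}$. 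Therefore $f^{*}q\in\pi_n\QF(L)$ vanishes, as $f^{*}q'$ does, and its space of null‑homotopies maps bijectively onto that of $f^{*}q'$, so $\eta'$ lifts to a null‑homotopy $\eta$ of $f^{*}q$. Because the bilinear parts of $\QF$ and $\QF^{\s}_M$ agree, the non‑degeneracy map $\eta_\sharp\colon X/L\to\Dual L$ coincides with $\eta'_\sharp$ and is hence an equivalence, so $(f,\eta)$ is an allowed $\QF$‑Lagrangian for $(X,q)$ and $[X,q]=0$ in $\L^{d,d}_n(R;\QF)\cong\L_n(R;\QF)$. This establishes injectivity for all $n\geq d-2r+2$.

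The main obstacle is precisely this last degree: the bijectivity range is a formal consequence of Corollary~\ref{corollary:surgery-global-dim-for-rings} together with Lemma~\ref{lemma:genuine-to-symmetric}, but squeezing out the extra degree of injectivity forces one to unwind the metabolic category and lift a Lagrangian relative to a fixed Poincaré form, the crucial point being that the non‑degeneracy condition only sees the common bilinear part of $\QF$ and $\QF^{\s}_M$.
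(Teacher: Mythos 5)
Your proof is correct and follows essentially the same route as the paper's: reduce via Corollary~\refthree{corollary:surgery-global-dim-for-rings} to the width-restricted groups $\L_n^{a,b}$ with $(a,b)=(d,d)$ or $(d-1,d)$ according to the parity of $n+d$, invoke Lemma~\refthree{lemma:genuine-to-symmetric} for the bijectivity range, and handle the extra degree $n=d-2r+2$ of injectivity by lifting an allowed $\QF^{\s}_M$-Lagrangian to a $\QF$-Lagrangian using that the fibre of $\QF(L)\to\QF^{\s}_M(L)$ is $(n-1)$-truncated. You merely spell out the Lagrangian-lifting step (vanishing of $f^*q$, surjectivity on nullhomotopies, agreement of bilinear parts) in a bit more detail than the paper, which compresses it into the assertion that injectivity on $\pi_n$ and surjectivity on $\pi_{n+1}$ suffice.
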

\begin{proof}
We consider the commutative diagram
\[
\begin{tikzcd}
\L_n^{d,d}(R;\QF) \ar[r] \ar[d] & \L_n(R;\QF) \ar[d] & \L_n^{d-1,d}(R;\QF) \ar[l] \ar[d] \\
\L_n^{d,d}(R;\QF^{\s}_M) \ar[r] & \L_n(R;\QF^{\s}_M) & \L_n^{d-1,d}(R;\QF^{\s}_M) \ar[l]
\end{tikzcd}
\]
and use Corollary~\refthree{corollary:surgery-global-dim-for-rings} and Lemma~\refthree{lemma:genuine-to-symmetric} to conclude the bijectivity claim of the corollary. To see injectivity for $n=d-2r+2$, again using Corollary~\refthree{corollary:surgery-global-dim-for-rings}, it will suffice to show that the left vertical map in the above diagram is injective.
In light of Remark~\refthree{remark:cofibre-is-fibre-two} it will suffice to show that if \((X,q)\) is a $(-d+r-1)$-connective Poincaré object in \((\Dperf(R),\QF\qshift{-n})\) whose associated Poincaré object in \((\Dperf(R),(\QF^{\s}_M)\qshift{-n})\) admits a Lagrangian \(L \to X\) such that \(L\) is $(-d+r-1)$-connective then \(L\) can be refined to a Lagrangian of \((X,q)\) with respect to \(\QF\). For this, it will suffice to show that for an \(L\) with this connectivity bound, the map
\[
\Om^n\QF(L) \lto \Om^n\QF^{\s}_M(L)
\]
is surjective on \(\pi_1\) and injective on \(\pi_0\). Indeed, this 
map is $(-1)$-truncated by Remark~\refthree{remark:connectivity} since \(\QF\) is \(r\)-symmetric.
\end{proof}

As a consequence, we obtain the following result, which proves Theorem~\refthree{theorem:gs-and-s-agreement-range} and the first part of Theorem~\refthree{theorem:GW-classical-quad-sym} from the introduction. 
\begin{corollary}
\label{corollary:improve}%
Let $M$ be an invertible $\ZZ$-module with involution over $R$ and suppose that \(R\) is coherent  of finite global dimension \(d\). Then the canonical maps 
\[
\L^{\gs}_n(R;M) \lto \L^{\s}_n(R;M) \quad \text{ and } \quad \GW^\gs_n(R;M) \lto \GW^\s_n(R;M)
\]
are injective for $n \geq d-2$ and bijective for $n\geq d-1$. Likewise, the maps
\[
\L^{\gq}_n(R;M) \lto \L^{\s}_n(R;M) \quad \text{ and } \quad \GW^\gq_n(R;M) \lto \GW^\s_n(R;M)
\]
are injective for $n\geq d+2$ and bijective for $n\geq d+3$.
\end{corollary}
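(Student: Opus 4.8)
The plan is to bootstrap from the $\L$-theoretic comparison already proved in Corollary~\refthree{corollary:genuine-is-symmetric}, feeding it into the general fibre sequence relating $\GW$, $\K$ and $\L$. The first step is to recall from Example~\refthree{example:symm-quad-qf} that the genuine symmetric Poincaré structure $\QF^{\gs}_M$ is $2$-symmetric and the genuine quadratic one $\QF^{\gq}_M$ is $0$-symmetric, both being compatible with $M$ since their underlying duality is $\Dual = \hom_R(-,M)$. Applying Corollary~\refthree{corollary:genuine-is-symmetric} to $\QF^{\gs}_M$ with $r=2$ shows that $\L^{\gs}_n(R;M) \to \L^{\s}_n(R;M)$ is injective for $n \geq d-2$ and bijective for $n \geq d-1$, and applying it to $\QF^{\gq}_M$ with $r=0$ shows that $\L^{\gq}_n(R;M) \to \L^{\s}_n(R;M)$ is injective for $n \geq d+2$ and bijective for $n \geq d+3$. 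This settles the two $\L$-theory assertions.

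For the Grothendieck--Witt assertions I would invoke the fibre sequence $\K(R;M)_{\hC} \to \GW(R;\QF) \to \L(R;\QF)$ of \eqrefthree{equation:quadsequence}, which is natural in the Poincaré $\infty$-category and whose first term depends on $(\Dperf(R),\QF)$ only through the underlying stable $\infty$-category together with the $\Ct$-action induced by the underlying duality. Since $\QF^{\gs}_M$, $\QF^{\gq}_M$ and $\QF^{\s}_M$ share the same underlying duality, the canonical transformations $\QF^{\gs}_M \to \QF^{\s}_M$ and $\QF^{\gq}_M \to \QF^{\s}_M$ of Remark~\refthree{remark:compatible-structures} are Poincaré, not merely hermitian, morphisms, so they induce maps of fibre sequences which restrict to the identity on $\K(R;M)_{\hC}$. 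Passing to vertical fibres therefore produces equivalences
\[
\fib\bigl(\GW^{\gs}(R;M)\to\GW^{\s}(R;M)\bigr)\simeq \fib\bigl(\L^{\gs}(R;M)\to\L^{\s}(R;M)\bigr)
\]
and the analogous equivalence with $\gq$ in place of $\gs$.

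It then remains to translate the ranges. The first $\L$-theory statement says precisely that $\fib(\L^{\gs}(R;M)\to\L^{\s}(R;M))$ has vanishing homotopy groups in degrees $\geq d-2$, and the second that $\fib(\L^{\gq}(R;M)\to\L^{\s}(R;M))$ has vanishing homotopy groups in degrees $\geq d+2$; both follow at once from the long exact sequences, using injectivity and surjectivity together in the stated ranges. Transporting these vanishing statements across the displayed equivalences and reading off the long exact sequence of $\fib \to \GW^{\alpha}(R;M) \to \GW^{\s}(R;M)$ gives injectivity for $n \geq d-2$ (resp.\ $n \geq d+2$) and bijectivity for $n \geq d-1$ (resp.\ $n \geq d+3$), as claimed. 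I expect no genuine difficulty here: all the substance has been absorbed into Corollary~\refthree{corollary:genuine-is-symmetric} and hence into the surgery Proposition~\refthree{proposition:surgery-global-dim}. The one point to handle carefully is the observation that the comparison maps preserve the underlying duality, so that the $\K$-theory terms cancel on passing to fibres; without this the fibre of $\GW^{\gs}\to\GW^{\s}$ would also receive a contribution from $\Ct$-equivariant $\K$-theory and the clean reduction to $\L$-theory would break down.
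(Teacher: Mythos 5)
Your proposal is correct and follows essentially the same route as the paper: both reduce the Grothendieck–Witt statements to the $\L$-theory statements via the natural fibre sequence $\K(R;M)_{\hC} \to \GW \to \L$ (using that the three Poincaré structures share the underlying duality, so the $\K$-terms match), and then settle the $\L$-theory ranges by applying Corollary~\refthree{corollary:genuine-is-symmetric} with $r=2$ for $\QF^{\gs}_M$ and $r=0$ for $\QF^{\gq}_M$. The paper phrases this as the two comparison squares being cartesian, which is the same as your observation that the vertical fibres are equivalent; your write-up is just slightly more explicit about why the $\K$-theory contribution cancels.
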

\begin{proof}
By Theorem~\refthree{theorem:fiber-sequence-intro-three}, the canonical squares
\[
\begin{tikzcd}
\GW^\gq(R;M) \ar[r] \ar[d] & \GW^\gs(R;M) \ar[d] \ar[r] & \GW^\s(R;M) \ar[d] \\
\L^\gq(R;M) \ar[r] & \L^\gs(R;M) \ar[r] & \L^\s(R;M)
\end{tikzcd}
\]
are pullbacks. Hence the Grothendieck-Witt part of the corollary follows from the L-theory part. Moreover, the L-theory parts then follow from Corollary~\refthree{corollary:genuine-is-symmetric} using that $\QF^\gs$ is $2$-symmetric and $\QF^\gq$ is $0$-symmetric.
\end{proof}

\begin{example}
\label{example:dimension-bound-sharp}%
In general, the bounds obtained in Corollary~\refthree{corollary:improve} are sharp, as the following example shows. Consider the $d$-dimensional ring $\FF_2[\ZZ^d]$ as a ring with anti-involution induced by the inversion of the group $\ZZ^d$. We claim that the map $\L^\gs(\FF_2[\ZZ^d]) \to \L^\s(\FF_2[\ZZ^d])$ is not surjective on $\pi_{d-2}$. To see this we use the Ranicki-Shaneson splitting proved by Ranicki for $\L^\s$ and Milgram-Ranicki for $\L^\gs$. In \paperfour we give a proof of this result which works simultaneously for both variants, but for our purposes the following version is sufficient. Let $R$ be a ring with anti-involution and consider the ring $R[\ZZ]$ with anti-involution induced by the group inversion of $\ZZ$. Suppose that $\K_0(R) \cong \K_0(R[\ZZ]) \cong \ZZ$, for instance $R$ could be a field or the integers. Then, for $\QF = \QF^\gs,\QF^\s$, there is a natural equivalence
\[
\L(R[\ZZ];\QF) \simeq \L(R;\QF) \oplus \Sigma \L(R;\QF).
\]
By induction, we deduce that the map $\Sigma^d \L^\gs(\FF_2) \to \Sigma^d \L^\s(\FF_2)$ is a retract of the map $\L^\gs(\FF_2[\ZZ^d]) \to \L^\s(\FF_2[\ZZ^d])$. Therefore, in order to see that the latter map is not surjective on $\pi_{d-2}$ it suffices to argue that the map $\L^\gs(\FF_2) \to \L^\s(\FF_2)$ is not surjective on $\pi_{-2}$. Since $\L^\s_2(\FF_2) \cong \ZZ/2$ but $\L^\gs(\FF_2) = 0$, this is indeed the case. A similar argument shows that for $\eps=-1$, the map $\L_0^\gs(\ZZ[\ZZ];\eps) \to \L_0^\s(\ZZ[\ZZ];\eps)$ is not surjective on $\pi_0$.

We note that for $d=0$, this shows that the obtained bounds are sharp also for commutative rings viewed as rings with trivial anti-involution. Moreover, by \cite{Read}*{Corollary F}, the map $\L^\gs(\ZZ/4) \to \L^\s(\ZZ/4)$ is not an isomorphism on almost all positive homotopy groups, showing that also among general commutative rings with trivial involution, our results cannot be improved too much. However, Schlichting has recently shown that the map $\L^\gs(R) \to \L^\s(R)$ is an isomorphism on $\pi_n$ for $n\geq -1$ for many regular Noetherian domains $R$ of arbitrary finite Krull dimension \cite{Schlichting-genuine}, showing that among regular Noetherian rings, our bounds are quite far from being optimal.
\end{example}

\begin{remark}
\label{remark:right-coherent-variant}%
If $R$ is a right-coherent ring of finite right-global dimension, we may apply the results of Corollaries~\refthree{corollary:d-zero}, \refthree{corollary:genuine-is-symmetric}, and \refthree{corollary:improve} and Remark \refthree{remark:odd-vanishing} to the ring $R\op$ with Poincaré structure $\QF^\vee$ as described in Remark~\refthree{remark:passing-to-opposite-ring}: By Example~\refthree{example:opposite-poincare-structure} $\QF^\vee$ is $r$-symmetric if $\QF$ is. Using then the equivalence of Poincaré $\infty$-categories $(\Dperf(R\op),\QF^\vee) \simeq (\Dperf(R),\QF)$, we obtain the conclusions of Corollaries~\refthree{corollary:d-zero}, \refthree{corollary:genuine-is-symmetric}, and \refthree{corollary:improve} and Remark~\refthree{remark:odd-vanishing} also for right-coherent rings of finite right-global dimension.
\end{remark}

\begin{remark}
In Theorem~\refthree{theorem:main-theorem-L-theory} we have shown that the non-negative genuine symmetric \(\L\)-groups coincide with Ranicki's \(\L\)-groups of short complexes. The comparison range above then improves on Ranicki's classical theorem that established injectivity of the map $\L_n^{\gs}(R) \to \L^\s(R)$ for non-negative \(n \geq 2d-3\) and bijectivity for non-negative \(n \geq 2d-2\) for Noetherian rings of global dimension $d$.
\end{remark}

Since Dedekind rings have global dimension \(\leq 1\), and by applying the fibre sequence of Theorem~\refthree{theorem:fiber-sequence-intro-three} we immediately find:

\begin{corollary}
\label{corollary:dedekind}%
Let \(R\) be a Dedekind ring, e.g.\ the ring of integers in an algebraic number field. 
Then the canonical maps
\[
\L^{\gs}_n(R;M)\stackrel{}{\lto} \L^{\s}_n(R;M) \quad\text{and}\quad \GW^{\g\s}_n(R;M)\stackrel{}{\lto} \GW^{\s}_n(R;M)
\]
are injective for \(n=-1\) and bijective for \(n\geq 0\). In particular, the non-negative homotopy groups of $\L^{\gs}_n(R;M)$ are 4-periodic. Similarly, the maps
\[
\L^{\g\qdr}_n(R;M)\stackrel{}{\lto} \L^{\s}_n(R;M) \quad\text{and}\quad \GW^{\g\qdr}_n(R;M)\stackrel{}{\lto} \GW^{\s}_n(R;M)
\]
are injective for \(n=3\) and bijective for \(n\geq 4\).
\end{corollary}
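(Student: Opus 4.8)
The plan is to derive the corollary directly from Corollary~\refthree{corollary:improve}, using only that a Dedekind ring has small homological dimension. First I would observe that a Dedekind ring $R$ is Noetherian, hence coherent, and has global dimension $d \leq 1$. Under conventions that allow fields among Dedekind rings one may have $d = 0$; but the ranges appearing in Corollary~\refthree{corollary:improve} are monotone in $d$, so it is harmless to argue with $d = 1$ throughout, the case $d=0$ being then \emph{a fortiori}.

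Next I would simply substitute $d = 1$ into Corollary~\refthree{corollary:improve}. For the genuine symmetric versus symmetric comparison this yields that $\L^{\g\s}_n(R;M) \to \L^{\s}_n(R;M)$ and $\GW^{\g\s}_n(R;M) \to \GW^{\s}_n(R;M)$ are injective for $n \geq d - 2 = -1$ and bijective for $n \geq d - 1 = 0$, which contains both assertions of the corollary about these maps. Likewise, substituting $d=1$ gives that $\L^{\g\qdr}_n(R;M) \to \L^{\s}_n(R;M)$ and $\GW^{\g\qdr}_n(R;M) \to \GW^{\s}_n(R;M)$ are injective for $n \geq d + 2 = 3$ and bijective for $n \geq d + 3 = 4$, as claimed. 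Note that the Grothendieck--Witt part of Corollary~\refthree{corollary:improve} is itself deduced from its L-theory part via the fibre sequence of Theorem~\refthree{theorem:fiber-sequence-intro-three}, so no further input is needed at this stage.

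It then remains to record the $4$-periodicity of the non-negative homotopy groups of $\L^{\g\s}(R;M)$. For this I would invoke Corollary~\refthree{corollary:periodicity-L}, according to which the spectrum $\L^{\s}(R;M)$ is $4$-periodic, so that $\L^{\s}_n(R;M) \cong \L^{\s}_{n+4}(R;M)$ for all $n \in \ZZ$; combining this with the isomorphisms $\L^{\g\s}_n(R;M) \cong \L^{\s}_n(R;M)$ for $n \geq 0$ obtained above gives $\L^{\g\s}_n(R;M) \cong \L^{\g\s}_{n+4}(R;M)$ for all $n \geq 0$.

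Since the whole argument is an instantiation of results established earlier in the paper, I do not expect any genuine obstacle. The only points requiring a moment's attention are the convention for the term ``Dedekind ring'' (handled by the monotonicity remark above) and the fact that at the boundary degrees $n = d-2$ and $n = d+2$ one obtains injectivity but not necessarily surjectivity --- which is precisely how Corollary~\refthree{corollary:improve} is phrased, so nothing new must be proved.
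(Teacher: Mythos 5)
Your argument is correct and is essentially the paper's own proof: the corollary is stated as an immediate consequence of the fact that a Dedekind ring is coherent of global dimension $d\leq 1$ together with Corollary~\refthree{corollary:improve} (which itself packages the fibre sequence of Theorem~\refthree{theorem:fiber-sequence-intro-three}). Your appeal to Corollary~\refthree{corollary:periodicity-L} for the $4$-periodicity statement is likewise the intended justification.
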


We recall that by the main Theorem of \cite{comparison} the non-negative homotopy groups of
$\GW^{\g\s}_n(R;M)$ and $ \GW^{\g\qdr}_n(R;M)$ are isomorphic to the classical Grothendieck-Witt groups of symmetric and quadratic forms, respectively. Further, we find:

\begin{corollary}
\label{corollary:dedekind-gw}%
Let \(R\) be a Dedekind ring and $M$ an invertible $\ZZ$-module with involution over $R$. 
Then the canonical maps
\[
\GW_{\cl}^{\s}(R;M) \lto \tau_{\geq0}\GW^{\s}(R;M)\quad\text{and}\quad \tau_{\geq 4}\GW_{\cl}^{\qdr}(R;M) \lto \tau_{\geq4}\GW^{\s}(R;M)
\]
are equivalences.
\end{corollary}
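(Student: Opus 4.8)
The plan is to deduce both equivalences directly from Corollary~\refthree{corollary:dedekind} by rewriting the classical Grothendieck--Witt spectra in terms of their genuine refinements.

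First I would invoke the main theorem of \cite{comparison}, recalled above, which supplies natural equivalences
\[
\GW_{\cl}^{\s}(R;M) \simeq \tau_{\geq 0}\GW^{\gs}(R;M) \quad\text{and}\quad \GW_{\cl}^{\qdr}(R;M) \simeq \tau_{\geq 0}\GW^{\gq}(R;M).
\]
Under the first of these, the map $\GW_{\cl}^{\s}(R;M) \to \tau_{\geq 0}\GW^{\s}(R;M)$ is identified with the connective cover of the comparison map $\GW^{\gs}(R;M) \to \GW^{\s}(R;M)$ induced by the natural transformation $\QF^{\gs}_M \to \QF^{\s}_M$; under the second, together with the identity $\tau_{\geq 4}\circ\tau_{\geq 0} \simeq \tau_{\geq 4}$, the map $\tau_{\geq 4}\GW_{\cl}^{\qdr}(R;M) \to \tau_{\geq 4}\GW^{\s}(R;M)$ becomes $\tau_{\geq 4}$ applied to the comparison map $\GW^{\gq}(R;M) \to \GW^{\s}(R;M)$.

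Next I would apply Corollary~\refthree{corollary:dedekind}, which is available since a Dedekind ring is coherent of global dimension $\leq 1$, so that one may take $d \leq 1$ in Corollary~\refthree{corollary:improve}: the comparison map $\GW^{\gs}_n(R;M) \to \GW^{\s}_n(R;M)$ is an isomorphism for all $n \geq 0$, and $\GW^{\gq}_n(R;M) \to \GW^{\s}_n(R;M)$ is an isomorphism for all $n \geq 4$. Since applying $\tau_{\geq 0}$ (resp.\ $\tau_{\geq 4}$) to a map of spectra that is a $\pi_n$-isomorphism in all degrees $n \geq 0$ (resp.\ $n \geq 4$) yields an equivalence, both assertions follow.

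The entire content lies in Corollary~\refthree{corollary:dedekind}, which in turn rests on the surgery result Proposition~\refthree{proposition:surgery-global-dim} for $r$-symmetric Poincar\'e structures, the comparison of Corollary~\refthree{corollary:improve}, and the fibre sequence of Theorem~\refthree{theorem:fiber-sequence-intro-three}; once these are granted, the remaining steps are purely formal manipulations of Postnikov truncations, and I anticipate no genuine obstacle.
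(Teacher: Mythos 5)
Your proof is correct and coincides with the paper's (implicit) argument: the paper introduces this corollary immediately after recalling the comparison theorem of \cite{comparison} and Corollary~\refthree{corollary:dedekind}, which is exactly the reduction you carry out. The only thing worth noting is that you could cite Corollary~\refthree{corollary:dedekind} directly rather than re-deriving it from Corollary~\refthree{corollary:improve}, but this makes no material difference.
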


\begin{remark}
\label{remark:improve-two}%
We now prove the second part of Theorem~\refthree{theorem:GW-classical-quad-sym} from the introduction. So let $R$ be a coherent ring of finite global dimension d. We have equivalences $\Sigma^2\L^\gs(R) \simeq \L^{-\gev}(R)$, and $\Sigma^2 \L^{\gev}(R) \simeq \L^{-\gq}(R)$. If $R$ is in addition 2-torsion free, for instance a Dedekind domain whose fraction field is of characteristic different from 2, then the canonical maps $\QF^{-\gev} \to \QF^{-\gs}$ and $\QF^{\gq} \to \QF^{\gev}$ are equivalences by Remark~\refthree{remark:genuineeq}. We deduce that for such rings, there are in fact canonical equivalences
\[
\Sigma^2\L^{\gs}(R) \simeq \L^{-\gs}(R) \quad \text{ and } \quad \Sigma^2 \L^{\gq}(R) \simeq \L^{-\gq}(R).
\]
The comparison map is compatible with these equivalences, so  we deduce that the map $\L^{\gq}(R) \to \L^{\gs}(R)$ is a 2-fold loop of the map $\L^{-\gq}(R) \to \L^{-\gs}(R)$. Corollary~\refthree{corollary:improve} then implies that the maps $\L^{\gq}_n(R) \to \L^\gs_n(R)$ and $\GW^\qdr_{\cl,n}(R) \to \GW^\s_{\cl,n}(R)$ are injective for $n=d$ and an isomorphism for $n\geq d+1$.
\end{remark}

\begin{remark}
\label{remark:form-paramters-and-symmetric-surgery}%
As described in Definition~\refone{definition:form-parameter}, there is a canonical non-abelian derived Poincaré structure $\QF^{\gfpm}_M$ associated to any form parameter $\lambda$, as described in the work of Baues and Schlichting \cite{Baues, SchlichtinghigherI} (extending the classical notion of Bak) on an invertible $\ZZ$-module with involution $M$. It then follows from the fact that $\QF^{\gfpm}_M(P[0])$ is discrete (by definition) that $\QF^{\gfpm}_M$ is 0-quadratic and 0-symmetric. In particular, the comparison results of Corollary~\refthree{corollary:genuine-is-quadratic} and Corollary~\refthree{corollary:genuine-is-symmetric} apply to $\QF^{\gfpm}_M$. Depending on $\lambda$, the Poincaré structure $\QF^{\gfpm}_M$ might in fact be 1-symmetric (as is the case for even forms) or 2-symmetric (as is the case for symmetric forms), or likewise $1$-quadratic (as in the case of even forms) or 2-quadratic (as in the case of quadratic forms). 
Hence, for a form parameter $\lambda = (M,Q)$ over a 1-dimensional ring $R$, we find that the map $\L^{\gfpm}_n(R) \to \L^\s_n(R)$ is injective for $n=3$ and bijective for $n\geq 4$. In addition, $\L_2^\gfpm(R)$ is isomorphic to the Witt group associated to the form parameter $(-M, \ker(M \to Q))$ as follows from Corollary~\refthree{corollary:d-zero}.
\end{remark}

\begin{example}
\label{example:dotto-ogle}%
In this example, we show how the surgery methods developed in the previous sections allow to determine the L-theory $\L^\burn(\ZZ) = \L(\Dperf(\ZZ),\QF^\burn)$ investigated first in \cite{Dotto-Ogle} and denoted $\L^\g(\mathbb{A})$ therein. The Poincaré structure $\QF^\burn$ is defined as the pullback
\[
\begin{tikzcd}
  \QF^\burn(X) \ar[r] \ar[d] & \map_\ZZ(X,\tau_{\geq 1/2}\ZZ^{\tC}) \ar[d] \\
  \QF^\gs(X) \ar[r] & \map_\ZZ(X,\tau_{\geq 0}\ZZ^{\tC})
\end{tikzcd}
\]
where $\tau_{\geq 1/2}\ZZ^{\tC}$ is the pullback $\ZZ \times_{\ZZ/2} \tau_{\geq 0} \ZZ^{\tC}$. We deduce that $\QF^\burn$ is the Poincaré structure associated to the Burnside ring form parameter which is given by $Q= \ZZ \times_{\ZZ/2} \ZZ$ and $M=\ZZ$, and where the map $M \to Q$ is the pair $(2,0)$. Since the map  $M \to Q$ is injective, we deduce from Remark~\refthree{remark:form-paramters-and-symmetric-surgery} that the map $\L^\burn_n(\ZZ) \to \L^\s_n(\ZZ)$ is an isomorphism for $n\geq 2$. Here we have used that $\L_3^\s(\ZZ) = 0$. Likewise, we may apply Proposition~\refthree{proposition:pi-pi} to the canonical Poincaré functor $(\Mod_\SS^\omega,\QF^\uni) \to (\Dperf(\ZZ),\QF^\burn)$: 
In this case, the map on linear terms to investigate is the map $\SS \to \tau_{\geq 1/2} \ZZ^{\tC}$ which is $1$-connective. We deduce that the map $\L^\uni_n(\SS) \to \L^\burn_n(\ZZ)$ is an isomorphism for $n\leq 0$ and a surjection for $n=1$. We will now show that the map $\L_1^\qdr(\ZZ) \to \L_1^\burn(\ZZ)$ is surjective, showing that $\L_1^\burn(\ZZ)$ vanishes. To see this, one may first  assume that an element of $\L_1^\burn(\ZZ)$ is represented by
$(X,q)$ with $X$ a complex concentrated in degrees $[-1,0]$. 
In this case, $\pi_0(X)$ is a finitely generated free module, and one can perform surgery along the canonical map $\pi_0(X) \to X$. The result is a Poincaré object $(X',q')$ for $\Omega\QF^\burn$ with homotopy groups concentrated in degree $-1$, and there necessarily a torsion group $T$. On such objects, the form $q'$ then lifts to a form for $\Omega\QF^\qdr$, since $\Omega\map_\ZZ(T[-1],\tau_{\geq 1/2} \ZZ^{\tC})$ has trivial $\pi_0$.
\end{example}

\section{L-theory of Dedekind rings}
\label{section:localisation-devissage}%
The goal of this section is to extend Quillen's localisation-dévissage sequence \cite{quillen}*{Corollary of Theorem 5} for Dedekind rings to hermitian K-theory, thereby proving Theorem~\refthree{theorem:localisation-devissage-Dedekind} of the introduction. We recall that Dedekind rings are commutative regular Noetherian domains of global dimension $1$, and the main example of interest to us are those whose fraction field is a number field. Other notable examples are discrete valuation rings and rings of functions of smooth affine curves over fields.

To prove the theorem we first construct a Poincaré-Verdier sequence induced by the map from a Dedekind ring $R$ to  its localisation \emph{away from a set $S$ of non-zero prime ideals}. The functor $\GW$ takes such sequences to cofibre sequences of spectra. Using a dévissage result for symmetric $\GW$-theory we then identify the fibre term as the sum of $\GW$-spectra of the residue fields  $R/\fp$, where $\fp$ ranges through the set $S$.
 
Results of this type have appeared in the literature from early on. Three- or four-term localisation sequences for Witt groups appear in the work of Knebusch~\cite{knebusch}, Milnor-Husemoller \cite{milnor-symmetric} and these are extended to long exact sequences of L-groups by Ranicki ~\cite{Ranickiyellowbook}*{\S 4.2} and Balmer-Witt groups in~\cite{balmer-witt}*{\S 1.5.2}. For Grothendieck-Witt groups there are exact sequences due to Karoubi~\cite{karoubi-localisation-I,karoubi-localisation-II} as well as Hornbostel-Schlichting~\cite{hornbostel,hornbostel-schlichting}, both under the assumption that 2 is invertible in the ring. 

Our results hold with no assumption on invertibility of $2$, for the homotopy theoretic symmetric Grothendieck-Witt spectrum $\GW^{\s}$. By the results of the previous section (see Corollary~\refthree{corollary:dedekind} and~\refthree{corollary:dedekind-gw}) the non-negative homotopy groups of $\GW^\s(R)$ agree with the classical higher Grothendieck-Witt groups, but it is only $\GW^\s(R)$ which is well-behaved in all degrees, see Remark~\refthree{remark:no-quadratic-devissage}.

\subsection{The localisation sequence}
\label{subsection:locdev}%

Let $R$ be a Dedekind ring and  $S$ a set of non-zero prime ideals of $R$. 
We let $R_S = \nO(U)$ where $\nO$ is the structure sheaf of $\spec(R)$ and $U = \spec(R)\setminus S$ is the complement of the set $S$ (in case $S$ is infinite $U$ is not open in $ \spec(R)$, and $ \nO(U)$ is defined as the colimit of $\nO$ on the open subsets of $\spec(R)$ containing $U$). Concretely, one can describe the ring $R_S$ as follows: For any non-zero prime ideal $\fp$, the localisation $R_{(\fp)}$ at $\fp$ is a discrete valuation ring, and the fraction field $K$ of $R$ hence acquires a $\fp$-adic valuation $\nu_\fp$. Then $R_S$ identifies with the subring of $K$ given by all elements $x \in K$ such that $\nu_{\fp}(x) \geq 0$ for all $\fp$ not contained in $S$. The ring $R_S$ can be thought of as the localisation of $R$ \emph{away} from the set of primes $S$.
We denote by $\Dperf_S(R) \subseteq \Dperf(R)$ the fibre of the functor $\Dperf(R) \to \Dperf(R_S)$. By the above, it coincides with the full subcategory spanned by those perfect \(R\)-modules whose homotopy groups are \(S\)-primary torsion modules, which we will also refer to as \(S_{\infty}\)-torsion modules.

\begin{example}
Let $R$ be a Dedekind ring.
\begin{enumerate}
\item If $S$ consists of all the non-zero prime ideals of $R$, then $R_S$ is the fraction field $K$,
\item Given a multiplicative subset $T\subset R$ we may consider the set of primes ideals $S=\{\fp\ |\ \fp\cap T\neq \emptyset\}$. Then $R_S=R[T^{-1}]$ is obtained from $R$ by inverting the elements of $T$. In particular:
\item If $S = \{ \fp_1,\dots,\fp_n \}$ is such that the ideal $\fp_1^{r_1} \cdot \dots \cdot \fp_n^{r_n} = (x)$ is a principal ideal, then $R_S = R[\tfrac{1}{x}]$.
\end{enumerate}
These are the cases that we will use most. We warn the reader that, in general, $R_S$ is not obtained from $R$ by inverting a multiplicative subset.
\end{example}

Recall from Definition~\reftwo{definition:solid} that a map of rings $\vphi\colon A \to B$ is called a derived localisation if the map
$B\otimes_{A} B \to B$ from the derived tensor product is an equivalence in \(\Der(A)\).
When \(B\) is flat over \(A\), this is equivalent to saying that the multiplication map \(B \otimes^\mathrm{U}_A B \to B\) from the underived tensor product is an isomorphism of ordinary \(A\)-modules.
Recall as well that \(\vphi\) is said to have a perfectly generated fibre if \(I := \fib[A \to B] \in \Der(A)\) can be written as a filtered colimit of perfect \(B\)-torsion complexes, that is, objects in the fibre of \(\Dperf(A) \to \Dperf(B)\).

\begin{lemma}
\label{lemma:derived-localisation}%
Let $R$ be a Dedekind ring and $S$ a set of non-zero prime ideals of $R$. Then \(R_S\) is flat over \(R\) and the map $R \to R_S$ is a derived localisation with a perfectly generated fibre.
\end{lemma}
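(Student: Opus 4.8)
The strategy is to verify the three assertions---flatness, the derived localisation property, and perfect generation of the fibre---separately, reducing each to a local computation at the non-zero primes of $R$.

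First, for flatness: since $R$ is a Dedekind ring, $R_S$ is a subring of the fraction field $K$ obtained by imposing valuation conditions at the primes outside $S$, so it is a localisation of $R$ in the weak sense (an intersection of the $R_{(\fp)}$ for $\fp\notin S$ inside $K$), and in particular torsion-free, hence flat over the Dedekind ring $R$. More carefully, flatness can be checked locally: for a prime $\fp$ of $R$, the module $(R_S)_{(\fp)}$ is either $K$ (if $\fp\in S$) or $R_{(\fp)}$ (if $\fp\notin S$), and both are flat $R_{(\fp)}$-modules, so $R_S$ is flat over $R$.

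Next, for the derived localisation property: by Lemma~\reftwo{definition:solid} (or rather the characterisation recalled just above), since $R_S$ is flat over $R$ it suffices to check that the underived multiplication map $R_S\otimes^{\mathrm U}_R R_S \to R_S$ is an isomorphism of $R$-modules. This again may be checked after localising at each prime $\fp$ of $R$: at $\fp\in S$ it becomes $K\otimes_K K\to K$, and at $\fp\notin S$ it becomes $R_{(\fp)}\otimes_{R_{(\fp)}} R_{(\fp)}\to R_{(\fp)}$, both visibly isomorphisms. Hence $R\to R_S$ is a derived localisation.

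Finally, for the perfectly generated fibre: let $I=\fib[R\to R_S]$. Since $R_S$ is flat, $I$ is a discrete $R$-module, namely the quotient $R_S/R$, which is the $S$-primary torsion module $\bigoplus_{\fp\in S}(K/R_{(\fp)})$ after localising appropriately; in any case $\pi_*(I)$ is an $S_\infty$-torsion module. One must exhibit $I$ as a filtered colimit of perfect $R$-modules lying in the fibre of $\Dperf(R)\to\Dperf(R_S)$. The natural candidates are the modules $R/\fp^{k}$ for $\fp\in S$, $k\geq 1$, which are perfect over the Dedekind ring $R$ (global dimension $1$) and are annihilated by localisation at $R_S$; one writes $R_S/R$ as the filtered colimit of its finitely generated submodules, each of which is a finite direct sum of cyclic torsion modules $R/\fp^{k}$ and hence perfect $R_S$-torsion. \emph{The main obstacle} I anticipate is the bookkeeping in this last step: making sure the finitely generated submodules of $R_S/R$ are genuinely perfect as $R$-modules (using that $R$ is Dedekind, so finitely generated torsion modules have projective dimension $\leq 1$) and that the colimit is filtered with the transition maps landing in the torsion subcategory, rather than anything deep. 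With these three points established, the lemma follows.
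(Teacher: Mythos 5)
Your proof is correct, but it takes a genuinely different route from the paper's. The paper organizes all three claims around a single filtered colimit description $R_S = \colim_{D \in \Div_S} R_D$, where $R_D \subseteq K$ is the fractional ideal associated to an effective divisor $D$ supported on $S$: flatness is then automatic since each $R_D$ is a line bundle; the derived localisation property follows from the multiplicative isomorphisms $R_D \otimes^\mathrm{U}_R R_{D'} \cong R_{D+D'}$; and the perfect generation of the fibre comes from $R_S/R = \colim_D R_D/R$ with each $R_D/R$ visibly perfect torsion. You instead argue each point separately using standard facts: flatness because a torsion-free module over a Dedekind domain is flat (or equivalently by localising at each prime), the localisation property by a prime-by-prime computation of $(R_S)_{(\fp)}$ ($= K$ for $\fp\in S$, $= R_{(\fp)}$ otherwise), and perfect generation by writing $R_S/R$ as the filtered colimit of its finitely generated submodules, each of which is a perfect $S_\infty$-torsion module since $R$ is Noetherian of global dimension $1$. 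Both approaches work; the paper's buys a single explicit cofinal diagram that is reused three times, while yours is more elementary and relies on off-the-shelf facts about Dedekind domains rather than constructing the cofinal family of fractional ideals. One small imprecision worth fixing: in the local check at $\fp \in S$ the multiplication map becomes $K \otimes_{R_{(\fp)}} K \to K$, not $K \otimes_K K \to K$; this is still an isomorphism because $K$ is a localisation of the discrete valuation ring $R_{(\fp)}$, but that is the fact one should invoke.
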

\begin{proof}
To see that \(R_S\) is flat observe that it can be written as a filtered colimit of projective modules
\[
R_S = \colim_{D \in \Div_S} R_D.
\]
Here \(\Div_S\) is the monoid of effective divisors supported on \(S\), that is, formal sums \(D = \sum_{\fp} a_{\fp} \fp\) with \(a_{\fp} \in \NN\) such that \(a_{\fp} = 0\) for all but finitely many ideals which are contained in \(S\), 
and where for such an effective divisor \(D = \sum a_{\fp} \fp\) we write 
\[
R_{D} = \{x \in K \;|\; \val_{\fp}(x) \geq -a_{\fp}\;\forall \fp\} \subseteq K
\]
for the associated fractional ideal. Since $R_D$ is projective, this shows that \(R_S\) is flat. In addition, since \(\Div_S\) is filtered and the map \(R_{D} \otimes^\mathrm{U}_R R_{D'} \to R_{D + D'}\) induced by the multiplication in \(K\) is an isomorphism, we also deduce from this description that the multiplication map \(R_S \otimes^\mathrm{U}_R R_S \to R_S\) is an isomorphism of \(R\)-modules, so that \(R \to R_S\) is a derived localisation. Finally, to see that \(\fib[R \to R_S] = R/R_S[-1]\) is a filtered colimit of perfect \(S\)-torsion complexes we note that
\[
R_S/R = \colim_{D \in \Div_S} R_{D}/R
\]
where each \(R_{D}/R\) is perfect (being the cokernel of an injective map of projective modules) and \(S_{\infty}\)-torsion: \(R_S \otimes_R[R_D/R] = \cof[R_S \to R_S \otimes_R R_D] = 0\).
\end{proof}

Let now $M$ be a line bundle (that is, a finitely generated projective module of rank 1) over $R$ with an $R$-linear involution,
which we regard as a $\ZZ$-module with involution over $R$ as in Definition~\refthree{definition:modinv}.
For every \(-\infty \leq m \leq \infty\) we may endow $\Dperf(R)$ with the Poincaré structure $\QF^{\geq m}_M$, and $\Dperf(R_S)$ with the Poincaré structure $\QF^{\geq m}_{M_S}$ associated to the localised line bundle
\(M_S:=R_S\otimes^\mathrm{U}_{R}M\).
The extension of scalars is then a Poincaré functor, see Lemma~\refone{lemma:Poincresscalars}, so that $\Dperf_S(R)$ is closed under the duality of $\Dperf(R)$ induced by $M$ and becomes a Poincaré subcategory, with the restricted Poincaré structure.
By abuse of notation, we denote this restricted Poincaré structure again by $\QF^{\geq m}_M \colon \Dperf_S(R){\op} \to \Spa$.

\begin{proposition}
\label{proposition:verdier}%
Let $R$ be a Dedekind ring and $M$ a line-bundle over $R$ with $R$-linear involution. Then
the sequence of Poincaré $\infty$-categories
\[
(\Dperf_S(R),\QF^{\geq m}_M) \lto (\Dperf(R),\QF^{\geq m}_M) \lto (\Dperf(R_S),\QF^{\geq m}_{M_S})
\]
is a Poincaré-Verdier sequence. In particular, it induces a fibre sequence of $\GW$ and $\L$-spectra.
\end{proposition}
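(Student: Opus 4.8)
The goal is to exhibit the displayed sequence as a fibre sequence (equivalently, a cofibre sequence) in the category $\Catp$ of Poincaré $\infty$-categories, and the plan is to reduce this, via the theory of Poincaré--Verdier sequences developed in \papertwo, to the analogous statement for the underlying stable $\infty$-categories. Two of the needed ingredients are already supplied by the discussion preceding the proposition: extension of scalars along $R\to R_S$ refines to a Poincaré functor $(\Dperf(R),\QF^{\geq m}_M)\to(\Dperf(R_S),\QF^{\geq m}_{M_S})$ by Lemma~\refone{lemma:Poincresscalars}, and hence $\Dperf_S(R)$, being its fibre, is closed under the duality $\Dual=\hom_R(-,M)$ and inherits the restricted Poincaré structure. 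What remains is to establish the underlying Verdier sequence and then to recognise $(\Dperf(R_S),\QF^{\geq m}_{M_S})$ as the induced Poincaré--Verdier quotient.

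For the underlying Verdier sequence I would argue as follows. By Lemma~\refthree{lemma:derived-localisation} the map $R\to R_S$ is flat, so $-\otimes_R R_S$ is an exact functor $\Dperf(R)\to\Dperf(R_S)$ whose fibre is exactly the subcategory $\Dperf_S(R)$ of perfect $R$-modules with $S_\infty$-torsion homotopy. Again by Lemma~\refthree{lemma:derived-localisation}, $R\to R_S$ is a derived localisation with perfectly generated fibre in the sense of \reftwo{definition:solid}, and I would invoke the general result of \papertwo attached to this notion to conclude that $\Dperf(R)\to\Dperf(R_S)$ is a Verdier projection with kernel $\Dperf_S(R)$. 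The one point requiring attention is that one gets an honest Verdier quotient and not merely a dense inclusion after idempotent completion: here one uses that, $R$ being Dedekind, every line bundle on $R_S$ is the restriction of a line bundle on $R$ (extend the divisor), so that $\K_0(R)\to\K_0(R_S)$ is surjective and $\Dperf(R)/\Dperf_S(R)\xrightarrow{\ \simeq\ }\Dperf(R_S)$.

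Given the underlying Verdier sequence, the assembly is formal. Because the kernel $\Dperf_S(R)$ is duality-closed, \papertwo equips the Verdier quotient $\Dperf(R)/\Dperf_S(R)$ with a Poincaré structure making the projection a Poincaré functor and the resulting sequence a Poincaré--Verdier sequence; the extension-of-scalars Poincaré functor then induces a comparison Poincaré functor from this quotient to $(\Dperf(R_S),\QF^{\geq m}_{M_S})$ whose underlying functor is the equivalence found above, and which is therefore an equivalence of Poincaré $\infty$-categories once one knows its hermitian transformation is an equivalence. This last point — that the abstract quotient structure really is the concrete genuine structure $\QF^{\geq m}_{M_S}$, and not some Bousfield-localised relative of it — is where the main work sits; it amounts to the compatibility of base change along the flat map $R\to R_S$ with the bilinear functor $\hom_{R\otimes R}(X\otimes X,M)$, with the Tate object $M^{\tC}$, and with the $m$-connective cover entering the pullback defining $\QF^{\geq m}_M$, which is precisely what Lemma~\refone{lemma:Poincresscalars} records. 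With this in hand the sequence is a Poincaré--Verdier sequence, and the final clause follows from the main theorem of \papertwo that $\GW$ and $\L$ are Verdier-localising, hence send Poincaré--Verdier sequences to fibre sequences of spectra.
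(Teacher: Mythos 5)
Your overall plan mirrors the paper's quite closely: the same three ingredients enter, namely flatness of $R \to R_S$ via the filtered-colimit-of-fractional-ideals description, the derived-localisation-with-perfectly-generated-fibre statement from Lemma~\refthree{lemma:derived-localisation}, and surjectivity of $\K_0(R) \to \K_0(R_S)$ via $\Pic$, and you correctly isolate that the hard part is showing the abstract Poincaré structure on the Verdier quotient coincides with the concrete genuine structure $\QF^{\geq m}_{M_S}$ rather than some other quadratic functor with the same underlying duality.

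The gap is in your closing sentence, where you assert that this identification ``is precisely what Lemma~\refone{lemma:Poincresscalars} records.'' It is not. That lemma only upgrades the extension-of-scalars functor to a Poincaré functor, i.e.\ it supplies a natural transformation from $\QF^{\geq m}_M$ to the pullback of $\QF^{\geq m}_{M_S}$ along extension of scalars. It says nothing about Verdier quotients, and in particular does not assert that this transformation exhibits $\QF^{\geq m}_{M_S}$ as the (left Kan extended) quotient Poincaré structure on $\Dperf(R)/\Dperf_S(R)$. For finite $m$ this is where the genuine content lies: one has to track the truncated Tate object $\tau_{\geq m}\GEM M^{\tC}$ and the pullback square defining $\QF^{\geq m}_M$ across the localisation, and this does not follow from the mere existence of a Poincaré refinement of the localisation functor. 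The paper handles this in a single stroke by invoking Proposition~\reftwo{proposition:left-Kan-extension-specific-structures}, after verifying its hypothesis that $M$ is compatible with the localisation in the sense of Definition~\reftwo{definition:module-compatible-with-induction} via Example~\reftwo{example:involution-induction-compatible}(1). That is the citation you would need to plug the hole you yourself flagged; as written your argument asserts the key step rather than proving it.
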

\begin{proof}
The last statement follows from the first, since $\GW$ and $\L$ are Verdier-localising functors, which was proven in Corollary~\reftwo{corollary:gwadd} and Corollary~\reftwo{corollary:Ladd}. We now wish to apply Proposition~\reftwo{proposition:left-Kan-extension-specific-structures}. For this we need to check that $M$ is compatible with the localisation $R \to R_S$ in the sense of Definition~\reftwo{definition:module-compatible-with-induction}, which follows from the fact that $R$ and $R_S$ are commutative and $M$ is an $R\otimes R$-module through the multiplication map of $R$, see Example~\reftwo{example:involution-induction-compatible}(1). Furthermore, by Lemma~\refthree{lemma:derived-localisation}, $R_S$ is a flat $R$-module. In addition, the map $\K_0(R) \to \K_0(R_S)$ is surjective: As filtered colimits along surjections are surjections, it suffices to argue this in the case where $S$ is finite. In this case, $R_S$ is itself a Dedekind ring, so it suffices to argue that the map $\Pic(R) \to \Pic(R_S)$ is surjective. This follows from the observation that $\Pic(R)$ and $\Pic(R_S)$ are respectively the quotients of the free abelian groups generated by the prime ideals of $R$ and $R_S$. 
The proposition then follows from Lemma~\refthree{lemma:derived-localisation}.
\end{proof}

We now restrict our attention to the case of the symmetric Poincaré structure \(\QF^{\sym}_M = \QF^{\geq -\infty}_M\) and aim to describe more explicitly the 
shifted Poincaré structure \((\QF^{\sym}_{M})\qshift{1} = \QF^{\sym}_{M[1]}\) restricted to \(\Dperf_S(R)\).
\begin{lemma}
\label{lemma:explicit}%
\
\begin{enumerate}
\item
\label{item:lemma-explicit:1}%
The canonical t-structure on $\Der(R)$ restricts to a t-structure on $\Dperf_S(R)$. Its 
heart \(\Dperf_S(R)^{\heartsuit} = \Torf_S(R)\) is the abelian category of finitely generated \(S_\infty\)-torsion \(R\)-modules. 
\item
\label{item:lemma-explicit:2}%
For $T \in \Dperf_S(R)^\heartsuit$, the space $\Omega^\infty \QF^\sym_{M[1]}(T)$ is discrete and naturally isomorphic to the abelian group of symmetric $M_S/M$-valued bilinear forms on $T$.
\item
\label{item:lemma-explicit:3}%
For $T \in \Dperf_S(R)^\heartsuit$, we have $\Dual_{M[1]}T = \Hom_R(T,M_S/M)$. In particular, $\Dual_{M[1]}$ restricts to a duality on the heart $\Dperf_S(R)^\heartsuit$.
\end{enumerate}
\end{lemma}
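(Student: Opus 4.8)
The plan is to prove the three items in the order (i), (iii), (ii), since the computation in (ii) will rely on the explicit description of the duality obtained in (iii). For (i), I would use that a Dedekind ring is regular Noetherian of global dimension $\leq 1$, so that, as recalled in \S\refthree{subsection:surgery-symmetric}, the functors $\tau_{\geq 0}$ and $\tau_{\leq 0}$ preserve perfect complexes and $\Dperf(R)$ carries the Postnikov $t$-structure. For $X\in\Dperf_S(R)$ the homotopy groups are $S_\infty$-torsion by the characterisation recorded before the statement, and those of $\tau_{\geq 0}X$ and $\tau_{\leq 0}X$ are sub- and quotient modules of them, hence again $S_\infty$-torsion; they also remain perfect, so both truncations stay in $\Dperf_S(R)$ and the $t$-structure restricts. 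Its heart consists of the perfect complexes concentrated in degree $0$ with $S_\infty$-torsion $\pi_0$; since over a Noetherian ring of finite global dimension a module is perfect exactly when it is finitely generated, this heart is the abelian category $\Torf_S(R)$.

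For (iii), the relevant duality is the one underlying $(\QF^\sym_M)\qshift{1}=\QF^\sym_{M[1]}$, namely $\Dual_{M[1]}=\Dual_M(-)[1]=\map_R(-,M[1])$, and $\Dperf_S(R)$ is closed under it because it is closed under $\Dual_M$ (as recorded before Proposition~\refthree{proposition:verdier}) and under shifts. Fix $T\in\Torf_S(R)$. Applying $\map_R(T,-)$ to the cofibre sequence $M\to M_S\to M_S/M$ — a short exact sequence of discrete $R$-modules, since $R_S$ is flat over $R$ by Lemma~\refthree{lemma:derived-localisation} — I would exploit the extension-of-scalars adjunction along $R\to R_S$ to obtain $\map_R(T,M_S)\simeq\map_{R_S}(R_S\otimes_R T,M_S)\simeq 0$, using that $T$ is $S_\infty$-torsion, i.e.\ $R_S\otimes_R T\simeq 0$. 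This produces an equivalence $\map_R(T,M_S/M)\stackrel{\simeq}{\lto}\map_R(T,M)[1]=\Dual_{M[1]}T$. It then remains to observe that $\map_R(T,M_S/M)$ is concentrated in degree $0$: it is coconnective because $T$ is connective and $M_S/M$ discrete, $\pi_{-1}=\mathrm{Ext}^1_R(T,M_S/M)=0$ since $\mathrm{Ext}^1_R(T,M_S)=\pi_{-1}\map_R(T,M_S)=0$ and $\mathrm{Ext}^2_R(T,M)=0$ by the global dimension bound, and higher Ext groups vanish for the same reason. Hence $\Dual_{M[1]}T=\Hom_R(T,M_S/M)$, which lies in the heart $\Torf_S(R)$ by (i); in particular $\Dual_{M[1]}$ preserves the heart, and restricting the duality datum makes $\Torf_S(R)$ an abelian category with duality.

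For (ii), recall that $\QF^\sym_{M[1]}(T)=\B(T,T)^{\hC}$, where $\B(X,Y)=\map_{R\otimes R}(X\otimes Y,M[1])\simeq\map_R(X,\Dual_{M[1]}Y)$ is the symmetric cross-effect. By (iii), $\B(T,T)=\map_R(T,\Hom_R(T,M_S/M))$ is coconnective, being a mapping spectrum from a connective to a discrete $R$-module, with $\pi_0\B(T,T)=\Hom_R(T,\Hom_R(T,M_S/M))\cong\Hom_{R\otimes^{\mathrm{U}}R}(T\otimes^{\mathrm{U}}T,M_S/M)$. Since $\mathrm{H}^s(\Ct;-)$ vanishes for $s<0$, the homotopy fixed point spectral sequence $\mathrm{H}^s(\Ct;\pi_t\B(T,T))\Rightarrow\pi_{t-s}\B(T,T)^{\hC}$ is supported in total degrees $\leq 0$, so $\QF^\sym_{M[1]}(T)$ is coconnective and $\Omega^\infty\QF^\sym_{M[1]}(T)$ is discrete; moreover the only contribution in total degree $0$ is $E_2^{0,0}=\pi_0\B(T,T)^{\Ct}$, which supports no differentials for degree reasons. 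This identifies $\pi_0\QF^\sym_{M[1]}(T)$ with $\Hom_{R\otimes^{\mathrm{U}}R}(T\otimes^{\mathrm{U}}T,M_S/M)^{\Ct}$, i.e.\ with the group of symmetric $M_S/M$-valued bilinear forms on $T$, where the $\Ct$-action is the flip on $T\otimes^{\mathrm{U}}T$ composed with the involution of $M_S/M$ induced from that of $M$; all identifications involved are plainly natural in $T$.

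The only steps that are not routine bookkeeping are the vanishing $\map_R(T,M_S)\simeq0$ — this is precisely where the hypothesis that we localise \emph{away from} $S$ is used, as it rests on $R_S\otimes_R T\simeq 0$ — together with the verification that the truncation functors preserve $\Dperf_S(R)$; I expect the former to be the genuine point. Everything else is the standard interaction of a duality with a $t$-structure and a direct analysis of the homotopy fixed point spectral sequence.
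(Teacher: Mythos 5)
Your proposal is correct and arrives at the same conclusions via the same core mechanism: the short exact sequence $0 \to M \to M_S \to M_S/M \to 0$, the vanishing of $\map_R(T,M_S)$ for $S_\infty$-torsion $T$ (which you correctly trace to the extension--restriction adjunction and $R_S\otimes_R T \simeq 0$), and coconnectivity coming from $M_S/M$ being discrete. There are two organizational variations worth flagging. First, you prove (iii) before (ii) and then deduce (ii) from the description of $\Dual_{M[1]}T$ together with the cross-effect formula and a homotopy fixed point spectral sequence; the paper instead treats (2) directly, observing that the fibre sequence of Poincaré structures $\QF^{\sym}_M \to \QF^{\sym}_{M_S} \to \QF^{\sym}_{M_S/M}$ identifies $\QF^{\sym}_{M[1]}|_{\Dperf_S(R)}$ with $\QF^{\sym}_{M_S/M}|_{\Dperf_S(R)}$, and then uses that the inner mapping spectrum is coconnective. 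Second, and more substantively, for the discreteness of $\map_R(T,M_S/M)$ in (iii) you argue by chasing Ext groups using the global dimension bound; the paper instead reduces to the case where $S$ is the set of all non-zero primes (legitimate because $\Dual_{M[1]}T$ does not depend on $S$) and observes that $M_S/M$ is then a divisible, hence injective, $R$-module, so the derived Hom is an ordinary Hom. Your Ext argument is slightly more hands-on and avoids the reduction to the fraction field, while the injectivity observation is a one-line fact; both are valid, and neither has a gap.
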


\begin{proof}
Since \(R\) and \(R_S\) have global dimension \(\leq 1\), the perfect derived categories \(\Dperf(R)\) and \(\Dperf(R_S)\) inherit the canonical t-structures from the respective unbounded derived categories. By Lemma~\refthree{lemma:derived-localisation}, \(R_S\) is flat over \(R\). Therefore, the localisation functor \(\Dperf(R) \to \Dperf(R_S)\) preserves both connective and truncated objects and hence commutes with truncations and connective covers. As a result, its kernel \(\Dperf_S(R) \subseteq \Dperf(R)\) is closed under truncation and connective covers, and so inherits a t-structure from \(\Dperf(R)\), i.e.\ the inclusion inclusion \(\Dperf_S(R) \subseteq \Dperf(R)\) commutes with truncations and connective covers. The heart of this restricted t-structure is then the abelian category \(\Dperf_S(R) \cap \Der(R)^{\heartsuit} = \Torf_S(R)\) of finitely generated \(S_{\infty}\)-torsion \(R\)-modules, showing~\refthreeitem{item:lemma-explicit:1}.
For \refthreeitem{item:lemma-explicit:2}, we consider the short exact sequence of $R$-modules
\[
0 \to M \to M_S \to M_S/M \to 0
\]
which induces a fibre sequence of Poincaré structures
$\QF^{\sym}_M \to \QF^{\sym}_{M_S} \to \QF^{\sym}_{M_S/M}$
on \(\Dperf(R)\). Now, $\QF^\sym_{M_S}$ vanishes on $\Dperf_S(R)$, so the 
natural transformation \(\QF^{\sym}_{M_S/M} \to \QF^{\sym}_{M[1]}\) restricts to an equivalence on \(\Dperf_S(R)\). We then obtain a natural equivalence
\[
\QF^\sym_{M[1]}(T) \simeq \QF^\sym_{M_S/M}(T) = \hom_R(T \otimes_R T,M_S/M)^{\hC}
\]
for $T \in \Dperf_S(R)$.
Since \(M_S/M\) is discrete, the last mapping spectrum is coconnective as soon as \(T\) is connective. In particular, its underlying space is discrete when \(T,\) belongs to the heart. In this case, we have the equivalence
\[
\Omega^\infty \hom_R(T\otimes_R T,M_S/M)^{\hC} \simeq \Hom_R(T\otimes^\mathrm{U}_R T,M_S/M)^{\Ct}
\]
where the latter term is the abelian group of symmetric \(M_S/M\)-valued forms on \(T\). Finally, to see \refthreeitem{item:lemma-explicit:3}, 
we again use the above short exact sequence of modules to find a fibre sequence
\[
\map_R(X,M_S) \lto \map_R(X,M_S/M) \lto \map_R(X,M[1]) = \Dual_{M[1]}X
\]
for $X \in \Dperf(R)$. We therefore find that for $T \in \Dperf_S(R)$, the first term above vanishes, and we obtain an equivalence $\Dual_{M[1]}T = \map_R(T,M_S/M)$. It then suffices to argue that this mapping spectrum is discrete if $T$ lies in the heart $\Dperf_S(R)^\heartsuit$. To see this, we may assume that $S$ is the set of all prime ideals, in which case $R_S$ is the field of fractions of $R$. Then we find that $M_S/M$ is a divisible and hence injective $R$-module \cite{Kurdachenko-dedekind}, showing that $\Dual_{M[1]}T$ is indeed discrete.
\end{proof}

Using the above, we would like to give an explicit description of the boundary map
\[
\partial_0\colon \L^{\sym}_0(R_S;M_S) \lto \L_{-1}(\Dperf(R)_{S},\QF^{\sym}_M) \cong \L_0(\Dperf(R)_{S},\QF^{\sym}_{M[1]})
\]
associated to the fibre sequence in \(\L\)-theory
arising from the Poincaré-Verdier sequence of Proposition~\refthree{proposition:verdier}. 
By Corollary~\refthree{corollary:d-zero} any class in \(\L^{\sym}_0(R_S; M_S)\) can be represented by a finitely generated projective \(R_S\)-module \(V\) equipped with a unimodular \(M_S\)-valued symmetric form \(b\). %
Recall that an \(R\)-lattice \(P \subseteq V\) is a finitely generated projective \(R\)-submodule of $V$ inducing an isomorphism \(R_S \otimes^\mathrm{U}_R P \to V\). We note that such lattices always exist. Indeed, any such \(V\) is tensored up from a finitely generated projective \(R_{S'}\)-module \(V'\) for \(S'\) a finite set of primes. Then \(R_{S'}\) is a Dedekind ring so that \(V'\) splits as a direct sum of line bundles, and each such line bundle is tensored up from an \(R\)-line bundle since the map \(\Pic(R) \to \Pic(R_S)\) is surjective. Now any \(R\)-lattice \(P \subseteq V\) has a dual \(R\)-lattice \(P^* \subseteq V\) with respect to \(b\), spanned by those vectors \(v \in V\) such that \(b(v,u)\) lies in \(M\) for every \(u \in P\). We will say that an \(R\)-lattice \(P \subseteq V\) is \(R\)-integral if \(P \subseteq P^*\).
We note that any \(R\)-lattice can be made integral by suitably shrinking it, and so any such \((V,b)\) admits an integral \(R\)-lattice. %

We then observe that $T:=P^*/P$ is an object of $\Dperf_S(R)^\heartsuit$ and carries an induced $M_S/M$-valued symmetric form $c$ given by the formula
\[
c([v],[u]) = [b(v,u)] \in M_S/M.
\]
This form is well-defined since $b(u,v) \in M$ whenever $u,v$ are elements of $P^*$ and at least one of them is contained in $P \subseteq P^*$, and defines a Hermitian form for $(\Dperf_S(R),\QF^{\sym}_{M[1]})$ by Lemma \refthree{lemma:explicit}~\refthreeitem{item:lemma-explicit:2}.
With this notation at hand, we have the following:

\begin{proposition}
\label{proposition:boundary-map}%
Let \(V\) be a finitely generated projective \(R_S\)-module equipped with a unimodular symmetric form \(b\colon V \times V \to M_S\), let \(P \subseteq V\) be an integral \(R\)-lattice and $T= P^*/P$ as above. 
Then we have
\[
\partial_0[V,b] = [T,c] \in \L_0(\Dperf_S(R),\QF^{\sym}_{M[1]}).
\]
\end{proposition}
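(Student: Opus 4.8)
The plan is to compute $\partial_0$ by identifying the connecting homomorphism of the $\L$-theoretic fibre sequence of Proposition~\refthree{proposition:verdier} with the \emph{boundary of a hermitian object}, and then to evaluate this boundary on the (non-Poincaré) hermitian object $(P, b|_P)$ of $(\Dperf(R),\QF^{\sym}_M)$ obtained by restricting $b$ to the integral lattice $P$. Throughout one uses the shift identification $\QF^{\sym}_{M}\qshift{1}=\QF^{\sym}_{M[1]}$, hence $\L_{-1}(\Dperf_S(R),\QF^{\sym}_M)\cong\L_0(\Dperf_S(R),\QF^{\sym}_{M[1]})$, already invoked in the statement.

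First I would set up the algebraic picture. Because $P$ is an integral lattice, $b|_P$ is a genuine $M$-valued symmetric form on $P$, i.e.\ a point of $\Omega^\infty\QF^{\sym}_M(P[0])$; since $M$ is finitely generated projective one has $\hom_R(P,M_S)\cong R_S\otimes_R\hom_R(P,M)$, so that under the isomorphism $b_\sharp\colon V\xrightarrow{\cong}\hom_{R_S}(V,M_S)\cong R_S\otimes_R\Dual_M P$ the submodule $\Dual_M P=\hom_R(P,M)$ is carried onto $P^*\subseteq V$ and the adjoint $(b|_P)_\sharp\colon P\to\Dual_M P$ becomes the inclusion $P\hookrightarrow P^*$. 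Thus $(P,b|_P)$ is a hermitian object of $(\Dperf(R),\QF^{\sym}_M)$ whose Poincarification defect $\cof\big((b|_P)_\sharp\big)=P^*/P=T$ lies in $\Dperf_S(R)$, as $(b|_P)_\sharp$ base-changes to the equivalence $b_\sharp$ over $R_S$. The boundary construction (algebraic surgery on $(P,b|_P)$, in the spirit of Proposition~\refthree{proposition:naive-surgery}) then produces a canonical form $\partial(b|_P)$ on $T$; it is a Poincaré form for $\QF^{\sym}_{M[1]}$ because $\Dual_{M[1]}T\simeq\Sigma\Dual_M\cof\big((b|_P)_\sharp\big)\simeq\Sigma\fib\big((b|_P)_\sharp\big)\simeq T$ (using the symmetry of $(b|_P)_\sharp$), and by Lemma~\refthree{lemma:explicit}(\refthreeitem{item:lemma-explicit:2}) and~(\refthreeitem{item:lemma-explicit:3}) this Poincaré object of $\Dperf_S(R)$ is exactly the pair $(T,c)$ of the statement: tracing $\partial(b|_P)$ through the cofibre sequence $P\hookrightarrow P^*\to T$ gives the $M_S/M$-valued symmetric form $c([v],[u])=[b(v,u)]$.

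The heart of the argument is then the following description of the connecting map: for every hermitian object $(x,\theta)$ of $(\Dperf(R),\QF^{\sym}_M)$ for which $\theta_\sharp\colon x\to\Dual_M x$ becomes an equivalence after base change to $R_S$ — so that $\cof(\theta_\sharp)\in\Dperf_S(R)$ and the image $(\bar x,\bar\theta)$ is a Poincaré object of $(\Dperf(R_S),\QF^{\sym}_{M_S})$ — one has $\partial_0[\bar x,\bar\theta]=[\cof(\theta_\sharp),\partial\theta]$ in $\L_0(\Dperf_S(R),\QF^{\sym}_{M[1]})$. I would deduce this from the construction of the $\L$-theory fibre sequence attached to the Poincaré–Verdier sequence: since $\Dperf(R)\to\Dperf(R_S)$ is essentially surjective and, by the discussion preceding the proposition, every unimodular symmetric form over $R_S$ admits an integral $R$-lattice, every Poincaré object of the quotient lifts to a hermitian object of $(\Dperf(R),\QF^{\sym}_M)$ with $S_\infty$-torsion defect, and the connecting homomorphism records precisely that defect together with its boundary form. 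As a consistency ingredient one notes that the boundary construction equips $\partial(x,\theta)$ with a canonical null-cobordism in $(\Dperf(R),\QF^{\sym}_{M[1]})$, with Lagrangian the natural map $\Dual_M x\to\cof(\theta_\sharp)$ whose cofibre is $\Sigma x\simeq\Dual_{M[1]}\Dual_M x$; comparing this null-cobordism with the metabolic-category presentation of $\partial_0$ then identifies the two. Applying the recipe to $(x,\theta)=(P,b|_P)$, whose image over $R_S$ is $(V,b)$, and feeding in the identification $\big(\cof((b|_P)_\sharp),\partial(b|_P)\big)=(T,c)$ from the previous step yields $\partial_0[V,b]=[T,c]$.

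I expect the main obstacle to be this last description of $\partial_0$ — translating the abstract connecting homomorphism of a fibre sequence of spectra into the hermitian-algebraic ``read off the torsion defect'' recipe, while keeping careful track of the shift $\QF^{\sym}_M\qshift{1}\simeq\QF^{\sym}_{M[1]}$ and of the signs implicit in the boundary construction. By contrast, the chain-level identification of $\partial(b|_P)$ with $c$ is a direct unwinding, and independence of the answer from the choice of integral lattice is automatic once the formula is established, since $\partial_0$ is canonical.
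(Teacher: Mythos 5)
Your overall strategy matches the paper's: lift $(V,b)$ to the hermitian object $(P,b|_P)$, note that $(b|_P)_\sharp$ is the inclusion $P \hookrightarrow P^* \cong \Dual_M P$, perform surgery on this along $P \to 0$ with respect to $\QF^{\sym}_{M[1]}$, and identify the resulting Poincaré object with $(T,c)$. You also correctly invoke Lemma~\refthree{lemma:explicit} for the discreteness of $\Omega^\infty\QF^{\sym}_{M[1]}(T)$.

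However, you have mis-allocated where the difficulty lies, and this has opened a genuine gap. The description of $\partial_0$ as "surgery on a hermitian lift" --- which you flag as "the heart of the argument" and "the main obstacle" --- is simply cited from Proposition~\reftwo{proposition:boundary-L}, so the paper spends no effort re-deriving it. Conversely, the step you dismiss as "a direct unwinding", namely showing that the form output by the surgery construction on $T$ is precisely the algebraic form $c([v],[u])=[b(v,u)]$, is where the paper does all the real work, and your proof does not actually carry it out. The problem is that the surgery construction produces the form on $T$ via a chain of homotopy-coherent fibre sequences and the algebraic Thom isomorphism; nothing in that construction directly hands you the formula $[b(v,u)]$, and the discreteness of $\Omega^\infty\QF^{\sym}_{M[1]}(T)$ alone does not let you read it off, since you still must identify a $\pi_0$-class built through several cofibre-sequence adjoints. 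The paper resolves this by a deliberate device: it introduces an auxiliary hermitian structure $\QFD$ on $\Dperf(R)$ equipped with a transformation $\QF^{\sym}_M \Rightarrow \QFD$ and vanishing on $\Dperf_S(R)$, writes $\QFD'=\cof[\QF^{\sym}_M \Rightarrow \QFD]$, sets up a $3\times 3$ diagram of fibre sequences refining $(b_P,\id_{P^*})$ to a class $[b_{P^*}]$ that descends to $c$, and then observes that the construction is \emph{natural} in $\QFD$, so one may choose $\QFD = \QF^{\sym}_{M_S}$, in which case $\QFD'=\QF^{\sym}_{M_S/M}$ and the descended form becomes manifestly the class $[b(v,u)]$. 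Without this (or some substitute mechanism for tracking the $\pi_0$-class through the surgery), the single sentence "tracing $\partial(b|_P)$ through the cofibre sequence $P\hookrightarrow P^*\to T$ gives $c$" is an assertion, not an argument, and the proof is incomplete.
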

\begin{proof}
We follow the description of the boundary map given in Proposition~\reftwo{proposition:boundary-L}. The first step is to lift \((V,b)\) to a hermitian object in \(\Dperf(R)\), which we do by viewing the restriction \(b_P := b|_{P}\) as an \(M\)-valued hermitian form on \(P\), so that the pair \((P,b_P)\) determines a hermitian lift of \((V,b)\). The bilinear part of \(b\) then identifies the dual lattice \(P^* \subseteq V\) with \(\Dual_{M}P\), such that \((b_P)_{\sharp}\) is given by the inclusion \(P \subseteq P^*\).
As in Proposition~\reftwo{proposition:boundary-L}, we then view \((P,b_P)\) as a surgery datum on \(0\) with respect to \(\QF^{\sym}_{M[1]}\). Carrying out the surgery yields a Poincaré \(\QF^{\sym}_{M[1]}\)-form \(c\) on \(T := \cof[P \to \Dual_M P] = P^*/P\), %
and one has
\[
\partial_0[V,b] = [T,c] \in \L_0(\Dperf_S(R),\QF^{\sym}_{M[1]}).
\]
We would now like to unwind the definitions in the surgery to obtain an explicit description of \(c\) in terms of \(b\). For this, it will be convenient to introduce an auxiliary datum as follows. 
Suppose given a hermitian structure \(\QFD\) on \(\Dperf(R)\) which vanishes on \(\Dperf_S(R)\), and which is equipped with a natural transformation \(\QF^{\sym}_M \Rightarrow \QFD\). Write \(\QFD' = \cof[\QF^{\sym}_M \Rightarrow \QFD]\) and consider the commutative diagram
\[
\begin{tikzcd}[row sep = 10pt, column sep = 5pt]
& \QF^{\sym}_M(T) \ar[rr]\ar[d] && \QF^{\sym}_M(P^*) \ar[rr]\ar[d] && \QF^{\sym}_M(P) \times_{\map(P,P^*)} \map(P^*,P^*)\ar[d] \\  
0 \ar[r,phantom,"\simeq"] & \QFD(T) \ar[rr]\ar[d] && \QFD(P^*) \ar[rr,"\simeq"]\ar[d] && \QFD(P) \times_{\Bil_{\QFD}(P,P)} \Bil_{\QFD}(P^*,P) \ar[d] \\
\QF^{\sym}_{M[1]}(T) \ar[r,phantom,"\simeq"] & \QFD'(T) \ar[rr] && \QFD'(P^*) \ar[rr] && \QFD'(P) \times_{\Bil_{\QFD'}(P,P)} \Bil_{\QFD'}(P^*,P) 
\end{tikzcd}
\]
whose rows and columns are fibre sequences by Corollary \refone{example:usual-sequence}.
The pair \((b_P,\id_{P^*})\) then canonically refines to a point \((b_P,\id_{P^*},\eta)\) in the (infinite loop space of the) top right term above, where \(\eta\) is the identity homotopy from \((b_P)_{\sharp}\) to itself in \(\map(P,P^*)\). On the other hand, since \(\QFD(T) = 0\), the image of \((b_P,\id_{P^*},\eta)\) in the middle right term uniquely lifts to a form \(b_{P^*} \in \Om^{\infty}\QFD(P^*)\). %
Let \([b_{P^*}] \in \Om^{\infty}\QFD'(P^*)\) be the image of \(b_{P^*}\). Then the data of \((b_P,\id_{P^*},\eta)\) determines a null homotopy of the image of \([b_{P^*}]\) in the bottom right fibre product, and hence refines \([b_{P^*}]\) to a \(\QFD'\)-form \(c\) on \(T\), which we may identify with a \(\QF^{\sym}_{M[1]}\)-form on $T$.

When \(\QFD = 0\) this amounts to the usual procedure for computing the resulting form \(c\) on the trace \(T\) of surgery along \((P,b_P) \to 0\). On the other hand, this construction is natural in \(\QFD\), and so any map between two hermitian structures under \(\QF^{\sym}_M\) vanishing on \(\Dperf_S(R)\) determines an identification between the resulting forms \(c\). In other words, to compute the form \(c\) we may choose \(\QFD\) to our convenience. Let us hence choose \(\QFD\) to be \(\QF^{\sym}_{M_S}\), considered as a hermitian structure on \(\Dperf(R)\), so that \(\QFD' = \QF^{\sym}_{M_S/M}\). %
In this case we may identify \(b_{P^*}\) above with the restriction \(b_{P^*} := b|_{P^*}\) of the \(M_S\)-valued form \(b\) to \(P^* \subseteq V\). The \(M_S/M\)-valued form \([b_{P^*}]\) on \(P^*\) is then simply the form which sends \(v,u \in P^*\) to the class \([b(v,u)] \in M_S/M\) of the element \(b(v,u) \in M_S\), and the descended form \(c\) on \(T\) %
is given by the desired explicit formula.
\end{proof}

\subsection{Dévissage}
Let again $R$ be a Dedekind ring,  $S$ a set of non-zero prime ideals of $R$ and $M$ a line bundle over $R$.
In general, the objective of dévissage is to identify the Grothendieck-Witt and \(\L\)-spectra of the Poincaré \(\infty\)-category \((\Dperf_S(R),\QF^{\s}_M)\) in terms of those of the residue fields \(\FF_{\fp} :=R/{\fp}\) for \(\fp \in S\). To establish this, we begin by refining the restriction of scalars functor 
to a Poincaré functor.

We recall that for a ring homomorphism \(f\colon A \to B\),  the extension of scalars functor $f_! \colon \D(A) \to \D(B)$ is left adjoint to the restriction of scalars functor $f^* \colon \D(B) \to \D(A)$, which admits a further right adjoint $f_* \colon \D(A) \to \D(B)$.
If \(B\) is moreover perfect as an \(A\)-module (that is, it admits a finite resolution by finitely generated projective \(A\)-modules), then \(f^*\) restricts to a functor $f^* \colon \Dperf(B) \to \Dperf(A)$ on perfect objects.
Let $M$ and $N$ be invertible $\ZZ$-modules with involution, respectively over $A$ and $B$. Then any map $\Psi \colon (f\otimes f)^*(N) \to M$ induces a hermitian structure on the restriction functor $f^*$
given by the natural transformation 
\[
\QF^{\s}_N(X) = \map_{B\otimes B}(X \otimes X,N)^{\hC} \lrar \map_{A\otimes A}((f\otimes f)^*(X \otimes X),(f\otimes f)^*N)^{\hC} \lrar
\]
\[
\map_{A\otimes A}(f^*(X) \otimes f^*(X),(f\otimes f)^*N)^{\hC} \st{\Psi_*}{\lrar} \map_{A\otimes A}(f^*(X) \otimes f^*(X),M)^{\hC} = \QF^{\s}_M(f^*(X)).
\]

\begin{lemma}
\label{lemma:hermitian_structure_on_restriction}%
In the notation above, the hermitian functor \((f^*,\psi)\colon (\Dperf(B),\QF^{\s}_N) \to (\Dperf(A),\QF^{\s}_M)\) is Poincaré if and only if the map $N \to f_*(M)$ induced by $\Psi$ by restricting to one of the $A$-module structures is an equivalence in $\D(B)$.
\end{lemma}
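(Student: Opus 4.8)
The plan is to reduce the Poincaré condition on $(f^*,\psi)$ to a statement about a single object. Recall from \S\refone{subsection:hermitian-and-poincare-cats} that $(f^*,\psi)$ is a Poincaré functor precisely when the canonically induced natural transformation $\tau\colon f^*\Dual_N \Rightarrow \Dual_M f^*$ of functors $\Dperf(B)\op \to \Dperf(A)$ is an equivalence; this makes sense because $B$ is perfect over $A$, so $f^*$ preserves perfect objects, and because $M,N$ are invertible, so $\Dual_M,\Dual_N$ are the dualities underlying $\QF^{\s}_M$ and $\QF^{\s}_N$. First I would note that $\Dual_M,\Dual_N$ are exact equivalences and $f^*$ is exact, so $f^*\Dual_N$ and $\Dual_M f^*$ are exact functors $\Dperf(B)\op\to\Dperf(A)$, and therefore the locus where $\tau$ is an equivalence is a stable subcategory of $\Dperf(B)\op$ closed under retracts. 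Since $B$ generates $\Dperf(B)$ as a thick subcategory, it then suffices to check that $\tau_B$ is an equivalence.

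Next I would identify $\tau_B$ explicitly. Its source is $f^*\Dual_N(B)\simeq f^*(N)$, since $\Dual_N(B)=\map_B(B,N)\simeq N$; its target is $\Dual_M(f^*B)=\map_A(f^*B,M)=\map_A(B,M)$, which carries a natural $B$-module structure identifying it with $f_*(M)$ — the coinduction along $f$ of $M$, regarded via one of its $A$-module structures, equipped with the resulting module-with-involution structure over $B$ — and whose underlying $A$-module is $f^*f_*(M)$. Then I would trace $\tau$ through its construction: on symmetric cross-effects $\psi$ is the composite of restriction along $f\otimes f$ with $\Psi_*$, as in the display preceding the lemma; unwinding this through the identifications $\B_N(X,Y)\simeq\map_B(X,\Dual_N Y)$, $\B_M(f^*X,f^*Y)\simeq\map_A(f^*X,\Dual_M f^*Y)$ and the adjunction $\map_A(f^*X,-)\simeq\map_B(X,f_*(-))$, one obtains for each $Y$ a map $\Dual_N Y\to f_*\Dual_M f^*Y$ in $\Der(B)$ whose adjoint is $\tau_Y$. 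Evaluating at $Y=B$ and using the identifications above should show that $\tau_B$ is $f^*$ applied to the map $N\to f_*(M)$ induced by $\Psi$ in the statement.

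Finally, since $f^*$ is conservative, $\tau_B$ is an equivalence if and only if $N\to f_*(M)$ is an equivalence in $\Der(B)$, and combined with the first reduction this yields the lemma.

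The hard part will be the identification in the second paragraph: matching the abstract canonical comparison map attached to a hermitian functor in \S\refone{subsection:hermitian-and-poincare-cats} with the concrete structure $\psi$ built from $\Psi$, and in particular keeping straight which of the two $A$-module structures on $M$ enters the duality $\Dual_M$ and which enters $f_*(M)$, so that $\tau_B$ really is the restriction of the map $N\to f_*(M)$ of the statement and not a twisted variant. Everything else is formal.
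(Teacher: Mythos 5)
Your proposal matches the paper's proof: reduce to the generator $X=B$ since both sides are exact functors and $B$ generates $\Dperf(B)$ under finite colimits and retracts, identify $\tau_B$ with $f^*$ applied to the map $N \to f_*M$ by unwinding the construction of $\psi$ from $\Psi$, and conclude by conservativity of $f^*$. The paper likewise compresses the identification step into a single "unwinding the definitions," so you have correctly anticipated where the real work lies.
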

\begin{proof}
Since \(\Dperf(B)\) is generated by \(B\) via finite colimits and retracts it will suffice to show that the associated natural transformation
\[
f^*\Dual(X) \lto \Dual f^*(X)
\]
evaluates to an equivalence on \(X = B\). Unwinding the definitions, the above map for \(X=B\) identifies with the map $f^*N \to \map_A(f^*B,M) = f^*f_*M$
which is the image under \(f^*\) of \(\gamma\colon N \to f_* M\). Since \(f^*\) is conservative this image is an equivalence if and only if \(\gamma\) is an equivalence.
\end{proof}

Now suppose that \(\fp \subseteq R\) is a non-zero prime ideal in a Dedekind ring, and let 
\(p\colon R \to \FF_{\fp} := R/\fp\) 
be the quotient map. We note that $\fp$ is a rank 1 projective module and hence is $\otimes$-invertible, where the inverse is given by the dual $\fp^{-1} \simeq \map_R(\fp,R)$,
which in turn can also be realized as the fractional ideal 
\[
\fp^{-1} = \{x \in R \;|\; \forall a \in \fp \text{ we have } ax \in R \} \subseteq R,
\]
see \cite{stacks}*{Tag 0AUW}. We find that $\FF_{\fp}$ is perfect as an $R$-module, as it is represented by the chain complex $[\fp \to R]$ with $\fp$ in degree 1 and $R$ in degree $0$. Let us then consider the adjunction
\begin{equation*}
p^*\colon \D(\FF_{\fp}) \adj \D(R)\cocolon p_*
\end{equation*}
and note that $p^*$ preserves compact objects since $p^*(\FF_\fp)$ is compact. It follows that $p_*$ preserves filtered colimits, and since it is exact it in fact preserves all colimits. We recall that $p_\ast$ is given by the formula
\[
p_*(X) = \map_R(p^{\ast}\FF_\fp,X)
\]
regarded as an $\FF_\fp$-module via the functoriality in the first variable. We recall that $M$ is assumed to be a line bundle over $R$.

\begin{lemma}
\label{lemma:purity}%
Let $\fp \subseteq R$ be a non-zero prime ideal in a Dedekind ring and let $p \colon R \to \FF_{\fp}$ be the quotient map. Then there is a canonical equivalence
\[
p_*M \simeq (\fp^{-1}M/M)[-1] .
\]
In addition, any choice of a uniformiser for $\fp$ determines an isomorphism $(\fp^{-1}M/M) \cong M/\fp M = p_!M$.
\end{lemma}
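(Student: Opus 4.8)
The plan is to compute $p_*M$ directly from the formula $p_*M = \map_R(p^*\FF_\fp, M)$ recalled above, using the canonical two-term presentation of $\FF_\fp$, and then to identify the resulting torsion module explicitly.

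First I would invoke the cofibre sequence $\fp \to R \to p^*\FF_\fp$ in $\D(R)$ coming from the presentation $\FF_\fp = \coker(\fp \hookrightarrow R)$ recorded just before the statement; here $\fp$ is a rank $1$ projective, hence perfect. Applying the exact contravariant functor $\map_R(-,M)$ turns this into a fibre sequence
\[
p_*M = \map_R(p^*\FF_\fp, M) \lto \map_R(R,M) = M \lto \map_R(\fp, M).
\]
Since $\fp$ is finitely generated projective and $M$ is a line bundle, there is a canonical identification $\map_R(\fp, M) \simeq \map_R(\fp, R)\otimes_R M \simeq \fp^{-1}\otimes_R M$ using $\map_R(\fp,R)\simeq\fp^{-1}$; moreover, $M$ being flat over the domain $R$, the natural map $\fp^{-1}\otimes_R M \to K\otimes_R M$ is injective with image the fractional submodule $\fp^{-1}M$, and under these identifications the map $M \to \map_R(\fp, M)$ is the inclusion $M \hookrightarrow \fp^{-1}M$. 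Since the fibre of a monomorphism of discrete modules is the $(-1)$-shift of its cokernel, this gives a canonical equivalence $p_*M \simeq (\fp^{-1}M/M)[-1]$.

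For the second statement, fix a uniformiser $\pi$ for $\fp$, i.e.\ an element $\pi\in\fp$ with $\val_\fp(\pi)=1$, equivalently one generating $\fp$ in the localisation $R_{(\fp)}$. Checking valuations at every prime shows that multiplication by $\pi$ carries $\fp^{-1}$ into $R$, hence $\fp^{-1}M$ into $M$; it also carries $M$ into $\fp M$ as $\pi\in\fp$. It therefore descends to an $R$-linear map $\fp^{-1}M/M \to M/\fp M$. Both source and target are finitely generated torsion $R$-modules supported only at $\fp$, so it suffices to check this map is an isomorphism after localising at $\fp$; there it becomes multiplication by $\pi$ from $\pi^{-1}M_{(\fp)}/M_{(\fp)}$ to $M_{(\fp)}/\pi M_{(\fp)}$, which is an isomorphism since multiplication by $\pi$ is an isomorphism $\pi^{-1}M_{(\fp)}\xrightarrow{\sim} M_{(\fp)}$ carrying $M_{(\fp)}$ onto $\pi M_{(\fp)}=\fp M_{(\fp)}$. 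Finally $p_!M = \FF_\fp\otimes_R M = M/\fp M$ by definition of extension of scalars.

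The only points requiring care are keeping track of the homological degree of the shift in the first step — which the cofibre-sequence formulation makes transparent — and confirming that the isomorphism in the second step depends on nothing beyond $\pi$, which is clear since the map $\fp^{-1}M/M\to M/\fp M$ is written down directly as multiplication by $\pi$. I do not anticipate a substantive obstacle.
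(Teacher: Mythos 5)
Your proof is correct and follows essentially the same route as the paper: apply $\map_R(-,M)$ to the cofibre sequence $\fp \to R \to p^*\FF_\fp$, identify $\map_R(\fp,M)$ with $\fp^{-1}M$, and read off $p_*M \simeq (\fp^{-1}M/M)[-1]$; then for the second part use a uniformiser and reduce to the localisation at $\fp$. One point the paper spells out that you pass over: the fibre-sequence computation identifies $p^*p_*M$ (not $p_*M$ itself) with $(\fp^{-1}M/M)[-1]$ in $\D(R)$, and one must observe that since $p^*$ is fully faithful on discrete modules this identification descends to an equivalence of $\FF_\fp$-modules. Also, your parenthetical "equivalently one generating $\fp$ in the localisation $R_{(\fp)}$" is not quite an equivalence—the paper's uniformiser is allowed to lie in $K \cap R_{(\fp)}$ rather than in $\fp \subseteq R$—but for your integral choice of $\pi$ the descended map is globally defined, so the argument goes through.
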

\begin{proof}
Applying the functor $\map_R(-,M)$ to the fibre sequence $\fp\to R\to \FF_\fp$ %
we obtain a fibre sequence
\[
p^*p_*(M) \lto M \lto \fp^{-1} M,
\]
and consequently a distinguished equivalence
\[
p^*p_*(M)[1] \simeq \fp^{-1}M/M
\]
of $R$-modules. The $R$-module structure on $\fp^{-1}M/M$ a priori descends to an $\FF_p$-module structure, and one obtains an equivalence $p_*(M)[1] \simeq \fp^{-1}M/M$ of $\FF_p$-modules, since $p^*$ is fully faithful on discrete modules. 
To see the second part, choose a \(\fp\)-uniformiser \(\pi\), that is, an element of \(K\) whose \(\fp\)-valuation is exactly \(1\). Then \(\pi\) belongs to the local ring \(R_{(\fp)} \subseteq K\), and generates its valuation ideal \(\tilde{\fp} := R_{(\fp)}\fp \subseteq R_{(\fp)}\) there. Since \(M_{(\fp)}\) is flat over \(R\) the induced map $\fp^{-1}M/M \to \tilde{\fp}^{-1}M_{(\fp)}/M_{(\fp)}$ is an isomorphism. Multiplication by \(\pi\) then induces the claimed isomorphism
\[
\tilde{\fp}^{-1}M_{(\fp)}/M_{(\fp)} \cong M_{(\fp)}/\tilde{\fp}M \cong p_!(M). \qedhere
\]
\end{proof}

\begin{corollary}%
\label{corollary:devissage-functor}%
Let $\fp \subseteq R$ be a non-zero prime ideal in a Dedekind ring and let $p \colon R \to \FF_{\fp}$ be the quotient map. For any line bundle $M$ over $R$, the restriction of scalars functor $p^*\colon\Dperf(\FF_{\fp}) \to \Dperf(R)$ canonically refines to a Poincaré functor
\[
(\Dperf(\FF_\fp),\QF^\s_{p_*M}) \lto (\Dperf_S(R),\QF^\s_M).
\]
\end{corollary}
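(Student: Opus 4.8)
The plan is to deduce this directly from Lemma~\refthree{lemma:hermitian_structure_on_restriction}, applied to the ring map $p\colon R \to \FF_\fp$, taking the invertible module with involution over $\FF_\fp$ to be $N := p_*M$ itself and the comparison map $\Psi$ to be the counit $\eps\colon p^*p_*M \to M$ of the adjunction $p^* \dashv p_*$. With this choice the Poincaré condition will follow for free from the triangle identities.

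First I would check the two preliminary points needed to even set up the hermitian refinement. The quotient $\FF_\fp$ is perfect over $R$, being represented by the two-term complex $[\fp \to R]$ with $\fp$ in degree $1$, so $p^*$ does restrict to a functor $\Dperf(\FF_\fp) \to \Dperf(R)$; moreover, since $\fp \in S$ and every object of $\Dperf(\FF_\fp)$ is annihilated by $\fp$, its image under $p^*$ has $\fp$-torsion homotopy groups, hence is $S_\infty$-torsion, so $p^*$ in fact factors through $\Dperf_S(R)$. Next I would argue that $N = p_*M$ is a legitimate input: by Lemma~\refthree{lemma:purity} we have $p_*M \simeq (\fp^{-1}M/M)[-1]$, a shift of a one-dimensional $\FF_\fp$-vector space, which as an object of $\D(\FF_\fp)$ is perfect in either of its $\FF_\fp$-module structures and satisfies that the canonical map $\FF_\fp \to \map_{\D(\FF_\fp)}(p_*M,p_*M)$ is an equivalence; thus $p_*M$ is an invertible $\GEM\ZZ$-module with involution over $\FF_\fp$ in the sense of~\refone{definition:module-with-involution}, with involution the trivial one inherited from the line bundle $M$ over the commutative ring $R$. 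Viewing $p_*M$ and $M$ as modules over $\FF_\fp \otimes \FF_\fp$ and $R \otimes R$ through the respective multiplication maps, the $R\otimes R$-action on $(p\otimes p)^*p_*M$ factors through $R\otimes R \to R$, so the counit $\eps$ is automatically $R\otimes R$-linear and $\Ct$-equivariant and hence provides a map of the kind required by Lemma~\refthree{lemma:hermitian_structure_on_restriction}. This endows $p^*$ with a canonical hermitian structure $(p^*,\psi)\colon (\Dperf(\FF_\fp),\QF^{\s}_{p_*M}) \lto (\Dperf_S(R),\QF^{\s}_M)$, and no choice of uniformiser enters at this stage.

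Finally I would verify that $(p^*,\psi)$ is Poincaré. By Lemma~\refthree{lemma:hermitian_structure_on_restriction} this holds precisely when the map $N \to p_*M$ adjoint to $\Psi$ under $p^* \dashv p_*$ is an equivalence in $\D(\FF_\fp)$; with $N = p_*M$ and $\Psi = \eps$ this adjoint map is the identity of $p_*M$ by the triangle identity, hence trivially an equivalence. I expect the only real work to be bookkeeping: keeping track of which of the two $\FF_\fp$-module structures on $p_*M$ is used at each point, confirming that the resulting Poincaré structure is exactly $\QF^{\s}_{p_*M}$ rather than a twist of it, and checking that the homological shift in $p_*M \simeq (\fp^{-1}M/M)[-1]$ does not interfere with the invertibility condition — all of which are routine once the identification of $p_*M$ from Lemma~\refthree{lemma:purity} is in hand.
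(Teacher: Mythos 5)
Your proof is correct and takes essentially the same route as the paper: both apply Lemma~\refthree{lemma:hermitian_structure_on_restriction} with $N = p_*M$ and the counit $p^*p_*M \to M$, and both invoke Lemma~\refthree{lemma:purity} to see that $p_*M$ is an invertible module over $\FF_\fp$ and that $p^*$ lands in $\Dperf_S(R)$. Your explicit appeal to the triangle identity to see that the adjoint of the counit is the identity — and hence that the Poincaré condition is automatic — makes precise a step the paper leaves implicit (and incidentally, the paper's own proof mislabels the counit as the "unit"); you also correctly flag that $p_*M$ is a shift of a discrete module and must therefore be treated as an invertible $\GEM\ZZ$-module with involution rather than as a strict one, which is the generality the lemma is silently being used in.
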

\begin{proof}
We view $M$ and $p_*M$ as modules over $R\otimes R$ and $\FF_\fp \otimes \FF_\fp$ via the multiplication maps, respectively. By Lemma~\refthree{lemma:purity} $p_*M$ is in particular an invertible $\FF_\fp$-module. We may therefore apply
Lemma~\refthree{lemma:hermitian_structure_on_restriction} to the unit of the adjunction $p^*p_*(M) \to M$ to obtain a hermitian structure on the restriction of scalars functor
\[
(\Dperf(\FF_\fp),\QF^\s_{p_*M}) \lto (\Dperf(R),\QF^\s_M)
\]
which is Poincaré again by Lemma~\refthree{lemma:hermitian_structure_on_restriction}. It then suffices to observe that the image of the functor $p^*$ is contained in the subcategory $\Dperf_S(R)$ of $S$-torsion modules.
\end{proof}

\begin{theorem}[Dévissage]
\label{theorem:devissage}%
Let $R$ be a Dedekind ring, $S$ a set of non-zero prime ideals of $R$, and $M$ a line bundle over $R$ with $R$-linear involution. Then
for every \(m \in \ZZ\) the direct sum Poincaré functor 
\begin{equation}
\label{equation:psi}%
\psi_S\colon\displaystyle\mathop{\oplus}_{\fp \in S} (\Dperf(\FF_{\fp}),(\QF^{\s}_{p_*M})\qshift{m}) \lto (\Dperf_S(R),(\QF^{\s}_M)\qshift{m}) 
\end{equation}
induces a canonical equivalence on algebraic \(\K\)-theory, \(\GW\)-theory and \(\L\)-theory spectra.
\end{theorem}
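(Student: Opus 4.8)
The plan is to treat the three invariants separately, deducing the statement for $\GW$ from those for $\K$ and $\L$ via the fibre sequence $\K(\C,\QF)_{\hC} \to \GW(\C,\QF) \to \L(\C,\QF)$ of \eqrefthree{equation:quadsequence}. Several preliminary reductions apply. Since $\K$, $\GW$ and $\L$ commute with filtered colimits and finite direct sums of Poincaré $\infty$-categories, and $(-)_{\hC}$ commutes with direct sums, the map induced by $\psi_S$ on each of these invariants is the coproduct over $\fp \in S$ of the map induced by the single Poincaré functor $p^*\colon(\Dperf(\FF_\fp),(\QF^{\s}_{p_*M})\qshift{m}) \to (\Dperf_S(R),(\QF^{\s}_M)\qshift{m})$ of Corollary~\refthree{corollary:devissage-functor}. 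Moreover $\K(\Dperf_S(R))$ does not depend on the Poincaré structure, and $\L$ is a bordism invariant, so that $\L(\Dperf_S(R);(\QF^{\s}_M)\qshift{m}) \simeq \Sigma^{m-1}\L(\Dperf_S(R);\QF^{\s}_{M[1]})$ and similarly on the residue fields, compatibly with $\psi_S$; thus for $\K$ and $\L$ it suffices to treat a single value of $m$, which I would take to be $m=1$. Finally, $\psi_S$ is a Poincaré functor and hence $\Ct$-equivariant on $\K$-theory (the $\Ct$-actions being induced by the dualities, which preserve the decomposition of $S_\infty$-torsion by support), so once $\psi_S$ is a $\K$-equivalence it is also a $\K(-)_{\hC}$-equivalence; granting in addition the $\L$-statement, the map of fibre sequences then forces the $\GW$-statement for every $m$.

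\textbf{K-theory.} By Lemma~\refthree{lemma:explicit}\refthreeitem{item:lemma-explicit:1}, $\Dperf_S(R)$ carries a bounded $t$-structure with heart the abelian category $\Torf_S(R)$ of finitely generated $S_\infty$-torsion $R$-modules; as $R$ has global dimension $1$, every such module has projective dimension $\leq 1$, so the inclusion of the heart promotes to an equivalence $\mathcal{D}^b(\Torf_S(R)) \simeq \Dperf_S(R)$, and likewise $\mathcal{D}^b(\Vect_{\FF_\fp}) \simeq \Dperf(\FF_\fp)$. By the theorem of the heart, $\K(\Dperf_S(R)) \simeq \K(\Torf_S(R))$ and $\K(\Dperf(\FF_\fp)) \simeq \K(\FF_\fp)$. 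Now $\Torf_S(R)$ decomposes as the direct sum over $\fp \in S$ of the categories of $\fp$-primary torsion $R$-modules, and every such module admits, by the structure theorem over the Dedekind ring $R$, a finite filtration with subquotients in $\Vect_{\FF_\fp}$; Quillen's dévissage theorem \cite{quillen} then identifies $\bigoplus_\fp \K(\FF_\fp)$ with $\K(\Torf_S(R))$. Tracing the identifications shows that $\psi_S$ is a $\K$-equivalence.

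\textbf{L-theory.} I would take $m=1$ and use the surgery results of \S\refthree{subsection:surgery-symmetric} to reduce to Witt groups. On the target, the Poincaré structure $\QF^{\s}_{M[1]} = (\QF^{\s}_M)\qshift{1}$ has underlying duality $\map_R(-,M[1])$, which by Lemma~\refthree{lemma:explicit}\refthreeitem{item:lemma-explicit:3} restricts on $\Dperf_S(R)$ to the exact duality $\Hom_R(-,M_S/M)$ on $\Torf_S(R)$; being an exact anti-equivalence preserving the heart, it exchanges $\Dperf_S(R)_{\geq 0}$ and $\Dperf_S(R)_{\leq 0}$. Thus Corollary~\refthree{corollary:dim-zero} (the case $d=0$) applies and identifies every homotopy group of $\L(\Dperf_S(R);\QF^{\s}_{M[1]})$ with the symmetric Witt group $\W(\Torf_S(R),\QF^{\g\s}_{\Dual_{M[1]}})$ or $\W(\Torf_S(R),\QF^{\g\s}_{-\Dual_{M[1]}})$ (in degrees $\equiv 0$, resp.\ $\equiv 2 \bmod 4$, and $0$ otherwise). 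The same corollary applies on the source, since $(\QF^{\s}_{p_*M})\qshift{1} = \QF^{\s}_{p_*M[1]}$ with $p_*M[1] \cong \fp^{-1}M/M$ discrete by Lemma~\refthree{lemma:purity}, and its underlying duality preserves $\Vect_{\FF_\fp}$. Since $\L$-theory spectra are $4$-periodic, it remains to show that $\psi_S$ induces isomorphisms on the relevant Witt groups, in both twists. Using the support decomposition $\Torf_S(R) = \bigoplus_\fp \Torf_{\{\fp\}}(R_{(\fp)})$ of abelian categories with duality, this is the classical dévissage for Witt groups of torsion modules over a Dedekind domain, peeling a non-degenerate symmetric $M_S/M$-valued linking form over the discrete valuation ring $R_{(\fp)}$ down to its bottom layer over the residue field $\FF_\fp$; see \cite{milnor-symmetric, knebusch}.

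\textbf{Main obstacle.} The principal obstacle is this last step. One must verify that every non-degenerate symmetric $M_S/M$-valued form on a finite-length $R_{(\fp)}$-module is Witt-equivalent to one supported on a module killed by $\fp$, and that the class so obtained is faithfully detected on $\FF_\fp$ — it is here that the Dedekind hypothesis, via the fact that each $R_{(\fp)}$ is a discrete valuation ring, is genuinely used, and one must take care that the relevant peeling operations respect the cobordism relation rather than merely the isomorphism relation. All remaining points — the interaction of $\K$, $\GW$, $\L$ and $(-)_{\hC}$ with direct sums, the theorem of the heart, and the bookkeeping of shifts and twists — are routine given the results recalled in the excerpt.
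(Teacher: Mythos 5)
Your proposal follows essentially the same route as the paper's proof: reduce to $\K$ and $\L$ via the fibre sequence $\K_{\hC}\to\GW\to\L$, reduce to finite $S$ by filtered colimits, apply the theorem of the heart and Quillen's classical dévissage on $\K$-theory, and on $\L$-theory use $4$-periodicity to reduce to $m=1$ together with the $d=0$ surgery result (Corollary~\refthree{corollary:dim-zero}) to reduce to Witt groups of the hearts, concluding by dévissage for Witt groups of torsion forms. The paper applies Barwick's theorem of the heart directly to both sides of $\psi_S$ and then observes the induced functor on hearts is the inclusion of semi-simple objects, whereas you phrase the $\K$-step through an intermediate identification $\mathcal D^b(\Torf_S(R))\simeq\Dperf_S(R)$ which is unnecessary (and delicate to justify from global dimension alone — the theorem of the heart is what you really want, and you do invoke it); and the paper cites a specific reference for the Witt-group dévissage while you point at \cite{milnor-symmetric,knebusch}, but this is a bibliographic rather than mathematical difference. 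The only place I would press you is your brief aside that the cited classical sources treat your exact situation ($M_S/M$-valued symmetric forms on finite-length modules over a DVR, up to cobordism rather than isomorphism); that is indeed the substantive input, and you correctly flag it as the real content of the proof.
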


\begin{proof}
First, note that by the fibre sequence of Corollary~\reftwo{corollary:tate-square-L}, it will be enough to prove the theorem for algebraic \(\K\)-theory and \(\L\)-theory. 
Second, both sides of \(\psi_S\) depend on \(S\) in a manner that preserves filtered colimits. More specifically, if we write \(S\) as a filtered colimit \(S = \colim_{S'\subseteq S,|S|<\infty} S'\) of its finite subsets then the direct sum on the left hand side of~\eqrefthree{equation:psi} is the colimit of the corresponding finite direct sums, while on the right hand side the full subcategory \(\Dperf_S(R) \subseteq \Dperf(R)\) is the union of all the full subcategories \(\Dperf(R)_{S'}\) for finite \(S'\subseteq S\). Since both algebraic \(\K\)-theory and \(\L\)-theory commute with filtered colimits we may reduce to the case where \(S\) is finite.

In this case, the left hand side of~\eqrefthree{equation:psi} can also be written as the product of \(\Dperf(\FF_{\fp})\) for varying \(\fp \in S\) (recall that \(\Catp\) is semi-additive, see Proposition~\refone{proposition:catp-pre-add}). In addition, each \(\Dperf(\FF_{\fp})\), being the perfect derived category of a field, supports a t-structure inherited from \(\D(\FF_{\fp})\), and so we can endow the left hand side of~\eqrefthree{equation:psi} with the product of the corresponding t-structures. The heart of this product t-structure is then the direct sum \(\oplus_{\fp \in S} \Vect(\FF_{\fp})\) where \(\Vect(\FF_{\fp})\) is the abelian category of finite dimensional \(\FF_{\fp}\)-vector spaces. We can also identify it with the category of finitely generated modules over the product ring \(\prod_{\fp \in S} \FF_{\fp}\).
For the right hand side of~\eqrefthree{equation:psi}, Lemma~\refthree{lemma:explicit} tells us that \(\Dperf_S(R)\) also carries a t-structure, whose heart is the category \(\Torf_S(R)\) of finitely generated \(S_{\infty}\)-torsion \(R\)-modules. 
Now since~\eqrefthree{equation:psi} is induced by the various restriction functors \(p^*\colon \Dperf(\FF_{\fp}) \to \Dperf(R)\) it preserves connective and truncated objects with respect to the t-structures just discussed. The functor
\begin{equation}
\label{equation:psi-heart}%
\displaystyle\mathop{\oplus}_{\fp \in S} \Vect(\FF_{\fp}) \lto \Torf_S(R)
\end{equation}
induced by~\eqrefthree{equation:psi} on the respective hearts is then fully-faithful (even though~\eqrefthree{equation:psi} itself is not fully-faithful) and can be identified with the inclusion of the full subcategory of finitely generated \(S\)-torsion modules inside all finitely generated \(S_{\infty}\)-torsion modules, i.e.\ with the full subcategory of \emph{semi-simple} objects inside the abelian category \(\Torf_S(R)\). By the main result of Barwick~\cite{barwick-exact} 
the inclusion of hearts induces an equivalence on algebraic \(\K\)-theory spectra on both the domain and codomain of~\eqrefthree{equation:psi}. The desired claim for algebraic \(\K\)-theory is hence equivalent to saying that the inclusion of abelian categories~\eqrefthree{equation:psi-heart} induces an equivalence on algebraic \(\K\)-theory, which in turn follows from Quillen's classical dévissage theorem
 ~\cite{quillen}*{Theorem \S 5.4}.

We will now show that~\eqrefthree{equation:psi} induces an equivalence on \(\L\)-theory. Recall that by Corollary~\refthree{corollary:periodicity-L}, L-theory supports natural equivalences \(\L(\C,\QF\qshift{1}) \simeq \Sig\L(\C,\QF)\). It will thus suffice to prove the claim for \(m=1\) in which case the duality on $\Dperf(\FF_p)$ is given by $\Dual_{p_*(M)[1]} \simeq \Dual_{p_!(M)}$ by Lemma~\refthree{lemma:purity}.
Now since each \(\FF_{\fp}\) has global dimension \(0\) the duality \(\Dual_{p_!M}\) on \(\Dperf(\FF_{\fp})\) maps \(0\)-connective objects to \(0\)-truncated objects and \emph{vice versa}. The same hence holds for the product duality on \(\prod_{\fp \in S} \Dperf(\FF_{\fp})\) with respect to the product t-structure. This also holds for the Poincaré \(\infty\)-category \((\Dperf_S(R),(\QF^{\s}_M)\qshift{1})\) by Lemma~\refthree{lemma:explicit}.
Hence for both sides of~\eqrefthree{equation:psi} we are in the situation of Corollary~\refthree{corollary:dim-zero}, and so to finish the proof it will suffice to show that~\eqrefthree{equation:psi-heart} induces an isomorphism on symmetric and anti-symmetric Witt groups. This follows from the dévissage result of~\cite{devissage}*{Corollary 6.9, Theorem 6.10}.
\end{proof}

The combination of the classical dévissage and localisation theorems of Quillen give rise to the fibre sequence of K-theory spectra
\[
\mathop{\oplus}_{\fp \in S}\K(\FF_{\fp}) \longrightarrow\K(R)\longrightarrow \K(R_S).
\]
From Theorem~\refthree{theorem:devissage} we obtain the corresponding sequences for the symmetric L and GW-spectra.

\begin{corollary}[Localisation-dévissage]
\label{corollary:decomposition-s}%
Under the assumptions of Theorem~\refthree{theorem:devissage}, the restriction and localisation functors yield canonical fibre sequences of spectra
\[
\begin{tikzcd}[row sep=tiny]
\displaystyle\mathop{\oplus}_{\fp \in S}\GW(\FF_{\fp};(\QF^{\s}_{p_*M})\qshift{m}) \ar[r] & \GW(R;(\QF^{\s}_M)\qshift{m}) \ar[r] & \GW(R_S;(\QF^{\s}_{M_S})\qshift{m}) \\
\displaystyle\mathop{\oplus}_{\fp \in S}\L(\FF_{\fp};(\QF^{\s}_{p_*M})\qshift{m}) \ar[r] & \L(R;(\QF^{\s}_M)\qshift{m}) \ar[r] & \L(R_S;(\QF^{\s}_{M_S})\qshift{m})
\end{tikzcd}
\]
for every \(m \in \ZZ\).
\end{corollary}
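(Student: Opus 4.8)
The final statement is an immediate consequence of two results already established: the Poincaré--Verdier sequence of Proposition~\refthree{proposition:verdier} and the dévissage equivalence of Theorem~\refthree{theorem:devissage}. The plan is as follows. First I would specialise the superscript parameter in Proposition~\refthree{proposition:verdier} to $-\infty$, obtaining the Poincaré--Verdier sequence
\[
(\Dperf_S(R),\QF^{\s}_M) \lto (\Dperf(R),\QF^{\s}_M) \lto (\Dperf(R_S),\QF^{\s}_{M_S})
\]
since $\QF^{\geq -\infty}_M = \QF^{\s}_M$. Next I would shift this sequence by $[m]$ for an arbitrary $m \in \ZZ$. The shift operation $(\C,\QF) \mapsto (\C,\QF\qshift{m}) = (\C,\Sigma^m\QF)$ is an automorphism of $\Catp$ --- its underlying stable $\infty$-category and exact functors are unchanged, and the underlying duality merely changes to $\Sigma^m\Dual$, which is again an equivalence --- so it carries Poincaré--Verdier sequences to Poincaré--Verdier sequences; moreover restriction of Poincaré structures commutes with the shift, so the restriction of $(\QF^{\s}_M)\qshift{m}$ to $\Dperf_S(R)$ is the shift of the restricted structure. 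We thus obtain, for every $m \in \ZZ$, a Poincaré--Verdier sequence
\[
(\Dperf_S(R),(\QF^{\s}_M)\qshift{m}) \lto (\Dperf(R),(\QF^{\s}_M)\qshift{m}) \lto (\Dperf(R_S),(\QF^{\s}_{M_S})\qshift{m}).
\]

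Now I would apply $\GW$ and $\L$, which are Verdier-localising functors by Corollary~\reftwo{corollary:gwadd} and Corollary~\reftwo{corollary:Ladd}. This turns the displayed Poincaré--Verdier sequence into fibre sequences of spectra
\[
\GW(\Dperf_S(R),(\QF^{\s}_M)\qshift{m}) \lto \GW(R;(\QF^{\s}_M)\qshift{m}) \lto \GW(R_S;(\QF^{\s}_{M_S})\qshift{m})
\]
and the analogous one with $\GW$ replaced by $\L$. It remains to identify the fibre terms, and this is precisely Theorem~\refthree{theorem:devissage}: the direct sum Poincaré functor $\psi_S$ of~\eqrefthree{equation:psi} induces equivalences
\[
\bigoplus_{\fp \in S}\GW(\FF_{\fp};(\QF^{\s}_{p_*M})\qshift{m}) \stackrel{\simeq}{\lto} \GW(\Dperf_S(R),(\QF^{\s}_M)\qshift{m})
\]
and likewise on $\L$. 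Splicing these equivalences into the two fibre sequences above yields the claimed fibre sequences. Unwinding the constructions, the first map of each resulting sequence is induced by the restriction-of-scalars Poincaré functors $p^*\colon (\Dperf(\FF_{\fp}),\QF^{\s}_{p_*M}) \to (\Dperf_S(R),\QF^{\s}_M)$ of Corollary~\refthree{corollary:devissage-functor} followed by the inclusion $\Dperf_S(R) \hookrightarrow \Dperf(R)$, and the second map by localisation along $R \to R_S$, as desired.

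I do not expect a genuine obstacle here, since the statement is a formal consequence of Proposition~\refthree{proposition:verdier} and Theorem~\refthree{theorem:devissage}; the only point requiring care is bookkeeping. Specifically, one must confirm that the shift $[m]$ can be applied uniformly to the entire Poincaré--Verdier sequence (as opposed to invoking $\L(\C,\QF\qshift{m}) \simeq \Sigma^m\L(\C,\QF)$, which is available for $\L$ but not for $\GW$), and that the dévissage equivalence $\GW(\psi_S)$ is compatible with the inclusion $\Dperf_S(R) \hookrightarrow \Dperf(R)$, so that the composite $\bigoplus_{\fp}\GW(\FF_{\fp};\cdots) \to \GW(R;\cdots)$ is honestly induced by restriction of scalars. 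Both are immediate: the shift is an equivalence of $\Catp$, and $\psi_S$ is by construction the direct sum of the functors $p^*$, whose images lie in $\Dperf_S(R)$ by Corollary~\refthree{corollary:devissage-functor}. Since Theorem~\refthree{theorem:devissage} is already formulated for all $m \in \ZZ$, no further periodicity input is needed.
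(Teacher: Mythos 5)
Your proof is correct and takes essentially the same route the paper implicitly does: specialise Proposition~\refthree{proposition:verdier} to $\QF^{\s}_M$, shift, apply the Verdier-localising functors $\GW$ and $\L$, and identify the fibre via Theorem~\refthree{theorem:devissage}. The only content you add beyond what the paper states implicitly is the (correct) bookkeeping that the shift is an automorphism of $\Catp$ preserving Poincaré--Verdier sequences and commuting with restriction, which is a legitimate and worthwhile observation since Proposition~\refthree{proposition:verdier} is stated for $\QF^{\geq m}_M$ rather than for $(\QF^{\s}_M)\qshift{m}$.
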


\begin{remark}
\label{remark:uniformiser-version-L-sequence}%
Making use of Lemma~\refthree{lemma:purity}, after making choices of uniformisers of the ideals $\fp$ in $S$, one obtains the slightly more familiar fibre sequence
\[
\begin{tikzcd}
\displaystyle\mathop{\oplus}_{\fp \in S}\GW(\FF_{\fp};(\QF^{\s}_{p_!M})\qshift{m-1}) \ar[r] & \GW(R;(\QF^{\s}_M)\qshift{m}) \ar[r] & \GW(R_S;(\QF^{\s}_{M_S})\qshift{m}) 
\end{tikzcd}
\]
and likewise for L-theory.
We now briefly remark on the naturality of this sequence. Given a map of Dedekind rings $f\colon R \to R'$, a line bundle $M$ on $R$, and a set of prime ideals $S$ in $R$, we may set $M' = R'\otimes_R M$ and $S'$ the set of primes whose preimage is contained in $S$. There is then the following canonical commutative square
\[
\begin{tikzcd}
	\GW(R;(\QF^{\s}_M)\qshift{m}) \ar[r] \ar[d] & \GW(R_S;(\QF^{\s}_{M_S})\qshift{m}) \ar[d] \\
	\GW(R';(\QF^{\s}_{M'})\qshift{m}) \ar[r] & \GW(R'_{S'};(\QF^{\s}_{{M'}_{\!\! S'}})\qshift{m})
\end{tikzcd}
\]
which induces a map on horizontal fibres: 
\[
\displaystyle\mathop{\oplus}_{\fp \in S} \GW(\FF_{\fp};(\QF^{\s}_{p_!M})\qshift{m-1}) \lto \displaystyle\mathop{\oplus}_{{\fp' \in S'}} \GW(\FF_{\fp'};(\QF^{\s}_{p'_!M'})\qshift{m-1}).
\]
Let us describe this map in the case where $S= \{\fp\}$ consists of a single prime. For any prime $\fp'$ in $S'$, there is then an induced map $\kappa\colon \FF_\fp \to \FF_{\fp'}$ between the corresponding residue fields, and $\kappa_!p_!M \simeq p'_!f_!M = p'_!M'$. However, the map induced on fibres
\[
\GW(\FF_\fp;(\QF^{\s}_{p_!M})\qshift{m-1}) \lto \GW(\FF_{\fp'};(\QF^{\s}_{p'_!{M'}})\qshift{m-1})
\]
is not in general the one induced by the map $\kappa$ and the mentioned equivalence $\kappa_!p_!M \simeq p'_!M'$ unless a uniformiser for $\fp$ is sent to a uniformiser of $\fp'$ under the map $R \to R'$. An example where this indeed fails is the map $\ZZ \to \ZZ[i]$ from the integers to the Gaussian integers with $\fp = (2)$. In this case, $2$ is not a uniformiser of the prime ideal $(1+i)$ in $\ZZ[i]$ which is the single prime ideal containing 2. One can then show that the induced map on K-theory between the residue fields is not the identity. Likewise, using for instance Lemma~\refthree{lemma:boundary-map-local} below, one can also see that the induced map on L-theory between the residue fields is not the identity, but in fact the zero map.
\end{remark}

\begin{remark}
A variant of Corollary~\refthree{corollary:decomposition-s} for \(\L\)-theory of short complexes in non-negative degrees (which, for Dedekind rings, coincides with symmetric \(\L\)-theory in non-negative degrees by Theorem~\refthree{theorem:main-theorem-L-theory} and Corollary~\refthree{corollary:dedekind}), was proven by Ranicki in~\cite{Ranickiyellowbook}*{\S 4.2}. For Grothendieck-Witt theory, Hornbostel and Schlichting prove a dévissage statement and obtain a localisation sequence of the type of Corollary~\refthree{corollary:decomposition-s} under the assumption that $2$ is a unit in $R$, see~\cite{hornbostel},~\cite{hornbostel-schlichting}. Apart from the announcement \cite{Schlichting-integers}*{Theorem 3.2} which provides the above fibre sequence for $\GW$ after passing to connective covers, we are not aware of any previous results in the literature for Grothendieck-Witt spaces, along the lines of Corollary~\refthree{corollary:decomposition-s}, for rings in which \(2\) is not invertible. 
\end{remark}

\begin{remark}
\label{remark:no-quadratic-devissage}%
The dévissage result above is a special feature of the symmetric Poincaré structure: It is the only among the Poincaré structures $\Qgen{m}{M}$ for which this result holds at the spectrum level (not just in a range of degrees). Indeed, to see this it suffices, by Corollary~\refthree{corollary:genuine-is-quadratic}, to argue that dévissage fails for quadratic L-theory. For an explicit example, one can note that the maps
\[
\Omega \L^\qdr(\FF_2) \lto \L^\qdr(\ZZ) \lto \L^\qdr(\ZZ[\tfrac{1}{2}])
\]
cannot be part of a fibre sequence, for instance because it would imply that $\L^\qdr_2(\ZZ) = 0$, which is not the case. Here, we use that $\L^\qdr_2(\ZZ[\tfrac{1}{2}]) \cong \L^\s_2(\ZZ[\tfrac{1}{2}])= 0$, which is of course well-known, but see also Corollary~\refthree{corollary:compute-L-groups-global-field} below, as well as the vanishing of $\L_3^\qdr(\FF_2)$, see Remark~\refthree{remark:odd-vanishing}.

However, dévissage in quadratic L-theory fails only at dyadic primes, that is at those primes which contain the ideal $(2)$. More precisely, if no prime in $S$ is dyadic, then the localisation-dévissage sequence exists also in quadratic L-theory; see Remark~\refthree{remark:devissage-in-quadratic-L} for details. For instance, it can be used to calculate $\L^\qdr(\ZZ[\tfrac{1}{p}])$ for odd primes $p$.
\end{remark}

Shifting the L-theory fibre sequence of Corollary~\refthree{corollary:decomposition-s}, or rather the version discussed in Remark~\refthree{remark:uniformiser-version-L-sequence}, once to the right and choosing uniformisers for all non-zero primes, we obtain a fibre sequence
\[
\L^\s(R) \lto \L^\s(K) \stackrel{\partial}{\lto} \displaystyle\mathop{\oplus}\limits_{\fp} \L^\s(\FF_{\fp})
\]
In the next section, we aim to determine the L-groups of Dedekind rings. In order to do so, we will need to  %
make the effect of the map $\partial$ on $\pi_0$ explicit. Clearly, it suffices to describe the composite of $\partial$ with the projection to $\L^\s(\FF_\fp)$ for each prime $\fp$ of $R$. By naturality of the dévissage theorem and the localisation sequence, to describe this composition we may replace $R$ by its localisation $R_{(\fp)}$ which is a local Dedekind ring and hence a discretely valued ring, as the choice of uniformiser for $\fp$ is (by definition) also a uniformiser for $\fp$, viewed as prime ideal in $R_{(\fp)}$.
Without loss of generality, we may hence assume that $R$ was a discretely valued ring to begin with. Let $\pi$ be a uniformiser of the maximal ideal of $R$, so that every non-zero element in $K$ is uniquely of the form $\pi^i u$ for some unit $u$ in $R$. Clearly, it suffices to describe the map 
\[
\partial_0 \colon \L_0^\s(K) \to \L_0^\s(\FF_\fp)
\]
on generators of the L-group, which are given by the forms $\langle x \rangle=(K,x)$ for units $x$ of $K$, where we have identified canonically $\pi_0(\QF^\s(K))$ with $K$. Indeed, if the characteristic of $K$ is not 2, then every form itself is isomorphic to a diagonal form. If the characteristic is 2, then any unimodular form is the sum of a diagonalisable form and one which admits a Lagrangian, see \cite{milnor-symmetric}*{I \S 3}.
By a change of basis, one finds the relation $\langle x \rangle = \langle xy^2 \rangle$ for any other unit $y$. We may thus suppose without loss of generality that $x$ is either of the form $\pi u$ or of the form $u$, again for $u$ a unit in $R$. By exactness of the localisation-dévissage sequence, we have $\partial_0\langle u \rangle = 0$, so it remains to describe $\partial_0 \langle \pi u \rangle$.

\begin{lemma}
\label{lemma:boundary-map-local}%
Let \(R\) be a local Dedekind ring with maximal ideal \(\fp\), residue field \(\FF_{\fp} = R/\fp\) and fraction field \(K\), and let $\pi$ be a uniformiser for $\fp$, and $u \in R^\times$ a unit. Then we have $\partial_0\langle \pi u \rangle = \langle [u] \rangle$, where $[u]$ denotes the image of $u$ under the map $R^\times \to \FF_\fp^\times$.
\end{lemma}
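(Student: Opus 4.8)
The plan is to feed the diagonal form $\langle\pi u\rangle=(K,b)$ with $b(y,z)=\pi u\,yz$ into the explicit description of the boundary map provided by Proposition~\refthree{proposition:boundary-map}, and then transport the resulting torsion form through the dévissage equivalence of Theorem~\refthree{theorem:devissage} to land in $\L_0^\s(\FF_\fp)$. Throughout we take the line bundle to be $M=R$ with trivial involution, so $M_S=K$ and $M_S/M=K/R$; since $R$ is a discretely valued ring, $S=\{\fp\}$ is a single prime, so the direct sums in Theorem~\refthree{theorem:devissage} and Corollary~\refthree{corollary:decomposition-s} reduce to a single summand. Recall from the discussion preceding the lemma that, under these identifications, $\partial_0$ is the composite of the boundary map $\L_0^\s(K)\to\L_{-1}(\Dperf_S(R),\QF^\s_M)=\L_0(\Dperf_S(R),\QF^\s_{M[1]})$ of the Poincaré-Verdier sequence of Proposition~\refthree{proposition:verdier} with the inverse of the dévissage isomorphism, the latter using the uniformiser-dependent identification $p_*(M)[1]\simeq p_!M=\FF_\fp$ of Lemma~\refthree{lemma:purity}.

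First I would apply Proposition~\refthree{proposition:boundary-map} with $V=K$ and the $R$-lattice $P=R\subseteq K$. One computes the dual lattice $P^*=\{v\in K:\pi u\,vR\subseteq R\}=\pi^{-1}R$, which contains $P$, so $P$ is indeed integral. Hence $T:=P^*/P=\pi^{-1}R/R$ and, by the proposition,
\[
\partial_0\langle\pi u\rangle=[T,c],\qquad c([v],[w])=[b(v,w)]=[\pi u\,vw]\in K/R.
\]
Since $\fp T=\pi R\cdot\pi^{-1}R/R=0$, the module $T$ is a one-dimensional $\FF_\fp$-vector space, so it lies in the full subcategory $\Vect(\FF_\fp)\subseteq\Torf_S(R)=\Dperf_S(R)^\heartsuit$ of semisimple objects, which is precisely the image of the dévissage functor $\psi_S$ on hearts.

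It then remains to recognise $\psi_S^{-1}[T,c]$ as $\langle[u]\rangle\in\L_0^\s(\FF_\fp)$. For this one unwinds the identifications: by Lemma~\refthree{lemma:explicit} and Corollary~\refthree{corollary:dim-zero} the group $\L_0(\Dperf_S(R),\QF^\s_{M[1]})$ is the symmetric Witt group of the abelian category $\Torf_S(R)$ with duality $\Hom_R(-,K/R)$, and the $\fp$-torsion subgroup $\fp^{-1}R/R\subseteq K/R$ in which $c$ takes values is carried isomorphically onto $\FF_\fp$ by the uniformiser-induced map $[\pi^{-1}a]\mapsto[a]$ of Lemma~\refthree{lemma:purity}, which sends the generator $[\pi^{-1}]$ to $1$. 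Computing $c$ in the $\FF_\fp$-basis $[\pi^{-1}]$ of $T$ gives $c([\pi^{-1}a],[\pi^{-1}b])=[\pi^{-1}uab]=\overline{uab}\cdot[\pi^{-1}]$, so that under these identifications $c$ becomes the $\FF_\fp$-valued bilinear form $(x,y)\mapsto[u]xy$, which is exactly $\langle[u]\rangle$. Since $\psi_S$ is a Poincaré functor inducing an isomorphism on $\L_0$, this yields $\partial_0\langle\pi u\rangle=\langle[u]\rangle$.

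The step I expect to be the main obstacle is the last one: making the dévissage Poincaré functor $\psi_S=p^*$ of Corollary~\refthree{corollary:devissage-functor}, together with its hermitian refinement coming from the counit $p^*p_*M\to M$ and the shift by one, completely explicit at the level of forms on the heart, so as to be sure the coefficient line $\fp^{-1}R/R$ is matched with $\FF_\fp$ by the uniformiser-induced isomorphism and that no spurious unit is introduced. This is bookkeeping rather than a genuine difficulty, since $T$ is discrete and the entire comparison takes place inside the abelian category $\Torf_S(R)$ equipped with the duality $\Hom_R(-,K/R)$, where it reduces to the classical dévissage isomorphism for symmetric Witt groups already invoked in the proof of Theorem~\refthree{theorem:devissage}.
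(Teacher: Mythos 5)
Your proposal is essentially the paper's own proof: feed $\langle\pi u\rangle$ through Proposition~\refthree{proposition:boundary-map} with the integral lattice $P=R$ (dual lattice $P^*=\pi^{-1}R=\fp^{-1}$), obtaining $(T,c)$ with $T=\fp^{-1}/R$ and $c([x],[y])=[\pi u xy]\in K/R$, and then transport backward through dévissage using the uniformiser identification $\fp^{-1}/R\cong\FF_\fp$ to read off $\langle[u]\rangle$; all the explicit computations agree with the paper's.

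The one step you flag as "bookkeeping" and then leave unverified is precisely the step the paper treats with care: showing that the hermitian refinement on the dévissage Poincaré functor $p^*\colon(\Dperf(\FF_\fp),\QF^\s_{p_*R[1]})\to(\Dperf_\fp(R),\QF^\s_{R[1]})$, induced by the counit $p^*p_*R\to R$, carries the $(\fp^{-1}/R)$-valued form $c'$ to the $K/R$-valued form $c$ via the inclusion $\fp^{-1}/R\hookrightarrow K/R$. The paper establishes this by factoring the counit as $\fib[R\to\fp^{-1}]\to\fib[R\to K]\to R$ (suspended: $\fp^{-1}/R\to K/R\to R[1]$), so that the dévissage functor factors through an auxiliary Poincaré $\infty$-category with coefficients $K/R$ where the comparison is transparent. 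Your appeal to the "classical dévissage isomorphism" on Witt groups of the heart is in the right spirit and the conclusion is correct, but the identification of the abstract Poincaré-functor structure with that classical isomorphism is exactly what needs verification here, so this part of your argument, as written, is a (small and fillable) gap rather than mere bookkeeping.
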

\begin{proof}
Let \(a \in \fp\) be a generator of \(\fp\), considered as a 1-dimensional form \(\langle a\rangle = (K,a)\). %
Write \(\fp^{-1} = (a^{-1})\) for the inverse fractional ideal of \(\fp\). For this proof, let $\bar{\partial}_0$ denote the map of Proposition~\refthree{proposition:boundary-map}. We will show the following statements.
\begin{enumerate}
\item
\label{item:1-form}%
The class \(\bar{\partial}_0\langle a \rangle \in \L_0(\Dperf_{\fp}(R),\QF^{\sym}_{R[1]})\) is represented by the \(\fp\)-torsion module \(T := \fp^{-1}/R \in \Dperf_{\fp}(R)^{\heartsuit}\), equipped with the symmetric \(K/R\)-valued pairing 
\[
c_a \colon T \times T \to K/R \quad\quad c_a([x],[y]) = [axy] \in K/R.
\]
\item
\label{item:devissage-1}%
Under the dévissage equivalence \(\L^\s_0(\FF_{\fp};\fp^{-1}/R) = \L^\s_0(\FF_{\fp};p_*R[1]) \xrightarrow{\simeq} \L_0(\Dperf_{\fp}(R),\QF^{\sym}_R[1])\), 
the class \(\bar{\partial}_0\langle a\rangle\) corresponds to that of the \(\FF_p\)-vector space \(T' = \fp^{-1}/R\), equipped with the \((\fp^{-1}/R)\)-valued form
\[
c'_a\colon T' \times T' \to \fp^{-1}/R \quad\quad c'_a([x],[y]) = [axy] \in \fp^{-1}/R.
\]
\item
\label{item:devissage-2}%
Using the uniformiser \(\pi\) to define an isomorphism \(p_*R[1] = \fp^{-1}/R \xrightarrow{\pi} R/\fp = \FF_{\fp}\), then, under the resulting equivalence \(\L^\s_0(\FF_{\fp}) \xrightarrow{\simeq} \L_0(\Dperf_{\fp}(R),\QF^{\sym}_{R[1]})\), the class of \(\bar{\partial}_0\langle u\pi\rangle\) corresponds to \(\langle [u] \rangle\). %
\end{enumerate}
Statement~\refthreeitem{item:1-form} is simply the specialization of Proposition~\refthree{proposition:boundary-map} to the present case. Indeed, $R \subseteq K$ is an $R$-lattice whose dual lattice is $\fp^{-1}$, in particular $R$ is an integral lattice and the associated torsion module is precisely $\fp^{-1}/R$ with the described form. For~\refthreeitem{item:devissage-1} we note that the counit map \(p^*p_*R \to R\), through which the dévissage Poincaré functor is constructed (see the proof of Corollary~\refthree{corollary:devissage-functor}), can be identified with the forgetful map \(\map_R(R/\fp,R) = \fib[R \to \fp^{-1}] \to R\). We may consequently factor it as 
\[
\fib[R \to \fp^{-1}] \to \fib[R \to K] \to R ,
\]
which, after suspending, becomes
\( \fp^{-1}/R \to K/R \to R[1] \).
The (suspended) dévissage functor of Corollary~\refthree{corollary:devissage-functor} thus factors as 
\[
(\Dperf(\FF_p),\QF^{\sym}_{p_*R[1]}) \to (\Dperf_{\fp}(R),\QF^{\sym}_{K/R}) \xrightarrow{\simeq} (\Dperf_{\fp}(R),\QF^{\sym}_{R[1]}) ,
\]
where the first functor carries the Poincaré structure induced as in Proposition~\refthree{lemma:hermitian_structure_on_restriction} from the map \(p^*p_*R[1] = \fp^{-1}/R \to K/R\). This Poincaré functor thus sends \((T',c'_a)\) to the Poincaré object \((T,c_a)\), so that~\refthreeitem{item:devissage-1} follows. Finally, \refthreeitem{item:devissage-2} follows from~\refthreeitem{item:devissage-1} since for \(x,y \in R\) we have \(\pi c'_{u\pi}([x/\pi],[y/\pi]) = [uxy] \in \FF_{\fp}\).
\end{proof}

\begin{remark}
\label{remark:boundary-map}%
Lemma~\refthree{lemma:boundary-map-local} identifies the map $\partial_0 \colon \L_0^\s(K) \to \oplus_\fp \L_0^\s(\FF_\fp)$ with the map induced by the maps $\psi^1 \colon \W^\s(K) \to \W^\s(\FF_\fp)$ constructed in \cite{milnor-symmetric}*{Chapter IV \S 1}.
\end{remark}

\subsection{Symmetric and quadratic L-groups of Dedekind rings}
\label{subsection:dedekind-calculation}%

In this section we show that the classical symmetric and quadratic Grothendieck-Witt groups of certain Dedekind rings are finitely generated. By Theorem~\refthree{theorem:fiber-sequence-intro-three} it will suffice to prove the finite generation of the corresponding genuine L-groups, provided the finite generation of the K-groups is known. Moreover, as we show in Corollary~\refthree{corollary:finite-generation-GWm}, by the surgery results established in the earlier sections, it will in fact suffice to show the finite generation of the symmetric and quadratic L-groups.
In this case, we in fact do much more: We give a full calculation of the quadratic and symmetric L-groups of Dedekind rings whose field of fractions is not of characteristic 2. We first treat the symmetric case, where we make essential use of the boundary map
\[
\partial_n \colon \L_n^\s(K) \to \L_n^\s(\FF_\fp)
\]
arising from the localisation-dévissage sequence.

\begin{proposition}
\label{proposition:compute-L-groups}%
Let $R$ be a Dedekind ring whose field of fractions $K$ is not of characteristic 2, and let $\mathcal{I}$ be the (finite) set of dyadic primes of $R$. Then we have
\[
\L_n^\s(R) \cong 
\begin{cases}
\W^\s(R) & \text{ for } n \equiv 0 (4) \\
\displaystyle\mathop{\oplus}\limits_{\fp \in \mathcal{I}} \W^\s(\FF_\fp) & \text{ for } n \equiv 1 (4) \\
0 & \text{ for } n \equiv 2 (4) \\
\coker(\partial_{0}) & \text{ for } n \equiv 3 (4)
\end{cases}
\]
\end{proposition}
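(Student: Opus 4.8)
The plan is to compute $\L^\s_n(R)$ via the localisation--dévissage fibre sequence and the known vanishing of symmetric L-groups of fields. Recall from Corollary~\refthree{corollary:decomposition-s} (in the uniformiser-version of Remark~\refthree{remark:uniformiser-version-L-sequence}, shifted once) that there is a fibre sequence of spectra
\[
\L^\s(R) \lto \L^\s(K) \stackrel{\partial}{\lto} \bigoplus_{\fp} \L^\s(\FF_\fp),
\]
where $\fp$ ranges over all non-zero primes of $R$, and the long exact sequence of homotopy groups gives, for each $n$,
\[
\L^\s_{n+1}(K) \stackrel{\partial_{n+1}}{\lto} \bigoplus_\fp \L^\s_{n+1}(\FF_\fp) \lto \L^\s_n(R) \lto \L^\s_n(K) \stackrel{\partial_n}{\lto} \bigoplus_\fp \L^\s_n(\FF_\fp).
\]
First I would record the input data. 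Since $K$ is a field of characteristic $\neq 2$, $2$ is a unit in $K$, so by Remark~\refthree{remark:genuineeq} all the genuine and classical symmetric/quadratic flavours coincide and by Corollary~\refthree{corollary:periodicity-L} the spectrum $\L^\s(K)$ is $4$-periodic; moreover $\L^\s_0(K) = \W^\s(K)$ (Corollary~\refthree{corollary:L-zero} or Remark~\refthree{remark:odd-vanishing}), $\L^\s_2(K) = 0$ by Remark~\refthree{remark:odd-vanishing} (the Witt group of anti-symmetric forms over a field vanishes), and $\L^\s_1(K) = \L^\s_3(K) = 0$ by Remark~\refthree{remark:odd-vanishing} since $K$ has global dimension $0$ and $\QF^\s$ is $r$-symmetric for all $r$. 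The same applies to each residue field $\FF_\fp$: if $\fp$ is non-dyadic then $\FF_\fp$ has characteristic $\neq 2$ so $\L^\s_*(\FF_\fp)$ is $\W^\s(\FF_\fp)$ concentrated in degrees $\equiv 0\ (4)$ with the others vanishing; if $\fp$ is dyadic then $\FF_\fp$ has characteristic $2$ and still, being a field (global dimension $0$), $\L^\s_{2k-1}(\FF_\fp) = 0$ and $\L^\s_{2k}(\FF_\fp) = \W^\s(\FF_\fp)$ (the Witt group of $(-1)^k$-symmetric forms, which over a field of characteristic $2$ is the symmetric Witt group in all even degrees since $-1 = 1$). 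Thus in \emph{all} degrees $n$, $\bigoplus_\fp \L^\s_n(\FF_\fp)$ vanishes for $n$ odd, equals $\bigoplus_{\text{all }\fp} \W^\s(\FF_\fp)$ for $n \equiv 0\ (4)$, and equals $\bigoplus_{\fp \in \I} \W^\s(\FF_\fp)$ for $n \equiv 2\ (4)$ (the summands from non-dyadic $\fp$ drop out because $\W$ of an odd-characteristic field in degree $\equiv 2$ is zero).

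Next I would feed this into the long exact sequence degree by degree, using the $4$-periodicity of $\L^\s(K)$. For $n \equiv 1\ (4)$: the outgoing map lands in $\L^\s_1(K) = 0$ and the incoming term is $\bigoplus_\fp \L^\s_2(\FF_\fp) = \bigoplus_{\fp\in\I}\W^\s(\FF_\fp)$, with the preceding map from $\L^\s_2(K) = 0$, so $\L^\s_n(R) \cong \bigoplus_{\fp\in\I}\W^\s(\FF_\fp)$. For $n \equiv 2\ (4)$: the sequence reads $\bigoplus_\fp \L^\s_3(\FF_\fp) \to \L^\s_n(R) \to \L^\s_2(K)$, i.e.\ $0 \to \L^\s_n(R) \to 0$, so $\L^\s_n(R) = 0$. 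For $n \equiv 3\ (4)$: the incoming term is $\bigoplus_\fp \L^\s_0(\FF_\fp) = \bigoplus_{\text{all }\fp}\W^\s(\FF_\fp)$, the preceding map is $\partial_0 \colon \L^\s_0(K) \to \bigoplus_\fp \W^\s(\FF_\fp)$, and the outgoing map is into $\L^\s_3(K) = 0$; hence $\L^\s_n(R) \cong \coker(\partial_0)$, where I have used the $4$-periodicity of $\L^\s(K)$ to identify $\partial_{n+1}$ with $\partial_0$. For $n \equiv 0\ (4)$: the sequence gives $\bigoplus_\fp \L^\s_1(\FF_\fp) = 0 \to \L^\s_n(R) \to \L^\s_0(K) \stackrel{\partial_0}{\to} \bigoplus_\fp \W^\s(\FF_\fp)$, so $\L^\s_n(R) \cong \ker(\partial_0)$; it then remains to identify $\ker(\partial_0)$ with $\W^\s(R)$. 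I expect the main obstacle to lie precisely here — identifying $\ker(\partial_0 \colon \W^\s(K) \to \bigoplus_\fp \W^\s(\FF_\fp))$ with the Witt group $\W^\s(R)$ of the Dedekind ring. This is the classical exactness of the second residue/transfer sequence for Witt groups of Dedekind domains (cf.\ Milnor--Husemoller \cite{milnor-symmetric}, and the identification of $\partial_0$ with the classical residue maps $\psi^1$ recorded in Remark~\refthree{remark:boundary-map}); alternatively one can argue directly that $\L^\s_0(R) = \W^\s(R)$ via Corollary~\refthree{corollary:d-zero} since $R$ is coherent of global dimension $1$, and the exactness of the long exact sequence at $\L^\s_0(R)$ together with $\bigoplus_\fp\L^\s_1(\FF_\fp)=0$ then upgrades this to $\W^\s(R) = \ker(\partial_0)$ automatically, so no separate Milnor--Husemoller input is strictly needed — $\L^\s_0(R)\cong\W^\s(R)$ is the thing to quote. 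With all four residues computed, the proposition follows.

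To summarise the order of operations: (1) assemble the L-groups of $K$ and of all the residue fields $\FF_\fp$, splitting on whether $\fp$ is dyadic; (2) invoke Corollary~\refthree{corollary:decomposition-s} and write down the long exact sequence; (3) run the four residue classes of $n \bmod 4$ through it, using $4$-periodicity of $\L^\s(K)$ to rename the relevant boundary map as $\partial_0$; (4) identify $\L^\s_0(R)$ with $\W^\s(R)$ via Corollary~\refthree{corollary:d-zero}, which handles the $n\equiv 0$ case. The only genuinely delicate point is step (4) / the bookkeeping of which $\W^\s(\FF_\fp)$ summands survive in degrees $\equiv 2\ (4)$, which is exactly where the hypothesis that $K$ has characteristic $\neq 2$ — so that non-dyadic residue fields also have odd characteristic — gets used.
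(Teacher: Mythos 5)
Your proof is correct and takes essentially the same route as the paper: use the localisation--dévissage long exact sequence from Corollary~\refthree{corollary:decomposition-s}, feed in the vanishing of odd symmetric L-groups of fields (Remark~\refthree{remark:odd-vanishing}), the vanishing of the antisymmetric Witt group of $K$, and the $2$-periodicity of symmetric L-theory over the dyadic residue fields, then use $4$-periodicity of $\L^\s(K)$ to rename $\partial_{n}$ as $\partial_0$. The only cosmetic difference is the treatment of the $n\equiv 0\ (4)$ case: the paper quotes Corollary~\refthree{corollary:L-zero} together with Corollary~\refthree{corollary:dedekind}, whereas you invoke Corollary~\refthree{corollary:d-zero} to get $\L^\s_0(R)\cong\W^\s(R)$ directly and note that the identification with $\ker(\partial_0)$ then comes for free from the exact sequence, so the Milnor--Husemoller input is not needed --- both routes rest on the same surgery results.
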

\begin{proof}
The case $n\equiv 0 (4)$ follows from combining Corollary~\refthree{corollary:L-zero} and Corollary~\refthree{corollary:dedekind}. Since the symmetric \(\L\)-groups of \(K\) and each residue field \(\FF_{\fp}\) vanish in odd degrees by Corollary~\refthree{corollary:d-zero}, the long exact sequence in \(\L\)-groups furnished by Corollary~\refthree{corollary:decomposition-s} yields for every $k$ an exact sequence
\[
0 \lto \L^{\s}_{2k}(R) \lto \L^{\s}_{2k}(K) \stackrel{\partial_{2k}}{\lto} \oplus_{\fp}\L^{\s}_{2k}(\FF_{\fp}) \lto \L^{\s}_{2k-1}(R) \lto 0.
\]
This shows the case $n \equiv 2 (4)$, as for odd numbers $k$, the group $\L^\s_{2k}(K)$ is isomorphic to the anti-symmetric Witt group of $K$ by Corollary~\refthree{corollary:d-zero}, which vanishes as the characteristic of $K$ is not 2. The remaining cases are obvious from the above exact sequence, making use of the fact that the symmetric L-theory of fields of characteristic 2, like $\FF_{\fp}$ for dyadic primes, is 2-periodic, whereas the symmetric L-theory of fields of odd characteristic vanishes in degrees different from $0 \equiv 4$, see Remark~\refthree{remark:odd-vanishing}.
\end{proof}

\begin{corollary}
\label{corollary:compute-L-groups-global-field}%
Under the assumptions of Proposition~\refthree{proposition:compute-L-groups}, assume in addition that $K$ is a global field and let $d= |\mathcal{I}|$ be the (finite) number of dyadic primes of \(R\). Then we have
\[
\L^{\s}_n(R) = 
\begin{cases}
\W^{\s}(R) & \text{ for } n\equiv 0 (4) \\
(\ZZ/2)^d & \text{ for } n \equiv 1 (4) \\
0 & \text{ for } n \equiv 2 (4) \\
\Pic(R)/2 &\text{ for } n\equiv 3 (4)
\end{cases}
\]
\end{corollary}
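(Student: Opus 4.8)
The plan is to derive the statement from Proposition~\refthree{proposition:compute-L-groups}: the cases $n\equiv 0,2\pmod 4$ are already in the asserted form, so all that remains is to rewrite, under the extra hypothesis that $K$ is a global field, the two non-trivial terms $\bigoplus_{\fp\in\mathcal I}\W^\s(\FF_\fp)$ (in degree $n\equiv1$) and $\coker(\partial_0)$ (in degree $n\equiv3$). For the first, note that if $\fp$ is a dyadic prime then $\FF_\fp$ is a finite field of characteristic $2$, hence perfect, so every element is a square and every $1$-dimensional non-degenerate symmetric bilinear form over $\FF_\fp$ is isometric to $\langle 1\rangle$; moreover $\langle 1\rangle\perp\langle 1\rangle$ contains the isotropic line spanned by $e_1+e_2$, which is its own orthogonal complement, so this form is metabolic. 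Hence $\W^\s(\FF_\fp)\cong\ZZ/2$, and summing over the $d=|\mathcal I|$ dyadic primes gives $\bigoplus_{\fp\in\mathcal I}\W^\s(\FF_\fp)\cong(\ZZ/2)^d$.

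For the degree $n\equiv3\pmod 4$ term, I would compute $\coker(\partial_0)$ for $\partial_0\colon\W^\s(K)\to\bigoplus_\fp\W^\s(\FF_\fp)$ the total second residue homomorphism, which by Lemma~\refthree{lemma:boundary-map-local} and Remark~\refthree{remark:boundary-map} is the classical one. Postcomposing with the rank-mod-$2$ maps $\W^\s(\FF_\fp)\to\ZZ/2$ and using that every class in $\W^\s(K)$ is diagonalisable over the characteristic-$\neq 2$ field $K$, with $\langle a_1,\dots,a_k\rangle$ having rank-mod-$2$ residue equal to the class of the divisor of $a_1\cdots a_k$, one identifies the image of $\W^\s(K)$ in $\bigoplus_\fp\ZZ/2=\Div(R)/2$ with the image of the principal divisors. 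This yields a surjection $\coker(\partial_0)\twoheadrightarrow\coker\!\big(\W^\s(K)\to\Div(R)/2\big)$, and the latter cokernel is $\Pic(R)/2$, since $0\to\mathrm{PDiv}(R)\to\Div(R)\to\Pic(R)\to 0$ becomes right exact after $-\otimes\ZZ/2$.

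It then remains to see that this surjection $\coker(\partial_0)\to\Pic(R)/2$ is injective, i.e.\ that the kernel of $\bigoplus_\fp\W^\s(\FF_\fp)\to\Div(R)/2$ — namely $\bigoplus_\fp I(\FF_\fp)$, with $I(\FF_\fp)$ the fundamental ideal (trivial for dyadic $\fp$, and generated by a binary form $\langle 1,-a\rangle$ with $a$ a non-square for the remaining $\fp$) — lies in the image of $\partial_0$. I expect this to be the main obstacle: it is equivalent to exactness of the classical second residue sequence $0\to\W^\s(R)\to\W^\s(K)\xrightarrow{\partial_0}\bigoplus_\fp\W^\s(\FF_\fp)\to\Pic(R)/2\to 0$ for a Dedekind ring with global fraction field, and this is exactly the point where a genuinely global input is needed — it cannot be read off from the local residue maps, and since we do not assume $2\in R^\times$ the Gersten–Witt machinery is unavailable. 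Concretely one would realise each generator $\langle 1,-a\rangle$ of some $I(\FF_\fp)$ as $\partial_0$ of a binary form $\langle x,-y\rangle$ over $K$, choosing $x,y$ by weak approximation so that their residues at primes other than $\fp$ cancel; the details of making these cancellations work globally are the delicate part, and the cleanest alternative is to cite the classical computation of the Witt group of a Dedekind ring of $S$-integers in a global field together with the explicit description of $\partial_0$ furnished by Lemma~\refthree{lemma:boundary-map-local}.
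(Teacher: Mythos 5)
Your proposal is correct and follows essentially the same route as the paper: for $n\equiv 1\pmod 4$ you use that residue fields of a global field are finite, so $\W^\s(\FF_\fp)\cong\ZZ/2$ for dyadic $\fp$, and for $n\equiv 3\pmod 4$ you use Lemma~\refthree{lemma:boundary-map-local}/Remark~\refthree{remark:boundary-map} to identify $\partial_0$ with the classical second residue map and then cite the classical computation of its cokernel. The paper does exactly this, citing Milnor--Husemoller for number fields and Scharlau for general global fields; your extra discussion of the rank-mod-2 surjection onto $\Pic(R)/2$ and the remaining injectivity issue is a sound elaboration of the content of those references but not a different argument.
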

\begin{proof}
The case $n \equiv 1 (4)$ follows since the assumption that $K$ is global says that the residue fields $\FF_\fp$ at non-zero primes are finite fields. The claim then follows from the fact that the symmetric Witt group of a finite field of characteristic 2 is given by $\ZZ/2$. For the other non-trivial case, Remark~\refthree{remark:boundary-map} gives the following commutative diagram
\[
\begin{tikzcd}[row sep=small]
\W^\s(K) \ar[r,"\psi^1"] \ar[d,"\cong"] & \displaystyle\mathop{\oplus}\limits_{\fp} \W^\s(\FF_\fp) \ar[d,"\cong"] \\
\L^\s_0(K) \ar[r,"\partial_0"] & \displaystyle\mathop{\oplus}\limits_{\fp} \L^\s_0(\FF_\fp)
\end{tikzcd}
\]
It is then shown in \cite{milnor-symmetric}*{Chapter IV \S 4} that the cokernel of the upper horizontal map is given by $\Pic(R)/2$, provided $K$ is a number field. In \cite{scharlau-quadratic}*{Chapter 6, \S 6, Theorem 6.11} this is extended to hold for a general global field $K$.
\end{proof}

\begin{remark}
\label{remark:one}%
We recall that there is a canonical equivalence $\L^{-\s}(R) \simeq \Sigma^2\L^{\s}(R)$, so that Proposition~\refthree{proposition:compute-L-groups} and Corollary~\refthree{corollary:compute-L-groups-global-field} also determine the $(-1)$-symmetric L-groups.
\end{remark}

When the fraction field of \(R\) is a global field of characteristic \(2\) we have a similar result:
\begin{corollary}
\label{corollary:compute-L-groups-global-Dedekind-ring-characteristic-2}%
Let \(R\) be a Dedekind ring whose field of fractions \(K\) is a global field of characteristic \(2\). Then
\[
\L^{\s}_n(R) = 
\begin{cases} 
\W^{\s}(R) & \text{ for } n \equiv 0 (2) \\
\Pic(R)/2 & \text{ for } n\equiv 1 (2)
\end{cases}
\]
\end{corollary}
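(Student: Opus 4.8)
The plan is to repeat the argument of the proof of Corollary~\ref{corollary:compute-L-groups-global-field}, now exploiting that \emph{all} residue fields of $R$ --- not just the dyadic ones --- have characteristic $2$, which forces the relevant L-theories to be $2$-periodic rather than merely $4$-periodic. Concretely, I would take $S$ to be the set of all non-zero primes of $R$, so that $R_S = K$, take $M = R$ with the trivial involution, and shift the localisation--dévissage sequence of Corollary~\ref{corollary:decomposition-s} once (choosing uniformisers, as in the discussion preceding Lemma~\ref{lemma:boundary-map-local}) to obtain a fibre sequence
\[
\L^\s(R) \lto \L^\s(K) \stackrel{\partial}{\lto} \bigoplus_{\fp}\L^\s(\FF_{\fp}),
\]
the sum running over all non-zero primes $\fp$ of $R$.

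First I would collect the input on the L-groups of fields. Since $R$, $K$ and each $\FF_{\fp}$ are $\FF_2$-algebras, Corollary~\ref{corollary:periodicity-L} shows that all three L-spectra above are $2$-periodic. Part~\ref{item:cor-one} of Corollary~\ref{corollary:d-zero}, applied with $d=0$ and $\QF = \QF^\s$ --- which is $r$-symmetric for \emph{every} $r$ by Example~\ref{example:symm-quad-qf}, so that the hypothesis $k \geq 1-r$ is vacuous --- gives $\L^\s_n(K) = 0 = \L^\s_n(\FF_{\fp})$ for all odd $n$, while part~\ref{item:cor-two} identifies $\L^\s_{2k}(K) \cong \W^\s(K)$ and $\L^\s_{2k}(\FF_{\fp}) \cong \W^\s(\FF_{\fp})$ for every $k$; here the sign $(-1)^k$ plays no role because the characteristic is $2$. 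Feeding this into the long exact sequence of the fibre sequence above, the odd terms of $K$ and of the $\FF_{\fp}$ vanish and the sequence breaks --- exactly as in the proof of Proposition~\ref{proposition:compute-L-groups} --- into short exact sequences
\[
0 \lto \L^\s_{2k}(R) \lto \W^\s(K) \stackrel{\partial_0}{\lto} \bigoplus_{\fp}\W^\s(\FF_{\fp}) \lto \L^\s_{2k-1}(R) \lto 0
\]
for every $k$, the middle map being independent of $k$ up to the periodicity identifications.

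It remains to identify $\ker(\partial_0)$ and $\coker(\partial_0)$. By part~\ref{item:L-groups-one} of Corollary~\ref{corollary:L-zero} together with Corollary~\ref{corollary:dedekind} we have $\L^\s_0(R) \cong \W^\s(R)$, hence $\ker(\partial_0) \cong \W^\s(R)$, and $2$-periodicity of $\L^\s(R)$ gives $\L^\s_n(R) \cong \W^\s(R)$ for all even $n$. Likewise $\L^\s_n(R) \cong \coker(\partial_0)$ for all odd $n$, so it suffices to compute this cokernel. By Remark~\ref{remark:boundary-map} --- whose proof rests on Lemma~\ref{lemma:boundary-map-local}, insensitive to the characteristic, and on the description of generators of $\L^\s_0(K)$ preceding it, which explicitly allows characteristic $2$ --- the map $\partial_0$ is identified with the total second residue homomorphism $\psi^1\colon \W^\s(K) \to \bigoplus_{\fp}\W^\s(\FF_{\fp})$ of \cite{milnor-symmetric}. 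Its cokernel is $\Pic(R)/2$ for a global field $K$ by \cite[Chapter~6, \S 6, Theorem~6.11]{scharlau-quadratic}, and this completes the proof.

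The main obstacle is this last, purely arithmetic, step: one must make sure that the identification $\coker(\psi^1) \cong \Pic(R)/2$ is available for a global function field $K$ of characteristic $2$ (the number field case being \cite[Chapter~IV, \S 4]{milnor-symmetric}). If the cited reference is not stated in sufficient generality, one would instead deduce it from the exactness of the Witt-group residue sequence for global function fields --- itself a consequence of reciprocity --- together with the identification of $\Pic(R)$ as the $S$-localisation of the divisor class group of $K$; in any case there is no homotopy-theoretic content here, and everything else in the proof is a verbatim transcription of the argument for characteristic $\neq 2$ under the simplification that $\L^\s$ is $2$- rather than $4$-periodic.
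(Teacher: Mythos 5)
Your strategy tracks the paper's up to the last step: the localisation--dévissage sequence, the $2$-periodicity of all three $\L$-spectra, the vanishing of the odd $\L$-groups of the fields by Corollary~\refthree{corollary:d-zero}, the identification $\L^\s_{2k} \cong \W^\s$, and the identification $\L^\s_0(R)\cong\W^\s(R)$ via Corollaries~\refthree{corollary:L-zero} and~\refthree{corollary:dedekind} all match the paper, and the resulting short exact sequence
$0 \to \L^\s_{2k}(R) \to \W^\s(K) \xrightarrow{\partial_0} \bigoplus_\fp \W^\s(\FF_\fp) \to \L^\s_{2k-1}(R) \to 0$
is exactly what the paper works with.

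The gap is precisely where you flag uncertainty. Citing \cite[Ch.~6, \S 6, Thm.~6.11]{scharlau-quadratic} for $\coker(\psi^1)\cong\Pic(R)/2$ is what the paper does in the characteristic $\neq 2$ case (Corollary~\refthree{corollary:compute-L-groups-global-field}); the paper's proof of the characteristic~$2$ case deliberately does \emph{not} reuse that citation, and your fallback gesture at ``exactness of the Witt-group residue sequence as a consequence of reciprocity'' is not a worked argument and is not what is needed --- reciprocity constrains the image of the sum of residues, it does not by itself compute the cokernel of $\psi^1$.

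What the paper does instead is an elementary, self-contained diagram chase. It maps the (exact, right-truncated) divisor sequence
$\ZZ/2[R\setminus\{0\}] \to \Div(R)/2 \to \Pic(R)/2 \to 0$
to the bottom row $\W^\s(K) \xrightarrow{\partial_0} \bigoplus_\fp\W^\s(\FF_\fp) \to \L^\s_1(R)\to 0$ via $x\mapsto\langle x\rangle$ on the left and the isomorphism $\Div(R)/2 \cong \bigoplus_\fp\ZZ/2 \cong \bigoplus_\fp\W^\s(\FF_\fp)$ in the middle (using that residue fields are finite of characteristic~$2$, so rank mod~$2$ is an isomorphism). Commutativity of the left square and exactness of the rows produce a surjection $\Pic(R)/2 \to \L^\s_1(R)$, and its injectivity follows because the left vertical map is \emph{surjective}: $\W^\s(K)$ is generated by one-dimensional forms $\langle x\rangle$ with $x\in K^\times$ by \cite[I~\S 3]{milnor-symmetric}, and for any such $x$ there is $y\in K^\times$ with $xy^2\in R\setminus\{0\}$. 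This entirely replaces the external arithmetic input, and is the step you are missing.
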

\begin{proof}
As in the proof of Proposition~\refthree{proposition:compute-L-groups}, we have an exact sequence
\[
0 \lto \L^{\s}_{2k}(R) \lto \L^{\s}_{2k}(K) \stackrel{\partial_{2k}}{\lto} \oplus_{\fp}\L^{\s}_{2k}(\FF_{\fp}) \lto \L^{\s}_{2k-1}(R) \lto 0.
\]
Since $R$ is an $\FF_2$-algebra, the L-groups $\L^\s_n(R)$ are 2-periodic and since $K$ is a global field, all residue fields $\FF_\fp$ are (finite) fields of characteristic 2. We recall that there is an exact sequence of abelian groups
\[
K^\times \stackrel{\mathrm{div}}{\lto} \mathrm{Div}(R) \lto \Pic(R) \lto 0
\]
where $\mathrm{Div}(R)$ is the free abelian group generated by the prime ideals of $R$, and the group homomorphism $\mathrm{div}$ is determined by the following: For a non-zero element $x$ of $R$, write $(x) = \fp_1^{r_1} \cdot \dots \cdot \fp_n^{r_n}$ with natural numbers $r_i$. Then $\mathrm{div}(x) = \sum_{i=1}^n r_i \cdot \fp_i$. Now consider the diagram
\[
\begin{tikzcd}
\ZZ/2[R\setminus\{0\}] \ar[r] \ar[d,"\langle - \rangle"] & \mathrm{Div}(R)/2 \ar[r] \ar[d,"\cong"] & \Pic(R)/2 \ar[r] \ar[d, dashed] & 0 \\
\W^\s(K) \ar[r,"\partial_0"] & \oplus_{\fp} \W^\s(\FF_\fp) \ar[r] & \L_{1}^\s(R) \ar[r] & 0
\end{tikzcd}
\]
consisting of exact horizontal sequences and the left most top vertical map is induced by the map $\mathrm{div}$ above. Here, the middle vertical isomorphism is induced from the isomorphism $\mathrm{Div}(R)/2 \cong \oplus_{\fp} \ZZ/2$ and the isomorphisms $\W^\s(\FF_\fp) \cong \ZZ/2$. The left square commutes by an explicit check, so that there exists a dashed arrow as indicated. By construction, the dashed map is a surjection, and an injection by the observation that the left most vertical map is surjective: Indeed,  the Witt group $\W^\s(K)$ is generated by the forms $\langle x \rangle$ for $x \in K^\times$ by \cite{milnor-symmetric}*{I \S 3} and for every $x$ in $K^\times$ there is a $y \in K^\times$ such that $xy^2$ is contained in $R \setminus\{0\}$.
\end{proof}

\begin{remark}
\label{remark:local-Dedekind}%
Let $R$ be a local Dedekind ring with fraction field $K$ and residue field $k$. Then the map $\partial_0 \colon \L_0^\s(K) \to \L_0^\s(k)$ is surjective by Lemma~\refthree{lemma:boundary-map-local}, and we have seen earlier that its kernel is $\L_0^\s(R)$. Assuming that the characteristic of $K$ is not $2$, we deduce from Proposition~\refthree{proposition:compute-L-groups} that $\L_n^\s(R)$ vanishes for $n\equiv 2,3\mod(4)$. Furthermore, for $n\equiv 1\mod(4)$, we find that $\L_n^\s(R)$ is either isomorphic to $\W^\s(k)$, if the characteristic of $k$ is 2, or is trivial otherwise. If the characteristic of $K$ is 2, we deduce from the proof of Corollary~\refthree{corollary:compute-L-groups-global-Dedekind-ring-characteristic-2} that $\L_1^\s(R) = 0$.
\end{remark}

We now want to give a formula for the quadratic L-groups of Dedekind rings, similar to Proposition~\refthree{proposition:compute-L-groups} and Corollary~\refthree{corollary:compute-L-groups-global-field}.
By Remark~\refthree{remark:no-quadratic-devissage}, the strategy based on the localisation-dévissage sequence will not work in general for quadratic \(\L\)-groups. %
Instead, we will make use of a general localisation-completion property, Proposition~\refthree{proposition:LC} below, and a rigidity property of quadratic L-theory, Proposition~\refthree{proposition:hensel}. These two properties do not require assuming that the ring in question is Dedekind, or even commutative. Following \S\reftwo{section:appendix-module-examples}, for a subgroup $\mathrm{c}\subset \K_0(R)$ fixed by the involution, we let $\Der^{\mathrm{c}}(R)$ denote the full subcategory of $\Dperf(R)$ spanned by the complexes whose $\K_0$-class lies in $\mathrm{c}$. The subgroup $c$ is sometimes called the \emph{control term} in the literature.

\begin{proposition}
\label{proposition:LC}%
Let $R$ be a ring, $M$ an invertible $\ZZ$-module with involution over $R$, and $S$ the multiplicatively closed subset generated by an integer $\ell \in R$. Assume that $R^\cwedge_\ell$ is the derived $\ell$-completion of $R$, (e.g.\ that
the order of the $\ell^\infty$-torsion in $R$ is bounded, 
for instance that $\ell$ is not a zero divisor). Then the square
\[
\begin{tikzcd}
(\Dperf(R),\Qgen{m}{M}) \ar[r] \ar[d] & (\Dperf(R^\cwedge_\ell),\Qgen{m}{M^\cwedge_\ell}) \ar[d] \\
(\Der^{\mathrm{c}}(R[\tfrac{1}{\ell}]),\Qgen{m}{S^{-1}M}) \ar[r] & (\Der^{\mathrm{c}'}(R^\cwedge_\ell[\tfrac{1}{\ell}]),\Qgen{m}{S^{-1}(M^\cwedge_\ell) })
\end{tikzcd}
\]
is a Poincaré-Verdier square for all $m \in \ZZ \cup \{ \pm \infty \}$, where $c=\im(\K_0(R)\to \K_0(R[\tfrac{1}{\ell}]))$, and $c'=\im(\K_0(R^\cwedge_\ell)\to \K_0(R^\cwedge_\ell[\tfrac{1}{\ell}]))$. In particular it becomes a pullback after applying $\GW$ or $\L$.
\end{proposition}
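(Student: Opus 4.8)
The plan is to deduce the statement from the classical localisation--completion (``excision'') square for perfect complexes, its hermitian refinement via the machinery of \papertwo, and the observation that the genuine Poincaré structures $\Qgen{m}{-}$ depend on the module with involution only through its action on \emph{torsion} perfect complexes. Since $\ell \in R$ is central, the maps $R \to R[\tfrac1\ell]$ and $R^\cwedge_\ell \to R^\cwedge_\ell[\tfrac1\ell]$ are flat derived localisations with perfectly generated fibre: by the argument of Lemma~\refthree{lemma:derived-localisation} the fibre of $R \to R[\tfrac1\ell]$ is, up to a shift, a filtered colimit of the perfect $\ell$-power torsion complexes $R/\ell^n$, and likewise over $R^\cwedge_\ell$. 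Moreover $S^{-1}M = R[\tfrac1\ell]\otimes^{\mathrm U}_R M$, $M^\cwedge_\ell = R^\cwedge_\ell \otimes^{\mathrm U}_R M$ and $S^{-1}(M^\cwedge_\ell)$ are compatible with the relevant localisations in the sense of Definition~\reftwo{definition:module-compatible-with-induction}, as in Example~\reftwo{example:involution-induction-compatible}. Applying Proposition~\reftwo{proposition:left-Kan-extension-specific-structures} to each of the two vertical maps, and identifying the Verdier quotients $\Dperf(R)/\Dperf_S(R)$ and $\Dperf(R^\cwedge_\ell)/\Dperf_S(R^\cwedge_\ell)$ with $\Der^{\mathrm{c}}(R[\tfrac1\ell])$ and $\Der^{\mathrm{c}'}(R^\cwedge_\ell[\tfrac1\ell])$ via Thomason's theorem on dense subcategories (this is exactly the reason the control terms $c$, $c'$ appear), one sees that both columns of the square are Poincaré-Verdier sequences, with Poincaré-Verdier fibres $(\Dperf_S(R),\Qgen{m}{M})$ and $(\Dperf_S(R^\cwedge_\ell),\Qgen{m}{M^\cwedge_\ell})$ respectively, equipped with the restricted Poincaré structures.

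By the characterisation of Poincaré-Verdier squares in \papertwo, it then suffices to show that the Poincaré functor induced on fibres by the top horizontal map,
\[
(\Dperf_S(R),\Qgen{m}{M}) \lto (\Dperf_S(R^\cwedge_\ell),\Qgen{m}{M^\cwedge_\ell}),
\]
is an equivalence of Poincaré $\infty$-categories. On underlying $\infty$-categories this is the standard torsion-excision equivalence: since $R^\cwedge_\ell$ is the derived $\ell$-completion of $R$, the maps $R/\ell^n \to R^\cwedge_\ell/\ell^n$ are equivalences for every $n$, so for $X \in \Dperf_S(R)$ one has $R^\cwedge_\ell\otimes_R X \simeq X^\cwedge_\ell \simeq X$ because $X$ is dualisable and already derived $\ell$-complete; this yields both full faithfulness and essential surjectivity of base change, with inverse given by restriction of scalars. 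For the Poincaré structures, recall that $\Qgen{m}{M}$ is built as an iterated pullback out of $\QF^{\qdr}_M(X) = \map_{R\otimes R}(X\otimes X,M)_{\hC}$, $\QF^{\s}_M(X) = \map_{R\otimes R}(X\otimes X,M)^{\hC}$, and the linear pieces $\map_R(X,\tau_{\geq m}\GEM M^{\tC})$ and $\map_R(X,\GEM M^{\tC})$; by naturality it is enough to compare each of these building blocks along base change to $R^\cwedge_\ell$ on torsion perfect complexes. For the bilinear part this holds since $X\otimes X$ is a torsion perfect $R\otimes R$-complex and $R\otimes R \to R^\cwedge_\ell\otimes R^\cwedge_\ell$ is again an isomorphism modulo $\ell^n$, so $\map_{R\otimes R}(X\otimes X,M)\simeq \map_{R^\cwedge_\ell\otimes R^\cwedge_\ell}(X\otimes X,M^\cwedge_\ell)$ compatibly with the $\Ct$-action (this also subsumes the matching of the underlying dualities $\map_R(X,M)\simeq \map_{R^\cwedge_\ell}(X,M^\cwedge_\ell)$, hence the Poincaré condition on the functor). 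For the linear parts it follows from the analogous statement $\map_R(X,N)\simeq \map_{R^\cwedge_\ell}(X,N^\cwedge_\ell)$ for torsion perfect $X$, applied to $N = \GEM M^{\tC}$ and $N = \tau_{\geq m}\GEM M^{\tC}$, together with the fact that $(\GEM M^{\tC})^\cwedge_\ell$ and its $m$-connective cover agree with $\GEM (M^\cwedge_\ell)^{\tC}$ and its $m$-connective cover after evaluation against any bounded $\ell$-power torsion complex. Assembling these identifications shows that the comparison map of restricted Poincaré structures is an equivalence, so the functor on fibres is an equivalence of Poincaré $\infty$-categories and the square is a Poincaré-Verdier square.

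Finally, $\GW$ and $\L$ are Verdier-localising (Corollary~\reftwo{corollary:gwadd} and Corollary~\reftwo{corollary:Ladd}), so they send the two columns to fibre sequences of spectra with identified fibres, and therefore send the square to a pullback square of spectra. The main obstacle is the middle step: verifying that the restricted genuine Poincaré structures on $\Dperf_S(R)$ and $\Dperf_S(R^\cwedge_\ell)$ agree under the torsion-excision equivalence — that is, that forming $\Qgen{m}{-}$ is insensitive to replacing $M$ by $M^\cwedge_\ell$ once one evaluates only on torsion perfect complexes. Each ingredient ultimately rests on the single input that $R \to R^\cwedge_\ell$ is an isomorphism modulo $\ell^n$ for all $n$, but one must carry this carefully through the bilinear part, the Tate-valued linear part, and its truncation, which is where the genuine (as opposed to quadratic or symmetric) nature of $\Qgen{m}{-}$ requires attention.
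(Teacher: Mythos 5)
Your proof takes a genuinely different route from the paper's. The paper's proof is a single application of the criterion Proposition~\reftwo{proposition:analytic-isomorphism} from \papertwo, verifying its four hypotheses by hand (the base change formula $R^\cwedge_\ell\otimes_R M \simeq M^\cwedge_\ell$, the Ore/centrality conditions on $\ell$, the derived pullback square of rings $R, R^\cwedge_\ell, R[\tfrac1\ell], R^\cwedge_\ell[\tfrac1\ell]$, and a Tate-vanishing condition on the map $M^\cwedge_\ell[\tfrac1\ell]\to M[1]$). You instead build the Poincaré-Verdier square directly by exhibiting both columns as Poincaré-Verdier sequences via Proposition~\reftwo{proposition:left-Kan-extension-specific-structures} and then checking that the induced Poincaré functor on torsion fibres is an equivalence. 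This buys a more hands-on understanding of why completion is invisible on torsion complexes, at the cost of having to carry the torsion-excision identification through each piece of the genuine Poincaré structure (bilinear part, linear Tate part, and its truncation). Two places deserve a remark that your outline elides. First, the applicability of the compatibility condition of Definition~\reftwo{definition:module-compatible-with-induction}: in Proposition~\refthree{proposition:verdier} this relied on commutativity of $R$, but here $R$ is arbitrary and one must invoke centrality of $\ell$ in $R$; this is precisely the Ore-type hypothesis the paper's criterion asks for. Second, in the linear-part comparison one should spell out what is being used: for $\ell = 2$ the fibre of $M \to M^\cwedge_2$ is a $\QQ$-module, hence $\Ct$-Tate acyclic, so $\GEM M^{\tC}\to\GEM(M^\cwedge_2)^{\tC}$ is an equivalence; for odd $\ell$ the relevant mapping spectra out of $\ell$-power torsion complexes vanish identically on both sides because the Tate constructions are $2$-torsion and $\ell$ acts invertibly. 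These are exactly the facts that the paper's hypothesis \reftwoitem{item:right-mult-equiv} and the final Tate-vanishing check encode in one step.
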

\begin{proof}
We show that the canonical maps $f\colon R\to R^\cwedge_\ell $ and $\alpha\colon M\to (f\otimes f)^\ast(M^\cwedge_\ell)$ satisfy the conditions of Proposition~\reftwo{proposition:analytic-isomorphism}.
For \reftwoitem{item:eta-is-an-iso}, we note that the composite morphism
\[
R^\cwedge_\ell \otimes_R M \to (R^\cwedge_\ell \otimes R^\cwedge_\ell )\otimes_{R \otimes R} M \to M^\cwedge_\ell
\]
is indeed an equivalence: This is clear for $M=R$, which implies the general case since $M$ is a finitely generated projective $R$-module.
Conditions \reftwoitem{item:S-compatible-M} and \reftwoitem{item:both-left-ore} are immediate from the fact that \(\ell\) is in the image of the unit map \(\ZZ \to R\), and \reftwoitem{item:right-mult-equiv} follows from the fact that the square
\[
\begin{tikzcd}
R \ar[r] \ar[d] & R^\cwedge_\ell \ar[d] \\
R[\tfrac{1}{\ell}] \ar[r] & R^\cwedge_\ell[\tfrac{1}{\ell}]
\end{tikzcd}
\]
is a derived pullback, see for instance \cite{DwyerGreenlees}*{\S 4}, as the assumption on $\ell^\infty$-torsion implies that $R^\cwedge_\ell$ is also a derived completion. 
The final thing to check is that the map $M^\cwedge_\ell[\tfrac{1}{\ell}] \to M[1]$ induces the zero map in $\Ct$-Tate cohomology. This follows from the fact that the domain is a $\QQ$-vector space, and so has trivial $\Ct$-Tate cohomology.
\end{proof}

To make efficient use of the localisation-completion square, we shall also need the following result due to Wall \cite{Wall-hensel}*{Lemma 5}. We include a guide through the proof merely for convenience of the reader, as to avoid confusion about different definitions (and versions) of L-theory. We warn the reader that what is denoted by $L_i^K(R)$ in \cite{Wall-hensel} is what we would denote $\L(\Dfree(R),\QF^\qdr)$, i.e.\ quadratic L-theory based on complexes of (stably) free modules.
\begin{proposition}
\label{proposition:hensel}%
Let $R$ be ring, complete in the $I$-adic topology for an ideal $I$ of $R$. Then the canonical map $\L^\qdr(R) \to \L^\qdr(R/I)$ is an equivalence.
\end{proposition}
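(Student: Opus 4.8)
The plan is to follow Wall's argument \cite{Wall-hensel}, casting it in the surgery framework of \S\refthree{subsection:surgery-quadratic}. The map in question is the one induced on $\L$-theory by the Poincaré functor $(\Dperf(R),\QF^\qdr_R)\to(\Dperf(R/I),\QF^\qdr_{R/I})$ given by extension of scalars along $R\to R/I$, so it suffices to prove that it is an isomorphism on $\pi_n$ for every $n$ (both sides are $4$-periodic by Corollary~\refthree{corollary:periodicity-L}, which one may use to restrict attention to four consecutive degrees, but the argument below works uniformly in $n$). We use the description of $\L^\qdr_n$ as the cobordism group of Poincaré objects for $(\QF^\qdr)\qshift{-n}$. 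Since $\QF^\qdr$ is $m$-quadratic for every $m$ (Example~\refthree{example:symm-quad-qf}), surgery below the middle dimension (Proposition~\refthree{proposition:surgery-quadratic}, taken with $m$ large) lets us represent every class in $\L^\qdr_n(R/I)$ by a Poincaré object $(X,q)$ whose underlying complex is a complex of finitely generated projective $R/I$-modules concentrated in the interval $[\tfrac{-n-a}{2},\tfrac{-n+a}{2}]$ with $a\in\{0,1\}$ chosen so that $n+a$ is even (one or two adjacent degrees), and similarly to represent every nullcobordism of such an object by an equally small Lagrangian.

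The completeness of $R$ enters through three lifting inputs. First, a finitely generated projective $R/I$-module lifts to a finitely generated projective $R$-module, and a map between lifts exists by projectivity, since $\Hom_R(P,Q)\to\Hom_{R/I}(P/IP,Q/IQ)$ is surjective for $P$ projective. Second, a map of perfect complexes of finitely generated projective $R$-modules is a quasi-isomorphism as soon as its reduction modulo $I$ is one; this is a derived-Nakayama/successive-approximation argument using $I$-adic completeness of $R$, and it handles both the lifting of the underlying perfect complex and the non-degeneracy of hermitian forms. Third, for a lift $\tilde X$ of such an $X$ one has a base-change equivalence $\QF^\qdr_{R/I}(X)\simeq\QF^\qdr_R(\tilde X)\otimes_R R/I$ (valid because $\tilde X$, hence $\tilde X\otimes_R\tilde X$, is perfect), and $\QF^\qdr_R(\tilde X)=\map_R(\tilde X\otimes_R\tilde X,R)_{\hC}$ is concentrated in degrees $\geq n-a$; inspecting the homotopy-orbit spectral sequence (in particular the right-exactness of $\Ct$-coinvariants on $\pi_n$) shows that the reduction map $\pi_n\QF^\qdr_R(\tilde X)\to\pi_n\QF^\qdr_{R/I}(X)$ is surjective, so the (genuine) quadratic form $q$ lifts to a hermitian form on $\tilde X$, which is automatically Poincaré by the second input. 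The nullhomotopy datum of a small Lagrangian lifts by the same connectivity consideration.

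Assembling these: surjectivity of $\L^\qdr_n(R)\to\L^\qdr_n(R/I)$ comes from lifting a small representing Poincaré object; injectivity comes from surgering a given $R$-class down to a small representative $(X,q)$ whose image over $R/I$ bounds, choosing a small Lagrangian of $(f_!X,f_!q)$, lifting it to a Lagrangian of $(X,q)$ over $R$, and concluding that the class vanishes. Finally, idempotents lift along the $I$-adically complete surjection $R\to R/I$, so $\K_0(R)\to\K_0(R/I)$ is an isomorphism and the statement is insensitive to whether one uses $\Dperf$ or $\Dfree$, matching the group $L^K$ of \cite{Wall-hensel}. The main obstacle is the bookkeeping for odd $n$ (the case $a=1$): there $\QF^\qdr_R(\tilde X)$ is only $(n-1)$-connective, and the reduction map on $\pi_n$ can fail to be surjective due to a $\Tor_1^R(-,R/I)$ contribution originating from the differential of $\tilde X$, so that one must first perform an auxiliary surgery over $R/I$ to eliminate this term (equivalently, phrase the odd case via formations); a minor secondary point is to fix the precise meaning of ``complete in the $I$-adic topology'', which is harmless when $I$ is finitely generated but should be pinned down.
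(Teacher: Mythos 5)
Your proposed proof takes a genuinely different route from the paper's. The paper reduces everything to the bottom two homotopy groups via periodicity $\L^\qdr_n(R;\eps)\cong\L^\qdr_{n+2}(R;-\eps)$, shows $\L_0^\qdr(R)\cong\L_0^\qdr(R/I)$ by checking that $R\to R/I$ induces a bijection on isomorphism classes of unimodular quadratic forms (surjectivity via lifting of projectives and forms, \emph{injectivity via Wall's isometry-lifting theorem} \cite[Theorem 2]{Wall-integers}), and then handles $\L_1^\qdr$ by a diagram chase invoking further surjectivity results from \cite{Wall-hensel}. You instead attempt a direct surgery argument valid in all degrees, which is an appealing idea and more in the spirit of \S\refthree{subsection:surgery-quadratic} and Proposition~\refthree{proposition:pi-pi}. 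The two points at which completeness enters — lifting projectives along $R\twoheadrightarrow R/I$, and detecting quasi-isomorphisms of bounded complexes of finitely generated projectives after reduction — are correctly identified, and the even-degree object-lifting step is sound: for $\tilde X=\tilde P[-k]$ a shifted projective, the fibre of $\QF^\qdr_R(\tilde X)\to\QF^\qdr_{R/I}(X)$ is $\map_{R\otimes R}(\tilde X\otimes\tilde X,I)_{\hC}$ and is $n$-connective, so the form lifts on $\pi_n$.

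The gap you flag in the odd case is, however, a genuine one, and it is more pervasive than you suggest. For a two-term complex $\tilde X$ the fibre $\map_{R\otimes R}(\tilde X\otimes\tilde X,I)_{\hC}$ is only $(n-1)$-connective, so surjectivity of $\pi_n\QF^\qdr_R(\tilde X)\to\pi_n\QF^\qdr_{R/I}(X)$ is obstructed by $\pi_{n-1}$ of that fibre. Crucially, the \emph{same} obstruction reappears in the Lagrangian-lifting step of your injectivity argument \emph{even when $n$ is even}: with $(a,b)=(0,1)$ the Lagrangian $L$ is a two-term complex, so the nullhomotopy $\eta$ need not lift across $\QF^\qdr_R(\tilde L)\to\QF^\qdr_{R/I}(L)$ on the relevant homotopy group for exactly the same reason. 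Concretely, if $(P,q)$ is a unimodular quadratic form over $R$ and $\bar L\subseteq P/IP$ is a Lagrangian, a lift $\tilde L\to P$ of the inclusion has $q|_{\tilde L}$ lying in the (nonzero) kernel $\Hom_{R\otimes^\mathrm{U} R}(\tilde L\otimes^\mathrm{U}\tilde L,I)_{\Ct}$, and nothing in the connectivity estimates forces it to vanish; one must modify the lift, which is precisely the content of Wall's isometry-lifting theorem. Your suggested remedy — passing to formations — would thus reintroduce exactly the input the paper imports from \cite{Wall-integers,Wall-hensel}: whether two lifted Lagrangians of a hyperbolic form become isometric over $R$ given that they are over $R/I$ is an isometry-lifting question. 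So while your approach is an interesting attempt to replace the citation of Wall's theorems by surgery arguments in the style of Proposition~\refthree{proposition:pi-pi} (which does not apply directly since $R\to R/I$ is not $1$-connective), as written it does not succeed in doing so: the hard step — lifting Lagrangians, equivalently isometries — is not eliminated, and your appeal to "auxiliary surgery" is not a proof.
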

\begin{proof}
First, we claim that the functor $\Unimod^{\qdr}(R;\eps) \to \Unimod^{\qdr}(R/I;\eps)$ induces a bijection on isomorphism classes, for $\eps=\pm1$. To see this, we first observe that the functor $\Proj(R) \to \Proj(R/I)$ is full and essentially surjective. Moreover, for any finitely generated projective $R$ module $P$, the map 
\[
\QF^{\qdr}_\eps(P) \to \QF^{\qdr}_{\eps}(P\otimes^\mathrm{U}_R R/I)
\]
is surjective on $\pi_0$, and furthermore an $\eps$-quadratic form is unimodular if and only if its image over $R/I$ is; this is as a consequence of Nakayama's lemma: $I$ is contained in the Jacobson radical as we have assumed that $R$ is $I$-complete.
We deduce that the quotient map \(R \to R/I\) induces a surjective map on isomorphism classes of quadratic forms. %
To see injectivity, we apply \cite{Wall-integers}*{Theorem 2}: amongst other things, it says that given forms $(P,q)$ and $(P',q')$ over $R$, then any isometry between their induced forms over $R/I$ can be lifted to an isometry over $R$. In particular, the map $\Unimod^{\qdr}(R;\eps) \to \Unimod^{\qdr}(R/I;\eps)$ is also injective on isomorphism classes. We deduce that the map $\GW_0^{\qdr}(R;\eps) \to \GW_0^{\qdr}(R/I;\eps)$ is an isomorphism. Since likewise the map $\K_0(R) \to \K_0(R/I)$ is an isomorphism, we deduce that $\L_0^{\qdr}(R;\eps) \to \L_0^{\qdr}(R/I;\eps)$ is an isomorphism as well.
We then consider the diagram
\[
\begin{tikzcd}
\pi_1(\K(R;\eps)_{\hC}) \ar[r] \ar[d, two heads] & \GW_1^{\qdr}(R;\eps) \ar[r] \ar[d, two heads] & \L_1^{\qdr}(R;\eps) \ar[r] \ar[d] & \K_0(R;\eps)_{\Ct} \ar[r] \ar[d,"\cong"] & \GW_0^{\qdr}(R;\eps) \ar[d,"\cong"] \\
\pi_1(\K(R/I;\eps)_{\hC}) \ar[r] & \GW_1^{\qdr}(R/I;\eps) \ar[r] & \L_1^{\qdr}(R/I;\eps) \ar[r] & \K_0(R/I;\eps)_{\Ct} \ar[r] & \GW_0^{\qdr}(R/I;\eps)
\end{tikzcd}
\]
where \cite{Wall-hensel}*{Corollary 1 \& Lemma 1} give that the two left most vertical maps are surjective, and \cite{Wall-hensel}*{Proposition 4} that the induced map on vertical kernels is surjective. This implies that the map $\L^{\qdr}_1(R;\eps) \to \L^{\qdr}_1(R/I;\eps)$ is an isomorphism. From the general periodicity $\L_n^{\qdr}(R;\eps) \cong L_{n+2}^{\qdr}(R;-\eps)$ we deduce the proposition.
\end{proof}

\begin{remark}
We thank Akhil Mathew for making us aware of the following result, see \cite{CMM}*{Remark~5.6} for the details. Namely, let $F$ be a functor from commutative rings to spectra which commutes with filtered colimits, and assume that $F$ satisfies the following property.
For every pair $(R,I)$ where $R$ is a Noetherian commutative ring, complete in the $I$-adic topology for an ideal $I \subseteq R$, the canonical map $F(R) \to F(R/I)$ is an equivalence. Then the map $F(S) \to F(S/J)$ is an equivalence for every henselian pair $(S,J)$. 
Since L-theory commutes with filtered colimits, we deduce from this and Proposition~\refthree{proposition:hensel} that the canonical map $\L^\qdr(S) \to \L^\qdr(S/J)$ is an equivalence for any henselian pair $(S,J)$.
\end{remark}

We now apply the above results in order to compute the quadratic \(\L\)-groups of Dedekind rings.

\begin{proposition}
\label{proposition:compute-quadratic-L-groups}%
Let $R$ be a Dedekind ring whose field of fractions $K$ is not of characteristic 2, and let $\mathcal{I}$ be the (finite) set of dyadic primes of $R$. Then we have
\[
\L^\qdr_n(R) \cong 
\begin{cases}
\W^\qdr(R) & \text{ for } n\equiv 0(4) \\ 
0 & \text{ for } n\equiv 1 (4) \\ 
\displaystyle\mathop{\oplus}\limits_{\fp \in \mathcal{I}} \W^\qdr(\FF_\fp) & \text{ for } n \equiv 2 (4) 
\end{cases}
\]
The isomorphism in degrees $n \equiv 2 (4)$ is induced by the canonical maps $R \to \FF_\fp$ for each dyadic prime $\fp$. For $n\equiv 3(4)$ there is a short exact sequence
\[
0 \lto A \lto \L^\qdr_{n}(R) \lto \L^\s_{n}(R) \lto 0
\]
where $A$ is the total cokernel, that is the cokernel of the map induced on cokernels, of the commutative square
\[
\begin{tikzcd}
\L_0^\qdr(R) \ar[r] \ar[d] & \L_0^\s(R) \ar[d] \\
\L_0^\qdr(R^\cwedge_2) \ar[r] & \L_0^\s(R^\cwedge_2)
\end{tikzcd}
\]

\end{proposition}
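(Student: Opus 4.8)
The plan is to analyse the localisation--completion square of Proposition~\ref{proposition:LC} at $\ell = 2$ by a Mayer--Vietoris argument, using rigidity of quadratic $\L$-theory (Proposition~\ref{proposition:hensel}) to evaluate the $2$-adic corner and the computation of symmetric $\L$-groups of Dedekind rings (Proposition~\ref{proposition:compute-L-groups}, Remark~\ref{remark:local-Dedekind}) to evaluate the corners where $2$ has been inverted. Throughout $M = R$ carries the trivial involution, and I would use that $\L^\qdr$ is $4$-periodic (Corollary~\ref{corollary:periodicity-L}) and $2$-periodic over $\FF_2$-algebras. Note $\mathcal{I}$ is finite and $R^\cwedge_2 \cong \prod_{\fp\in\mathcal{I}}\hat R_\fp$ is a finite product of complete discrete valuation rings with residue fields $\FF_\fp$ of characteristic $2$.

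First I would build a single pullback square in $\L$-theory relating $R$ and $R^\cwedge_2$. Applying Proposition~\ref{proposition:LC} with $\ell=2$ for both $m=+\infty$ and $m=-\infty$ gives two squares that become pullbacks after $\L$. Since $2$ is invertible in $R[\tfrac12]$ and in $R^\cwedge_2[\tfrac12]$ one has $\QF^\qdr = \QF^\s$ there, and since the maps $\K_0(R) \to \K_0(R[\tfrac12])$ and $\K_0(R^\cwedge_2) \to \K_0(R^\cwedge_2[\tfrac12])$ are surjective --- by surjectivity of $\Pic$ for localisations of Dedekind rings, as in the proof of Proposition~\ref{proposition:verdier}, together with $\Pic(R^\cwedge_2)=0$ --- the control subcategories $\Der^{\mathrm c},\Der^{\mathrm c'}$ are all of $\Dperf$. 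Hence the two squares have identical bottom rows, and comparing them along $\QF^\qdr \Rightarrow \QF^\s$ (an equivalence on the bottom row) shows that
\[
\begin{tikzcd}
\L^\qdr(R) \ar[r]\ar[d] & \L^\s(R) \ar[d] \\
\L^\qdr(R^\cwedge_2) \ar[r] & \L^\s(R^\cwedge_2)
\end{tikzcd}
\]
is a pullback. I would also keep the original quadratic square, whose lower corners are $\L^\s(R[\tfrac12])$ and $\L^\s(R^\cwedge_2[\tfrac12]) \simeq \bigoplus_{\fp\in\mathcal{I}}\L^\s(\hat K_\fp)$; here $R[\tfrac12]$ is a Dedekind ring with no dyadic primes and each $\hat K_\fp$ is a local field of characteristic $0$, so (Proposition~\ref{proposition:compute-L-groups}, Corollary~\ref{corollary:dim-zero}, Remark~\ref{remark:odd-vanishing}) all homotopy groups of these two corners needed below vanish. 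On the $2$-adic side, additivity of $\L$-theory together with rigidity give $\L^\qdr(R^\cwedge_2) \simeq \bigoplus_{\fp\in\mathcal{I}}\L^\qdr(\FF_\fp)$, with homotopy $\bigoplus_\fp\W^\qdr(\FF_\fp)$ in even degrees and $0$ in odd degrees (using $\L^\qdr_0(\FF_\fp)\cong\W^\qdr(\FF_\fp)$ from Corollaries~\ref{corollary:genuine-is-quadratic} and~\ref{corollary:L-zero}, $\L^\qdr_1(\FF_\fp)=0$ from Corollaries~\ref{corollary:genuine-is-quadratic} and~\ref{corollary:d-zero}, and $2$-periodicity), while $\L^\s(R^\cwedge_2) \simeq \bigoplus_{\fp\in\mathcal{I}}\L^\s(\hat R_\fp)$ has homotopy given by Remark~\ref{remark:local-Dedekind}.

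With these inputs I would run Mayer--Vietoris in each residue class mod $4$. For $n\equiv 0$ the claim reduces to $\L^\qdr_0(R)\cong\W^\qdr(R)$ (Corollaries~\ref{corollary:genuine-is-quadratic} and~\ref{corollary:L-zero}, or Proposition~\ref{proposition:witt}) transported by $4$-periodicity. For $n\equiv 1,2$ I would use the quadratic square: its two $2$-inverted corners contribute nothing in the relevant degrees, so $\L^\qdr_n(R)=0$ when $n\equiv 1$, and $\L^\qdr_n(R)\cong\pi_n\L^\qdr(R^\cwedge_2)\cong\bigoplus_\fp\W^\qdr(\FF_\fp)$ when $n\equiv 2$, via the edge map, which factors through $R\to R^\cwedge_2\to\hat R_\fp\to\FF_\fp$, i.e.\ through $R\to\FF_\fp$, since the rigidity equivalence is induced by the residue quotient. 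For $n\equiv 3$ I would use the displayed pullback square: in that degree $\pi_n$ of both $2$-complete $\L$-theories vanish, while $\pi_{n+1}$ of them are $\L^\qdr_0(R^\cwedge_2)=\bigoplus_\fp\W^\qdr(\FF_\fp)$ and $\L^\s_0(R^\cwedge_2)=\bigoplus_\fp\W^\s(\hat R_\fp)$, and $\pi_{n+1}\L^\s(R)=\L^\s_0(R)$; the long exact sequence then collapses to
\[
0 \lto \coker\!\big(\L^\s_0(R)\oplus\L^\qdr_0(R^\cwedge_2) \lto \L^\s_0(R^\cwedge_2)\big) \lto \L^\qdr_n(R) \lto \L^\s_n(R) \lto 0,
\]
the first map being the difference of the completion map and the comparison map $\L^\qdr(R^\cwedge_2)\to\L^\s(R^\cwedge_2)$; this cokernel is exactly the total cokernel of the square in the statement, using $4$-periodicity to replace $\pi_{n+1}$ by $\pi_0$ compatibly with all maps.

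The step I expect to be the main obstacle is the compatibility bookkeeping underlying this argument: verifying that the quadratic and symmetric localisation--completion squares genuinely agree on their common bottom corners --- so that the displayed difference square really is a pullback --- and that the rigidity and $4$-periodicity identifications naming $\pi_\ast\L^\qdr(R^\cwedge_2)$ and $\pi_\ast\L^\s(R^\cwedge_2)$ are natural with respect to $\QF^\qdr\Rightarrow\QF^\s$, so that the connecting homomorphism in the $n\equiv 3$ case is precisely the map of Witt groups appearing in the total cokernel. Once that is in place, the remaining homotopy-group computations are a routine diagram chase.
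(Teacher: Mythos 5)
Your argument is essentially the same as the paper's. You build the "mixed" square relating $\L^\qdr(R),\L^\qdr(R^\cwedge_2),\L^\s(R),\L^\s(R^\cwedge_2)$ by comparing the quadratic and symmetric localisation--completion squares along $\QF^\qdr\Rightarrow\QF^\s$; the paper packages exactly this as a cube whose front, back, and bottom faces are pullbacks, deducing the top face is a pullback. You then feed the same vanishing inputs (rigidity from Proposition~\ref{proposition:hensel}, vanishing of odd quadratic L-groups of fields and of $\L^\s_n$ of complete local Dedekind rings for $n\equiv 2,3\ (4)$) into the Mayer--Vietoris sequence.

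The one organizational difference is that for $n\equiv 1,2\ (4)$ you run Mayer--Vietoris on the \emph{original} quadratic localisation--completion square (so the bottom corners are $\L^\s(R[\tfrac12])$ and $\L^\s(R^\cwedge_2[\tfrac12])$ and vanish in the relevant degrees by Proposition~\ref{proposition:compute-L-groups} and Corollary~\ref{corollary:d-zero}), whereas the paper uses the mixed square throughout --- which then also requires input (4) in the paper's proof, that $\L^\s_1(R)\to\L^\s_1(R^\cwedge_2)$ is an isomorphism. Both work, and your route for $n\equiv 1,2$ is arguably a hair more self-contained.

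The compatibility concerns you flag at the end are not actually obstacles: the map of localisation--completion squares is literally the image of the natural transformation $\QF^\qdr\Rightarrow\QF^\s$, the rigidity equivalence of Proposition~\ref{proposition:hensel} is by construction the map induced by $\hat R_\fp\to\FF_\fp$, and the $4$-periodicity (Corollary~\ref{corollary:periodicity-L}) and the natural transformation $\QF^\qdr\Rightarrow\QF^\s$ commute since both arise from natural constructions on Poincaré structures. The identification of the connecting map with the difference map and hence of the kernel with the total cokernel is the standard Mayer--Vietoris bookkeeping you already carried out.
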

\begin{proof}
The canonical map $\W^\qdr(R) \to \L_0^\qdr(R)$ is an isomorphism by Corollary~\refthree{corollary:L-zero}. To see the other cases, we consider the cube 
\[
\begin{tikzcd}[row sep=tiny, column sep=tiny]
 & \L^\s(R) \ar[rr] \ar[dd] && \L^\s(R^\cwedge_2) \ar[dd] \\
\L^\qdr(R) \ar[rr]  \ar[ur] \ar[dd] && \L^\qdr(R^\cwedge_2) \ar[ur] \ar[dd] & \\
 & \L^\s(R[\tfrac{1}{2}]) \ar[rr] && \L^\s(R^\cwedge_2[\tfrac{1}{2}]) \\
\L^\qdr(R[\tfrac{1}{2}]) \ar[ur] \ar[rr] && \L^\qdr(R^\cwedge_2[\tfrac{1}{2}]) \ar[ur]
\end{tikzcd}
\]
which is obtained by mapping the quadratic localisation-completion square appearing in Proposition~\refthree{proposition:LC} to the symmetric one. We note that no control terms are needed since localisations of Dedekind rings induce surjections on $\K_0$.
In this cube, the front and back squares are pullbacks by Proposition~\refthree{proposition:LC}, and the bottom square is a pullback since in all rings that appear 2 is invertible. We deduce that the diagram
\begin{equation}
\label{equation:useful-square}%
\begin{tikzcd}
\L^\qdr(R) \ar[r] \ar[d] & \L^\qdr(R^\cwedge_2) \ar[d] \\
\L^\s(R) \ar[r] & \L^\s(R^\cwedge_2) 
\end{tikzcd}
\end{equation}
is also a pullback.

Now all remaining statements to be proven follow from the long exact Mayer-Vietoris sequence associated to this pullback, using the following:
\begin{enumerate}
\item $\L_n^\qdr(R^\cwedge_2) = 0$ for odd $n$, by Proposition~\refthree{proposition:hensel},
\item $\L_n^\s(R^\cwedge_2) = 0$ for $n\equiv 3 (4)$, because $R^\cwedge_2$ is a product of local Dedekind rings; see Remark~\refthree{remark:local-Dedekind},
\item
\label{item:part-three}%
$\L_n^\s(R) = \L_n^\s(R^\cwedge_2) = 0$ for $n\equiv 2 (4)$ by Proposition~\refthree{proposition:compute-L-groups}, and
\item the map $\L_1^\s(R) \to \L_1^\s(R^\cwedge_2)$ is an isomorphism. This can be seen from the localisation-completion square for symmetric L-theory and \refthreeitem{item:part-three}.
\end{enumerate}
\end{proof}

\begin{corollary}
\label{corollary:compute-quadratic-L-groups-global-field}%
Under the assumptions of Proposition~\refthree{proposition:compute-quadratic-L-groups}, assume in addition that $K$ is a number field and let $d= |\mathcal{I}|$ be the (finite) number of dyadic primes of \(R\). Then we have
\[
\L^\qdr_n(R) \cong 
\begin{cases}
\W^\qdr(R) & \text{ for } n \equiv 0 (4) \\ 
0 & \text{ for } n\equiv 1(4) \\
(\ZZ/2)^d & \text{ for } n\equiv 2(4)
\end{cases}
\]
The invariants in the case $n\equiv 2(4)$ are given by the Arf invariants of the images in the L-theory of $\FF_\fp$ for each dyadic prime $\fp$. Moreover, there is an exact sequence
\[
0 \lto A \lto \L^\qdr_{-1}(R) \lto \Pic(R)/2 \lto 0
\]
where $A$ is as in Proposition~\refthree{proposition:compute-quadratic-L-groups} and is a finite 2-group.
\end{corollary}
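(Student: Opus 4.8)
The plan is to read off all the statements from Proposition~\refthree{proposition:compute-quadratic-L-groups} and Corollary~\refthree{corollary:compute-L-groups-global-field}, feeding in two elementary facts about Witt groups: that the quadratic Witt group of a finite field of characteristic $2$ is $\ZZ/2$ detected by the Arf invariant, and that the symmetric Witt group of a $2$-adic discrete valuation ring is a finite $2$-group. First I would note that a number field has characteristic $0$, so the hypotheses of Proposition~\refthree{proposition:compute-quadratic-L-groups} are in force, and the cases $n\equiv 0,1\ (4)$ are then literally that proposition. For $n\equiv 2\ (4)$, Proposition~\refthree{proposition:compute-quadratic-L-groups} gives $\L^\qdr_n(R)\cong\bigoplus_{\fp\in\mathcal{I}}\W^\qdr(\FF_\fp)$, the isomorphism being induced by the quotient maps $R\to\FF_\fp$; since $K$ is a number field each residue field $\FF_\fp$ at a dyadic prime is finite of characteristic $2$, and I would recall that over such a field every non-degenerate quadratic form is classified by rank and Arf invariant, so that $\W^\qdr(\FF_\fp)\cong\ZZ/2$ with the isomorphism given by the Arf invariant. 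Identifying $\W^\qdr(\FF_\fp)$ with $\L^\qdr_0(\FF_\fp)$ via Corollary~\refthree{corollary:L-zero} and using the $2$-periodicity of $\L^\qdr$ for $\FF_2$-algebras (Corollary~\refthree{corollary:periodicity-L}) matches this with $\L^\qdr_n(\FF_\fp)$; composing with the $R\to\FF_\fp$-induced isomorphism yields the asserted description of $\L^\qdr_n(R)\cong(\ZZ/2)^d$ in terms of the $d$ Arf invariants.

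Next I would treat $n=-1\equiv 3\ (4)$. Proposition~\refthree{proposition:compute-quadratic-L-groups} supplies the short exact sequence $0\to A\to\L^\qdr_{-1}(R)\to\L^\s_{-1}(R)\to 0$ with $A$ the total cokernel of the indicated degree-$0$ square, and Corollary~\refthree{corollary:compute-L-groups-global-field} identifies $\L^\s_{-1}(R)\cong\Pic(R)/2$. This is precisely the claimed exact sequence, so the only remaining point is that $A$ is a finite $2$-group. By definition $A$ is a quotient of $\coker\bigl(\L^\qdr_0(R^\cwedge_2)\to\L^\s_0(R^\cwedge_2)\bigr)$, hence in particular a quotient of $\L^\s_0(R^\cwedge_2)$; since $R^\cwedge_2$ is a finite product of complete discrete valuation rings (each of global dimension $\leq 1$), Corollary~\refthree{corollary:d-zero} identifies $\L^\s_0(R^\cwedge_2)$ with the symmetric Witt group $\W^\s(R^\cwedge_2)$. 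Thus it suffices to show that $\W^\s(R^\cwedge_2)$ is a finite $2$-group.

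For this I would use that $R$ is $2$-torsion free, so that $R^\cwedge_2\cong\prod_{\fp\in\mathcal{I}}\cO_\fp$ with $\cO_\fp$ the completion of $R$ at a dyadic prime, a complete discrete valuation ring with finite residue field $\FF_\fp$ of characteristic $2$ and fraction field $K_\fp$ a finite extension of $\QQ_2$. By Remark~\refthree{remark:local-Dedekind} the group $\W^\s(\cO_\fp)=\L^\s_0(\cO_\fp)$ is the kernel of the surjection $\partial_0\colon\L^\s_0(K_\fp)\to\L^\s_0(\FF_\fp)$, so it embeds into $\W^\s(K_\fp)$; since a $2$-adic field is not formally real, its Witt ring is a torsion group, and the torsion of the Witt ring of any field is $2$-primary, while finiteness follows from the finiteness of the square class group of $K_\fp$. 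Hence $\W^\s(K_\fp)$, and therefore $\W^\s(\cO_\fp)$ and the finite product $\W^\s(R^\cwedge_2)=\prod_\fp\W^\s(\cO_\fp)$, is a finite $2$-group, and so is its quotient $A$.

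I expect the last step to require the most care: one must spell out, from the Mayer--Vietoris sequence attached to the pullback square~\eqrefthree{equation:useful-square} appearing in the proof of Proposition~\refthree{proposition:compute-quadratic-L-groups}, the precise identification of $A$ as a quotient of $\W^\s(R^\cwedge_2)$, and then invoke (or cite) the classical structure theory showing that Witt rings of non-real, and in particular $p$-adic, fields are finite $2$-groups. Everything else is a direct consequence of the results already established above.
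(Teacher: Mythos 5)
Your proposal is correct and follows essentially the same route as the paper: the cases $n\equiv 0,1\ (4)$ are read off from Proposition~\refthree{proposition:compute-quadratic-L-groups}, the Arf invariant identifies $\W^\qdr(\FF_\fp)\cong\ZZ/2$ for $n\equiv 2\ (4)$, and for $n\equiv 3\ (4)$ one observes that $A$ is a quotient of $\L_0^\s(R^\cwedge_2)\cong\prod_\fp\L_0^\s(\cO_\fp)$, which embeds in $\prod_\fp\L_0^\s(K_\fp)$, a finite $2$-group since each $K_\fp$ is a finite extension of $\QQ_2$. The paper simply cites Lam's Theorem VI 2.29 for the last point rather than unwinding the non-formally-real/torsion/square-class argument, but the content is identical.
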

\begin{proof}
First we recall from Corollary~\refthree{corollary:compute-L-groups-global-field} that for $n\equiv 3(4)$, we have $\L^\s(R) \cong \Pic(R)/2$, and that $A$ is a quotient of $\L_0^\s(R^\cwedge_2)$. We have $(2) = (\fp_1^{e_1} \cdot \dots \cdot \fp_k^{e_k})$ for some numbers $e_i$, where the $\fp_i$ are the dyadic primes. It follows that there is an isomorphism
\[
\L_0^\s(R^\cwedge_2) \cong \prod\limits_{i=1}^k \L_0^\s(R^\cwedge_{\fp_i}).
\]
It thus suffices to recall that 
\begin{enumerate}
\item the map $\L_0^\s(R^\cwedge_{\fp_i}) \to \L_0^\s(R^\cwedge_{\fp_i}[\tfrac{1}{2}])$ is injective; see the proof of Proposition~\refthree{proposition:compute-L-groups}, and that
\item $\L_0^\s(R^\cwedge_{\fp_i}[\tfrac{1}{2}])$ is a finite 2-group: The fraction field $R^\cwedge_{\fp_i}[\tfrac{1}{2}]$ of $R^\cwedge_{\fp_i}$ is a finite extension of $\QQ_2$, so we may appeal to \cite{Lam}*{Theorem VI 2.29}.
\end{enumerate}
Finally, we note that the residue fields $\FF_\fp$ are finite fields of characteristic 2, so that the Arf invariant provides an isomorphism $\W^\qdr(\FF_\fp) \cong \ZZ/2$.
\end{proof}

\begin{remark}
\label{remark:two}%
As in the symmetric case, we recall that there is a canonical equivalence $\L^{-\qdr}(R) \simeq \Sigma^2\L^{\qdr}(R)$, so that Proposition~\refthree{proposition:compute-quadratic-L-groups} and Corollary~\refthree{corollary:compute-quadratic-L-groups-global-field} also determine the $(-1)$-quadratic L-groups.
\end{remark}

\begin{remark}
\label{remark:A-nontrivial}%
If the number $d$ of dyadic primes of $R$ is at least 2, then $A$ is not trivial: Taking the rank mod 2 induces the right horizontal surjections in the following diagram.
\[
\begin{tikzcd}
\L_0^\qdr(R) \ar[r] \ar[d] & \L_0^\s(R) \ar[d]  \ar[r, two heads] & \ZZ/2 \ar[d] \\
\L_0^\qdr(R^\cwedge_2) \ar[r] & \L_0^\s(R^\cwedge_2) \ar[r, two heads] & (\ZZ/2)^d
\end{tikzcd}
\]
Both horizontal composites are zero, therefore we obtain a commutative diagram
\[
\begin{tikzcd}
\coker(\L_0^\qdr(R) \to \L_0^\s(R)) \ar[r] \ar[d] & \ZZ/2 \ar[d] \\
\coker(\L_0^\qdr(R^\cwedge_2) \to \L_0^s(R^\cwedge_2)) \ar[r] & (\ZZ/2)^d
\end{tikzcd}
\]
whose horizontal arrows are surjective. The induced map on vertical cokernels is a map $A \to (\ZZ/2)^{d-1}$ which is therefore again surjective.
\end{remark}

\begin{example}
\label{example:L-of-integers}%
Let us consider the case $R= \ZZ$. From the pullback diagram \eqrefthree{equation:useful-square}, we obtain an exact sequence
\[
0 \lto \L_0^\qdr(\ZZ) \stackrel{(0,8)}{\lto} \L_0^\qdr(\ZZ^\cwedge_2) \oplus \L_0^\s(\ZZ) \lto \L_0^\s(\ZZ^\cwedge_2) \lto \L_{-1}^\qdr(\ZZ) \lto 0,
\]
where the map $\L_0^\qdr(\ZZ) \to \L_0^\qdr(\ZZ^\cwedge_2)$ is the zero map: By Proposition~\refthree{proposition:hensel}, it suffices to know that the map $\L_0^\qdr(\ZZ) \to \L_0^\qdr(\FF_2)$ is the zero map. For this, one calculates that the Arf invariant of the $E_8$-form (viewed as a form over $\FF_2$) is zero. Furthermore, the map $\L_0^\qdr(\ZZ) \to \L_0^\s(\ZZ)$ is isomorphic to multiplication by $8$, as the $E_8$ form generates $\L_0^\qdr(\ZZ)$. We therefore obtain a short exact sequence
\begin{equation}
\label{equation:useful-sequence}%
0 \lto \ZZ/2 \oplus \ZZ/8 \lto \L_0^\s(\ZZ^\cwedge_2) \lto \L_{-1}^\qdr(\ZZ) \lto 0.
\end{equation}
Furthermore, by localisation-dévissage, there is a short exact sequence
\[
0 \lto \L_0^\s(\ZZ^\cwedge_2) \lto \L_0^\s(\QQ_2) \lto \ZZ/2 \lto 0
\]
and from \cite{Lam}*{VI Theorem 2.29 \& Corollary 2.23}, we know that $\L_0^\s(\QQ_2)$ has 32 elements. We deduce that $\L_0^\s(\ZZ^\cwedge_2)$ has 16 elements, and hence the above injection $\ZZ/2\oplus \ZZ/8 \subseteq \L_0^\s(\ZZ^\cwedge_2)$ is an isomorphism. For completeness, we observe that the exact sequence involving $\L_0^\s(\QQ_2)$ splits, so one obtains the well known isomorphism $\L_0^\s(\QQ_2) \cong (\ZZ/2)^2 \oplus \ZZ/8$ \cite{Lam}*{VI Theorem 2.29}. A concrete splitting is given by the element $\langle -1,2 \rangle$. The only thing that needs checking is that this element has order 2.

From the above and the exact sequence \eqrefthree{equation:useful-sequence}, we find that $\L_{-1}^\qdr(\ZZ) = 0$. In particular, we obtain the well known calculations of the symmetric and quadratic L-groups of $\ZZ$:
\[
\L_n^\s(\ZZ) \cong
\begin{cases}
\ZZ & \text{ for } n\equiv 0 (4) \\
\ZZ/2 & \text{ for } n\equiv 1 (4) \\ 
0 & \text{ for } n \equiv 2 (4) \\ 
0 & \text{ for } n \equiv 3 (4)
\end{cases}
\quad \quad
\L_n^\qdr(\ZZ) \cong 
\begin{cases}
\ZZ & \text{ for } n \equiv 0 (4) \\
0 & \text{ for } n \equiv 1 (4) \\
\ZZ/2 & \text{ for } n \equiv 2 (4) \\
0 & \text{ for } n \equiv 3 (4)
\end{cases}
\]
Together with Theorem~\refthree{theorem:main-theorem-L-theory}, Corollary~\refthree{corollary:dedekind}, and Remark~\refthree{remark:improve-two} this determines $\L^\gs_n(\ZZ)$.
In addition, we find that the map $\L^\gs(\ZZ)[\tfrac{1}{2}] \to \L^\s(\ZZ)[\tfrac{1}{2}]$ is an equivalence. We will make use of this fact in Proposition~\refthree{proposition:l-groups-two-inverted-outside}.
\end{example}

\begin{example}
\label{example:Eisenstein-integers}%
Consider the quadratic extension $K=\QQ[\sqrt{-3}]$ of $\QQ$ and let $R$ be its ring of integers. Concretely, $R$ is the ring of Eisenstein integers $R= \ZZ[\tfrac{1+\sqrt{-3}}{2}]$, which is a euclidean domain and hence a principal ideal domain. The discriminant of $K$ is $(3)$, and as $(2)$ does not divide $(3)$, we deduce that $(2)$ is a prime ideal in $R$ \cite{Neukirch}*{Corollary III.2.12}, and hence is the single dyadic prime. We deduce that $\L_2^\s(R) = \L_3^\s(R) = 0$, as the Picard group of a principal ideal domain vanishes. Furthermore $\L_1^\s(R) \cong \ZZ/2$ and $\L_0^\s(R) \cong \W_0^\s(R) \cong \ZZ/4$ \cite{milnor-symmetric}*{Corollary 4.2}. To calculate the quadratic L-groups we consider the diagram of exact sequences
\[
\begin{tikzcd} 
0 \ar[r] & \L_0^\qdr(\ZZ) \ar[r,"{(8,0)}"] \ar[d,"0"] & \ZZ \oplus \ZZ/2 \ar[r] \ar[d,"{(\pr,\id)}"] & \L_0^\s(\ZZ^\cwedge_2) \ar[r] \ar[d,"\theta"] & 0 \ar[r] \ar[d] & 0 \\
0 \ar[r] & \L_0^\qdr(R) \ar[r] & \ZZ/4 \oplus \ZZ/2 \ar[r] & \L_0^\s(R^\cwedge_2) \ar[r] & A \ar[r] & 0 
\end{tikzcd}
\]
and deduce that $A \cong \coker(\theta)$ and that there is an exact sequence
\[
0 \lto \ZZ/2 \lto \ker(\theta) \lto \L_{0}^\qdr(R) \lto 0.
\]
Now, from the commutative diagram of localisation-dévissage sequences (note that 2 is a uniformiser in both cases)
\[
\begin{tikzcd}
\L^\s(\ZZ^\cwedge_2) \ar[r] \ar[d] & \L^\s(\QQ_2) \ar[r] \ar[d] & \L^\s(\ZZ/(2)) \ar[d] \\
\L^\s(R^\cwedge_2) \ar[r] & \L^\s(K^\cwedge_2) \ar[r] & \L^\s(R/(2))
\end{tikzcd}
\]
we deduce that the kernel and the cokernel of $\theta$ are respectively isomorphic to the kernel and the cokernel of the map
\[
\theta'\colon \L^\s_0(\QQ_2) \lto \L^\s_0(K^\cwedge_2).
\]
It is a general theorem about quadratic extensions of fields that the kernel of $\theta'$ is, as an ideal, generated by the element $\langle 1,3 \rangle$, \cite{Lam}*{VII Theorem 3.5}. Since $-5/3$ is a square in $\QQ_2$, we deduce that $\langle 1,3 \rangle = \langle 1,-5\rangle$. From \cite{Lam}*{VI Remark 2.31}, we then deduce that the kernel of $\theta'$ is spanned by $4\langle 1 \rangle$ and $\langle 1,3 \rangle$ and thus isomorphic to $(\ZZ/2)^2$. We deduce that $\L_0^\qdr(R) \cong \ZZ/2$. From \cite{Lam}*{VII Theorem 3.5}, we also find that 
\[
\coker(\theta') \cong \ker\big(  \L_0^\s(\QQ_2) \stackrel{\cdot \langle 1,3 \rangle}{\lto} \L_0^\s(\QQ_2)\big)
\]
and again from \cite{Lam}*{VI Remark 2.31}, we find that the kernel of $\cdot \langle 1,3 \rangle$ is additively generated by $2\langle 1 \rangle$ and $\langle 1,-2 \rangle$, and deduce an isomorphism
\[
\ker\big(  \L_0^\s(\QQ_2) \stackrel{\cdot \langle 1,3 \rangle}{\lto} \L_0^\s(\QQ_2)\big)  \cong \ZZ/4 \oplus \ZZ/2.
\]
In summary, we obtain the following L-groups for $R$:
\[
\L_n^\s(R) \cong
\begin{cases}
\ZZ/4 & \text{ for } n\equiv 0 (4) \\ 
\ZZ/2 & \text{ for } n\equiv 1 (4) \\ 
0 & \text{ for } n \equiv 2 (4) \\ 
0 & \text{ for } n \equiv 3 (4)
\end{cases}
\quad \quad \quad
\L_n^\qdr(R) \cong 
\begin{cases}
\ZZ/2 & \text{ for } n \equiv 0 (4) \\
0 & \text{ for } n \equiv 1 (4) \\
\ZZ/2 & \text{ for } n \equiv 2 (4) \\
\ZZ/4 \oplus \ZZ/2 & \text{ for } n \equiv 3 (4)
\end{cases}
\]
\end{example}

\begin{remark}
\label{remark:devissage-in-quadratic-L}%
Let $R$ be a Dedekind ring and $S$ a set of primes not containing a dyadic one. We note that in this case, $R$ and $R_S$ have the same 2-adic completions, i.e.\ the canonical map $R^\cwedge_2 \to (R_S)^\cwedge_2$ is an isomorphism. We then consider the following diagram
\[
\begin{tikzcd}
\L^\qdr(R) \ar[r] \ar[d] & \L^\qdr(R_S) \ar[r] \ar[d] & \L^\qdr(R^\cwedge_2) \ar[d] \\
\L^\s(R) \ar[r] & \L^\s(R_S) \ar[r] & \L^\s(R^\cwedge_2) .
\end{tikzcd}
\]
We have seen in the proof of Proposition~\refthree{proposition:compute-quadratic-L-groups} that the big and the right squares are  pullbacks. Therefore, so is the left square. Using that for any non-dyadic prime $\fp$ of $R$, the residue field $\FF_\fp$ is a field of odd characteristic, so that its quadratic and symmetric L-theories agree, we deduce from Corollary~\refthree{corollary:decomposition-s} that there is a fibre sequence
\[
\L^\qdr(R) \lto \L^\qdr(R_S) \lto \displaystyle\mathop{\oplus}\limits_{\fp \in S} \L^\qdr(\FF_{\fp})
\]
so that the failure of dévissage in quadratic L-theory is related only to dyadic primes, as indicated in Remark~\refthree{remark:no-quadratic-devissage}.
\end{remark}

\begin{corollary}
\label{corollary:finite-generation}%
Let $\nO$ be a number ring, that is, a localisation of the rings of integers in a number field away from finitely many primes, and $\epsilon=\pm1$. Then the $\eps$-symmetric L-groups $\L^{\s}_n(\nO;\eps)$ and the $\eps$-quadratic L-groups $\L^\qdr_n(\nO;\eps)$ are finitely generated.
\end{corollary}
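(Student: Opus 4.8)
The plan is to reduce the statement to the explicit computations of the L-groups of Dedekind rings already obtained in Corollary~\refthree{corollary:compute-L-groups-global-field} and Corollary~\refthree{corollary:compute-quadratic-L-groups-global-field}. First I would dispose of the case $\eps = -1$: by the equivalences $\L^{-\s}(\nO) \simeq \Sigma^2\L^{\s}(\nO)$ and $\L^{-\qdr}(\nO) \simeq \Sigma^2\L^{\qdr}(\nO)$ of Remark~\refthree{remark:one} and Remark~\refthree{remark:two}, one has $\L^{\s}_n(\nO;-1) \cong \L^{\s}_{n-2}(\nO)$ and $\L^{\qdr}_n(\nO;-1) \cong \L^{\qdr}_{n-2}(\nO)$, so it suffices to prove that $\L^{\s}_n(\nO)$ and $\L^{\qdr}_n(\nO)$ are finitely generated for all $n$.

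A number ring $\nO$ is a Dedekind ring whose fraction field is a number field, hence a global field of characteristic $\neq 2$, and the groups $\L^{\s}_n(\nO)$ and $\L^{\qdr}_n(\nO)$ are $4$-periodic in $n$ by Corollary~\refthree{corollary:periodicity-L}. Thus Corollary~\refthree{corollary:compute-L-groups-global-field} and Corollary~\refthree{corollary:compute-quadratic-L-groups-global-field} express all of them in terms of finitely many building blocks: the Witt groups $\W^{\s}(\nO)$ and $\W^{\qdr}(\nO)$; the group $(\ZZ/2)^d$, where $d$ is the number of dyadic primes of $\nO$; the group $\Pic(\nO)/2$; and, in the residual case $n \equiv 3\ (4)$ of the quadratic L-groups, an extension of $\Pic(\nO)/2$ by the finite $2$-group $A$ of Proposition~\refthree{proposition:compute-quadratic-L-groups}. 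Since the ideal class group of a number ring is finite, being a quotient of the class group of the ring of integers $\nO_K$, the groups $\Pic(\nO)/2$, $(\ZZ/2)^d$ and $A$ are all finite. The corollary therefore reduces to the finite generation of $\W^{\s}(\nO)$ and $\W^{\qdr}(\nO)$.

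For the symmetric Witt group of the ring of integers $\nO_K$ this is the one classical input: $\W^{\s}(\nO_K)$ is a finitely generated abelian group — for instance, its torsion subgroup is a finite $2$-group and the total signature map $\W^{\s}(\nO_K) \to \ZZ^{r_1}$ to the product over the real places of $K$ has finite cokernel; see \cite[Chapter~IV]{milnor-symmetric}, and \cite{scharlau-quadratic} for the general global-field situation. To pass to an arbitrary number ring $\nO = (\nO_K)_S$ with $S$ a finite set of primes, I would use the localisation--dévissage fibre sequence of Corollary~\refthree{corollary:decomposition-s},
\[
\bigoplus_{\fp\in S}\L^{\s}(\FF_{\fp}) \lto \L^{\s}(\nO_K) \lto \L^{\s}(\nO),
\]
(up to twists by $1$-dimensional forms on the residue fields), whose fibre term has finite homotopy groups since there are only finitely many $\fp$ and each $\FF_{\fp}$ is finite; the associated long exact sequence then exhibits $\W^{\s}(\nO) = \L^{\s}_0(\nO)$ as an extension of a subgroup of a finite group by a quotient of the finitely generated group $\L^{\s}_0(\nO_K)$, hence as finitely generated. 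For the quadratic Witt group $\W^{\qdr}(\nO) = \L^{\qdr}_0(\nO)$ I would avoid a further citation and instead invoke the localisation--completion pullback square~\eqrefthree{equation:useful-square} for $\nO$: its other two corners $\L^{\qdr}_0(\nO^\cwedge_2)$ and $\L^{\s}_0(\nO^\cwedge_2)$ are finite, by Proposition~\refthree{proposition:hensel} and Remark~\refthree{remark:local-Dedekind} together with the finiteness of the Witt group of a finite extension of $\QQ_2$, so the Mayer--Vietoris sequence of that square reduces finite generation of $\W^{\qdr}(\nO)$ to that of $\W^{\s}(\nO)$ just established. Combined with the second paragraph, this proves the corollary.

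The only genuinely non-formal ingredient, and hence the main obstacle, is the classical finite generation of the symmetric Witt group of a ring of integers; everything else is bookkeeping built on the L-group calculations of the previous subsection together with the finiteness of ideal class groups and of Witt groups of $2$-adic local fields.
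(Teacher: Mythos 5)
Your reduction — to the symmetric and quadratic Witt groups, the Picard group, and the finite $2$-group $A$, using the $4$-periodicity of Corollary~\refthree{corollary:periodicity-L} together with Corollary~\refthree{corollary:compute-L-groups-global-field} and Corollary~\refthree{corollary:compute-quadratic-L-groups-global-field} and Remarks~\refthree{remark:one} and~\refthree{remark:two} — is exactly the paper's, and your handling of the $\eps=-1$ case via the degree shift matches as well. Where you genuinely diverge is in the proof that $\W^{\qdr}(\nO)$ is finitely generated. The paper proves a purely algebraic lemma: for a Dedekind ring $R$ whose fraction field $K$ has characteristic $\neq 2$, the symmetrisation map $\W^\qdr(R) \to \W^\s(R)$ is injective (by reducing to the fact that $\W^\qdr(R)\to\W^\qdr(K)$ is injective and showing a strict Lagrangian over $K$ descends to one over $R$, with $2$-torsion-freeness of $R$ forcing the quadratic refinement to vanish on it); then finite generation of $\W^{\qdr}(\nO)$ is immediate from that of $\W^{\s}(\nO)$. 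You instead run the Mayer--Vietoris sequence of the localisation--completion pullback square~\eqrefthree{equation:useful-square}, using Proposition~\refthree{proposition:hensel} and the finiteness of the $2$-adic terms. Both arguments are correct; the paper's is more elementary and yields a reusable injectivity statement, while yours leans on the pullback square already established for the L-group computation and so keeps the proof entirely within the spectral machinery. Your detour for the symmetric Witt group — deducing $\W^\s(\nO)$ for a localisation $\nO = (\nO_K)_S$ from the full ring of integers via the localisation--dévissage fibre sequence of Corollary~\refthree{corollary:decomposition-s} — is also correct but unnecessary, since the cited result of Milnor--Husemoller \cite[\S 4, Theorem 4.1]{milnor-symmetric} applies to $S$-integers directly, which is how the paper uses it.
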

\begin{proof}
It follows from Proposition~\refthree{proposition:compute-L-groups} and Corollary~\refthree{corollary:compute-quadratic-L-groups-global-field} and Remarks~\refthree{remark:one} and \refthree{remark:two} that it suffices to show that the symmetric Witt group $\W^{\s}(\nO)$, the quadratic Witt group $\W^\qdr(\nO)$, and the Picard group $\Pic(\nO)$ are finitely generated. The statement for the symmetric Witt group is proven in \cite{milnor-symmetric}*{\S 4, Theorem 4.1}, and in fact $\W^\s(\nO)$ is an extension of a finite group by a free abelian group of rank given by the number of real embeddings of the number field $F$. Now we claim that generally for a Dedekind ring $R$ whose fraction field $K$ is of characteristic different from 2, the canonical map $\W^\qdr(R) \to \W^\s(R)$ is injective, so that $\W^\qdr(R)$ is finitely generated if $\W^\s(R)$ is. This follows from the fact that the map $\W^\qdr(R) \to \W^\qdr(K)$ is injective, see \cite{Knebusch-Scharlau}. As the argument in loc.\ cit.\ is not explicitly written out, let us sketch a direct argument that the map $\W^\qdr(R) \to \W^\s(R)$ is injective: First, assume that a symmetric form $(P,\varphi)$ vanishes in $\W^\s(R)$. Then the same is true for its image in $\W^\s(K)$. By Corollary~\refthree{corollary:d-zero} we deduce that $(P\otimes_R K,\varphi\otimes_R K)$ admits a strict Lagrangian. The argument written in the proof of \cite{Knebusch-Scharlau}*{Lemma 1.4} then shows that  $(P,\varphi)$ indeed itself admits a strict Lagrangian. Now, let $(P,q)$ be a quadratic form whose image in $\W^\s(R)$ vanishes. We deduce that the underlying symmetric bilinear form of $(P,q)$ admits a strict Lagrangian $L$. We then observe that for each $x$ in $L$, we have $2q(x) = b(x,x) = 0$, so that $q_{|L} = 0$ as $R$ is 2-torsion free. It follows that $L$ is a Lagrangian for the quadratic form $(P,q)$ as needed.
Finally, the Picard group of the ring of integers in a number field is finite (in other words, the class number of a ring of integers is finite), and hence the Picard group of a localisation of such a ring receives a surjection from a finite group and is thus itself finite, compare to the proof of
Proposition~\refthree{proposition:verdier}. 
\end{proof}

\begin{remark}
In the above proof, we have again restricted our attention to Dedekind rings whose field of fractions $K$ has characteristic different from 2. If the characteristic of $K$ is 2 we find that:
\begin{enumerate}
\item
\label{item:r-one}%
The map $\W^\s(R) \to \W^\s(K)$ is injective, but
\item
\label{item:r-two}%
the map $\W^\qdr(R) \to \W^\s(R)$ is zero.
\end{enumerate}
Indeed \refthreeitem{item:r-one} follows from the same argument given above, since also for fields $K$ of characteristic 2 a form $(P,b)$ is zero in $\W^\s(K)$ if and only if it admits a strict Lagrangian, see Corollary~\refthree{corollary:d-zero}.
To see \refthreeitem{item:r-two}, it suffices to show that the composite $\W^\qdr(R) \to \W^\s(R) \to \W^\s(K)$ vanishes, as the latter map is injective, see the proof of Corollary~\refthree{corollary:compute-L-groups-global-Dedekind-ring-characteristic-2}. This composite factors through the map $\W^\qdr(K) \to \W^\s(K)$ which is zero as the underlying bilinear form of any quadratic form over a field of characteristic 2 has a symplectic basis and hence admits a Lagrangian.
\end{remark}

\begin{corollary}
\label{corollary:finite-generation-GWm}%
Let $\nO$ be a number ring and $\eps=\pm 1$. Then for all $m,n \in \ZZ$, the groups $\L_n(\nO;\Qgen{m}{\eps})$, and consequently the groups $\GW_n(\nO;\Qgen{m}{\eps})$, are finitely generated. 
\end{corollary}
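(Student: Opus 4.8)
The plan is to deduce the statement from the finite generation of the symmetric and quadratic $\L$-groups of $\nO$ already established in Corollary~\refthree{corollary:finite-generation}, bridging the two by the surgery comparison theorems of the previous section. Recall that $\nO$, being a localisation of a ring of integers, is a Dedekind ring, hence coherent of global dimension $d\leq 1$, and that $\Qgen{m}{M}$ is $m$-quadratic and $(2-m)$-symmetric by Example~\refthree{example:symm-quad-qf}. For the $\L$-groups I would argue separately in low and high degrees. In low degrees, Corollary~\refthree{corollary:genuine-is-quadratic} applied to $\QF=\Qgen{m}{R(\eps)}$ shows that the natural map $\L^{\qdr}_n(\nO;\eps)\to\L_n(\nO;\Qgen{m}{\eps})$ is an isomorphism for $n\leq 2m-3$ and a surjection for $n=2m-2$; since $\L^{\qdr}_n(\nO;\eps)$ is finitely generated for all $n$ and finitely generated abelian groups are closed under passage to quotients, this settles all $n\leq 2m-2$. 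In high degrees, Corollary~\refthree{corollary:genuine-is-symmetric} with $r=2-m$ and $d\leq 1$ shows that $\L_n(\nO;\Qgen{m}{\eps})\to\L^{\s}_n(\nO;\eps)$ is injective for $n\geq d+2m-2$, in particular for $n\geq 2m-1$; as finitely generated abelian groups are closed under passage to subgroups and $\L^{\s}_n(\nO;\eps)$ is finitely generated, this settles all $n\geq 2m-1$. Since these two ranges exhaust $\ZZ$, the group $\L_n(\nO;\Qgen{m}{\eps})$ is finitely generated for every $n$.

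For the Grothendieck-Witt groups I would invoke the fibre sequence $\K(\nO;\eps)_{\hC}\to\GW(\nO;\Qgen{m}{\eps})\to\L(\nO;\Qgen{m}{\eps})$ obtained by specialising the sequence \eqrefthree{equation:quadsequence} to the Poincaré $\infty$-category $(\Dperf(\nO),\Qgen{m}{R(\eps)})$, whose underlying $\K$-theory spectrum is that of $\nO$ with the $\Ct$-action induced by the duality $\Dual_{R(\eps)}$. By Quillen's finite-generation theorem the groups $\K_q(\nO)$ are finitely generated for all $q$ — for rings of integers this is Quillen's theorem directly, and the general case follows from the localisation sequence together with the finite generation of the $\K$-groups of the residue fields. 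Since $\K(\nO)$ is connective, the homotopy-orbit spectral sequence $\rH_p(\Ct;\K_q(\nO;\eps))\Rightarrow\pi_{p+q}(\K(\nO;\eps)_{\hC})$ has, in each total degree, only finitely many possibly nonzero terms, each a homology group of $\Ct$ with finitely generated coefficients and hence finitely generated; therefore $\pi_n(\K(\nO;\eps)_{\hC})$ is finitely generated for every $n$. Feeding this together with the finite generation of $\L_n(\nO;\Qgen{m}{\eps})$ into the long exact sequence of the fibre sequence exhibits $\GW_n(\nO;\Qgen{m}{\eps})$ as an extension of a subgroup of a finitely generated group by a quotient of a finitely generated group, hence finitely generated.

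The argument is essentially assembly: the substantive input — finite generation of $\W^{\s}$, $\W^{\qdr}$ and $\Pic$ of number rings, finite generation of their algebraic $\K$-groups, and the surgery comparison ranges — is all available from the previous section and classical work. The only point requiring a modicum of care is that the comparison maps are isomorphisms only in the respective stable ranges and are merely a surjection (respectively an injection) in the single boundary degree $n=2m-2$ (respectively $n=2m-1$); but since these boundary degrees still lie within the ranges being treated and finite generation descends to subgroups and quotients, this costs nothing, and I do not anticipate any genuine obstacle.
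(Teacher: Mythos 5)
Your argument coincides with the paper's own: split $\ZZ$ at $n = 2m-2$, use surjectivity from $\L^{\qdr}$ (via Corollary~\refthree{corollary:genuine-is-quadratic} and $m$-quadraticity) for $n \leq 2m-2$ and injectivity into $\L^{\s}$ (via Corollary~\refthree{corollary:genuine-is-symmetric}, $(2-m)$-symmetricity, and $d\leq 1$) for $n \geq 2m-1$, with both targets finitely generated by Corollary~\refthree{corollary:finite-generation}; then feed the finitely generated $\K$-homotopy-orbit groups into the fibre sequence. The only difference is cosmetic: you spell out the reduction from number rings to rings of integers via the localisation sequence and the homotopy-orbit spectral sequence argument, which the paper leaves implicit behind the citation to Quillen.
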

\begin{proof} We saw in Example~\refthree{example:symm-quad-qf} that the functor $\Qgen{m}{\eps}$ is $m$-quadratic and $(2-m)$-symmetric. Hence, on the one hand, it follows from Corollary~\refthree{corollary:genuine-is-quadratic} that for $n \leq 2m-2$ the map $\L_n^\qdr(\nO;\eps) \to \L_n(\nO;\Qgen{m}{\eps})$ is surjective. By Corollary~\refthree{corollary:finite-generation} the left hand group is finitely generated, so the same is true for $\L_n(\nO;\Qgen{m}{\eps})$.

On the other hand, Corollary~\refthree{corollary:genuine-is-symmetric} implies that the map $\L_n(\nO;\Qgen{m}{\eps}) \to \L_n^\s(\nO;\eps)$ is injective for $n \geq 2m-1$. Again, by Corollary~\refthree{corollary:finite-generation} the target group is finitely generated, it follows that $\L_n(\nO;\Qgen{m}{\eps})$ is so as well. To obtain the consequences for Grothendieck-Witt groups, we recall from Quillen's results that the algebraic K-groups of number rings are finitely generated \cite{quillen-fg}. From the homotopy orbits spectral sequence, it follows that also the homotopy groups of $\K(\nO;\eps)_{\hC}$ are finitely generated, so that the desired result follows from the fibre sequence
\[
\K(\nO;\eps)_{\hC} \lto \GW(\nO;\Qgen{m}{\eps}) \lto \L(\nO;\Qgen{m}{\eps}).
\]
\end{proof}

Combining the above with the comparison theorem of \cite{comparison} gives the following corollary.
\begin{corollary}
\label{corollary:finite-generation-GW}%
Let  $\nO$ be a number ring, and $\eps=\pm1$. Then the classical $\epsilon$-symmetric and $\eps$-quadratic Grothendieck-Witt groups $\GW^{\s}_{\cl,n}(\nO;\eps)$ and $\GW^{\qdr}_{\cl,n}(\nO;\eps)$ are finitely generated for all $n \geq 0$.
\end{corollary}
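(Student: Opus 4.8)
The plan is to obtain this as an essentially formal consequence of Corollary~\ref{corollary:finite-generation-GWm} together with the comparison theorem of \cite{comparison}. Recall from the Recollection section that the genuine symmetric and genuine quadratic Poincaré structures attached to $\eps=\pm1$ are $\QF^{\gs}_\eps = \Qgen{0}{\eps}$ and $\QF^{\gq}_\eps = \Qgen{2}{\eps}$, and that by the main result of \cite{comparison} the canonical maps
\[
\GW^{\s}_{\cl}(R;M) \stackrel{\simeq}{\lto} \tau_{\geq0}\GW^{\gs}(R;M)
\quad\text{and}\quad
\GW^{\qdr}_{\cl}(R;M) \stackrel{\simeq}{\lto} \tau_{\geq0}\GW^{\gq}(R;M)
\]
are equivalences for any ring $R$ and invertible $\ZZ$-module with involution $M$.

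First I would specialise these equivalences to $R = \nO$ and $M = \nO(\eps)$. Since for a spectrum $X$ one has $\pi_n \tau_{\geq 0}X = \pi_n X$ for $n \geq 0$, this identifies $\GW^{\s}_{\cl,n}(\nO;\eps) \cong \GW_n(\nO;\Qgen{0}{\eps})$ and $\GW^{\qdr}_{\cl,n}(\nO;\eps) \cong \GW_n(\nO;\Qgen{2}{\eps})$ for every $n \geq 0$. Then I would invoke Corollary~\ref{corollary:finite-generation-GWm}, taking $m=0$ in the symmetric case and $m=2$ in the quadratic case: it asserts precisely that the groups $\GW_n(\nO;\Qgen{m}{\eps})$ are finitely generated for all $m,n \in \ZZ$. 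Combining these two inputs yields the finite generation of $\GW^{\s}_{\cl,n}(\nO;\eps)$ and $\GW^{\qdr}_{\cl,n}(\nO;\eps)$ for all $n \geq 0$.

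All of the real content has by this point already been carried out: Corollary~\ref{corollary:finite-generation-GWm} rests on the fibre sequence of Theorem~\ref{theorem:fiber-sequence-intro-three}, on Quillen's finite generation of the $\K$-groups of number rings, on the surgery comparison results of \S\ref{section:L-theory} (Corollaries~\ref{corollary:genuine-is-quadratic} and~\ref{corollary:genuine-is-symmetric}), and on the explicit finite generation of the symmetric and quadratic L-groups established in Corollary~\ref{corollary:finite-generation}. Consequently there is no genuine obstacle left at this stage; the only point worth flagging is that the comparison of \cite{comparison} is an equivalence only after passing to connective covers, which is exactly why the present statement is restricted to $n \geq 0$. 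I would also remark that the identical argument applied with $m=1$ gives finite generation of the classical even Grothendieck-Witt groups $\GW^{\ev}_{\cl,n}(\nO;\eps)$, and that via the periodicity of Proposition~\ref{proposition:periodcat} one obtains the analogous statement for the Grothendieck-Witt groups associated to any of the Poincaré structures $\Qgen{m}{\eps}$.
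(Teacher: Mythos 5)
Your argument is exactly the paper's: the paper proves this corollary by the one-line remark ``Combining the above [Corollary~\refthree{corollary:finite-generation-GWm}] with the comparison theorem of \cite{comparison} gives the following corollary,'' which is precisely the combination you spell out, with the correct identifications $\QF^{\gs}_\eps=\Qgen{0}{\eps}$ and $\QF^{\gq}_\eps=\Qgen{2}{\eps}$ and the correct caveat that \cite{comparison} gives an equivalence only on connective covers, hence the restriction $n\geq 0$. Nothing is missing.
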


\begin{remark}
\label{remark:homological-stability}%
The finite generation of the groups $\GW^{\qdr}(\nO;\eps)$ can also be deduced by a homological stability argument similar to the one of Quillen for algebraic $\K$-theory of number rings: By Serre class theory, it suffices to show that the ordinary homology groups of the components of $\Omega^\infty \GW^{\qdr}(\nO;\eps)$ are finitely generated. Since every $\eps$-quadratic form is a direct summand in an $\eps$-hyperbolic form, the group completion theorem identifies any such component with the space 
\[
\begin{cases}
\BO_{\infty,\infty}(\nO)^+ & \text{ for } \eps = 1, \\
\BSp^\qdr_\infty(\nO)^+ & \text{ for } \eps = -1
\end{cases}
\]
where $\rO_{\infty,\infty}(\nO)$ and $\Sp^\qdr_\infty(\nO)$ denote the colimit of the automorphism group of an $n$-fold sum of the $(1)$- and $(-1)$-quadratic hyperbolic form, respectively. Charney \cite{Charney} has proved a homological stability result for those groups, so that it suffices to show that the groups $\rO_{n,n}(\nO)$ and $\Sp^\qdr_n(\nO)$ have finitely generated homology. Let us briefly explain why that is: First we note that both groups are arithmetic. Second, every arithmetic group has a torsion free finite index subgroup \cite{Serre-arithmetic-groups}*{1.3 (4)}, and hence also a \emph{normal} torsion free finite index subgroup. By the Serre spectral sequence for the quotient by this normal subgroup, we find that it suffices to know that torsion free arithmetic groups have finitely generated homology, which follows from the fact they they admit a finite classifying space \cite{Serre-arithmetic-groups}*{1.3 (5)}. We wish to thank Manuel Krannich for a helpful discussion about this and for making us aware of Serre's survey.

Finally, we note that in the symmetric case, it is not generally true that every form embeds into a hyperbolic form (as any such form admits a quadratic refinement), so in order to run a similar argument one first needs to find a symmetric bilinear form $b$ such that every other form embeds into a suitable number of orthogonal copies of $b$, and one needs to prove homological stability for the family of automorphism groups of such orthogonal copies of $b$. To our knowledge, this is not known to hold in the generality of number rings, though it does hold for the integers: In the case of symplectic forms, i.e.\ $\epsilon = -1$, this is again due to Charney, and in the case of symmetric forms, this is shown in Nadig's PhD thesis.
\end{remark}

\section{Grothendieck-Witt groups of Dedekind rings}
\label{section:HLP}%
In this final section we consider the homotopy limit problem for Dedekind domains and finite fields of characteristic \(2\). In the latter case, we extend the solution of the homotopy limit problem from the Grothendieck-Witt space (where it is known to hold by the work of Friedlander) to its Grothendieck-Witt spectrum. We then combine this with the dévissage results of \S\refthree{subsection:homotopy-limit} to solve the homotopy limit problem for Dedekind rings whose fraction field is a global field of characteristic $0$, i.e.\ a number field, proving Theorem~\refthree{theorem:homotopy-limit-intro-three} from the introduction. Finally, we apply these ideas to the particular case of \(\ZZ\) and calculate its \(\pm 1\)-symmetric and genuine \(\pm 1\)-quadratic Grothendieck-Witt groups conditionally on Vandiver's conjecture, and in the range \(n \leq 20000\) unconditionally. 

\subsection{The homotopy limit problem}
\label{subsection:homotopy-limit}%

A prominent question in the hermitian \(\K\)-theory of rings and schemes is when the map from the Grothendieck-Witt space/spectrum to the homotopy fixed points of the associated algebraic \(\K\)-theory space/spectrum is an equivalence. This question, first raised by Thomason in~\cite{thomason}, is commonly known as the \emph{homotopy limit problem}. In the case of fields, the following theorem represents the current state of the art; see \cite{HKO-homotopy-limit, BKSO-fixed-point, bachmannperiodic}. We recall that the virtual mod 2 cohomological dimension $\vcd_2$ of a field $k$ can be defined as the ordinary mod 2 cohomological dimension $\cd_2$ of (the absolute Galois group of) $k[\sqrt{-1}]$. In particular, we have $\vcd_2(k) \leq \cd_2(k)$. Given $\eps=\pm1$ we let $\K(k;\eps)$ denote the K-theory spectrum of $k$ with $\Ct$-action induced by the duality $\Dual=\hom_{k}(-,k(\eps))$.

\begin{theorem}
\label{theorem:homotopy-limit}%
Let \(k\) be a field of characteristic different from \(2\) and such that $\vcd_2(k) < \infty$.
Then the map of spectra
\begin{equation*}
\GW^{\s}(k;\eps) \lto \K(k;\eps)^{\hC}
\end{equation*}
is an equivalence after \(2\)-completion. 
\end{theorem}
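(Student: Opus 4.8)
The plan is to reduce the statement to a comparison of 2-completed spectra that is already established in the literature, specifically the work of Berrick--Karoubi--Schlichting--\O stv\ae r \cite{BKSO-fixed-point} (building on \cite{HKO-homotopy-limit} and \cite{bachmannperiodic}). The issue is purely one of bookkeeping: loc.\ cit.\ phrases the homotopy limit problem in terms of Schlichting's Grothendieck--Witt spectrum, which a priori is defined only when $2$ is invertible in the base ring --- but that is exactly the hypothesis here, so there is no conflict. Thus the first step is simply to observe that since $\chr(k) \neq 2$, the ring $k$ has $2$ inverted, so $\QF^\qdr_{k(\eps)} \to \QF^\s_{k(\eps)}$ is an equivalence by Remark~\refthree{remark:genuineeq}, and by \S\reftwo{appendix:AppIIB} the spectrum $\GW^\s(k;\eps)=\GW(\Dperf(k),\QF^\s_{k(\eps)})$ agrees with the classical Grothendieck--Witt spectrum of $k$ used in \cite{schlichting-derived, Spitzweck-GW, BKSO-fixed-point}. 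The $\Ct$-action on $\K(k;\eps)$ coming from the duality $\Dual = \hom_k(-,k(\eps))$ likewise matches the one in those references.

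Second, I would recall that the comparison map $\GW^\s(k;\eps) \to \K(k;\eps)^{\hC}$ of the present framework is identified, under the above equivalences, with the natural comparison map studied in \cite{BKSO-fixed-point}; this can be extracted from the universal property of $\GW$ as the Verdier-localising approximation to the space of Poincaré objects together with the canonical map $\Poinc(\C,\QF)\to \core\C$ and the fact that $\core\C \to \K(\C)$ factors the comparison. Granting this identification, the statement is precisely \cite[Theorem (on the homotopy limit problem for fields)]{BKSO-fixed-point}: for a field $k$ of characteristic $\neq 2$ with $\vcd_2(k)<\infty$, the map $\GW^\s(k;\eps)\to \K(k;\eps)^{\hC}$ is an equivalence after $2$-completion. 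One should also note that the result in those references is typically stated for $\eps = 1$; the case $\eps = -1$ follows from the case $\eps=1$ for the field $k$ together with the identification $\GW^{-\s}(k) \simeq \GW(\Dperf(k),(\QF^\s_k)\qshift{2})$ of Proposition~\refthree{proposition:periodcat} and Remark~\refthree{remark:genuineeq}, and the matching shift on the $\K$-theory side, which commutes with homotopy fixed points up to the relevant $2$-complete equivalence (alternatively, one invokes the $\eps = -1$ version directly, as it is also covered in \cite{BKSO-fixed-point}).

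The main obstacle --- and really the only nontrivial point --- is the translation step: making precise that the abstract comparison map $\GW(\Dperf(k),\QF^\s) \to \K^{\hC}$ produced by the Poincaré-categorical machinery of \paperone, \papertwo coincides with the geometrically defined comparison map of Schlichting and of Berrick--Karoubi--Schlichting--\O stv\ae r. This is a matter of matching two constructions of the same map rather than proving anything new, and it is handled by the identifications recorded in \S\reftwo{appendix:AppIIB}; I would simply cite those and note that they are natural enough to intertwine the respective comparison maps to $\K$-theory homotopy fixed points. Everything else --- finiteness of $\vcd_2$, the role of $k[\sqrt{-1}]$ --- is inherited verbatim from \cite{BKSO-fixed-point} and needs no reproof here.
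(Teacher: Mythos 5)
Your proposal is correct and matches the paper's treatment: Theorem~\refthree{theorem:homotopy-limit} is a citation of \cite{HKO-homotopy-limit, BKSO-fixed-point, bachmannperiodic} rather than a new result, and the paper likewise handles the only real issue — matching the present $\GW^\s$ with Schlichting's model — by invoking the comparison of \S\reftwo{appendix:AppIIB} (specifically Proposition~\reftwo{proposition:comp-schlichting}), exactly as you propose. Your extra remarks on the $\eps=-1$ case are consistent with, though not required by, the paper's presentation.
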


\begin{remark}
The cited Theorem~\refthree{theorem:homotopy-limit} was stated in \cite{BKSO-fixed-point} using Schlichting's model for Grothendieck-Witt spectra. Since \(k\) is assumed to have characteristic \(\neq 2\) we may invoke the comparison statement of Proposition~\reftwo{proposition:comp-schlichting-space} and identify Schlichting's construction with ours.
\end{remark}

The characteristic \(0\) case of Theorem~\refthree{theorem:homotopy-limit} was proven in~\cite{HKO-homotopy-limit}, while the positive odd characteristic case is established in~\cite{BKSO-fixed-point}. An alternative proof of this theorem is also provided in recent work of Bachmann and Hopkins \cite{bachmannperiodic}.
Special cases of the above theorem were already known before: the case of the field \(\CC\) of complex numbers, for example, can be reduced to the classical equivalence \(\BO \simeq \BU^{\hC}\) 
see, e.g.,~\cite{berrick-karoubi}*{Lemma 7.3}. In fact, in loc.\ cit.\ the authors prove this also for the \((-1)\)-symmetric variant. The equivalence for \(\CC\) can in turn be used to deduce the same for finite fields $\FF_q$. One can express the Grothendieck-Witt spaces of $\FF_q$ in terms of the Adams operations on $\BO$ and $\BSp$ in a way analogous to the main results of Quillen's famous paper~\cite{quillen-finite-fields} on the algebraic \(\K\)-theory of $\FF_q$.
These results were first established by Friedlander in~\cite{friedlander-computations}, and later expanded and refined in see~\cite{FP-classical} (where also a small mistake was corrected in the case of \(q\) even and quadratic forms: Friedlander computed $\pi_1(\GW_\cl^\qdr(\FF_q))$ to be trivial, but in fact it is isomorphic to $\ZZ/2$). Combined with the positive solution of the homotopy limit problem for \(\CC\) they imply the following. 

\begin{theorem}
\label{theorem:HLP-for-finite-fields}%
For \(\eps = \pm1\) and every prime power \(q\) the natural map
\[
\GW_\cl^{\s}(\FF_q;\eps) \lto \K(\FF_q; \eps)^{\hC}
\]
is an equivalence on connective covers.
\end{theorem}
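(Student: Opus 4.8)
The plan is to reduce the assertion to a statement about the homotopy symmetric Grothendieck--Witt spectrum, and then to treat the prime $2$ and its complement separately. As a first step, since $\FF_q$ is a field --- hence coherent of global dimension $0$ --- Corollary~\ref{corollary:improve} applied with $d=0$ shows that $\GW^{\gs}_n(\FF_q;\eps)\to\GW^{\s}_n(\FF_q;\eps)$ is an isomorphism for $n\geq-1$, while the comparison theorem of~\cite{comparison} gives $\GW_\cl^{\s}(\FF_q;\eps)\simeq\tau_{\geq 0}\GW^{\gs}(\FF_q;\eps)$. Hence $\GW_\cl^{\s}(\FF_q;\eps)\simeq\tau_{\geq 0}\GW^{\s}(\FF_q;\eps)$, and it is enough to show that the forgetful map $\GW^{\s}(\FF_q;\eps)\to\K(\FF_q;\eps)^{\hC}$ induces an isomorphism on $\pi_n$ for every $n\geq 0$. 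Using Quillen's computation of $K_*(\FF_q)$, the fibre sequence $\K(\FF_q;\eps)_{\hC}\to\GW^{\s}(\FF_q;\eps)\to\L^{\s}(\FF_q;\eps)$, and the finiteness of the Witt groups $\W_*(\FF_q;\eps)$ (Corollary~\ref{corollary:d-zero} and Remark~\ref{remark:odd-vanishing}), one checks that both spectra have degreewise finitely generated homotopy groups, so it suffices to prove the isomorphism after inverting $2$ and after $2$-completion.

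Away from $2$ the argument is uniform in $q$. Inverting $2$ identifies $\QF^{\s}_M$ with $\QF^{\gs}_M$, so Karoubi's splitting recalled in the introduction gives a natural isomorphism $\GW^{\s}_{\cl,n}(\FF_q;\eps)[\tfrac12]\cong(\K_n(\FF_q;\eps)[\tfrac12])^{\Ct}\oplus\W_n(\FF_q;\eps)[\tfrac12]$, in which the Witt summand vanishes because $\W_*(\FF_q;\eps)$ is $2$-power torsion. On the other hand, $\K(\FF_q;\eps)$ is connective and the group cohomology of $\Ct$ with $2$-inverted coefficients is concentrated in degree $0$, so the homotopy fixed point spectral sequence collapses after inverting $2$ and yields $\pi_n(\K(\FF_q;\eps)^{\hC})[\tfrac12]\cong(\pi_n\K(\FF_q;\eps)[\tfrac12])^{\Ct}$ for $n\geq 0$. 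Since the forgetful map realises Karoubi's identification, it is an isomorphism on non-negative homotopy groups after inverting $2$.

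It remains to check the $2$-complete statement, where the two parities of $q$ behave differently. If $q$ is odd, then $\FF_q$ is a field of characteristic $\neq 2$ with $\vcd_2(\FF_q)\leq\cd_2(\FF_q)=1<\infty$, so Theorem~\ref{theorem:homotopy-limit} applies directly and shows that $\GW^{\s}(\FF_q;\eps)\to\K(\FF_q;\eps)^{\hC}$ is a $2$-adic equivalence of spectra; together with the previous paragraph and the finiteness of the homotopy groups this settles the case $q$ odd. If $q$ is even, Theorem~\ref{theorem:homotopy-limit} is not available, and instead one invokes the classical computations of Friedlander~\cite{friedlander-computations} and Fiedorowicz--Priddy~\cite{FP-classical}: via a Brauer lift $\overline{\FF}_q\hookrightarrow\CC$ and the positive solution of the homotopy limit problem over $\CC$ --- equivalently the equivalence $\BO\simeq\BU^{\hC}$ together with its $(-1)$-symmetric analogue from~\cite{berrick-karoubi} --- they express the Grothendieck--Witt space $\Omega^\infty\GW_\cl^{\s}(\FF_q;\eps)$ in terms of the homotopy fibre of an Adams operation, compatibly with the forgetful map to $\Omega^\infty(\K(\FF_q;\eps)^{\hC})$; since $K_{2i-1}(\FF_q)$ has odd order when $q$ is even, Quillen's computation makes $\K(\FF_q;\eps)$ $2$-adically trivial in positive degrees, and the two descriptions match. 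As both sides are infinite loop spaces of connective spectra, this equivalence of spaces upgrades to the desired $2$-adic equivalence of connective spectra. Assembling the two parities with the away-from-$2$ statement concludes the proof.

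The \textbf{main obstacle} is the case $q$ even: it lies outside the scope of Theorem~\ref{theorem:homotopy-limit}, and its resolution rests entirely on transporting the solution of the homotopy limit problem over $\CC$ through a Brauer lift --- the content of the work of Friedlander and of Fiedorowicz--Priddy, which, as the correction to the quadratic computation recorded above shows, needs to be handled carefully. One must also mind the passage from the space-level to the spectrum-level statement, which is only an equivalence on connective covers, explaining the formulation of the theorem.
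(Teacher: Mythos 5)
Your proposal takes a genuinely different route from the paper for odd $q$, and essentially the paper's route for even $q$. The paper's own proof of Theorem~\ref{theorem:HLP-for-finite-fields} is a citation: it credits Friedlander~\cite{friedlander-computations} and Fiedorowicz--Priddy~\cite{FP-classical}, who express the Grothendieck--Witt spaces of $\FF_q$ in terms of Adams operations on $\BO$ and $\BSp$ via the Brauer lift, and combines these with the solution of the homotopy limit problem over $\CC$. This is used uniformly in $q$. You instead split by parity and, for $q$ odd, invoke Theorem~\ref{theorem:homotopy-limit} directly, which is indeed applicable since $\cd_2(\FF_q)=1$ for $q$ odd. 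This is a legitimate and arguably more modern route for odd $q$, replacing the lengthy cohomology computations of finite matrix groups by the $2$-adic result of Hu--Kriz--Ormsby/Berrick--Karoubi--Schlichting--{\O}stv{\ae}r/Bachmann--Hopkins. For $q$ even you reproduce the paper's citation-based argument; you do not use the paper's alternative and shorter argument for $q$ even, namely Proposition~\ref{proposition:holim-char-two-s}, which bypasses Friedlander--Fiedorowicz--Priddy entirely by reducing, via the Tate pullback square of Corollary~\reftwo{corollary:tate-square-L}, to the comparison $\L^{\s}(\FF_q)\to\K(\FF_q;\QF^{\s})^{\tC}$ and checking this on the two nontrivial homotopy groups.

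There is, however, a genuine gap in your reduction step. You assert that ``one checks that both spectra have degreewise finitely generated homotopy groups'' in order to fracture the statement into the $2$-inverted and $2$-complete cases. For $\GW^{\s}(\FF_q;\eps)$ this does follow from the fibre sequence $\K_{\hC}\to\GW^{\s}\to\L^{\s}$. For $\K(\FF_q;\eps)^{\hC}$ with $q$ odd it is not at all obvious: the homotopy fixed point spectral sequence has contributions $\rH^s(\Ct;\K_t(\FF_q))$ for all $s\geq0$, and since $\K_{2i-1}(\FF_q)=\ZZ/(q^i-1)$ has $2$-torsion when $q$ is odd, infinitely many $E_2$-entries are nonzero in each total degree. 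Establishing finite generation of $\pi_n\K(\FF_q;\eps)^{\hC}$ therefore requires analysing the differentials (or invoking a Brauer lift to identify $\K(\FF_q)^\wedge_2\simeq\ku^\wedge_2$ with the conjugation action so that $(\K^{\hC})^\wedge_2\simeq\ko^\wedge_2$ --- but one then still has to be careful, as $2$-completion does not commute with $\pi_n$ for spectra that are not known to have degreewise finitely generated homotopy). As written, your finite generation claim for $\K^{\hC}$ is circular, being an immediate consequence of the theorem you are proving. A clean fix is to replace the groupwise fracture by the spectral arithmetic fracture square $X\to X[\tfrac12]\times_{X^\wedge_2[\tfrac12]}X^\wedge_2$, which is a pullback for all spectra and thus requires no finite generation hypothesis; alternatively, use the Tate pullback square to reduce to the map $\L^{\s}(\FF_q;\eps)\to\K(\FF_q;\eps)^{\tC}$, both sides of which are $2$-complete (the left because its homotopy groups are finite $2$-groups, the right because Tate constructions for $\Ct$ are $2$-complete), so that the $2$-adic equivalence from Theorem~\ref{theorem:homotopy-limit} is automatically an integral one. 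A minor additional imprecision: inverting $2$ on the Grothendieck--Witt spectrum does not ``identify $\QF^{\s}_M$ with $\QF^{\gs}_M$'' as Poincar\'e structures when $2$ is not a unit in the ring; what is true is that the induced map on L-theory (and hence GW-theory) becomes an equivalence after inverting $2$, as in Proposition~\ref{proposition:l-groups-two-inverted-outside}.
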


The results of \cite{friedlander-computations} and ~\cite{FP-classical} on which this approach relies use lengthy  computations in the cohomology of various finite matrix groups. We shall now present an alternative and significantly shorter proof of the Theorem~\refthree{theorem:HLP-for-finite-fields} in the case of \(q\) even, using Theorem~\refthree{theorem:fiber-sequence-intro-three} from the introduction. We recall that $\GW_\cl^{\s}(\FF_q;\eps) \to \GW^s(\FF_q;\eps)$ is an equivalence on connective covers, Corollary~\refthree{corollary:dedekind-gw}, so it suffices to prove the following proposition, covering not only
the Grothendieck-Witt space, but also the corresponding spectrum, and which applies to arbitrary shifts of the symmetric Poincaré structure:

\begin{proposition}
\label{proposition:holim-char-two-s}%
Let $q=2^r$ for some positive integer $r$. Then the map of spectra
\[
\GW(\FF_q;(\QF^{\s})\qshift{m}) \lto \K(\FF_q;(\QF^{\s})\qshift{m})^{\hC}
\]
is an equivalence for every \(m \in \ZZ\).
\end{proposition}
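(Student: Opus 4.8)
The plan is to transport the statement into $\L$-theory via the Tate square and then reduce everything to a computation at the prime $2$.

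First I would invoke the natural square of \reftwo{corollary:tate-square-L}
\[
\begin{tikzcd}
\GW(\C,\QF) \ar[r] \ar[d] & \K(\C,\QF)^{\hC} \ar[d] \\
\L(\C,\QF) \ar[r] & \K(\C,\QF)^{\tC},
\end{tikzcd}
\]
whose two vertical maps both have fibre $\K(\C,\QF)_{\hC}$; the square is therefore cartesian, hence cocartesian, and the upper horizontal map is an equivalence if and only if the lower one is. Applying this to $(\Dperf(\FF_q),(\QF^{\s})\qshift{m})$ reduces the proposition to proving that $\L(\FF_q;(\QF^{\s})\qshift{m}) \to \K(\FF_q;(\QF^{\s})\qshift{m})^{\tC}$ is an equivalence for every $m\in\ZZ$. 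Both functors here are bordism invariant, and the equivalences $\F(\C,\QF\qshift{m})\simeq\Sigma^m\F(\C,\QF)$ are natural in the bordism-invariant functor $\F$; so this map is the $m$-fold suspension of the case $m=0$, and it is enough to show that $\L^{\s}(\FF_q)\to\K(\FF_q;\QF^{\s})^{\tC}$ is an equivalence.

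Next I would compute the two sides. As $q=2^r$, Quillen's computation gives $\K_{2i-1}(\FF_q)\cong\ZZ/(q^i-1)$ of odd order and $\K_{2i}(\FF_q)=0$ for $i\geq1$, so $\K(\FF_q)^{\cwedge}_2\simeq\GEM\ZZ^{\cwedge}_2$, and the duality involution, fixing $[\FF_q]\in\K_0(\FF_q)=\ZZ$, acts trivially. Since the Tate construction sees its input only through the $2$-completion, $\K(\FF_q;\QF^{\s})^{\tC}\simeq(\GEM\ZZ^{\cwedge}_2)^{\tC}$ for the trivial action, whose homotopy is $\ZZ/2$ in even degrees and $0$ in odd degrees --- consistent with the $2$-periodicity over the $\FF_2$-algebra $\FF_q$ given by Corollary~\refthree{corollary:periodicity-L}. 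On the source side $\FF_q$ has global dimension $0$ and $\QF^{\s}$ is $r$-symmetric for all $r$, so Corollary~\refthree{corollary:d-zero} yields $\L^{\s}_{2k-1}(\FF_q)=0$ for all $k$ and $\L^{\s}_0(\FF_q)\cong\W^{\s}(\FF_q)=\ZZ/2$, the last equality being classical, with generator $\langle1\rangle$; and $\L^{\s}(\FF_q)$ is again $2$-periodic.

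The remaining point is to see that the map $\L^{\s}_0(\FF_q)=\W^{\s}(\FF_q)\to\pi_0\K(\FF_q;\QF^{\s})^{\tC}$ is an isomorphism $\ZZ/2\to\ZZ/2$, which I would read off the Tate square on $\pi_0$. The composite $\GW^{\s}_0(\FF_q)\to\pi_0\K(\FF_q)^{\hC}\to\pi_0\K(\FF_q)$ is the forgetful map; as $\Ct$ acts trivially on $\pi_0\K(\FF_q)=\ZZ$ and the other contributions to $\pi_0$ of the homotopy fixed point spectral sequence are $H^s(\Ct;\K_s(\FF_q))$ with $\K_s(\FF_q)$ of odd order for $s>0$, the map $\pi_0\K(\FF_q)^{\hC}=\ZZ\to\pi_0\K(\FF_q)=\ZZ$ is an isomorphism while $\pi_0\K(\FF_q)^{\hC}\to\pi_0\K(\FF_q)^{\tC}$ is the quotient $\ZZ\to\ZZ/2$ by the norm. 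Now $\langle1\rangle\in\GW^{\s}_0(\FF_q)$ has underlying module $\FF_q$, so it maps to $1\in\ZZ$ and hence to the non-zero element of $\pi_0\K(\FF_q)^{\tC}=\ZZ/2$, whereas it maps to the generator of $\W^{\s}(\FF_q)$ under $\GW^{\s}_0(\FF_q)\twoheadrightarrow\W^{\s}(\FF_q)$; commutativity of the square then forces $\L^{\s}_0(\FF_q)\to\pi_0\K(\FF_q)^{\tC}$ to be an isomorphism. Combined with $\L^{\s}_1(\FF_q)=0=\pi_1\K(\FF_q)^{\tC}$ and with the compatibility of the map $\L^{\s}(\FF_q)\to\K(\FF_q;\QF^{\s})^{\tC}$ with the $2$-periodicity of both sides --- which follows from Proposition~\refthree{proposition:periodcat}, using that $-\FF_q=\FF_q$ in characteristic $2$, and the naturality of the bordism-invariant suspension identifications --- one concludes that the map is an isomorphism on all homotopy groups, hence an equivalence; passing back through the Tate square proves the proposition. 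I expect the main obstacle to be precisely this last step: pinning down the $\pi_0$-map through the Tate square and verifying compatibility with $2$-periodicity. The heavy lifting is done by the Tate square of \reftwo{corollary:tate-square-L} and by the near-triviality of $\K(\FF_q)$ at the prime $2$, so there is no genuinely hard input beyond what is already available.
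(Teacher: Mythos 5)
Your proof is correct and follows essentially the same route as the paper: reduce via the Tate square of \reftwo{corollary:tate-square-L} to showing $\L^{\s}(\FF_q)\to\K(\FF_q;\QF^{\s})^{\tC}$ is an equivalence, use bordism invariance of both functors to handle the shift $m$, then use $2$-periodicity together with Quillen's computation (making $\K(\FF_q)$ $2$-adically $\GEM\ZZ$) to reduce to checking $\pi_0$ and $\pi_1$, and finally track the class of $\langle 1\rangle$ through the forgetful map to see the $\pi_0$-map is an isomorphism. The only cosmetic difference is that you invoke naturality of the bordism-invariant suspension equivalence directly, whereas the paper phrases the same reduction via the Bott–Genauer sequence; these are the same argument.
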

\begin{proof}
Corollary~\reftwo{corollary:tate-square-L} provides, for every ring $R$ and Poincaré structure $\QF$ on $\Dperf(R)$, a pullback square
\[
\begin{tikzcd}
	\GW(R;\QF) \ar[r] \ar[d] & \L(R;\QF) \ar[d] \\
	\K(R;\QF)^{\hC} \ar[r] & \K(R;\QF)^{\tC},
\end{tikzcd}
\]
 It therefore suffices to show that the canonical map 
\begin{equation}
\label{equation:tate-finite-fields}%
\L(\FF_q;(\QF^{\s})\qshift{m}) \lto  \K(\FF_q;(\QF^{\s})\qshift{m})^{\tC}
\end{equation}
is an equivalence for every \(m\). 
Applying the transformation \(\L(-) \to \K(-)^{\tC}\) to the metabolic sequence of Example~\reftwo{example:metabolicfseq} gives a commutative diagram
\[
\begin{tikzcd}
	\L(\FF_q;(\QF^{\s})\qshift{m}) \ar[r,"\simeq"] \ar[d] & \Sig^m\L(\FF_q;\QF^{\s}) \ar[d] \\
	\K(\FF_q;(\QF^{\s})\qshift{m})^{\tC} \ar[r,"\simeq"] & \Sig^m\K(\FF_q;\QF^{\s})^{\tC}
\end{tikzcd}
\]
whose horizontal arrows are equivalences; see the discussion before Corollary~\refthree{corollary:periodicity-L}. It will thus suffice to treat the case $m=0$. Furthermore, L and Tate of K-theory are 2-periodic, see Corollary~\refthree{corollary:periodicity-L},
and it suffices to check that~\eqrefthree{equation:tate-finite-fields} induces an isomorphism on $\pi_0$ and $\pi_1$. By Corollary~\refthree{corollary:surgery-global-dim-for-rings} we have that \(\L_0(\FF_q;\QF^\sym) \cong \W^\s(\FF_q) \cong \ZZ/2\) is the Witt group of symmetric bilinear forms over \(\FF_q\), which is isomorphic to \(\ZZ/2\) generated by the class of the symmetric bilinear form \((\FF_q,b)\) with \(b(1,1)=1\). On the other hand, the same corollary also gives that \(\L_1(\FF_q;\QF^\sym) = 0\). To finish the proof it will hence suffice to show that \(\pi_1\K(\FF_q;\QF^\sym)^{\tC}=0\), that \(\pi_0\K(\FF_q;\QF^\sym)^{\tC}=\ZZ/2\), and that the map $\L_0(\FF_q;\QF^\sym) \to \pi_0\K(\FF_q;\QF^\sym)^{\tC}$ is non-zero.

Now, by Quillen's calculation of the \(\K\)-theory of finite fields~\cite{quillen-finite-fields}, the K-groups \(\K_\ast(\FF_q)\) are odd torsion groups in positive degrees, so that the map $\K(\FF_q) \to \GEM\ZZ$ is a 2-adic equivalence. It follows that the induced map
$\K(\FF_q;\QF^\s)^{\tC} \to \ZZ^{\tC}$ is an equivalence as well, which shows that the Tate-K-groups are as claimed. 
 
To finish the proof it will hence suffice to show that the map \(\L_0(\FF_q;\QF^\sym) \to \pi_0(\K(\FF_q;\QF^\sym)^{\tC})\) sends the generator to the generator. Indeed, in light of the commutative diagram
\[
\begin{tikzcd}
\pi_0\Poinc(\Dperf(\FF_q),\QF^\sym) \ar[d] \ar[r,two heads] & \GW_0(\FF_q;\QF^\sym) \ar[d] \ar[r, two heads] & \L_0(\FF_q;\QF^\sym) \ar[d] \\
\pi_0\core(\Dperf(\FF_q),\QF^\sym)^{\Ct} \ar[r,two heads] & \K_0(\FF_q;\QF^\sym)^{\Ct} \ar[r,two heads] & \widehat{\rH}^0(\Ct,\K_0(\FF_q;\QF^\sym))
\end{tikzcd}
\]
this simply follows from the fact that the composed forgetful functor
\[
\pi_0\Poinc(\Dperf(\FF_q),\QF^\sym) \lto \pi_0\core(\Dperf(\FF_q),\QF^\sym)^{\Ct} \lto \K_0(\FF_q;\QF^\sym)^{\Ct} \lto \K_0(\FF_q) \cong \ZZ
\]
sends \((\FF_q,b)\) to the generator \(1 \in \K_0(\FF_q)\).
\end{proof}

\begin{remark}
An alternative argument can be given making use of multiplicative structures:
In \paperfour, we prove that the map $\L(R;\QF^{\s}) \to \K(R;\QF^\s)^{\tC}$ is a map of $\Einf$-rings if $R$ is a commutative ring. For $R= \FF_q$ with $q$ even, we then know that both homotopy rings are isomorphic to $\FF_2[x^{\pm1}]$, for $|x|=2$. As any ring endomorphism of this ring is an isomorphism, the map we investigate is an equivalence. 
\end{remark}

\begin{remark}
\label{remark:K-of-perfect-field}%
Suppose $k$ is a perfect field of characteristic $2$.
In this case, the map $\FF_2 \to k$ induces an equivalence on 2-complete $\K$-theory and on $\L$-theory.
For $\K$-theory, this follows from an analysis of Adams operations on $\K(k)$, see~\cite{Hiller}*{Theorem 5.4}, and for L-theory it follows from Remark~\refthree{remark:odd-vanishing} that the odd L-groups vanish in both cases.
By 2-periodicity of L-theory, Corollary~\refthree{corollary:periodicity-L}, and Corollary~\refthree{corollary:dedekind} together with Corollary~\refthree{corollary:L-zero}, it therefore suffices to note that the map $\FF_2 \to k$ induces an isomorphism on symmetric Witt groups.
This in turn follows as every element in the symmetric Witt group of a field is a sum of one-dimensional forms $\langle x \rangle$ for $x\in k^\times$. Since $k$ is perfect, the Frobenius is surjective and hence $\langle x \rangle = \langle y^2 \rangle = \langle 1 \rangle$ showing that the rank mod 2 map $\W^\s(k) \to \ZZ/2$ is an isomorphism for any perfect field of characteristic $k$, including $\FF_2$.
Considering the commutative diagram
\[
\begin{tikzcd}
	\L^\s(\FF_2) \ar[r,"\simeq"] \ar[d,"\simeq"] & \L^\s(k) \ar[d] \\
	\K(\FF_2)^{\tC} \ar[r,"\simeq"] & \K(k)^{\tC}
\end{tikzcd}
\]
we deduce that the homotopy limit problem has an affirmative answer for every perfect field of characteristic $2$.
\end{remark}

The results of Berrick et al.\ ~\cite{BKSO-fixed-point} on the homotopy limit problem extend significantly beyond the realm of fields. It is shown, for example, that for any  Noetherian scheme \(X\) of finite Krull dimension over \(\ZZ[\tfrac{1}{2}]\) such that \(\vcd_2(k(x))\) is uniformly bounded across all points \(x \in X\),  the map
\begin{equation*}
\GW(X) \lto \K(X)^{\hC} 
\end{equation*}
is an equivalence after \(2\)-completion. 
Using the results of the previous sections we can now relax the assumption that \(2\) is invertible from the above result. Recall that for a Dedekind ring $R$ with line bundle $M$ with involution $\pm 1$, the canonical map 
\[
\GW_\cl^{\s}(R;M) \lto \GW(R;\QF^{\s}_M)
\]
is an equivalence in non-negative degrees, by Corollary~\refthree{corollary:dedekind-gw}. Combining this with the following result gives Theorem~\refthree{theorem:homotopy-limit-intro-three} from the introduction.

\begin{theorem}[The homotopy limit problem]
\label{theorem:holim-z}%
Let \(R\) be a Dedekind ring whose fraction field is a number field. Then for every \(m \in \ZZ\) and every line bundle $M$ over $R$ with involution $\pm1$, the map
\[
\GW(R;(\QF^{\s}_M)\qshift{m}) \lto \K(R;(\QF^{\s}_M)\qshift{m})^{\hC}
\]
is a 2-adic equivalence. 
\end{theorem}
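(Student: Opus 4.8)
The plan is to reduce the statement, via the fibre sequence of Theorem~\refthree{theorem:fiber-sequence-intro-three} rewritten as a pullback square, to the two cases where the homotopy limit problem is already known: the ring $R[\tfrac{1}{2}]$ on the one hand, and the residue fields at the dyadic primes on the other. Concretely, as in the proof of Proposition~\refthree{proposition:holim-char-two-s}, for any ring and any Poincaré structure one has the pullback square relating $\GW$, $\L$, $\K^{\hC}$ and $\K^{\tC}$ from Corollary~\reftwo{corollary:tate-square-L}. Applying this to $(\Dperf(R),(\QF^\s_M)\qshift m)$, the map $\GW \to \K^{\hC}$ is a $2$-adic equivalence if and only if $\L(R;(\QF^\s_M)\qshift m) \to \K(R;(\QF^\s_M)\qshift m)^{\tC}$ is one. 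By the shifting equivalences recalled before Corollary~\refthree{corollary:periodicity-L} we may moreover take $m=0$, and since both $\L$ and $\K^{\tC}$ of a Poincaré $\infty$-category with the symmetric structure are $4$-periodic (Corollary~\refthree{corollary:periodicity-L}), it is enough to check the claim on $\pi_n$ for finitely many $n$, say $n=0,1,2,3$, after $2$-completion.

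The key input is then to run the localisation-completion square of Proposition~\refthree{proposition:LC} with $\ell=2$ (valid since $R$ is Dedekind with fraction field a number field, so $2$ is not a zero divisor, and $\K_0$-surjectivity holds so no control terms are needed) simultaneously for $\L(-;\QF^\s)$ and for $\K(-;\QF^\s)^{\tC}$, compatibly with the natural transformation $\L \to \K^{\tC}$ of bordism-invariant functors. This produces a commutative cube whose front and back faces are pullbacks, reducing the desired equivalence to: (i) the analogous statement for $R[\tfrac12]$, (ii) the statement for $R^\cwedge_2$, and (iii) the statement for $R^\cwedge_2[\tfrac12]$. For (i) and (iii), $2$ is invertible, so one invokes the $2$-complete homotopy limit equivalence of \cite{BKSO-fixed-point} for the number rings (and their completions) in question — here one uses that $\vcd_2$ is finite and uniformly bounded, which holds because $R[\tfrac12]$ is a Dedekind ring with number field fraction field, so its residue fields are finite fields of odd characteristic and its fraction field has finite $\vcd_2$. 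For (ii), $R^\cwedge_2$ is a finite product of complete local Dedekind rings $R^\cwedge_{\fp}$ at the dyadic primes; using Proposition~\refthree{proposition:hensel} (rigidity for quadratic $\L$, and the analogous rigidity for symmetric $\L$ and for $\K^{\tC}$ under Henselian pairs, which follows from the general result of \cite{CMM} cited after Proposition~\refthree{proposition:hensel}) one reduces each factor to its residue field $\FF_\fp$, a finite field of characteristic $2$. At that point Proposition~\refthree{proposition:holim-char-two-s}, or equivalently Remark~\refthree{remark:K-of-perfect-field}, gives the equivalence $\L(\FF_\fp;\QF^\s) \to \K(\FF_\fp;\QF^\s)^{\tC}$, and hence (ii).

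I would organise the write-up as: first the reduction to $\L \to \K^{\tC}$ on $\pi_{\leq 3}$; then the cube from Proposition~\refthree{proposition:LC}; then dispatch the three corners. The main obstacle I anticipate is corner (ii): one must be careful that the rigidity statement $\L(R^\cwedge_\fp;\QF^\s) \simeq \L(\FF_\fp;\QF^\s)$ and $\K(R^\cwedge_\fp;\QF^\s)^{\tC} \simeq \K(\FF_\fp;\QF^\s)^{\tC}$ really holds at the level of \emph{symmetric} (not just quadratic) $\L$-theory and is compatible with the Tate construction on $\K$ — the quadratic rigidity of Proposition~\refthree{proposition:hensel} does not immediately transfer, but one can either bootstrap from the quadratic case using the $2$-torsion-free comparison results of \S\refthree{subsection:surgery-symmetric} together with the fact that the genuine structures interpolate, or invoke directly the $\K^{\tC}$-rigidity from \cite{CMM} (which applies since $\K^{\tC}$ commutes with filtered colimits and satisfies $I$-adic rigidity) and the already-established rigidity for $\GW^\s$ via the pullback square. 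A secondary point requiring care is bookkeeping of the line bundle twist $M$ under $2$-adic completion and localisation, i.e. that $M^\cwedge_\ell$ and $S^{-1}M$ are again invertible modules with involution of the expected type; this is exactly what Proposition~\refthree{proposition:LC} is set up to handle, so it should be routine.
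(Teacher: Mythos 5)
Your overall strategy — reduce to the map $\L \to \K^{\tC}$ via the Tate pullback square, then decompose via the arithmetic (localisation-completion) square of Proposition~\refthree{proposition:LC} — is a genuinely different route from the paper's, which instead applies the localisation-dévissage fibre sequence of Corollary~\refthree{corollary:decomposition-s} directly to the map $\GW \to \K^{\hC}$: there the fibre term is $\oplus_{\fp \mid 2}\GW(\FF_\fp;(\QF^\s_{M_\fp})\qshift{m-1})$, handled by Proposition~\refthree{proposition:holim-char-two-s}, and the cofibre term is $\GW(R[\tfrac{1}{2}];\cdot)$, handled by \cite{BKSO-fixed-point}. Your corners (i) and (iii) are fine and the use of Proposition~\refthree{proposition:LC} is legitimate (any Verdier-localising functor, including $\K^{\tC}$, sends Poincaré-Verdier squares to pullbacks).

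The gap is at corner (ii). The rigidity you invoke for $\L^{\s}$ over Henselian pairs is false: by the computation in Example~\refthree{example:L-of-integers}, $\L^{\s}_0(\ZZ^\cwedge_2) \cong \ZZ/2 \oplus \ZZ/8$ has $16$ elements, whereas $\L^{\s}_0(\FF_2) \cong \ZZ/2$. Proposition~\refthree{proposition:hensel} is specific to \emph{quadratic} L-theory, where Wall's lifting results apply; the CMM-type bootstrap only upgrades an $I$-adic rigidity statement to Henselian pairs, it does not produce one for $\L^{\s}$ (or for $\GW^{\s}$, which must also fail by the Tate square since $\K^{\tC}$ does satisfy rigidity). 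Your alternative suggestion of "bootstrapping from the quadratic case via the comparison results of \S\refthree{subsection:surgery-symmetric}" does not help either: those results only identify $\L^{\gs}$ with $\L^{\s}$ in high degrees, and the failure of rigidity occurs precisely in the low-degree transition region where $\L^{\gs}$ agrees with $\L^{\qdr}$ instead. In fact, the assertion for $R^\cwedge_2$ is itself a nontrivial instance of the homotopy limit problem (for a product of complete DVRs of mixed characteristic $(0,2)$), and the paper's remark following Theorem~\refthree{theorem:holim-z} shows that the only known route to it is again dévissage, using $\cd_2$ of the local field being $2$. So the arithmetic-square decomposition does not actually sidestep the dévissage input, and the rigidity shortcut breaks down.
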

\begin{proof}
Let \(S\) be the (finite) set of all prime ideals in \(R\) lying over \(2\). We observe that then $R_S = R[\tfrac{1}{2}]$ and similarly that $M_S = M[\tfrac{1}{2}]$ and consider the commutative diagram
\[
\begin{tikzcd}
	\oplus_{\fp \in S}\GW(\FF_{\fp}; (\QF^{\s}_{M_{\fp}})\qshift{m-1}) \ar[d] \ar[r] & \GW(R; (\QF^{\s}_{M})\qshift{m}) \ar[d] \ar[r] & \displaystyle\GW(R[\tfrac{1}{2}]; (\QF^{\s}_{M_{S}})\qshift{m}) \ar[d] \\ 
	\oplus_{\fp \in S}\K(\FF_{\fp}; (\QF^{\s}_{M_{\fp}})\qshift{m-1})^{\hC}  \ar[r] & \K(R;(\QF^{\s}_{M})\qshift{m})^{\hC}  \ar[r] & \K(R[\tfrac{1}{2}]; (\QF^{\s}_{M_{S}})\qshift{m})^{\hC} 
\end{tikzcd}
\]
obtained via the localisation-dévissage sequences of Corollary~\refthree{corollary:decomposition-s}. Note that we have commuted the homotopy fixed points with the \emph{finite} direct sum in the lower left corner. 
The left most vertical map is an equivalence by Proposition~\refthree{proposition:holim-char-two-s}, and the right most vertical map is a 2-adic equivalence by \cite{BKSO-fixed-point}*{Theorem 2.2}: We need to argue that all residue fields of $R[\tfrac{1}{2}]$ have finite mod 2 virtual cohomological dimension. Indeed, the residue fields at non-zero prime ideals are finite fields and hence have cohomological dimension one (the Galois group is $\widehat{\ZZ}$), and the residue field at $0$ is the fraction field which is number field and hence also has finite $\vcd_2$; \cite{Serre-Galois-cohomology}*{\S II.4.4}. 
It then follows that the middle vertical map is a 2-adic equivalence, as desired. 
\end{proof}

\begin{remark}
The conclusion of Theorem~\refthree{theorem:holim-z} thus holds for all Dedekind rings whose field of fractions is a global field of characteristic different from $2$: In the odd characteristic case \cite{BKSO-fixed-point} applies, and the case of characteristic zero is the content of Theorem~\refthree{theorem:holim-z}.
\end{remark}

\begin{remark}
Suppose again that $R$ is a Dedekind ring with global fraction field $K$. Suppose that $K$ has characteristic different from 2 and is not formally real, that is, that $-1$ is a sum of squares. In other words, suppose that $K$ has positive odd characteristic or is a totally imaginary number field. Then the Witt group $\W^\s(K)$ is a 2-primary torsion group of bounded exponent by \cite{scharlau-quadratic}*{Theorem 2.7.9}. As $\W^\s(R)$ is a subgroup of $\W^\s(K)$, see the proof of Proposition~\refthree{proposition:compute-L-groups},  Corollary~\refthree{corollary:compute-L-groups-global-field} implies that $\L^\s(R)$ is (derived) 2-complete. As $\K(R)^{\tC}$ is also 2-complete, the pullback
\[
\begin{tikzcd}
\GW(R;\QF^\s) \ar[r] \ar[d] & \L(R;\QF^\s) \ar[d] \\
\K(R;\QF^\s)^{\hC} \ar[r] & \K(R;\QF^\s)^{\tC}
\end{tikzcd}
\]
together with Theorem~\refthree{theorem:holim-z} implies that the map of Theorem~\refthree{theorem:holim-z} is in fact an equivalence before 2-completion. Conversely, if $K$ admits a real embedding,  then $\L^\s(R)$ is not 2-complete: We have seen in Corollary~\refthree{corollary:finite-generation} that all homotopy groups are finitely generated, so $\L^\s(R)$ is 2-complete if and only if all symmetric L-groups of $R$ are 2-complete. However, as observed in the proof of Corollary~\refthree{corollary:finite-generation}, $\W^\s(R)$ has rank equal to the number of real embeddings of $K$, and is thus not 2-complete. It hence follows that the map under investigation in Theorem~\refthree{theorem:holim-z} is not an integral equivalence if $K$ admits a real embedding. See also \cite{BKSO-fixed-point}*{Theorem 2.4 \& Proposition 4.7}. In fact, in our situation, the same result is true for $\W^\s(R;M)$ for any line bundle $M$ on $R$: The map $\W^\s(R;M) \to \W^\s(K)$ is an isomorphism after inverting 2, and the map $\W^\s(K) \to \W^\s(\RR)$ induced from a real embedding of $K$ is surjective. Hence the composite is non-zero and consequently $\ZZ$ is a direct summand inside $\W^\s(R;M)$. Hence $\L^\s(R;M)$ is not 2-complete.

As a side remark, we note that in the case where $K$ admits a real embedding, $\L^\s(R)$ contains $\L^s(\RR)$ as a retract, and $\L^\s(\RR)$ is not 2-complete. To see that $\L^\s(\RR)$ is indeed a retract, consider the following composite 
\[
\L^\s(\ZZ) \lto \L^\s(R) \lto \L^\s(\RR)
\]
where the two maps are induced by the canonical map $\ZZ \to R$ and the map $R \to K \subseteq \RR$ induced by a real embedding of $K$. This composite admits a splitting, as was observed in \cite{HLN}*{Theorem A}.
\end{remark}

\begin{remark}
The proof of Theorem~\refthree{theorem:holim-z} reveals that the assumptions are not optimal. Assume for instance that $R$ is the ring of integers in a non-archimedean local field $K$ of mixed characteristic $(0,2)$ and let $k$ be the residue field of the local ring $R$. For instance, assume that $R$ is a dyadic completion of the ring of integers in a number field.
Since $R$ is local the line bundle $M$ is trivial. We again consider the diagram consisting of horizontal fibre sequences
\[
\begin{tikzcd}
	\GW(k;(\QF^{\s})\qshift{m-1}) \ar[r] \ar[d] & \GW(R; (\QF^{\s})\qshift{m}) \ar[r] \ar[d] & \GW(K; (\QF^{\s})\qshift{m}) \ar[d] \\
	\K(k;(\QF^{\s})\qshift{m-1})^{\hC}  \ar[r] & \K(R; (\QF^{\s})\qshift{m})^{\hC}  \ar[r] & \K(K; (\QF^{\s})\qshift{m})^{\hC} 
\end{tikzcd}
\]
First, we note that $\cd_2(K) = 2$ \cite{Serre-Galois-cohomology}*{\S 4.3}, so the right vertical map is a 2-adic equivalence. We deduce that the middle vertical map is a 2-adic equivalence if and only if the left vertical map is a 2-adic equivalence. Thus if we assume that $k$ is a finite field, the middle vertical map is a 2-adic equivalence. In fact, in this case, $K$ is a finite extension of $\QQ_2^\cwedge$, and as observed earlier, $\L^\s(K)$ is 2-complete, in fact 2-power torsion \cite{Lam}*{Theorem VI 2.29}. It follows that the middle vertical map is in fact an equivalence.
\end{remark}

The proof of Theorem~\refthree{theorem:holim-z} allows us to also deduce the following result, which, in case of the integers was conjectured by Berrick and Karoubi \cite{berrick-karoubi}.
\begin{proposition}
\label{proposition:BK-conjecture}%
Let $R$ be a Dedekind ring whose fraction field 
is a global field of characteristic zero. Then the map 
\[
\GW^\s(R;\eps) \lto \GW^\s(R[\tfrac{1}{2}];\eps)
\]
is a 2-local equivalence on connected covers and injective in $\pi_0$.
\end{proposition}
\begin{proof}
The fibre of the map in question is given by a sum of terms of the kind $\GW(\FF_{\mathfrak{p}};(\QF^{\s}_{\eps})\qshift{-1})$, with $\FF_{\mathfrak{p}}$ a finite field of characteristic 2 by Corollary~\refthree{corollary:decomposition-s}. It therefore suffices to show that each of these terms is $2$-locally $(-1)$-truncated and has trivial $\pi_0$. To see this, we note that the map
\[
\GW(\FF_{\mathfrak{p}};(\QF^{\s}_{\eps})\qshift{-1}) \lto \K(\FF_{\mathfrak{p}};(\QF^{\s}_{\eps})\qshift{-1})^{\hC}
\]
is an equivalence by Proposition~\refthree{proposition:holim-char-two-s}. Let us denote by $\ZZ(-1)$ the complex $\ZZ$ in degree 0 with the sign action of $\Ct$. The map $\K(\FF_{\mathfrak{p}};(\QF^{\s}_{\eps})\qshift{-1}) \to \ZZ(-1)$ is a $\Ct$-equivariant map whose fibre has finite and odd torsion homotopy groups. It follows upon applying $(-)^{\hC}$ that this map is 1-connective and a $2$-local equivalence. The proposition follows.
\end{proof}

\begin{remark}
Using Remark~\refthree{remark:K-of-perfect-field}, one obtains the following variant of Proposition~\refthree{proposition:BK-conjecture}. Namely, let $R$ be a Dedekind ring of characteristic zero such that all residue fields of dyadic primes are perfect. Then the map 
\[
\GW^\s(R;\eps)/2 \lto \GW^\s(R[\tfrac{1}{2}];\eps)/2
\]
is $(-1)$-truncated. For this, we simply need to know that for a perfect field $k$ of characteristic 2, $K(k)/2$ is 0-truncated, i.e.\ the higher $\K$-groups $\K_n(k)$, for $n\geq 1$, are uniquely 2-divisible. In general, this of course does not imply that these groups vanish after localisation at $2$, but it is the case for instance for an algebraic closure of $\FF_2$. Whenever the residue fields satisfy the property that $K(k)_{(2)} \simeq \ZZ_{(2)}$, the same argument as in the proof of Proposition~\refthree{proposition:BK-conjecture} applies.

Finally, we note that every perfect field $k$ of positive characteristic $p$ is the residue field of a characteristic 0 Dedekind ring, namely the (complete) discrete valuation ring $W(k)$ of $p$-typical Witt vectors on $k$.
\end{remark}

We continue by noting the following obstruction to a positive solution of the homotopy limit problem for classical Grothendieck-Witt-theory of a discrete ring $R$, see also \cite{BKSO-fixed-point}*{Remark 4.9}.
In particular, Proposition~\refthree{proposition:obstruction-to-HLP} implies that the map $\GW^\s_\cl(R)/2 \to \K(R)^{\hC}/2$ cannot be an equivalence in non-negative degrees unless the comparison map $\L^\gs(R) \to \L^\s(R)$ is so as well. Recall from Example~\refthree{example:dimension-bound-sharp} that there are rings for which this is not the case.

\begin{proposition}
\label{proposition:obstruction-to-HLP}%
Suppose that the map $\GW_\cl^\s(R;M)/2 \to \K(R)^{\hC}/2$ is $n$-truncated for some natural number $n$, where we view $\GW_{\cl}^\s(R;M)/2$ as a (connective) spectrum. Then the map $\L^\gs(R;M) \to \L^\s(R;M)$ is $(n-1)$-truncated.
\end{proposition}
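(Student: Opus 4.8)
The plan is to reduce the statement to a truncation estimate on the fibre
\[
F:=\fib\bigl(\L^{\gs}(R;M)\to\L^{\s}(R;M)\bigr),
\]
and then to extract that estimate from the hypothesis by feeding it through the Tate square together with the identification of $\K(R;M)^{\tC}$ with hyperquadratic $\L$‑theory. By the comparison theorem of \cite{comparison} we have $\GW^{\s}_{\cl}(R;M)\simeq\tau_{\geq 0}\GW^{\gs}(R;M)$, so the map in the hypothesis is the composite of the connective cover map with the canonical map $\GW^{\gs}(R;M)\to\K(R;M)^{\hC}$ of \reftwo{corollary:tate-square-L}. Since $\fib(\tau_{\geq 0}\GW^{\gs}\to\GW^{\gs})$ is $(-2)$‑truncated, the hypothesis (using $n\geq 0$) implies that $G'/2$ is $n$‑truncated, where $G':=\fib\bigl(\GW^{\gs}(R;M)\to\K(R;M)^{\hC}\bigr)$; and by the Tate square \reftwo{corollary:tate-square-L} for $\QF^{\gs}_M$ we may identify $G'\simeq\fib\bigl(\L^{\gs}(R;M)\to\K(R;M)^{\tC}\bigr)$.

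The key structural input is the hyperquadratic fibre sequence $\L^{\qdr}(R;M)\to\L^{\s}(R;M)\to\K(R;M)^{\tC}$, i.e.\ the identification of $\K(R;M)^{\tC}$ with the cofibre of the comparison map from quadratic to symmetric $\L$‑theory, compatibly with the factorisation $\QF^{\qdr}_M\to\QF^{\gs}_M\to\QF^{\s}_M$. Granting this, $\fib\bigl(\L^{\s}(R;M)\to\K(R;M)^{\tC}\bigr)\simeq\L^{\qdr}(R;M)$, and the canonical map $\L^{\qdr}(R;M)\to\L^{\gs}(R;M)$ — whose composite to $\K(R;M)^{\tC}$ is null — lifts to a map $\L^{\qdr}(R;M)\to G'$ that splits the natural surjection $G'\to\L^{\qdr}(R;M)$. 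Hence $G'\simeq\L^{\qdr}(R;M)\oplus F$, so $F$ is a retract of $G'$. Consequently $F/2$ is a retract of $G'/2$ and is therefore $n$‑truncated; in particular $\pi_j F$ is $2$‑torsion‑free for every $j\geq n$, as one reads off the exact sequence $0\to\pi_{j+1}F/2\to\pi_{j+1}(F/2)\to\pi_j(F)[2]\to 0$ together with $\pi_{j+1}(F/2)=0$ for $j+1>n$.

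It remains to see that $\pi_\ast F$ is $2$‑power torsion, for then $\pi_j F=0$ for all $j\geq n$, i.e.\ $F$ is $(n-1)$‑truncated, which is the claim. For this I would use that the transformations $\Qgen{(m+1)}{M}\to\Qgen{m}{M}$ have cofibres whose homotopy groups are (shifts of) the $\Ct$‑Tate cohomology groups of $M$ (cf.\ Remark~\refthree{remark:genuineeq}), each annihilated by $2$; since on each perfect object and in each degree the tower defining $\QF^{\s}_M=\lim_m\Qgen{m}{M}$ is eventually constant, inverting $2$ turns $\QF^{\gs}_M\to\QF^{\s}_M$ into an equivalence, whence $\L^{\gs}(R;M)[\tfrac12]\xrightarrow{\ \sim\ }\L^{\s}(R;M)[\tfrac12]$ and $F[\tfrac12]\simeq 0$. (Alternatively one may invoke Karoubi's splitting, which gives $\L^{\gs}(R;M)[\tfrac12]\simeq\L^{\qdr}(R;M)[\tfrac12]$ directly.) Combining $F[\tfrac12]\simeq 0$ with the $2$‑torsion‑freeness of $\pi_j F$ for $j\geq n$ forces $\pi_j F=0$ there.

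The main obstacle is the second step: invoking the hyperquadratic identification $\cof(\L^{\qdr}_M\to\L^{\s}_M)\simeq\K(R;M)^{\tC}$ and extracting from it the splitting that exhibits $F$ as a retract of $G'$ — this is the conceptual crux, and without the retract one only controls $F$ as a subquotient of $4$‑periodic data, which is useless. Everything else is routine bookkeeping with fibre sequences and connectivity; the only other points requiring a little care are the interchange of $\L$‑theory with inverting $2$ past the infinite limit defining $\QF^{\s}_M$, and keeping track of the low‑degree discrepancy between $\GW^{\s}_{\cl}$ and $\GW^{\gs}$, which is harmless since $n\geq 0$.
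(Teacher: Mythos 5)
Your instinct that the "hyperquadratic identification'' is the crux is correct, and unfortunately it is exactly where the argument breaks: the claimed fibre sequence
\[
\L^{\qdr}(R;M)\lto\L^{\s}(R;M)\lto\K(R;M)^{\tC}
\]
is false in general. A quick way to see this: when \(2\) is invertible in \(R\) (e.g.\ \(R\) a field of characteristic \(0\)), Remark~\refthree{remark:genuineeq} shows that the natural transformations \(\Qgen{m+1}{M}\to\Qgen{m}{M}\) are all equivalences, so \(\QF^{\qdr}_M\simeq\QF^{\s}_M\) and hence \(\cof\bigl(\L^{\qdr}(R;M)\to\L^{\s}(R;M)\bigr)=0\). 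But \(\K(R;M)^{\tC}\) is generically nonzero — already \(\pi_0\K(R;M)^{\tC}\) receives \(\hat{\mathrm H}^0(\Ct;\K_0(R))\cong\ZZ/2\) for the trivial involution. So \(\K^{\tC}\) is not the cofibre of the symmetrisation map. (The cofibre is the object the paper calls \emph{normal} or \emph{hyperquadratic} \(\L\)-theory \(\L^{\mathrm n}\); it receives a map from \(\L^{\s}\) compatibly with the one to \(\K^{\tC}\), but those two targets are different, and no identification is available here.) Without this, there is no fibre sequence \(F\to G'\to\L^{\qdr}\), the lift \(\L^{\qdr}\to G'\) does not split a projection onto \(\L^{\qdr}\), and you cannot conclude that \(F/2\) is a retract of \(G'/2\). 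The rest of your write-up — the reduction from \(\GW^{\s}_{\cl}\) to \(\GW^{\gs}\) using \cite{comparison}, the identification \(G'\simeq\fib(\L^{\gs}\to\K^{\tC})\) via the Tate square, the observation that \(F[\tfrac12]\simeq 0\), and the final step ``\(F/2\) \(n\)-truncated plus \(F[\tfrac12]=0\) forces \(F\) \((n-1)\)-truncated'' — is all correct, but the middle is missing.

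The paper's proof gets precisely the missing statement (\(n\)-truncatedness of \(\L^{\gs}/2\to\L^{\s}/2\)) by a different mechanism: it writes \(\L^{\s}\) as the filtered colimit of \(\L^{\gs}\) along the Karoubi shift maps \(\sigma:\Sigma^4\L^{\gs}\to\L^{\gs}\), and then compares \(\sigma\) against the corresponding shift \(\sigma'\) on \(\K^{\tC}\), which by Corollary~\refthree{corollary:periodicity-L} is an \emph{equivalence} (\(\K^{\tC}\) is \(4\)-periodic). Staring at the square
\[
\begin{tikzcd}
\L^{\gs}(R;M)/2 \ar[r,"\sigma"] \ar[d] & \Sigma^{-4}\L^{\gs}(R;M)/2 \ar[d] \\
\K(R;M)^{\tC}/2 \ar[r,"\sigma'","\simeq"'] & \Sigma^{-4}\K(R;M)^{\tC}/2
\end{tikzcd}
\]
with left vertical \(n\)-truncated (from the hypothesis through the Tate square) and right vertical \((n-4)\)-truncated shows that \(\sigma\) on \(\L^{\gs}/2\) is \(n\)-truncated; passing to the colimit gives that \(\L^{\gs}/2\to\L^{\s}/2\) is \(n\)-truncated, and then your final step kicks in. If you want to salvage your approach, this periodicity-of-\(\K^{\tC}\) trick is the replacement for the hyperquadratic identification you were looking for.
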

\begin{proof}
By Proposition~\refthree{proposition:l-groups-two-inverted-outside} below, the map $\L^\gs(R;M) \to \L^\s(R;M)$ is an equivalence after inverting $2$. Therefore, once we show that the map $\L^\gs(R;M)/2 \to \L^\s(R;M)/2$ is $n$-truncated, it follows that the map $\L^\gs(R;M) \to \L^\s(R;M)$ is $(n-1)$-truncated as claimed. We first observe that the map $\GW^\gs(R;M)/2 \to \K(R)^{\hC}/2$ is also $n$-truncated, because the map $\GW_{\cl}^\s(R;M)/2 \to \GW^{\gs}(R;M)/2$ is $0$-truncated by \cite{comparison}.
We then consider the pullback diagram
\[
\begin{tikzcd}
	\GW^\gs(R;M)/2 \ar[r] \ar[d] & \L^\gs(R;M)/2 \ar[d] \\
	\K(R)^{\hC}/2 \ar[r] & \K(R)^{\tC}/2 
\end{tikzcd}
\]
and conclude that the map $\L^\gs(R;M)/2 \to \K(R)^{\tC}/2$ is $n$-truncated as well. Then we recall that there are canonical shift maps
\[
\cdots \lto \Sigma^4 \L^{\gs}(R;M) \stackrel{\sigma}{\lto} \L^\gs(R;M) \stackrel{\sigma}{\lto} \Sigma^{-4}\L^\gs(R;M) \lto \cdots
\]
whose filtered colimit is given by $\L^\s(R;M)$. It therefore suffices to show that $\L^\gs(R;M)/2 \to \Sigma^{-4}\L^\gs(R;M)/2$ is $n$-truncated. For this we consider the diagram
\[
\begin{tikzcd}
	\L^\gs(R;M)/2 \ar[r,"\sigma"] \ar[d] & \Sigma^{-4}\L^{\gs}(R;M)/2 \ar[d] \\
	\K(R)^{\tC}/2 \ar[r,"\sigma'"] & \Sigma^{-4}\K(R)^{\tC}/2
\end{tikzcd}
\]
and note that the lower horizontal map is an equivalence. Furthermore, the left vertical map is $n$-truncated and the right vertical map is $(n-4)$-truncated. We deduce that the upper horizontal map is also $n$-truncated.
\end{proof}

We finish this section with the promised calculation of 2-inverted genuine L-theory. At this point, we will invoke multiplicative structures on L-theory which we develop in detail in \paperfour.
\begin{proposition}
\label{proposition:l-groups-two-inverted-outside}%
Let $R$ be a ring with invertible $\ZZ$-module with involution $M$, and let $m \in \ZZ \cup \{  \pm \infty \}$. Then the natural map 
\[
\L(R;\Qgen{m}{M})[\tfrac{1}{2}] \lto \L(R;\QF^\s_M)[\tfrac{1}{2}]
\]
is an equivalence.
\end{proposition}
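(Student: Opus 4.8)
The plan is to run the comparison along the tower of Poincaré structures $\QF^{\qdr}_M = \Qgen{\infty}{M} \to \dots \to \Qgen{m+1}{M} \to \Qgen{m}{M} \to \dots \to \Qgen{-\infty}{M} = \QF^{\s}_M$ of Definition~\refthree{definition:genuine-structures} and to reduce to two inputs: first, that every single transition map $\L(R;\Qgen{m+1}{M}) \to \L(R;\Qgen{m}{M})$ with $m \in \ZZ$ becomes an equivalence after inverting $2$; and second, that these assemble correctly at the two ends of the tower. For the assembly I would use that $\QF^{\s}_M \simeq \colim_{m\to-\infty}\Qgen{m}{M}$ as Poincaré structures with common underlying duality $\Dual = \hom_R(-,M)$ — this is immediate from the pullback description in Definition~\refthree{definition:genuine-structures} — and that $\L$ commutes with such filtered colimits, so that $\L(R;\QF^{\s}_M) \simeq \colim_{m\to-\infty}\L(R;\Qgen{m}{M})$; then the map from $\L(R;\Qgen{m_0}{M})$ is a transfinite composite of single transitions and is a $\ZZ[\tfrac{1}{2}]$-equivalence for every $m_0 \in \ZZ$. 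For $m_0 = +\infty$ one uses in addition that $\L_n(R;\QF^{\qdr}_M) \to \L_n(R;\Qgen{m}{M})$ is an isomorphism for $n \leq 2m-3$ by Corollary~\refthree{corollary:genuine-is-quadratic}, so that after inverting $2$ it agrees in each fixed degree with $\L_n(R;\Qgen{m}{M})[\tfrac{1}{2}]$ for $m \gg 0$; the case $m_0 = -\infty$ is the identity.

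The geometric input for a single transition is that the fibre of $\Qgen{m+1}{M} \to \Qgen{m}{M}$, as a quadratic functor, has the same underlying duality and hence equals the fibre of the associated linear parts, which by Definition~\refthree{definition:genuine-structures} is the exact functor
\[
X \longmapsto \map_R\!\big(X,\ \widehat{\mathrm{H}}^{-m}(\Ct;M)[m-1]\big),
\]
where $\widehat{\mathrm{H}}^{-m}(\Ct;M) = \pi_m \GEM M^{\tC}$ is a $\Ct$-Tate cohomology group of $M$ and is therefore killed by $2$ (a module over $R/2R$). Consequently $\map_R(X,\widehat{\mathrm{H}}^{-m}(\Ct;M)[m-1])$ has all homotopy groups killed by $2$ for every perfect $X$, and in particular vanishes after inverting $2$; thus $\Qgen{m+1}{M} \to \Qgen{m}{M}$ is an equivalence of Poincaré structures after inverting $2$. (The analogous computation identifies the fibre of $\Qgen{m}{M} \to \QF^{\s}_M$ with $\map_R(-,(\tau_{\leq m-1}\GEM M^{\tC})[-1])$, again built from $\Ct$-Tate data and hence $\ZZ[\tfrac{1}{2}]$-acyclic; compare Remark~\refthree{remark:genuineeq}, which records the stronger fact that this transition is an \emph{integral} equivalence when $\widehat{\mathrm{H}}^{-m}(\Ct;M)=0$.)

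It then remains to deduce the corresponding statement for $\L$, i.e.\ that $\fib(\L(R;\Qgen{m+1}{M}) \to \L(R;\Qgen{m}{M}))$ is $\ZZ[\tfrac{1}{2}]$-acyclic. Since both $\Qgen{m}{M}$ and $\Qgen{m+1}{M}$ are $m$-quadratic, surgery (Proposition~\refthree{proposition:surgery-quadratic}) lets one represent, for each $n$, the classes in both $\L$-groups and their nullcobordisms by Poincaré objects and Lagrangians concentrated in a fixed interval of degrees. On such representatives the obstruction to lifting a $\Qgen{m}{M}$-structure (on a Poincaré object, or on its Lagrangian) to a $\Qgen{m+1}{M}$-structure is the image under the connecting map of the fibre sequence above, hence lies in a specified homotopy group of $\map_R(X,\widehat{\mathrm{H}}^{-m}(\Ct;M)[m-1])$ and is killed by $2$. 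A diagram chase on the presentations $\pi_0\pM_n^{a,b}(R;\Qgen{j}{M}) \to \pi_0\Poinc_n^{a}(R;\Qgen{j}{M}) \to \L_n(R;\Qgen{j}{M}) \to 0$ for $j=m,m+1$ — proceeding as in the proofs of Corollaries~\refthree{corollary:genuine-is-quadratic} and~\refthree{corollary:genuine-is-symmetric}, but with ``the obstruction vanishes'' replaced by ``the obstruction is killed by $2$'' — then exhibits the kernel and cokernel of $\L_n(R;\Qgen{m+1}{M}) \to \L_n(R;\Qgen{m}{M})$ as subquotients of these $2$-torsion obstruction groups, hence annihilated by $2$, giving the desired $\ZZ[\tfrac{1}{2}]$-equivalence. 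Alternatively and more structurally, one can carry out this last step using the multiplicative refinements of $\L$-theory from \paperfour: all spectra in sight are modules over $\L^{\s}(\ZZ)$ compatibly with the transition maps, which reduces the claim to the case $R = \ZZ$, settled by the explicit computations of Example~\refthree{example:L-of-integers}.

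The main obstacle is precisely this passage from the Poincaré-structure level to the $\L$-theory level: the range estimates of Corollaries~\refthree{corollary:genuine-is-quadratic} and~\refthree{corollary:genuine-is-symmetric} do not by themselves suffice, since they are invariant under the $4$-fold periodicity of Corollary~\refthree{corollary:periodicity-L} and hence only control a bounded range of degrees, whereas what is needed is the uniform-in-degree statement ``$2$-torsion everywhere'' — whence the need either to run the surgery/obstruction bookkeeping carefully in all degrees, or to import the $\L^{\s}(\ZZ)$-module structures of \paperfour. A secondary point requiring care is the colimit identity $\L(R;\QF^{\s}_M) \simeq \colim_{m\to-\infty}\L(R;\Qgen{m}{M})$, which rests on $\Poinc$ and $\Poinc^{\partial}$ commuting with filtered colimits of Poincaré structures sharing a fixed duality; this follows from the explicit descriptions of these spaces but should be spelled out.
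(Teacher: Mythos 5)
Your primary (obstruction-theoretic) argument has a genuine gap, and the paper takes the route you list only as a secondary alternative. Let me first pinpoint the error in the main line of your proposal, then compare your secondary suggestion with what the paper actually does.

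The central claim you need for the obstruction argument is that the kernel and cokernel of $\L_n(R;\Qgen{m+1}{M}) \to \L_n(R;\Qgen{m}{M})$ are annihilated by $2$. This is false. Take $R=\ZZ$, $M=\ZZ$, $m=0$, $n=0$: the map is $\L_0^{\gev}(\ZZ) \to \L_0^{\gs}(\ZZ)$, i.e.\ $\W^{\ev}(\ZZ) \to \W^{\s}(\ZZ)$, which under the signature isomorphisms is multiplication by $8$ on $\ZZ$ (this is recorded in the proof of Lemma~\refthree{lemma:q-to-s-cofibre-orthogonal}, using $\QF^{\gq}_\ZZ = \QF^{\gev}_\ZZ$). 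Its cokernel is $\ZZ/8$, not killed by $2$ — even though the obstruction group $\pi_{-1}\map_\ZZ(\ZZ,\hat{\mathrm{H}}^0(\Ct;\ZZ)[-1]) \cong \ZZ/2$ most certainly is. The reason your diagram chase fails is a mismatch between two kinds of ``multiplication by $2$'': the connecting map $\delta\colon \Omega^\infty\Qgen{m}{M}(X) \to \Omega^\infty\Sigma\Phi(X)$ is additive with respect to addition of forms \emph{on a fixed} $X$, while $2[X,q]$ in the $\L$-group is represented by the orthogonal sum $(X\oplus X, q\oplus q)$, whose obstruction is $(\delta q, \delta q) \in \pi_{n-1}\Phi(X)\oplus\pi_{n-1}\Phi(X)$ and not $2\,\delta q$. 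The obstruction $\delta q$ is not a cobordism invariant, and its $2$-torsion-ness gives no control on the image of the map of $\L$-groups. (Concretely: $\langle 1\rangle^{\oplus k}$ over $\ZZ$ always has nonzero lifting obstruction to $\Qgen{1}{\ZZ}$, yet $\langle 1\rangle^{\oplus 8}$ does lift — but only after passing to the cobordant representative $E_8$.) Your range estimates from Corollaries~\refthree{corollary:genuine-is-quadratic} and~\refthree{corollary:genuine-is-symmetric} similarly do not reach this boundary case, as you note; but the proposed obstruction-theoretic fix does not repair it.

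Your secondary alternative — invoking the multiplicative refinements of \paperfour — is essentially the paper's proof, but the precise form of the module statement matters. The paper does not use that the spectra are modules over $\L^{\s}(\ZZ)$; rather, $\L(R;\Qgen{m}{M})$ is a module over $\L^{\gs}(\ZZ) = \L(\ZZ;\Qgen{0}{\ZZ})$, the periodicity shift $\Sigma^4\L(R;\Qgen{m}{M}) \to \L(R;\Qgen{m}{M})$ is multiplication by the class $x \in \L_4^{\gs}(\ZZ)$ of $\ZZ[-2]$ with its signature-one symmetric form, and $\L(R;\QF^{\s}_M) \simeq \L(R;\Qgen{m}{M})[x^{-1}] \simeq \L(R;\Qgen{m}{M}) \otimes_{\L^{\gs}(\ZZ)} \L^{\s}(\ZZ)$. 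The calculation of Example~\refthree{example:L-of-integers} shows $\L^{\gs}(\ZZ)[\tfrac12] \to \L^{\s}(\ZZ)[\tfrac12]$ is an equivalence — equivalently, $x$ is already a unit after inverting $2$ (indeed $x$ pairs to $8$ with the class of $E_8$ in degree $-4$) — so the localisation at $x$ is trivial after inverting $2$, proving the proposition. Being a module merely over $\L^{\s}(\ZZ)$ would not produce a reduction to $R=\ZZ$: the content is that the comparison map is base change along $\L^{\gs}(\ZZ) \to \L^{\s}(\ZZ)$, which one sees only with the $\L^{\gs}(\ZZ)$-module structure in hand. Your observation that $\L$ commutes with the relevant filtered colimit of Poincaré structures is needed in both arguments (it underlies writing $\L^{\s}(R;M)$ as the telescope along multiplication by $x$) and is indeed a point worth making explicit, but it is not the main obstacle.
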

\begin{proof}
We first observe that the canonical map $\L^\gs(\ZZ) \to \L^\s(\ZZ)$ is an equivalence after inverting $2$, see Example~\refthree{example:L-of-integers}. 
Moreover, the shift maps appearing in the proof of Proposition~\refthree{proposition:obstruction-to-HLP} are in fact given by multiplication with an element $x \in \L_4^\gs(\ZZ)$, namely the Poincaré object $\ZZ[-2]$ with its standard genuine symmetric Poincaré structure of signature 1. Thus
 we find
\[
\L(R;\QF^\s_M) \simeq \L(R;\Qgen{m}{M})[x^{-1}] \simeq \L(R;\Qgen{m}{M}) \otimes_{\L^\gs(\ZZ)} \L^\s(\ZZ),
\]
and the result follows.
\end{proof}

\subsection{Grothendieck-Witt groups of the integers}
\label{subsection:integers}%

In this section, we will specialise the results established earlier in the paper to the ring of integers \(\ZZ\), and calculate its classical \(\eps\)-symmetric and \(\eps\)-quadratic Grothendieck-Witt groups. We will exploit Corollary~\refthree{corollary:dedekind} and instead calculate the non-negative Grothendieck-Witt groups of $\GW^{\s}(\ZZ;\eps)=\GW(\ZZ;\QF^\sym_{\eps})$ for the non-genuine symmetric Poincaré structure. Our calculation crucially relies on the knowledge of the algebraic \(\K\)-groups of \(\ZZ\), resulting from the resolution of the Quillen-Lichtenbaum conjecture completed in the work of Rost and Voevodsky. We will use \cite{weibel-kbook} as our primary reference for this material. The other external input is the calculation of Berrick-Karoubi~\cite{berrick-karoubi} of the Grothendieck-Witt groups of \(\ZZ[\tfrac{1}{2}]\).
Before we start the computation, we give a brief account of the $4$ types of classical Grothendieck-Witt groups that we are considering.

\vspace{10pt}
\noindent\textbf{\((1)\)-Symmetric}:
We recall that $\GW_\cl^\s(\ZZ)$ denotes the homotopy theoretic group completion of the maximal subgroupoid of the category of non-degenerate symmetric bilinear forms over $\ZZ$. By \cite{serre}*{Théorème 1}, there is an isomorphism $\pi_0\GW_\cl^\s(\ZZ)\cong \ZZ \oplus \ZZ$ where the summands are generated by the classes of the forms \(\langle 1 \rangle\) and \(\langle -1 \rangle\) on a free module of rank 1 $\ZZ$; they send $(x,y)$ to $xy$ and $-xy$, respectively. We write 
\(\rO_{\langle n, n \rangle}(\ZZ) = \Aut((\langle 1 \rangle \perp \langle -1 \rangle)^{\perp n}) \subseteq \GL_{2n}(\ZZ)\) and \(\rO_{\langle \infty, \infty \rangle}(\ZZ) = \colim_n \rO_{\langle n, n \rangle}(\ZZ)\).  
Then the commutator subgroup of \(\rO_{\langle \infty, \infty \rangle}(\ZZ)\) is perfect by e.g.~\cite{rw-group-completion}*{Proposition 3.1}. Moreover,
it is a direct consequence of \cite{serre}*{Théorème 4} that any non-degenerate symmetric bilinear form over \(\ZZ\) is an orthogonal summand in \((\langle 1 \rangle \perp \langle -1 \rangle)^{\perp n}\) for some \(n \geq 0\). Hence, the group completion theorem yields a homotopy equivalence of spaces
\[
\tau_{>0}\GW_\cl^{\s}(\ZZ) \simeq \BO_{\langle \infty, \infty \rangle}(\ZZ)^+,
\]
see \cite{mcduff-segal}, or \cite{rw-group-completion}*{Corollary 1.2}.

\vspace{10pt}
\noindent\textbf{\((-1)\)-Symmetric:} 
Similarly $\GW_\cl^{-\s}(\ZZ)$ is the homotopy theoretic group completion of the maximal subgroupoid of the category of non-degenerate symplectic bilinear forms over $\ZZ$.
We let $\Hms$ be the standard symplectic bilinear form on $\ZZ^2$. As every symplectic form over $\ZZ$ is isomorphic to a finite orthogonal sum of copies of $\Hms$, we find $\pi_0\GW^{-\s}_{\cl}(\ZZ) \cong \ZZ$, generated by $\Hms$. We write \(\Sp_{2n}(\ZZ) = \Aut((\Hms)^{\perp n}) \subseteq \GL_{2n}(\ZZ)\) and 
\(\Sp_{ \infty}(\ZZ) = \colim_n \Sp_{2n}(\ZZ)\). The group \(\Sp_{ \infty}(\ZZ)\) is again perfect, see e.g.~\cite{rw-group-completion}*{Proposition 3.1}, and the group completion theorem yields a homotopy equivalence of spaces
\[
\tau_{>0}\GW_\cl^{-\s}(\ZZ) \simeq \BSp_\infty (\ZZ)^+.
\]

\vspace{10pt}
\noindent\textbf{\((1)\)-Quadratic:} 
Now $\GW_\cl^{\qdr}(\ZZ)$ is the homotopy theoretic group completion of the maximal subgroupoid of the category of non-degenerate quadratic forms over $\ZZ$. Let \(\HqZ\) be the standard hyperbolic quadratic form and \(E_8\) the classical \(8\)-dimensional quadratic form associated to the Dynkin diagram of the same name. By~\cite{serre}*{Théorème 5}, every quadratic form \((P,q)\) satisfies \(P \oplus \HqZ \cong \HqZ^n \oplus E_8^m\) for some $n$ and $m$ and $\pi_0\GW_{\cl}^{\qdr}(\ZZ) \cong \ZZ \oplus \ZZ$ with generators $\HqZ$ and $E_8$. We write \(\rO_{n,n}(\ZZ) = \Aut((\HqZ)^{\oplus n}) \subseteq \GL_{2n}(\ZZ)\) and \(\rO_{\infty, \infty}(\ZZ) = \colim_n \rO_{n,n}(\ZZ)\). As above the group \(\rO_{\infty, \infty}(\ZZ)\) has perfect commutator subgroup and since any quadratic form over \(\ZZ\) is a direct summand of \((\HqZ)^{\perp n}\) for some \(n \geq 0\) there is a homotopy equivalence of spaces
\[
\tau_{>0}\GW_{\cl}^{\qdr}(\ZZ) \simeq \BO_{\infty,\infty} (\ZZ)^+.
\]

\vspace{10pt}
\noindent\textbf{\((-1)\)-Quadratic}: 
Finally, $\GW_{\cl}^{-\qdr}(\ZZ)$ is similarly built from $(-1)$-quadratic forms over $\ZZ$. Such a form is determined by its rank (which is an even number) and its Arf invariant, see~\cite{browder}*{\S III.1}. 
Let
\[
\mathrm{H}^0_{-\qdr} = \left(\ZZ^2, \begin{pmatrix} 0 & 1 \\ -1 & 0\end{pmatrix}, x y \right) \quad\text{and}\quad \mathrm{H}^1_{-\qdr} = \left(\ZZ^2, \begin{pmatrix} 0 & 1 \\ -1 & 0\end{pmatrix}, x^2 + xy + y^2 \right),
\]
be the standard hyperbolic \((-1)\)-quadratic forms with Arf invariant \(0\) and \(1\), respectively.
Then every \((-1)\)-quadratic form with Arf invariant \(0\) is isomorphic to a direct sum of copies of \(\mathrm{H}^0_{-\qdr}\), and every \((-1)\)-quadratic form with Arf invariant \(1\) is isomorphic to a direct sum of copies of \(\mathrm{H}^0_{-\qdr}\) plus one copy of \(\mathrm{H}^1_{-\qdr}\). Thus, $\pi_0\GW_{\cl}^{-\qdr} \cong \ZZ \oplus \ZZ/2$. We define \(\Sp^{\qdr}_{2n}(\ZZ) = \Aut((\mathrm{H}^0_{-\qdr})^{\perp n}) \subseteq \Sp_{2n}(\ZZ)\) to be the group of matrices preserving both the bilinear form and its quadratic refinement and set \(\Sp^{\qdr}_{ \infty}(\ZZ) = \colim_n \Sp^{\qdr}_{2n}(\ZZ)\). As above, the group completion theorem yields a homotopy equivalence of spaces
\[
\tau_{>0}\GW_{\cl}^{-\qdr}(\ZZ) \simeq \BSp^{\qdr}_\infty (\ZZ)^+.
\]

\subsubsection*{The Grothendieck-Witt groups of $\ZZ$}
We now proceed to calculate the \(\epsilon\)-symmetric Grothendieck-Witt groups of \(\ZZ\). Recall that the Bernoulli numbers \(\{B_n\}_{n \geq 0}\) are rational numbers determined by the equation
\[
\frac{x}{e^x -1} = \sum_{n=0}^{\infty}\frac{B_n}{n!}x^n.
\]
We write \(c_n\) for the numerator of \(|\tfrac{B_{2n}}{4n}|\) which is an odd number while the denominator of \( |\tfrac{B_{2n}}{4n}|\) will be denoted by \(w_{2n}\). 
For each \(k  \geq 0\) we then have equations
\[
|\K_{8k+2}(\ZZ)| = 2\cdot c_{2k+1} \, \, \text{ and }  \, \, |\K_{8k+6}(\ZZ)| =  c_{2k+2},
\]
and isomorphisms
\[
\K_{8k+3}(\ZZ) \cong  \ZZ/2 w_{4k+2} \, \, \text{ and }  \, \, \K_{8k+7}(\ZZ) \cong  \ZZ/w_{4k+4},
\]
see~\cite{weibel-kbook}*{Theorem 10.1}. 
\begin{remark}
\label{remark:unconditional}%
For an abelian group \(A\), we write \(A_{\odd}\) for the odd torsion subgroup of \(A\). For \(m \leq 5000\) the group \(\K_{4m-2}(\ZZ)_{\odd}\) is known to be cyclic of order \(c_{m}\), see~\cite{weibel-kbook}*{Example 10.3.2}. This holds for all \(m\) if the Kummer-Vandiver conjecture is true, see~\cite{weibel-kbook}*{Theorem 10.2}.
\end{remark}

We now arrive at the main computation of the \(\eps\)-symmetric Grothendieck-Witt groups of \(\ZZ\) in degrees \(n \geq 1\).

\begin{theorem}
\label{theorem:gwsz}%
The classical \(\eps\)-symmetric Grothendieck-Witt groups \(\ZZ\) are given in degrees \(n \geq 1\) by the following table:
\begin{table}[!ht]
\begin{tabular}{|c||c|c|}
\hline
\(n=\) & \(\GW^{\s}_{\cl,n}(\ZZ)\) & \(\GW^{-\s}_{\cl,n}(\ZZ)\) \\ \hline\hline
\(8k\)        & \(\ZZ \oplus \ZZ/2\)                                                           & \(0\)                                                                                  \\ \hline
\(8k+1\)        & \((\ZZ/2)^3\)                                                        & \(0\)                                                                                                  \\ \hline
\(8k+2\)        & \((\ZZ/2)^2 \oplus \K_{8k+2}(\ZZ)_{\odd}\)                                                                    & \(\ZZ \oplus \K_{8k+2}(\ZZ)_{\odd}\)                                                                                                \\ \hline
\(8k+3\)        & \(\ZZ/w_{4k+2}\)                                                            & \(\ZZ/2w_{4k+2}\)                                                                                             \\ \hline
\(8k+4\)        & \(\ZZ\)                                                                                    & \(\ZZ/2\)                                                                                              \\ \hline
\(8k+5\)        & \(0\)                                                                                      & \(\ZZ/2\)                                                                                              \\ \hline
\(8k+6\)        & \(\K_{8k+6}(\ZZ)_{\odd}\)                                                                                      & \(\ZZ\oplus \K_{8k+6}(\ZZ)_{\odd}\)                                                                                                \\ \hline
\(8k+7\)        & \(\ZZ/w_{4k+4}\)                                                 & \(\ZZ/w_{4k+4}\)                                                              \\ \hline
\end{tabular}
\end{table}
\end{theorem}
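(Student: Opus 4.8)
The plan is to feed $R=\ZZ$ with $M=R(\eps)$ for $\eps=\pm1$ into the fibre sequence of Theorem~\refthree{theorem:fiber-sequence-intro-three} and, for the $2$-primary bookkeeping, into the Tate square of \reftwo{corollary:tate-square-L}
\[
\begin{tikzcd}
	\GW(R;\QF) \ar[r] \ar[d] & \L(R;\QF) \ar[d] \\
	\K(R;\QF)^{\hC} \ar[r] & \K(R;\QF)^{\tC},
\end{tikzcd}
\]
and to read off the homotopy groups in each residue class mod $8$. Since $\ZZ$ is Dedekind, Corollary~\refthree{corollary:dedekind-gw} identifies the classical Grothendieck--Witt groups in non-negative degrees with those of the homotopy-symmetric $\GW^\sym$, and Corollary~\refthree{corollary:dedekind} together with Theorem~\refthree{theorem:main-theorem-L-theory} identifies the term $\L^\short(\ZZ;\eps)$ of the fibre sequence with the connective cover of the $4$-periodic $\L^\sym(\ZZ;\eps)$. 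The latter is computed in Example~\refthree{example:L-of-integers}: $\L^\sym_*(\ZZ)$ is $\ZZ,\ZZ/2,0,0$ in degrees $0,1,2,3$ mod $4$, and $\L^{-\sym}(\ZZ)\simeq\Sigma^2\L^\sym(\ZZ)$ by Corollary~\refthree{corollary:periodicity-L}. It is convenient to note that, since $\QF^\sym_{-\ZZ}\simeq(\QF^\sym_{\ZZ})\qshift{2}$ by Proposition~\refthree{proposition:periodcat}, the equivariant $\K$-theory input for $\eps=-1$ is the sign-twist of that for $\eps=+1$, so only one piece of $\Ct$-equivariant data is needed, namely $\K(\ZZ)$ with the duality involution $\hom_\ZZ(-,\ZZ)$.

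The second input is therefore the $\Ct$-equivariant homotopy type of $\K(\ZZ)$. Its homotopy groups are recorded in Weibel's survey~\cite{weibel-kbook} (via the norm residue theorem, and conditionally on the Kummer--Vandiver conjecture in degrees $\equiv 0$ mod $4$), and the involution is the classical one: trivial on $\K_0$ and on the odd-torsion subgroup of every $\K_n(\ZZ)$, and acting by $-1$ on the rank-one summand of $\K_{4i+1}(\ZZ)$ for $i\geq1$ --- which I would cite, or recover from the Galois action on étale cohomology. Given this, the odd-primary part of $\K(\ZZ;\eps)_{\hC}$ is just the coinvariants of the action, since $\Ct$-homology with $\ZZ[\tfrac12]$-coefficients vanishes above degree $0$; this feeds directly into the long exact sequence (the only further odd-primary contribution being the signature $\ZZ\subseteq\L^\sym_{4k}(\ZZ)$). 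The $2$-primary part is governed instead by the homotopy limit problem: Theorem~\refthree{theorem:holim-z} gives $\GW^\sym(\ZZ;\eps)^{\wedge}_2\simeq(\K(\ZZ;\eps)^{\hC})^{\wedge}_2$, which one computes from the $2$-completed $K$-groups through the homotopy fixed point spectral sequence. This spectral sequence has the familiar image-of-$J$ shape, and its $2$-fold Bott-type periodicity, interacting with the $4$-periodicity of $\L^\sym$, is exactly what produces the $8$-fold pattern of the columns, with the cyclic orders $w_{4k+2},w_{4k+4}$ and $c_{2k+1},c_{2k+2}$ the only data still growing with $k$.

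Assembling these, each residue class mod $8$ is extracted from the long exact sequence of $\K(\ZZ;\eps)_{\hC}\xrightarrow{\hyp}\GW^\sym(\ZZ;\eps)\to\L^\sym(\ZZ;\eps)$, combining the odd-primary coinvariant computation (which, away from $2$, is just the Karoubi splitting $\GW^\sym_*[\tfrac12]\cong(\K_*(\ZZ)[\tfrac12])^{\Ct}\oplus\L^\sym_*(\ZZ)[\tfrac12]$) with the $2$-complete homotopy-fixed-point description, glued by the arithmetic fracture square. The residual work is to settle the additive extension problems --- for instance identifying $\GW^{-\sym}_{\cl,8k+3}(\ZZ)$ as the non-split $\ZZ/2w_{4k+2}$ rather than $\ZZ/w_{4k+2}\oplus\ZZ/2$, and pinning down $\GW^\sym_{\cl,8k+1}(\ZZ)$ as $(\ZZ/2)^3$ and not a larger cyclic $2$-group. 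For this I would compare with $\ZZ[\tfrac12]$: Proposition~\refthree{prop:BK-conjecture} shows $\GW^\sym(\ZZ;\eps)\to\GW^\sym(\ZZ[\tfrac12];\eps)$ is a $2$-local equivalence on connected covers, its fibre being controlled by $\GW(\FF_2;(\QF^\sym_\eps)\qshift{-1})$ via localisation--dévissage (Corollary~\refthree{corollary:decomposition-s}), while the target is computed explicitly by Berrick--Karoubi~\cite{berrick-karoubi}; this determines the $2$-local homotopy type and so resolves the extensions.

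The main obstacle is twofold. First, one must keep honest track of the $\Ct$-action on the $2$-primary torsion of $\K_*(\ZZ)$ --- not merely on the rationalisation and the odd torsion --- and of the resulting differentials and extensions in the homotopy orbit and homotopy fixed point spectral sequences; this is what separates the eight columns and produces the precise patterns of $\ZZ/2$-summands, and it is where essentially all of the genuine computation lies. Second, there is an unavoidable dependence on Kummer--Vandiver: in degrees $\equiv 2$ mod $4$ the odd part of the answer is $\K_{8k+2}(\ZZ)_\odd$, whose order $c_{2k+1}$ is known unconditionally but whose cyclicity is precisely Vandiver's conjecture, and the vanishing $\K_{4i}(\ZZ)=0$ used in several columns is conditional as well. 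Unconditionally one therefore obtains the stated table with ``cyclic of order $c_{2k+1}$'' weakened to ``of order $c_{2k+1}$'', and the unconditional range $n\leq 20000$ of Remark~\refthree{remark:unconditional} reflects the computational verification of Vandiver's conjecture in that range.
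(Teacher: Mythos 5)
Your proposal has the right architecture and every ingredient it cites is the one the paper actually uses — the fibre sequence of Theorem~\refthree{theorem:fiber-sequence-intro-three} and its splitting after inverting~$2$, Lemma~\refthree{lemma:kz-inv} for the $\Ct$-action on odd-localised K-groups, Example~\refthree{example:L-of-integers} for $\L^\sym(\ZZ)$, Proposition~\refthree{prop:BK-conjecture} and Berrick--Karoubi for the $2$-primary side — but you organise the $2$-primary bookkeeping in a way that is both more laborious and partly redundant. You propose to compute $\GW^\sym(\ZZ;\eps)^\wedge_2\simeq(\K(\ZZ;\eps)^{\hC})^\wedge_2$ via the homotopy fixed point spectral sequence, which means tracking the $\Ct$-action on $2$-primary torsion of $\K_*(\ZZ)$, and then to resolve the resulting additive extensions by comparing with $\ZZ[\tfrac12]$. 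But as you yourself observe, Proposition~\refthree{prop:BK-conjecture} together with Corollary~\refthree{corollary:dedekind-gw} already identifies $\GW^\sym_\cl(\ZZ;\eps)$ with $\GW^\sym_\cl(\ZZ[\tfrac12];\eps)$ $2$-locally in positive degrees, and the latter's $2$-local homotopy groups are precisely what Berrick--Karoubi compute. So there is nothing left to compute on the $2$-primary side once you have made that comparison — the paper simply splits the argument localwise (the groups are finitely generated, so it suffices to argue at $2$ and away from $2$), cites Berrick--Karoubi for the $2$-local values, and records von Staudt's theorem to match the $2$-parts of the cyclic orders $w_{4k+2}$, $w_{4k+4}$. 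The homotopy fixed point spectral sequence and the homotopy limit problem never appear; your declared ``main obstacle'' of tracking the $\Ct$-action on $2$-torsion is entirely circumvented.

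One further small inaccuracy: the vanishing $\K_{4i}(\ZZ)=0$ is not used in the proof, and is not a hidden conditional input. In degrees $n\equiv 0,4\pmod 8$ the $\Ct$-action on $\K_n(\ZZ)[\tfrac12]$ is $-1$ by Lemma~\refthree{lemma:kz-inv}, so the coinvariants vanish after inverting $2$ regardless of whether $\K_{4i}(\ZZ)$ is trivial; the only contribution in these columns comes from $\L^\sym$. The table is stated unconditionally as given, with $\K_{8k+2}(\ZZ)_\odd$ and $\K_{8k+6}(\ZZ)_\odd$ appearing explicitly, and the only place Vandiver enters is Remark~\refthree{remark:unconditional}'s statement that these particular groups are \emph{cyclic} of the stated order — which you do correctly note in your closing sentence.
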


\begin{remark}
\label{remark:j-homomorphism}%
The number \(w_{2n}\) is equal to the cardinality of the image of the \(J\)-homomorphism \(\pi_{4n-1}(O) \to \pi_{4n-1}(\SS)\) in the stable stem. By~\cite{quillen-letter}*{pg.\ 186} the unit map \(\pi_{4n-1}(\SS) \to \K_{4n-1}(\ZZ)\) is injective on this image. Since the unit map for $\K(\ZZ)$ factors through the unit map for $\GW^{\s}_{\cl}(\ZZ)$, it follows that the groups $\GW^{\s}_{\cl,8k+3}(\ZZ)$ and $\GW^{\s}_{\cl,8k+7}(\ZZ)$ consist precisely of image of $J$-classes.
\end{remark}

\begin{proof}[Proof of Theorem~\refthree{theorem:gwsz}]
Since the groups in question are finitely generated, it suffices to prove that the theorem holds after localisation at 2 and after inverting 2. First, we argue 2-locally. Proposition~\refthree{proposition:BK-conjecture} and Corollary~\refthree{corollary:dedekind-gw} imply that for $\eps = \pm 1$, the canonical map
\[
\GW^{\s}_{\cl}(\ZZ;\eps) \lto \GW^{\s}_\cl(\ZZ[\tfrac{1}{2}];\eps)
\]
is a 2-local equivalence in degrees $\geq 1$. 
One can then compare with \cite{berrick-karoubi}*{Theorem B}\footnote{Note that what we denote $\GW$ is denoted $\mathcal{L}$ in loc.\ cit.\ and that the homotopy groups of $\mathcal{L}(R)$ are denoted by $L_i(R)$.}, where the 2-local $\GW$-groups of $\ZZ[\tfrac{1}{2}]$ are determined as displayed.
In order to compare their values for $8k+3$ and $8k+7$ with ours, note that by work of von Staudt the largest power of \(2\) which divides \(w_{2n}\) is the same as the largest power of \(2\) which divides \(8n\). 

For the 2-inverted case, we observe that the fibre sequence
\[
\K(\ZZ;\eps)_{\hC} \lto \GW^{\s}(\ZZ;\eps) \lto \L^{\s}(\ZZ;\eps)
\]
from Theorem~\refthree{theorem:fiber-sequence-intro-three} splits after inverting 2, see e.g.\ Corollary~\reftwo{corollary:GW-split-one-half}, so that there is an equivalence of spectra
\[
\GW^{\s}(\ZZ;\eps)[\tfrac{1}{2}] \simeq (\K(\ZZ;\eps)_{\hC})[\tfrac{1}{2}] \oplus \L^{\s}(\ZZ;\eps)[\tfrac{1}{2}]
\]
Furthermore, we note that there is an isomorphism $\pi_n(\K(\ZZ;\eps)_{\hC}[\tfrac{1}{2}]) \cong (\K_n(\ZZ;\eps)[\tfrac{1}{2}])_{\Ct}$.
It then follows from Lemma~\refthree{lemma:kz-inv} below that
\[
\GW^{\s}_n(\ZZ;\eps)[\tfrac{1}{2}] 
\cong 
\begin{cases}
\L^{\s}_n(\ZZ;\eps)[\tfrac{1}{2}] & \text{ for } n \equiv 0,1 \text{ mod } 4 \\ 
\K_n(\ZZ;\eps)[\tfrac{1}{2}] \oplus \L^{\s}_n(\ZZ;\eps)[\tfrac{1}{2}] & \text{ for } n \equiv 2,3 \text{ mod } 4 
\end{cases}
\]
This matches with the values in the above table after tensoring with \(\ZZ[\tfrac{1}{2}]\) and so the desired result follows.
\end{proof}

We thank Søren Galatius for telling us about the following lemma, see also \cite{FGV}*{\S 2}. Let $F$ be a number field and $\nO$ a ring of $S$-integers in $F$.
\begin{lemma}
\label{lemma:kz-inv}%
The $\Ct$-actions induced by the Poincaré structures \(\QF^{s}\) and \(\QF^{s}_-\) on \(\Dperf(\nO)\) induce multiplication by \((-1)^{n}\) on the groups \(\K_{2n-1}(\nO)[\tfrac{1}{2}]\) and \(\K_{2n-2}(\nO)[\tfrac{1}{2}]\) for each \(n \geq 2\).
\end{lemma}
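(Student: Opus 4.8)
The plan begins with a reduction: the $\Ct$-action on $\K(\nO)$ underlying either of the Poincaré structures $\QF^{\s}$ and $\QF^{\s}_{-}$ is induced by one and the same functor, the duality $\Dual X = \hom_\nO(X,\nO)$ on $\Dperf(\nO)$. Indeed the sign $\eps = \pm 1$ only affects the symmetry datum of a form, not the underlying duality functor, and $\hom_\nO(-,\nO(\eps))$ is equivalent to $\hom_\nO(-,\nO)$ as a functor $\Dperf(\nO)^{\op} \to \Dperf(\nO)$. It therefore suffices to analyse this single involution, which we again write $\Dual$, on $\K_{2n-1}(\nO)[\tfrac12]$ and $\K_{2n-2}(\nO)[\tfrac12]$ for $n \ge 2$.

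The central step is to identify $\Dual$ with the Adams operation $\psi^{-1}$ after inverting $2$. The duality is a morphism of $\lambda$-rings, because $\lambda^i(\hom_\nO(P,\nO)) \cong \hom_\nO(\lambda^i P,\nO)$ naturally in $P \in \Proj(\nO)$; in particular it commutes with every $\psi^k$ and preserves the weight (i.e.\ $\gamma$-)filtration. On $\K_0$ the splitting principle together with $\Dual[L] = [L^{-1}] = \psi^{-1}[L]$ for line bundles shows $\Dual = \psi^{-1}$. I would upgrade this to an identification $\Dual \simeq \psi^{-1}$ on $\K(\nO)[\tfrac12]$ following \cite[\S 2]{FGV}; alternatively, after $\ell$-completion for each odd prime $\ell$, one passes to étale $\K$-theory, where the duality is realised by the action of $-1 \in \ZZ_\ell^{\times}$ through the cyclotomic character (equivalently, the Adams operations) on the Tate twists.

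Granting $\Dual \simeq \psi^{-1}$, the lemma follows from the known structure of $\K_\ast(\nO)[\tfrac12]$. By Borel's computation of rational $\K$-theory and the Quillen--Lichtenbaum theorem for the odd torsion, for $n \ge 2$ the groups $\K_{2n-1}(\nO)[\tfrac12]$ and $\K_{2n-2}(\nO)[\tfrac12]$ sit in weight $n$: rationally they are the weight-$n$ motivic cohomology groups $H^1_{\mathcal M}(\nO,\QQ(n))$ and $H^2_{\mathcal M}(\nO,\QQ(n)) = 0$, while their torsion is given by $\ell$-adic étale cohomology groups $H^1_{\et}$ and $H^2_{\et}$ with coefficients in $\ZZ_\ell(n)$ for odd primes $\ell$. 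Here $n \ge 2$ is what places us in the range of degrees where the comparison is an isomorphism, and invertibility of $\ell$ accommodates the possibly dyadic points of $\operatorname{Spec}\nO$. On any such weight-$n$ group the operation $\psi^k$ acts by $k^n$; hence $\psi^{-1}$, and therefore $\Dual$, acts by $(-1)^n$.

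I expect the main obstacle to be precisely the identification $\Dual \simeq \psi^{-1}$ in positive degrees. While $\psi^{-1}$ causes no difficulty on the $\K$-theory spectrum (unlike the higher Adams operations, no localisation is needed to define it), matching a contravariant duality with an Adams operation beyond $\K_0$ requires either the $\lambda$-ring bookkeeping carried out in \cite[\S 2]{FGV} or the cyclotomic-character description of $\ell$-adic étale $\K$-theory. A lesser point is the interaction with the localisation sequence relating $\K(\nO)$ to $\K(\nO[\tfrac1\ell])$ and the $\K$-theory of residue fields of characteristic $2$; since $\Dual$ is compatible with this sequence and Quillen's computation of the $\K$-theory of finite fields shows those contributions again lie in weight $n$ in degree $2n-1$, they are consistent with the asserted formula and cause no trouble.
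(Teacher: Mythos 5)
Your proposal is correct and follows the same route as the paper: both reduce to a single underlying duality, pass to $\ell$-adic completion at odd primes, and read off the action from the cyclotomic character on Tate twists via étale $\K$-theory (the paper likewise cites \cite{FGV} for this). The one place the paper is cleaner is exactly where you flag the main obstacle: it never needs $\Dual \simeq \psi^{-1}$ as an operation on the $\K$-theory spectrum, only the elementary statement that the dualisation acts on the homotopy sheaf $\ZZ^\cwedge_\ell(n)$ by $(-1)^n$ (line bundles go to their inverses). The $\ell$-adic étale descent spectral sequence then collapses at $E_2$ because $\spec\nO[\tfrac{1}{\ell}]$ has $\ell$-adic étale cohomological dimension $2$ and $\mathrm{H}^0_{\et}$ vanishes in positive weight, so for $n \geq 2$ it identifies $\K_{2n-1}(\nO)^\cwedge_\ell$ and $\K_{2n-2}(\nO)^\cwedge_\ell$ with $\mathrm{H}^1_{\et}$ and $\mathrm{H}^2_{\et}$ in weight $n$, on which the duality visibly acts by $(-1)^n$; this handles the rational and torsion parts uniformly and sidesteps the lifting problem entirely.
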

\begin{proof}
We first note that the dualities associated to \(\QF^{\s}\) and \(\QF^{\s}_-\) have the same underlying equivalences \(\Dperf(\nO) \to \Dperf(\nO)\op\) so that the induced $\Ct$-action on homotopy groups is the same in both cases. Hence it will suffice to prove the claim for \(\QF^{\s}\). Since the K-groups of $\nO$ are finitely generated, it suffices to prove the claim on the $\ell$-completed K-groups $\K_n(\nO)^\cwedge_\ell$ for all odd primes $\ell$. We then use that the map $\K_n(\nO)^\cwedge_\ell \to \K_n(\nO[\tfrac{1}{\ell}])^\cwedge_\ell \to \pi_n(\K^{\et}(\nO[\tfrac{1}{\ell}])^\cwedge_\ell)$ are isomorphisms for $n \geq 2$; for the first, this follows from Quillen's localisation sequence, and for the second see e.g.\ \cite{CM}*{Theorem~1.2} for a more general statement. In particular, the étale descent spectral sequence, see e.g.\ \cite{CM}*{Theorem~1.3} is a convergent spectral sequence
\[
E_2^{s,n} = \mathrm H^s_{\et}(\spec(\nO[\tfrac{1}{\ell}]);\ZZ^\cwedge_\ell(n/2)) \Longrightarrow \pi_{n-s}(\K(\nO)^\cwedge_\ell)
\]
for $n-s \geq 2$. Here $\ZZ^\cwedge_\ell(n/2)$ is the homotopy sheaf $\pi_{n}(\K(-))$ on the étale site of $\spec(\nO[\tfrac{1}{\ell}])$, which by results of Gabber and Suslin is (as an abelian group) given by $\pi_n(\mathrm{ku}^\cwedge_\ell)$ and hence vanishes for odd values of $n$. We note that the descent spectral sequence is natural with respect to $\psi^{-1}$, i.e.\ the dualisation action, which acts on $\ZZ^\cwedge_\ell(n)$ by $(-1)^{n}$. Now, the $\ell$-adic étale cohomological dimension of $\spec(\nO[\tfrac{1}{\ell}])$ is $2$, so the spectral sequence above is concentrated in the columns $0,1$ and $2$. In addition
\[
\mathrm H^0_\et(\spec(\nO[\tfrac{1}{\ell}]);\ZZ_\ell^\cwedge(n)) = 0 \quad \text{ for } n > 0.
\]
In particular, the spectral sequence collapses at $E_2$ and gives isomorphisms 
\[
\pi_{2n-2}(\K(\nO)) \cong \mathrm H^2_\et(\spec(\nO[\tfrac{1}{\ell}]);\ZZ^\cwedge_\ell(n)) \quad \text{ and } \quad \pi_{2n-1}(\K(\nO)) \cong \mathrm H^1_\et(\spec(\nO[\tfrac{1}{\ell}]);\ZZ^\cwedge_\ell(n))
\]
for $n \geq 2$, from which the lemma follows.
\end{proof}

\begin{remark}
\label{remark:Schlichting-away-from-two}%
A calculation of the Grothendieck-Witt groups of the integers has also been announced by Schlichting. See \cite{Schlichting-integers} and the erratum \cite{Schlichting-wrong}.
\end{remark}

In low degrees the groups can be worked out explicitly.

\begin{proposition}
\label{proposition:gwz-low-deg}%
The first 24 non-negative Grothendieck-Witt groups of \(\ZZ\) are given by the table~\refthree{table:gw} below.

\begin{table}[!ht]
\label{table:gw}%
\caption{The first 24 Grothendieck-Witt groups of \(\ZZ\)}
\begin{tabular}{|c||c|c||c|c||c|}
\hline
\(k\) & \(\GW^{\s}_k(\ZZ)\) & \(k\)    & \(\GW^{s}_k(\ZZ)\)           & \(k\)    & \(\GW^{s}_k(\ZZ)\)           \\ \hline\hline
\(0\) & \(\ZZ \oplus \ZZ\)                                                                       & \(8\)  & \(\ZZ \oplus \ZZ/2\)      & \(16\) & \(\ZZ \oplus \ZZ/2\)       \\ \hline
\(1\) & \((\ZZ/2)^3\)                                                                              & \(9\)  & \((\ZZ/2)^3\) & \(17\) & \((\ZZ/2)^3\) \\ \hline
\(2\) & \((\ZZ/2)^2\)                                                                             & \(10\) & \((\ZZ/2)^2\) & \(18\) & \((\ZZ/2)^2\) \\ \hline
\(3\) & \(\ZZ/24\)                                                                                 & \(11\) & \(\ZZ/504\)   & \(19\) & \(\ZZ/264\)  \\ \hline
\(4\) & \(\ZZ\)                                                                                    & \(12\) & \(\ZZ\)       & \(20\) & \(\ZZ\)       \\ \hline
\(5\) & \(0\)                                                                                      & \(13\) & \(0\)         & \(21\) & \(0\)         \\ \hline
\(6\) & \(0\)                                                                                      & \(14\) & \(0\)         & \(22\) & \(\ZZ/691\)   \\ \hline
\(7\) & \(\ZZ/240\)                                                                                & \(15\) & \(\ZZ/480\)   & \(23\) & \(\ZZ/65520\)  \\ \hline
\end{tabular}
\end{table}

\begin{table}[!ht]
\begin{tabular}{|c||c|c||c|c||c|}
\hline
\(k\) & \(\GW^{-\s}_k(\ZZ)\) & \(k\)    & \(\GW^{-\s}_k(\ZZ)\)           & \(k\)    & \(\GW^{-\s}_k(\ZZ)\)           \\ \hline\hline
\(0\) & \(\ZZ\)                                                                       & \(8\)  & \(0\)      & \(16\) & \(0\)       \\ \hline
\(1\) & \(0\)                                                                              & \(9\)  & \(0\) & \(17\) & \(0\) \\ \hline
\(2\) & \(\ZZ\)                                                                             & \(10\) & \(\ZZ\) & \(18\) & \(\ZZ\) \\ \hline
\(3\) & \(\ZZ/48\)                                                                                 & \(11\) & \(\ZZ/1008\)   & \(19\) & \(\ZZ/528\)  \\ \hline
\(4\) & \(\ZZ/2\)                                                                                    & \(12\) & \(\ZZ/2\)       & \(20\) & \(\ZZ/2\)       \\ \hline
\(5\) & \(\ZZ/2\)                                                                                      & \(13\) & \(\ZZ/2\)         & \(21\) & \(\ZZ/2\)         \\ \hline
\(6\) & \(\ZZ\)                                                                                      & \(14\) & \(\ZZ\)         & \(22\) & \(\ZZ \oplus \ZZ/691\)   \\ \hline
\(7\) & \(\ZZ/240\)                                                                                & \(15\) & \(\ZZ/480\)   & \(23\) & \(\ZZ/65520\)  \\ \hline
\end{tabular}
\end{table}
\end{proposition}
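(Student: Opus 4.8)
\textbf{Proof proposal for Proposition~\refthree{proposition:gwz-low-deg}.}

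The plan is to assemble the table by combining Theorem~\refthree{theorem:gwsz}, which already settles all degrees $n \geq 1$ up to the precise group structure in degrees $\equiv 2 \pmod 4$, with the explicit low-degree values of the relevant arithmetic quantities. First I would recall from Weibel's survey \cite{weibel-kbook} the concrete values of the Bernoulli denominators $w_{4k+2}$ and $w_{4k+4}$ and the odd parts $c_m$ of the numerators of $|B_{2m}/4m|$ in the range needed: since we only need $8k+j$ with $8k+j \leq 23$, this amounts to $k=0,1,2$. One computes $w_2 = 24$, $w_4 = 240$, $w_6 = 504$, $w_8 = 480$, $w_{10} = 264$, $w_{12} = 65520$, and $w_{14}$ has $2$-part and relevant prime factors giving $\GW$-groups $\ZZ/65520$ etc.; the key numerical inputs $B_{22}/44$ contributing the factor $691$ in degree $22$ (so $c_6 = 691$, an irregular prime) are standard. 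I would also record that in this range $(\K_{4m-2}(\ZZ))_{\odd}$ is cyclic — it is known to be cyclic of order $c_m$ for $m \leq 5000$ by \cite[Example 10.3.2]{weibel-kbook}, which is unconditional here — so the groups $\GW^{\s}_{\cl,8k+2}$ and $\GW^{\s}_{\cl,8k+6}$ in Theorem~\refthree{theorem:gwsz} become explicit: in particular $(\K_6(\ZZ))_{\odd} = 0$, $(\K_{10}(\ZZ))_{\odd} = 0$ (so $\GW^\s_6(\ZZ)$, $\GW^\s_{14}(\ZZ)$ vanish as claimed), $(\K_{14}(\ZZ))_{\odd} = 0$ but $(\K_{22}(\ZZ))_{\odd} \cong \ZZ/691$, $(\K_2(\ZZ))_{\odd} = 0$, $(\K_{18}(\ZZ))_{\odd} = 0$, and similarly for the symplectic column.

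Next I would handle the degree-$0$ entries separately, since Theorem~\refthree{theorem:gwsz} only covers $n \geq 1$: these are the classical facts recalled in the preamble to this subsection, namely $\pi_0\GW^\s_{\cl}(\ZZ) \cong \ZZ \oplus \ZZ$ (generated by $\langle 1\rangle, \langle -1\rangle$, by Serre \cite{serre}) and $\pi_0\GW^{-\s}_{\cl}(\ZZ) \cong \ZZ$ (generated by the standard symplectic form). I would then simply substitute the numerical values into the eight-row table of Theorem~\refthree{theorem:gwsz}. For the symmetric column: $8k+2$ and $8k+6$ have the stated summand $(\ZZ/2)^2 \oplus (\K)_{\odd}$ resp.\ $(\K)_{\odd}$, which with the above vanishing statements gives $(\ZZ/2)^2$ in degrees $2, 10, 18$ and $0$ in degrees $6, 14$ and $\ZZ/691$ in degree $22$; the odd-degree entries $8k+3$, $8k+7$ become $\ZZ/24, \ZZ/240$ (degrees $3,7$), $\ZZ/504, \ZZ/480$ (degrees $11, 15$), $\ZZ/264, \ZZ/65520$ (degrees $19, 23$). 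The symplectic column is read off the same way, using that in degrees $\equiv 2 \pmod 4$ one has $\ZZ \oplus (\K)_{\odd}$, that $8k+3$ gives $\ZZ/2w_{4k+2}$ (i.e.\ $\ZZ/48, \ZZ/1008, \ZZ/528$), and $8k+7$ gives $\ZZ/w_{4k+4}$ (same as symmetric: $\ZZ/240, \ZZ/480, \ZZ/65520$), while degrees $4,5$ mod $8$ are $\ZZ/2$.

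The main obstacle is bookkeeping the $2$-primary versus odd-primary parts of the $w_{2n}$ correctly so that the entries in degrees $8k+3$ and $8k+7$ match, and in particular confirming the group \emph{structure} (not just the order) in the degrees $\equiv 2 \pmod 4$ — here one must invoke that $(\K_{4m-2}(\ZZ))_{\odd}$ is genuinely cyclic in the stated range, which is \cite[Theorem 10.2 and Example 10.3.2]{weibel-kbook}, so that the extension $(\ZZ/2)^2 \oplus (\K)_{\odd}$ is really a direct sum of cyclic groups as written; no nontrivial extension problem survives because the $2$-part $(\ZZ/2)^2$ and the odd part are coprime. A secondary point to check is the von Staudt observation (already used in the proof of Theorem~\refthree{theorem:gwsz}) that the $2$-adic valuation of $w_{2n}$ equals that of $8n$, which pins down the $2$-part of $\ZZ/w_{4k+2}$ and $\ZZ/2w_{4k+2}$; combined with the $2$-local computation via Berrick--Karoubi \cite{berrick-karoubi} this confirms there is no hidden $2$-torsion. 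Beyond these points the proof is entirely a substitution of known constants, so I would present it as a short corollary-style verification referencing Theorem~\refthree{theorem:gwsz}, \cite{weibel-kbook}, \cite{berrick-karoubi}, and \cite{serre} rather than reproving anything.
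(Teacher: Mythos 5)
Your proof matches the paper's approach exactly: the paper's proof is just the observation that Theorem~\refthree{theorem:gwsz} already gives the table in terms of $w_{4k+2}$, $w_{4k+4}$, and the odd torsion of $\K_{8k+2}(\ZZ)$, $\K_{8k+6}(\ZZ)$, and that in the range $n\leq 23$ the only nontrivial such odd torsion is $\K_{22}(\ZZ)\cong\ZZ/691$ (read off from \cite[Example 10.3]{weibel-kbook}). Two small corrections: the Bernoulli number governing $\K_{22}(\ZZ)$ is $B_{12}$ (via $\K_{4m-2}$ with $m=6$), not $B_{22}/44$, though your conclusion $c_6=691$ is right; and $w_{14}$ is irrelevant in this range (degree $23$ uses $w_{12}$), so that parenthetical can be dropped. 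The von Staudt and Berrick--Karoubi ingredients you mention are already absorbed into the proof of Theorem~\refthree{theorem:gwsz} and need not be revisited here.
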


\begin{proof}
The only information not already present in the table of Theorem~\refthree{theorem:gwsz} is the structure of the odd torsion in \(\K_{n}(\ZZ)\) for \(n=2,6\) mod \(8\). This can be read off from the list of \(\K\)-groups~\cite{weibel-kbook}*{Example 10.3} - the only non-trivial one in this range is 
\(\K_{22}(\ZZ) = \ZZ/691\). 
\end{proof}

We now turn to the computation of the classical \(\eps\)-quadratic Grothendieck-Witt groups of \(\ZZ\). 
Recall that for \(\epsilon =\pm 1\) there is a Poincaré functor $(\Dperf(\ZZ),\QF^{\gq}_\eps) \to (\Dperf(\ZZ),\QF^{\gs}_\eps)$,
which by the fibre sequence of Corollary~\reftwo{corollary:tate-square-L} induces a cartesian square of spectra
\[
\begin{tikzcd}
\GW^{\gq}(\ZZ;\eps) \ar[d] \ar[r] & \L^{\gq}(\ZZ;\eps)\ar[d] \\ \GW^{\gs}(\ZZ;\eps) \ar[r] & \L^{\gs}(\ZZ;\eps).
\end{tikzcd}
\]
The non-negative homotopy groups of the bottom left hand spectrum were computed in Theorem~\refthree{theorem:gwsz} above. To understand the spectrum \(\GW^{ \gq}(\ZZ;\eps)\) we will calculate the homotopy groups of the cofibre of the right hand vertical map, which is equivalent to the cofibre of the left hand vertical map. We begin with the case \(\epsilon =1\).
Write \(C\) for the cofibre of the map \(\L^{\gq}(\ZZ) \to \L^{\gs}(\ZZ)\) and \(C_i\) for the homotopy group \(\pi_i (C)\).  
\begin{lemma}
\label{lemma:q-to-s-cofibre-orthogonal}%
The groups \(C_i\) are given by 
\begin{enumerate}
\item \(C_1 \cong \ZZ/2\),
\item \(C_0 \cong \ZZ/8\) 
\item
\label{item:Cminus-one-Z-two}%
\(C_{-1} \cong \ZZ/2\), 
\item \(C_i =0\) for all other values of \(i\). 
\end{enumerate}
\end{lemma}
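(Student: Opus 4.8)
The plan is to compute the cofibre $C$ of the map $\L^{\gq}(\ZZ) \to \L^{\gs}(\ZZ)$ by combining the calculations of $\L^{\gs}_n(\ZZ)$ and $\L^{\gq}_n(\ZZ) = \L^{\qdr}_{n-4}(\ZZ)$ available from Example~\refthree{example:L-of-integers} and the periodicity $\Sigma^4\L^{\gs}(\ZZ) \simeq \L^{\gq}(\ZZ)$ of Corollary~\refthree{corollary:periodicity-L}, together with an identification of the relevant portion of the long exact sequence. First I would record what we know: by Theorem~\refthree{theorem:main-theorem-L-theory}, Corollary~\refthree{corollary:dedekind} and Remark~\refthree{remark:improve-two}, the groups $\L^{\gs}_n(\ZZ)$ for $n \geq 0$ agree with Ranicki's short symmetric L-groups and are $4$-periodic, so $\L^{\gs}_0(\ZZ) \cong \ZZ$, $\L^{\gs}_1(\ZZ) \cong \ZZ/2$, $\L^{\gs}_2(\ZZ) = 0$, $\L^{\gs}_3(\ZZ) = 0$; for $n \leq -3$ we have $\L^{\gs}_n(\ZZ) \cong \L^{\qdr}_n(\ZZ)$ by Corollary~\refthree{corollary:genuine-is-quadratic}, and the two missing values $\L^{\gs}_{-1}(\ZZ) = \L^{\ev}_{1}(\ZZ;-\ZZ)$ and $\L^{\gs}_{-2}(\ZZ) = \L^{\ev}_{0}(\ZZ;-\ZZ)$ are determined by the discussion in the introduction and the even L-theory of $\ZZ$. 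Similarly $\L^{\gq}_n(\ZZ) = \L^{\gs}_{n-4}(\ZZ)$ by periodicity, and the map $\L^{\gq}(\ZZ) \to \L^{\gs}(\ZZ)$ is, under periodicity, a $4$-fold desuspension of the map $\L^{\gs}(\ZZ) \to \L^{\gs}(\Sigma^{-4}\,\cdot\,) = \Sigma^{-4}\L^{\gq}(\ZZ) \to \ldots$; more directly, away from degrees where both sides vanish the map is controlled by the comparison map $\L^{\qdr}_0(\ZZ) \to \L^{\s}_0(\ZZ)$, which is multiplication by $8$ (the $E_8$-form), exactly as used in Example~\refthree{example:L-of-integers}.

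Next I would run the long exact sequence
\[
\cdots \to \L^{\gq}_{i+1}(\ZZ) \to \L^{\gs}_{i+1}(\ZZ) \to C_{i+1} \to \L^{\gq}_{i}(\ZZ) \to \L^{\gs}_{i}(\ZZ) \to C_i \to \cdots
\]
in the relevant range. The key input is that $\L^{\gq}_n(\ZZ) \cong \L^{\gs}_{n-4}(\ZZ)$ is $4$-periodic with values $\ZZ, \ZZ/2, 0, 0$ in residues $0,1,2,3$, while $\L^{\gs}_n(\ZZ)$ for $n < 0$ has values governed by quadratic and even L-theory of $\ZZ$: concretely $\L^{\gs}_{-2}(\ZZ) \cong \L^{\ev}_0(\ZZ;-\ZZ)$, $\L^{\gs}_{-1}(\ZZ) \cong \L^{\ev}_1(\ZZ;-\ZZ) = 0$, $\L^{\gs}_{-3}(\ZZ) = \L^{\qdr}_{-3}(\ZZ) = 0$, and $\L^{\gs}_{-4}(\ZZ) = \L^{\qdr}_{-4}(\ZZ) \cong \ZZ/2$. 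Around degree $0$ the sequence reads $\L^{\gq}_1(\ZZ) \cong \ZZ/2 \to \L^{\gs}_1(\ZZ) \cong \ZZ/2 \to C_1 \to \L^{\gq}_0(\ZZ) \cong \ZZ \xrightarrow{8} \L^{\gs}_0(\ZZ) \cong \ZZ \to C_0 \to \L^{\gq}_{-1}(\ZZ) = 0$, so $C_0 \cong \ZZ/8$ and $C_1$ fits in $0 \to C_1 \to \ZZ \xrightarrow{8} \ZZ$, whence either $C_1 = 0$ or the map $\L^{\gq}_1 \to \L^{\gs}_1$ is zero and $C_1 \cong \ZZ/2$; I would pin this down by checking that the degree-$1$ comparison map $\L^{\gs}_1(\ZZ) \to \L^{\s}_1(\ZZ)$ is an isomorphism (Corollary~\refthree{corollary:dedekind}) while $\L^{\gq}_1(\ZZ) = \L^{\s}_{-3}(\ZZ) \to \L^{\s}_1(\ZZ) \cong \ZZ/2$ factors through the periodicity map and is therefore zero, giving $C_1 \cong \ZZ/2$. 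Then degree $-1$: $\L^{\gq}_0(\ZZ) \xrightarrow{8} \L^{\gs}_0(\ZZ) \to C_0 \to \L^{\gq}_{-1}(\ZZ) = 0 \to \L^{\gs}_{-1}(\ZZ) = 0 \to C_{-1} \to \L^{\gq}_{-2}(\ZZ) = 0$, which would force $C_{-1} = 0$ — so I must instead use $\L^{\gs}_{-1}(\ZZ)$ and the next terms carefully; the correct computation of $C_{-1}$ comes from $0 = \L^{\gs}_{-1}(\ZZ) \to C_{-1} \to \L^{\gq}_{-2}(\ZZ) \to \L^{\gs}_{-2}(\ZZ)$, i.e. $C_{-1} \cong \ker(\L^{\gq}_{-2}(\ZZ) \to \L^{\gs}_{-2}(\ZZ)) \oplus (\text{coker in degree } -1)$, and since $\L^{\gq}_{-2}(\ZZ) = \L^{\gs}_{-6}(\ZZ) = \L^{\qdr}_{-6}(\ZZ) \cong \ZZ/2$ maps to $\L^{\gs}_{-2}(\ZZ) = \L^{\ev}_0(\ZZ;-\ZZ)$ — which I would identify and show this map is zero — one gets $C_{-1} \cong \ZZ/2$; simultaneously one checks $\L^{\gs}_{-2}(\ZZ) \to C_{-2} \to \L^{\gq}_{-3}(\ZZ) = 0$ together with $\L^{\gq}_{-2}(\ZZ) \to \L^{\gs}_{-2}(\ZZ)$ forces $C_{-2} = 0$.

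For the vanishing of $C_i$ in all other degrees I would argue as follows: for $i \geq 2$, both $\L^{\gs}_n(\ZZ)$ and $\L^{\gq}_n(\ZZ) = \L^{\gs}_{n-4}(\ZZ)$ are $4$-periodic for $n \geq 0$ and the comparison map $\L^{\gq}_n(\ZZ) \to \L^{\gs}_n(\ZZ)$ is, after the identification with short symmetric L-groups and using that $\L^{\gs}(\ZZ)[\tfrac12] \simeq \L^{\s}(\ZZ)[\tfrac12]$ is $4$-periodic (Example~\refthree{example:L-of-integers}, Proposition~\refthree{proposition:l-groups-two-inverted-outside}), an isomorphism in high enough degrees by Corollary~\refthree{corollary:improve} applied with $d=1$, hence an isomorphism for all $n \geq 2$ by periodicity, so $C_n = 0$ there; for $i \leq -3$ both $\L^{\gq}_n(\ZZ) \cong \L^{\qdr}_{n-4}(\ZZ)$ and $\L^{\gs}_n(\ZZ) \cong \L^{\qdr}_n(\ZZ)$ by Corollary~\refthree{corollary:genuine-is-quadratic}, and the comparison map is an isomorphism (again a periodicity statement in quadratic L-theory, or directly: both are $\L^{\qdr}$ and the map is the identity up to the $4$-fold shift, which is an iso on $\L^{\qdr}$), so $C_n = 0$. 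The main obstacle I expect is the bookkeeping around degrees $-1$ and $-2$: one must correctly identify the genuine symmetric L-groups $\L^{\gs}_{-1}(\ZZ)$ and $\L^{\gs}_{-2}(\ZZ)$ with the even L-groups $\L^{\ev}_1(\ZZ;-\ZZ)$ and $\L^{\ev}_0(\ZZ;-\ZZ)$ as in the introduction's addendum to Theorem~\refthree{theorem:Ranicki-coincides}, compute the latter (the even symmetric L-theory of $\ZZ$ with the sign-twisted module), and show the relevant maps in and out are zero; everything else is a routine diagram chase through the long exact sequence using the already-established $4$-periodicities on the two ends.
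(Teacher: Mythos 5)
The proposal follows the same overall strategy as the paper — the long exact sequence of the cofibre, isomorphism ranges for the comparison maps coming from the surgery corollaries, and the identification of $\L^{\gq}_0(\ZZ)\to\L^{\gs}_0(\ZZ)$ with multiplication by $8$ — but the proof of $C_1\cong\ZZ/2$ contains a genuine error. You assert $\L^{\gq}_1(\ZZ)\cong\ZZ/2$, but this group is $0$: by periodicity $\L^{\gq}_1(\ZZ)\cong\L^{\gs}_{-3}(\ZZ)$, and by Corollary~\refthree{corollary:genuine-is-quadratic} $\L^{\gs}_{-3}(\ZZ)\cong\L^{\qdr}_{-3}(\ZZ)=\L^{\qdr}_1(\ZZ)=0$. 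What you wrote is $\L^{\s}_{-3}(\ZZ)\cong\ZZ/2$, which is a different group precisely because $-3$ lies outside the range $n\geq -2$ where the comparison $\L^{\gs}_n\to\L^{\s}_n$ is an isomorphism. (This stems from the earlier misstatement that $\L^{\gq}_n(\ZZ)$ is $4$-periodic for $n\geq 0$ with values $\ZZ,\ZZ/2,0,0$: the true values in residues $0,1,2,3$ are $\ZZ,0,0,0$, becoming periodic only from degree $2$ onward.) Your compensating argument that the map $\L^{\gq}_1\to\L^{\gs}_1$ ``factors through the periodicity map and is therefore zero'' does not work either: the $4$-fold periodicity on $\L^{\s}$ is an equivalence, not zero. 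With the correct value $\L^{\gq}_1(\ZZ)=0$ the vanishing of the map is immediate and the paper's exact sequence $0\to\L^{\gs}_1(\ZZ)\to C_1\to\L^{\gq}_0(\ZZ)\xrightarrow{8}\L^{\gs}_0(\ZZ)\to C_0\to 0$ follows at once.

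A second, acknowledged but unresolved, gap is that your computations of both $C_{-1}$ and $C_{-2}$ hinge on $\L^{\gs}_{-2}(\ZZ)=0$: you reduce $C_{-1}$ to $\ker(\L^{\gq}_{-2}\to\L^{\gs}_{-2})$ and $C_{-2}$ to $\coker(\L^{\gq}_{-2}\to\L^{\gs}_{-2})$, and say you ``would identify and show this map is zero'' without doing so. The paper simply records $\L^{\gs}_{-2}(\ZZ)=0$ (it equals $\L^{\s}_{-2}(\ZZ)=\L^{\s}_2(\ZZ)=0$ since the comparison is an isomorphism for $n\geq -2$, cf.\ Example~\refthree{example:L-of-integers}); once this is in hand both $C_{-1}\cong\L^{\gq}_{-2}(\ZZ)\cong\L^{\qdr}_{-2}(\ZZ)\cong\ZZ/2$ and $C_{-2}=0$ fall out. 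The paper's proof organises all of this into the single commuting diamond diagram relating $\L^{\qdr}$, $\L^{\gq}$, $\L^{\gs}$, $\L^{\s}$ with the four isomorphism ranges $\simeq_{\leq 1}$, $\simeq_{\leq -3}$, $\simeq_{\geq 2}$, $\simeq_{\geq -2}$; it is worth internalising that diagram, since it makes every group you need legible at a glance and avoids the confusion between $\L^{\gs}_n$ and $\L^{\s}_n$ in negative degrees.
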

\begin{proof}
Let us consider the commutative diagram
\begin{equation*}
\begin{tikzcd}[row sep = small]
	& \L^{ \gq}(\ZZ) \ar[dd] \ar[dr,"{\simeq_{\geq 2}}"] & \\ \L^{\qdr} (\ZZ) \ar[ur,"{\simeq_{\leq 1}}"] \ar[dr,"{\simeq_{\leq -3}}"']  & & \L^{ \s}(\ZZ) \\ & \L^{ \gs}(\ZZ) \ar[ur,"{\simeq_{\geq -2}}"'], & 
\end{tikzcd} 
\end{equation*}
where the subscript on the symbol \(\simeq\) indicates the range of dimensions \(i\) in which the map induces an isomorphism on \(\pi_i\). These ranges are obtained from Corollaries~\refthree{corollary:genuine-is-quadratic} and \refthree{corollary:genuine-is-symmetric}, in the second case using that by Example~\refthree{example:symm-quad-qf} the Poincaré structures $\QF^{\gq}=\QF^{\gev}$ and $\QF^{\gs}=\Qgen{(-1)}{}$ are $1$-symmetric and $3$-symmetric, respectively.
Using in addition that $\L^\gs_{-2}(\ZZ) =0$,
it follows that \(C_i\) is at most non-trivial in the range \(-1 \leq i \leq 1\) as claimed. 
We then find that \(C_{-1} \cong \L^{\gq}_{-2}(\ZZ) \cong \L^{\qdr}_{-2}(\ZZ) \cong \ZZ/2\). The remaining two groups sit
in the exact sequence
\[
0 \lto \L^{\gs}_1(\ZZ) \lto C_1 \lto \L^{\gq}_0(\ZZ) \lto \L^{\gs}_0(\ZZ) \lto C_0 \to 0.
\]
Since the map \(\L^{\gq}_0(\ZZ) \to \L^{\gs}_0(\ZZ)\) identifies with the multiplication by \(8\) map on \(\ZZ\) it follows that \(C_0 \cong \ZZ/8\) and \(C_1 \cong \L^{\gs}_1(\ZZ) \cong \ZZ/2\); see~\cite{Ranickiyellowbook}*{Prop 4.3.1}.
\end{proof}

\begin{remark}
Let us denote by $\L^n(R)$ the cofibre of the symmetrisation map $\L^\qdr(R) \to \L^\s(R)$, called \emph{normal} or \emph{hyperquadratic} L-theory in Ranicki's work \cite{RanickiTSO, Ranickiblue}. We then have $C \simeq \tau_{[-1,1]}\L^n(\ZZ)$.
\end{remark}

\begin{theorem}
\label{theorem:gwqz}%
The classical quadratic Grothendieck-Witt groups of $\ZZ$ are given by 
\begin{enumerate}
\item \(\GW^{\gq}_0(\ZZ) \cong \ZZ \oplus \ZZ\),
\item 
\label{item:GWqoneZ}%
\(\GW^{\gq}_1(\ZZ) \cong \ZZ/2 \oplus \ZZ/2\),
\item 
\label{item:GWqnZ}%
\(\GW^{\gq}_n (\ZZ) \cong \GW^{\gs}_n(\ZZ)\) for \(n \geq 2\). 
\end{enumerate}
\end{theorem}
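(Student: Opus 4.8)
The plan is to exploit the cartesian square relating $\GW^{\gq}$, $\GW^{\gs}$, $\L^{\gq}$ and $\L^{\gs}$ of $\ZZ$ coming from Corollary~\reftwo{corollary:tate-square-L}, together with the computation of the cofibre $C$ of $\L^{\gq}(\ZZ) \to \L^{\gs}(\ZZ)$ carried out in Lemma~\refthree{lemma:q-to-s-cofibre-orthogonal}. Since the square is cartesian, the cofibre of the vertical map $\GW^{\gq}(\ZZ) \to \GW^{\gs}(\ZZ)$ is canonically identified with the cofibre of $\L^{\gq}(\ZZ) \to \L^{\gs}(\ZZ)$, i.e.\ with $C$. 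Hence there is a long exact sequence
\[
\cdots \lto \GW^{\gq}_n(\ZZ) \lto \GW^{\gs}_n(\ZZ) \lto C_n \lto \GW^{\gq}_{n-1}(\ZZ) \lto \cdots
\]
and by Lemma~\refthree{lemma:q-to-s-cofibre-orthogonal} the groups $C_n$ vanish outside the range $-1 \leq n \leq 1$, with $C_1 \cong \ZZ/2$, $C_0 \cong \ZZ/8$, $C_{-1} \cong \ZZ/2$.

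For item~\refthreeitem{item:GWqnZ}, I would observe that for $n \geq 2$ we have $C_n = C_{n-1} = 0$ (the latter because $n - 1 \geq 1$, and at $n-1 = 1$ we have $C_1 \neq 0$, so actually one must start from $n \geq 3$ directly, and treat $n = 2$ separately). More carefully: for $n \geq 3$ both $C_n$ and $C_{n-1}$ vanish, so $\GW^{\gq}_n(\ZZ) \to \GW^{\gs}_n(\ZZ)$ is an isomorphism. For $n = 2$ the relevant piece of the long exact sequence reads
\[
C_2 = 0 \lto \GW^{\gq}_2(\ZZ) \lto \GW^{\gs}_2(\ZZ) \lto C_1 = \ZZ/2 \lto \GW^{\gq}_1(\ZZ) \lto \GW^{\gs}_1(\ZZ) \lto C_0,
\]
so to conclude $\GW^{\gq}_2(\ZZ) \cong \GW^{\gs}_2(\ZZ)$ I must show the map $\GW^{\gs}_2(\ZZ) \to C_1$ is zero, equivalently that $C_1 \to \GW^{\gq}_1(\ZZ)$ is injective. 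This will follow once items (i) and~\refthreeitem{item:GWqoneZ} are established, by a counting argument on the tail of the sequence.

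For item~\refthreeitem{item:GWqoneZ} and (i), I would run the low-degree portion of the long exact sequence explicitly, using the values $\GW^{\gs}_0(\ZZ) \cong \ZZ \oplus \ZZ$, $\GW^{\gs}_1(\ZZ) \cong (\ZZ/2)^3$ from Theorem~\refthree{theorem:gwsz} (together with Corollary~\refthree{corollary:dedekind-gw} identifying genuine with classical in non-negative degrees), the value $\GW^{\gq}_0(\ZZ) \cong \ZZ \oplus \ZZ$ coming from Serre's theorem as recorded in the discussion of the $(1)$-quadratic case, and the known structure of the boundary maps. The key input is that the map $\L^{\gq}_0(\ZZ) \to \L^{\gs}_0(\ZZ)$ is multiplication by $8$ on $\ZZ$, which pins down how $\GW^{\gq}_*$ interacts with $\GW^{\gs}_*$ in degrees $0$ and $1$; one also needs $\GW^{\gq}_{-1}(\ZZ)$ to have no contribution obstructing exactness, which can be arranged since $C_{-1} \to \GW^{\gq}_{-2}(\ZZ)$ and the connecting maps at the bottom are controlled by the quadratic $\L$-groups already computed in Example~\refthree{example:L-of-integers}.

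The main obstacle I expect is bookkeeping the boundary maps in the low-degree segment of the Mayer--Vietoris/long exact sequence: one must determine precisely the image of $\GW^{\gs}_2(\ZZ) \to C_1$ and of $\GW^{\gs}_1(\ZZ) \to C_0$, and check that the resulting extensions split as claimed (giving $\ZZ \oplus \ZZ$ in degree $0$ rather than a non-split extension, and $\ZZ/2 \oplus \ZZ/2$ rather than $\ZZ/4$ in degree $1$). For degree $0$ this should be transparent since $\GW^{\gq}_0(\ZZ)$ is already known to be free of rank $2$ on the classes of $\HqZ$ and $E_8$; the comparison map to $\GW^{\gs}_0(\ZZ)$ then sends these to $\langle 1 \rangle + \langle -1 \rangle$ and the $E_8$-form, which is explicit enough to read off injectivity and hence that $C_0 \cong \ZZ/8$ is exactly the cokernel, consistent with $E_8$ having signature $8$. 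For degree $1$ the splitting of the extension $0 \to \L^{\gs}_1(\ZZ) \to C_1 \to \coker \to 0$-type piece requires identifying $\GW^{\gq}_1(\ZZ)$ with the kernel of $\GW^{\gs}_1(\ZZ) \to C_0$; since $\GW^{\gs}_1(\ZZ) = (\ZZ/2)^3$ is $2$-torsion, any subgroup is a product of copies of $\ZZ/2$, so once the rank of the kernel is computed (it should be $2$, the map to $C_0 \cong \ZZ/8$ hitting the $2$-torsion), the group structure is forced. I would double-check the rank computation against the forgetful map $\GW^{\gq}_1(\ZZ) \to \K_1(\ZZ) \cong \ZZ/2$ and the hyperbolic map, to make sure no collapse or extra torsion is missed.
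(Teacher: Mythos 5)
Your overall strategy — using the cartesian square from Corollary~\reftwo{corollary:tate-square-L} to identify the cofibre of $\GW^{\gq}(\ZZ)\to\GW^{\gs}(\ZZ)$ with $C := \cof(\L^{\gq}(\ZZ)\to\L^{\gs}(\ZZ))$, running the long exact sequence, and feeding in Lemma~\refthree{lemma:q-to-s-cofibre-orthogonal}, Serre's computation of $\GW^{\gq}_0(\ZZ)$, and Theorem~\refthree{theorem:gwsz} — is exactly the paper's approach. But your concrete exact sequences carry a systematic off-by-one indexing error, and this creates an imaginary complication at $n=2$.

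You correctly write the general long exact sequence as $\cdots\to\GW^{\gq}_n\to\GW^{\gs}_n\to C_n\to\GW^{\gq}_{n-1}\to\cdots$, but when you instantiate it for $n=2$ you shift every $C$-index down by one, producing
\[
 C_2 \to \GW^{\gq}_2 \to \GW^{\gs}_2 \to C_1 \to \GW^{\gq}_1 \to \GW^{\gs}_1 \to C_0,
\]
whereas the correct segment reads
\[
 C_3 \to \GW^{\gq}_2 \to \GW^{\gs}_2 \to C_2 \to \GW^{\gq}_1 \to \GW^{\gs}_1 \to C_1 \to \GW^{\gq}_0 \to \GW^{\gs}_0 \to C_0.
\]
Because $C_2=C_3=0$, the isomorphism $\GW^{\gq}_2(\ZZ)\cong\GW^{\gs}_2(\ZZ)$ is immediate; there is no map $\GW^{\gs}_2(\ZZ)\to C_1$ to worry about, no counting argument is needed, and item~\refthreeitem{item:GWqnZ} follows uniformly for all $n\geq 2$ just as for $n\geq 3$. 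The extra step you flagged as "the main obstacle" stems entirely from the mis-indexing.

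The same shift infects your sketch for item~\refthreeitem{item:GWqoneZ}: you speak of the map $\GW^{\gs}_1(\ZZ)\to C_0$ and of the "kernel of $\GW^{\gs}_1(\ZZ)\to C_0$," but the relevant outgoing map from $\GW^{\gs}_1$ lands in $C_1\cong\ZZ/2$, not $C_0$. The correct argument is the one the paper gives: since $\GW^{\gq}_0(\ZZ)\to\GW^{\gs}_0(\ZZ)$ is injective (it sends the basis $\HqZ, E_8$ to $(1,1)$ and $(8,0)$ in $\ZZ\langle 1\rangle\oplus\ZZ\langle -1\rangle$, with determinant $\pm 8$), exactness forces the boundary $C_1\to\GW^{\gq}_0(\ZZ)$ to vanish, so $\GW^{\gs}_1(\ZZ)\to C_1$ is surjective; combined with $C_2=0$, this exhibits $\GW^{\gq}_1(\ZZ)$ as the kernel of a surjection $(\ZZ/2)^3\to\ZZ/2$, hence a subgroup of $(\ZZ/2)^3$ of order $4$, necessarily $(\ZZ/2)^2$. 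Your idea of pinning down the group structure "since $\GW^{\gs}_1(\ZZ)$ is $2$-torsion" is the right one; once the indexing is fixed, the argument is exactly the paper's.
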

\begin{proof}
The group \(\GW^{\gq}_0(\ZZ)\) is well known to be freely generated by the standard hyperbolic form and the positive definite even form \(E_8\) (see the discussion at the beginning of the section).  
For \refthreeitem{item:GWqoneZ} consider the exact sequence
\[
C_2 \to \GW^{\gq}_1(\ZZ) \to \GW^{\gs}_1(\ZZ) \to C_1 \to \GW^{\gq}_0(\ZZ) \to \GW^{\gs}_0(\ZZ).
\]
The map \(\GW^{\gq}_0(\ZZ) \to \GW^{\gs}_0(\ZZ)\) is injective and the image has index 8. It follows that \(\GW^{\gs}_1(\ZZ) \cong (\ZZ/2)^3\) maps surjectively onto \(C_1 \cong \ZZ/2\) and since \(C_2 = 0\) by Lemma~\refthree{lemma:q-to-s-cofibre-orthogonal} we get that \(\GW^{\gq}_1(\ZZ) \cong (\ZZ/2)^2\).  Finally, \refthreeitem{item:GWqnZ} is implied by Lemma~\refthree{lemma:q-to-s-cofibre-orthogonal}\refthreeitem{item:Cminus-one-Z-two}.
\end{proof}

We now turn to the case \(\epsilon = -1\).  

\begin{lemma}
\label{lemma:q-to-s-cofibre-symplectic}%
Let \(D\) be the cofibre of the map \(\L^{-\gq}(\ZZ) \to \L^{-\gs}(\ZZ)\). Then $D \simeq \Sigma^2C$.
\end{lemma}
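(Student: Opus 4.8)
The plan is to deduce the identity $D \simeq \Sigma^2 C$ from the periodicity relations between genuine symmetric and genuine quadratic $\L$-theory established in Corollary~\refthree{corollary:periodicity-L}, applied with $M = \ZZ$ (trivial involution). Recall that Corollary~\refthree{corollary:periodicity-L} gives natural equivalences $\L(R;\Qgen{m}{M}) \simeq \Omega^2\L(R;\Qgen{m+1}{-M})$ for every $m \in \ZZ$, and that for the \(2\)-torsion free ring $\ZZ$ with trivial involution we additionally have $\QF^{\gq}_{-\ZZ} = \QF^{\gev}_{-\ZZ} = \QF^{\gs}_{-\ZZ}$ and $\QF^{\gq}_\ZZ = \QF^{\gev}_\ZZ$ by Remark~\refthree{remark:genuineeq} (since $\hat{\mathrm H}^0(\Ct;-\ZZ) = 0$ and $\hat{\mathrm H}^{-1}(\Ct;\ZZ) = 0$), exactly as recorded in Remark~\refthree{remark:improve-two}. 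Hence $\Sigma^2\L^{\gs}(\ZZ) \simeq \L^{-\gs}(\ZZ)$ and $\Sigma^2\L^{\gq}(\ZZ) \simeq \L^{-\gq}(\ZZ)$, using that suspension is the inverse of the loop functor.

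First I would verify that the comparison map $\QF^{\gq}_\ZZ \to \QF^{\gs}_\ZZ$ is compatible with these periodicity equivalences, i.e.\ that the square
\[
\begin{tikzcd}
\Sigma^2\L^{\gq}(\ZZ) \ar[r] \ar[d,"\simeq"] & \Sigma^2\L^{\gs}(\ZZ) \ar[d,"\simeq"] \\
\L^{-\gq}(\ZZ) \ar[r] & \L^{-\gs}(\ZZ)
\end{tikzcd}
\]
commutes. This is essentially the naturality statement already invoked in Remark~\refthree{remark:improve-two}: the periodicity equivalence of Proposition~\refthree{proposition:periodcat}, and hence the one on $\L$-theory obtained by applying the bordism-invariant functor $\L$, is natural in the Poincaré structure, and the transformation $\QF^{\gq}_\ZZ \to \QF^{\gs}_\ZZ$ maps under $(-)\qshift{2}$ to the transformation $\QF^{\gev}_{-\ZZ} \to \QF^{\gs}_{-\ZZ}$, which is an equivalence on the left factor but the relevant comparison on the right. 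Once this square is in hand, taking vertical cofibres identifies $\Sigma^2 C = \Sigma^2\cof(\L^{\gq}(\ZZ) \to \L^{\gs}(\ZZ))$ with $\cof(\L^{-\gq}(\ZZ) \to \L^{-\gs}(\ZZ)) = D$, which is the claim.

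The main obstacle I anticipate is purely bookkeeping: making sure the identifications $\QF^{\gq}_\ZZ = \QF^{\gev}_\ZZ$ and $\QF^{\gev}_{-\ZZ} = \QF^{\gs}_{-\ZZ}$ from Remark~\refthree{remark:genuineeq} are used on the correct side of the periodicity shift, and that one does not accidentally pick up a sign twist in the wrong place (the functor $\Qgen{m}{M}\qshift{2}$ lands at $\Qgen{m+1}{-M}$, so starting from $\QF^{\gq}_\ZZ = \Qgen{2}{\ZZ}$ one lands at $\Qgen{3}{-\ZZ}$, which by the vanishing of the relevant Tate cohomology of $-\ZZ$ agrees with $\Qgen{2}{-\ZZ} = \QF^{\gev}_{-\ZZ} = \QF^{\gs}_{-\ZZ}$; similarly $\QF^{\gs}_\ZZ = \Qgen{0}{\ZZ}$ lands at $\Qgen{1}{-\ZZ} = \QF^{\gev}_{-\ZZ}$, again equal to $\QF^{\gs}_{-\ZZ}$). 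Since everything reduces to the already-proven periodicity and the two-torsion-freeness of $\ZZ$, no genuinely new input is required, and the proof should be short.
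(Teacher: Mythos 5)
Your overall strategy is exactly the one the paper uses: invoke Proposition~\refthree{proposition:periodcat}/Corollary~\refthree{corollary:periodicity-L} together with the vanishing of appropriate Tate cohomology groups (Remark~\refthree{remark:genuineeq}) to produce the equivalences $\Sigma^2\L^{\gq}(\ZZ) \simeq \L^{-\gq}(\ZZ)$ and $\Sigma^2\L^{\gs}(\ZZ) \simeq \L^{-\gs}(\ZZ)$, observe that the comparison map is carried along by naturality, and take cofibres. That is correct and is the paper's proof.

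However, the Tate-cohomology bookkeeping that you yourself flag as the delicate point is garbled, and as stated it would actually contradict the lemma. You claim that $\QF^{\gq}_{-\ZZ} = \QF^{\gev}_{-\ZZ} = \QF^{\gs}_{-\ZZ}$ and, in the parenthetical, that $\Qgen{2}{-\ZZ} = \QF^{\gev}_{-\ZZ} = \QF^{\gs}_{-\ZZ}$. If the genuine quadratic and genuine symmetric structures on $(\Dperf(\ZZ),-\ZZ)$ agreed, then $\L^{-\gq}(\ZZ) \to \L^{-\gs}(\ZZ)$ would be an equivalence and $D$ would be contractible, which is false. The point is that $\hat{\mathrm H}^{-1}(\Ct;-\ZZ) \cong \ZZ/2 \neq 0$, so the comparison $\Qgen{2}{-\ZZ} \to \Qgen{1}{-\ZZ}$, i.e.\ $\QF^{\gq}_{-\ZZ} \to \QF^{\gev}_{-\ZZ}$, is \emph{not} an equivalence. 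What you actually want, and what the 2-torsion-freeness of $\ZZ$ gives (see Remark~\refthree{remark:Karoubi} and Remark~\refthree{remark:improve-two}), is the pair of identifications $\QF^{\gev}_{-\ZZ} \simeq \QF^{\gs}_{-\ZZ}$ (from $\hat{\mathrm H}^0(\Ct;-\ZZ)=0$) and $\Qgen{3}{-\ZZ} \simeq \Qgen{2}{-\ZZ} = \QF^{\gq}_{-\ZZ}$ (from $\hat{\mathrm H}^{-2}(\Ct;-\ZZ)=0$, using 2-periodicity). With these, periodicity sends $\QF^{\gq}_{\ZZ} = \Qgen{2}{\ZZ}$ to $\Qgen{3}{-\ZZ} \simeq \QF^{\gq}_{-\ZZ}$ and $\QF^{\gs}_{\ZZ} = \Qgen{0}{\ZZ}$ to $\Qgen{1}{-\ZZ} \simeq \QF^{\gs}_{-\ZZ}$, and the two sides land at \emph{distinct} Poincaré structures, as they must. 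With that correction, the rest of your argument goes through as the paper's.
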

\begin{proof}
By Proposition~\refthree{proposition:periodcat} and Remark~\refthree{remark:genuineeq}, we have canonical equivalences $\L^{-\gq}(\ZZ) \simeq \Sigma^2\L^{\gq}(\ZZ)$ and $\L^{-\gs}(\ZZ) \simeq \Sigma^2\L^{\gs}(\ZZ)$. Under these equivalences, the symmetrisation map in the definition of $D$ corresponds to the double suspension of the one in the definition of $C$.
\end{proof}

\begin{lemma}
\label{lemma:K-1Z-h}%
There are group isomorphisms
\begin{enumerate}
\item
\label{item:K-1Z-h-1}%
\(\pi_1 \K(\ZZ;\QF^{\gq}_-)_{\hC} \cong \ZZ/4\),
\item
\label{item:K-1Z-h-2}%
\(\pi_2 \K(\ZZ;\QF^{\gq}_-)_{\hC} =0.\)
\end{enumerate}
\end{lemma}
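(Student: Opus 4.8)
The plan is to read off both groups from the fundamental fibre sequence \eqref{equation:quadsequence}, applied not to $\QF^{\gq}_-$ directly but to the symmetric structure $\QF^{\s}_-$. The point is that all the genuine structures $\Qgen{m}{-\ZZ}$ on $\Dperf(\ZZ)$ — in particular $\QF^{\gq}_-$ and $\QF^{\s}_-$ — have the same underlying duality, so they induce the same $\Ct$-action on $\K(\ZZ)$ and hence $\K(\ZZ;\QF^{\gq}_-)_{\hC} = \K(\ZZ;\QF^{\s}_-)_{\hC}$. Feeding $(\Dperf(\ZZ),\QF^{\s}_-)$ into \eqref{equation:quadsequence} produces a long exact sequence involving $\pi_*\K(\ZZ;\QF^{\s}_-)_{\hC}$, $\GW_*(\ZZ;\QF^{\s}_-) = \GW^{-\s}_*(\ZZ)$ (which agrees with $\GW^{-\s}_{\cl,*}(\ZZ)$ in non-negative degrees by Corollary~\ref{corollary:dedekind-gw}) and $\L_*(\ZZ;\QF^{\s}_-) = \L^{-\s}_*(\ZZ)$.

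Next I would substitute the known values in degrees $\le 3$: Proposition~\ref{proposition:gwz-low-deg} gives $\GW^{-\s}_n(\ZZ) = \ZZ,\, 0,\, \ZZ,\, \ZZ/48$ for $n = 0,1,2,3$, while Remark~\ref{remark:one} together with Example~\ref{example:L-of-integers} gives $\L^{-\s}_n(\ZZ) \cong \L^{\s}_{n-2}(\ZZ) = 0,\, 0,\, \ZZ,\, \ZZ/2$ for $n=0,1,2,3$. Chasing the long exact sequence then yields $\pi_0\K(\ZZ;\QF^{\s}_-)_{\hC} \cong \ZZ$, an isomorphism $\pi_1\K(\ZZ;\QF^{\s}_-)_{\hC} \cong \coker(\rho_2)$, and a short exact sequence $0 \to \coker(\rho_3) \to \pi_2\K(\ZZ;\QF^{\s}_-)_{\hC} \to \ker(\rho_2) \to 0$, where $\rho_2 \colon \GW^{-\s}_2(\ZZ) = \ZZ \to \L^{-\s}_2(\ZZ) = \ZZ$ and $\rho_3 \colon \GW^{-\s}_3(\ZZ) = \ZZ/48 \to \L^{-\s}_3(\ZZ) = \ZZ/2$ are the canonical comparison maps. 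Thus the lemma reduces to showing that $\rho_2$ is multiplication by $\pm 4$ (whence $\pi_1 = \ZZ/4$, with the extension problem resolved automatically, and $\ker\rho_2 = 0$) and that $\rho_3$ is surjective (whence $\pi_2 = 0$).

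Part of this is formal. After inverting $2$ the sequence \eqref{equation:quadsequence} for $(\Dperf(\ZZ),\QF^{\s}_-)$ splits (as in the proof of Theorem~\ref{theorem:gwsz}), and since $K_2(\ZZ) = \ZZ/2$ is $2$-torsion, $\rho_2[\tfrac12]$ is an isomorphism; hence $\rho_2$ is multiplication by $\pm 2^a$ for some $a \geq 0$, and the same input shows $\pi_1$ and $\pi_2$ of $\K(\ZZ;\QF^{\s}_-)_{\hC}$ are finite $2$-groups. Everything therefore comes down to a $2$-primary computation. Here I would use the homotopy limit problem for $\ZZ$ (Theorem~\ref{theorem:holim-z}): it makes $\GW(\ZZ;\QF^{\s}_-) \to \K(\ZZ;-1)^{\hC}$ a $2$-adic equivalence, so after $2$-completion \eqref{equation:quadsequence} becomes the norm cofibre sequence $\K(\ZZ;-1)_{\hC} \to \K(\ZZ;-1)^{\hC} \to \K(\ZZ;-1)^{\tC}$ and $\rho_2,\rho_3$ become the maps induced by $\K^{\hC} \to \K^{\tC}$. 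These are then computable from Quillen's calculation of $\K_*(\ZZ)$ and the dualization action (Lemma~\ref{lemma:kz-inv}, which is trivial on $K_0, K_1, K_2$) via the associated homotopy fixed point and Tate spectral sequences. Equivalently, one can run the homotopy orbit spectral sequence for $\K(\ZZ;-1)_{\hC}$ directly and resolve the few differentials and the extension $0 \to \ZZ/2 \to \pi_1 \to \ZZ/2 \to 0$ from the $k$-invariants of $\K(\ZZ)$, or match $\rho_2,\rho_3$ $2$-locally with their $\ZZ[\tfrac12]$-counterparts via Proposition~\ref{prop:BK-conjecture} and the Berrick--Karoubi computation of $\GW^{-\s}(\ZZ[\tfrac12])$ in~\cite{berrick-karoubi}.

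The main obstacle is precisely this last step: pinning down the exact power of $2$ in $\rho_2$ (namely $a = 2$) and the surjectivity of $\rho_3$. Neither follows formally — one genuinely has to carry out a small $2$-adic computation in the $\K$-theory of $\ZZ$ (or in $\GW^{-\s}$ and $\L^{-\s}$ of $\ZZ$ or $\ZZ[\tfrac12]$), and the delicacy is that the relevant groups are all cyclic $2$-groups of small order, so the content is concentrated in a single power of $2$ which cannot be controlled by soft arguments alone.
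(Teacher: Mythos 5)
Your reduction to $\QF^{\s}_-$ (same duality, hence same homotopy orbits) and the long-exact-sequence bookkeeping are correct and coincide with how the paper begins. The formal half of your argument is also right: after inverting $2$ the fibre sequence splits and $\K_2(\ZZ)=\ZZ/2$ is $2$-torsion, so $\rho_2$ is multiplication by $\pm 2^a$, in particular injective, and $\pi_1,\pi_2$ of the orbits are finite $2$-groups. What you have then actually proved is that $\pi_1\K_{\hC}\cong\coker(\rho_2)$ is cyclic of $2$-power order and $\pi_2\K_{\hC}\cong\coker(\rho_3)$; everything beyond that you defer to an unperformed ``small $2$-adic computation,'' which is where the content lies. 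That is a genuine gap: identifying $a=2$ and showing $\rho_3$ is onto are each logically equivalent to the conclusions of the lemma, so the routes you sketch (read off $k$-invariants, match with Berrick--Karoubi, Tate spectral sequence) still have to be executed, and none is a soft argument.

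The paper closes this gap by two devices you did not propose, and neither passes through an explicit identification of $\rho_2$ or $\rho_3$. For (i), it runs the homotopy orbit spectral sequence: in total degree $1$ the only $E^2$-terms are $\mathrm{H}_1(\Ct;\K_0)\cong\ZZ/2$ and $\mathrm{H}_0(\Ct;\K_1)\cong\ZZ/2$, and the latter receives no differential because $\mathrm{H}_2(\Ct;\K_0)=0$; hence $|\pi_1\K_{\hC}|=4$ on the nose. Cyclicity then comes for free from the surjection $\L_2^{-\s}(\ZZ)\cong\ZZ\twoheadrightarrow\pi_1\K_{\hC}$ furnished by your own exact sequence (using $\GW_1^{-\s}(\ZZ)=0$), giving $\ZZ/4$ without ever computing $\rho_2$; the equality $a=2$ is then a corollary, not an input. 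For (ii), the spectral sequence alone only bounds $|\pi_2\K_{\hC}|\le 4$, and the extra input the paper uses is a topological comparison: $\K(\ZZ)\to\ko$ is an equivalence through degree $2$, so $\pi_2\K(\ZZ;\QF^{\s}_-)_{\hC}\cong\pi_2\K^{\topo}(\RR;\QF^{\s}_-)_{\hC}$; the latter sits between $\L_3^{\topo}(\RR;\QF^{\s}_-)\cong\L_3(\RR;\QF^{\s}_-)=0$ and $\GW_2^{\topo}(\RR;\QF^{\s}_-)\cong\pi_1\Sp_\infty^{\topo}(\RR)\cong\ZZ$, hence is torsion-free, and a finite torsion-free group is trivial. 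You would need to supply something of comparable strength to finish part (ii); the $k$-invariant or Berrick--Karoubi route would work in principle but is at least as much work as the topological argument, and in your write-up none of these is carried out.
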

\begin{proof}
Since the involution on \(\K(\ZZ;\QF^{\gq}_-)\) only depends on the underlying duality the canonical map 
\[
\pi_n \K(\ZZ;\QF^{\gq}_-)_{\hC} \to \pi_n \K(\ZZ;\QF^{\s}_-)_{\hC}
\]
is an isomorphism, and we shall henceforth replace \(\QF^{\gq}_-\) with \(\QF^{\s}_-\).

We first compute \(\pi_1 \K(\ZZ;\QF^{\s}_-)_{\hC}\). Consider the homotopy orbit spectral sequence 
\[
E^2_{s,t} = \mathrm{H}_s(\Ct;\pi_t \K(\ZZ;\QF^{\s}_-))  \implies \pi_{s+t}\K(\ZZ;\QF^{\s}_-)_{\hC}.
\]
Since \(\mathrm{H}_2(\Ct;\pi_0 \K(\ZZ;\QF^{\s}_-)) = 0\) the generator of \(\mathrm{H}_0(\Ct;\pi_1 \K(\ZZ;\QF^{\s}_-)) \cong \ZZ/2\) is 
not an eventual boundary. The group \(\pi_1 \K(\ZZ;\QF^{\s}_-)_{\hC}\) also gets a contribution from \(\mathrm{H}_1(\Ct;\pi_0 \K(\ZZ;\QF^{\s}_-))\) which has order \(2\), so in total it must have order \(4\). There is furthermore an exact sequence 
\[
\L^{-\s}_2(\ZZ) \to \pi_1 \K(\ZZ;\QF^{\s}_-)_{\hC} \to \GW_1^{-\s}(\ZZ),
\]
where the left hand group is isomorphic to \(\L^{\s}_0(\ZZ) \cong \ZZ\) and the right hand group is trivial by Table~\refthree{table:gw}. It follows that the middle group is cyclic and is hence isomorphic to \(\ZZ/4\).

We will now compute $\pi_2 \K(\ZZ;\QF^{\s}_-)_{\hC}$. For this, it will be useful to embed \(\ZZ\) in the field \(\RR\) of real numbers, and consider the \emph{topological} variants of \(\K\)-theory and \(\GW\)-theory for \(\RR\), equipped with its usual topology. For this we follow the approach of~\cite{schlichting-derived}*{\S 10} and define these in terms of the simplicial ring \(\RR^{\Del^{\bullet}} \in \Fun(\Del\op,\Ring)\), whose \(n\)-simplices are the set \(\RR^{\Del^n}\) of continuous maps of topological spaces \(|\Del^n| \to \RR\), considered as a ring via pointwise operations. One then defines the topological variants of \(\K\)-theory, \(\GW\)-theory and \(\L\)-theory by
\[
\K^{\topo}(\RR) := |\K(\RR^{\Del^{\bullet}})| = \displaystyle\mathop{\colim}_{n \in \Del\op}\K(\RR^{\Del^n}) \in \Spa
\]
\[
\GW^{\topo}(\RR;\QF^{\s}_\eps) := |\GW(\RR^{\Del^{\bullet}};\QF^{\s}_\eps)| = \displaystyle\mathop{\colim}_{n \in \Del\op}\GW(\RR^{\Del^n};\QF^{\s}_\eps) \in \Spa
\]
and
\[
\L^{\topo}(\RR;\QF^{\s}_\eps) := |\L(\RR^{\Del^{\bullet}};\QF^{\s}_\eps)| = \displaystyle\mathop{\colim}_{n \in \Del\op}\L(\RR^{\Del^n};\QF^{\s}_\eps) \in \Spa  .
\]
The construction above furnishes a natural map of spectra \(\K^{\topo}(\RR) \to \ko = \ZZ \times \BGL^{{\topo}}_{\infty}(\RR)\) which is an equivalence by~\cite{schlichting-derived}*{Proposition 10.2}. Similarly, by the same proposition 
\(\GW^{\topo}_0(\RR;\QF^{\s}_\eps) \cong \GW_0(\RR;\QF^{\s}_\eps)\) and \(\tau_{\geq 1}\GW^{\topo}(\RR;\QF^{\s}_\eps)\) is naturally equivalent to \(\BO^{{\topo}}_{\infty,\infty}(\RR)\) when \(\eps = 1\) and to \(\BSp^{{\topo}}_{\infty}(\RR)\) when \(\eps=-1\). The superscript top indicates that we topologise the groups as sequential colimits of Lie groups. In addition, by~\cite{schlichting-derived}*{Remark 10.4} the natural map \(\L(\RR;\QF^{\s}_\eps) \to \L^{\topo}(\RR;\QF^{\s}_\eps)\) is an equivalence.

We now claim that the map \(\K(\ZZ) \to \K^{\topo}(\RR) \simeq \ko\) induces isomorphisms on \(\pi_i\) for \(i \leq 2\). The groups in question are in fact isomorphic, furthermore the composite $\SS \to \K(\ZZ) \to \ko$ is an isomorphism in the claimed range, so the result follows.

Let us write \(\K^{\topo}(\RR;\QF^{\s}_-)\) for the spectrum \(\K^{\topo}(\RR)\) considered together with the \(C_2\)-action induced by the duality associated to \(\QF^{\s}_-\). Since taking homotopy orbits preserves connectivity we get from the above that the map \(\pi_i \K(\ZZ;\QF^{\s}_-)_{\hC} \to \pi_i \K^{\topo}(\RR;\QF^{\s}_-)_{\hC}\) is an isomorphism for \(i \leq 2\). To finish the proof it will hence suffice to show that \(\pi_2\K^{\topo}(\RR;\QF^{\s}_-)_{\hC}\) vanishes.
Since geometric realisations preserve fibre sequences of spectra, the latter group sits in an exact sequence
\[
\L^{\topo}_3(\RR;\QF^{\s}_-) \lto \pi_2 \K^{\topo}(\RR;\QF^{\s}_-)_{\hC} \lto \GW^{\topo}_2(\RR;\QF^{\s}_-) .
\]
Since \(\RR\) is a field we have that \(\L^{\topo}_3(\RR;\QF^{\s}_-) \cong \L_3(\RR;\QF^{\s}_-) \cong 0\) and since \(\GW^{\topo}_2(\RR;\QF^{\s}_-) \cong \pi_1\Sp_\infty^\topo(\RR) \cong \ZZ\) it follows that the group \(\pi_2 \K^{\topo}(\RR;\QF^{\s}_-)_{\hC}\) is free. But from the homotopy orbit spectral sequence we see that it has order at most 4, and so we conclude that it is trivial.
\end{proof}

\begin{remark}
By Karoubi periodicity (as formulated e.g.\ in Corollary~\reftwo{corollary:karoubi-skew-periodicity}), we know that $\K(\ZZ;\QF^{s}_-) \simeq \SS^{2-2\sigma}\otimes \K(\ZZ;\QF^{\s})$ as spectrum with $\Ct$-action. Furthermore, the map $\K(\ZZ) \to \ko$ is $\Ct$-equivariant with respect to the $\Ct$-action induced by $\QF^{\s}$ on $\K(\ZZ)$ and the trivial action on $\ko$. The above lemma is then a statement about low dimensional homotopy groups of $(\SS^{2-2\sigma}\otimes\ko)_{\hC}$. These can also be computed using the cofibre sequence $\Ct_+ \to S^0 \to S^\sigma$ and some elaborations thereof.
\end{remark}

\begin{theorem}
\label{theorem:gw-gqz}%
There are isomorphisms
\begin{enumerate}
\item 
\label{item:gw-gqz-1}%
\(\GW^{-\gq}_0(\ZZ) \cong \ZZ \oplus \ZZ/2\),
\item
\label{item:gw-gqz-2}%
\(\GW^{-\gq}_1(\ZZ) \cong \ZZ/4\),
\item
\label{item:gw-gqz-3}%
\(\GW^{-\gq}_2(\ZZ) \cong \ZZ\)
\item
\label{item:gw-gqz-4}%
\(\GW^{-\gq}_3(\ZZ) \cong \ZZ/24\)
\item
\label{item:gw-gqz-5}%
\(\GW^{-\gq}_i (\ZZ) \cong \GW^{-\gs}_i (\ZZ)\) for \(i \geq 4\).
\end{enumerate}
\end{theorem}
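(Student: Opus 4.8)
The plan is to run the two fibre sequences attached to the genuine $(-1)$-quadratic Poincaré structure against each other. From the cartesian square displayed just before the theorem, the cofibre of $\GW^{-\gq}(\ZZ)\to\GW^{-\gs}(\ZZ)$ agrees with the cofibre $D$ of $\L^{-\gq}(\ZZ)\to\L^{-\gs}(\ZZ)$, which by Lemma~\refthree{lemma:q-to-s-cofibre-symplectic} is $\Sigma^2C$ and hence, by Lemma~\refthree{lemma:q-to-s-cofibre-orthogonal}, has $D_1\cong\ZZ/2$, $D_2\cong\ZZ/8$, $D_3\cong\ZZ/2$ and $D_i=0$ otherwise. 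Simultaneously, Corollary~\reftwo{corollary:tate-square-L} supplies the fibre sequence \eqref{equation:quadsequence}, namely $\K(\ZZ;\QF^{\gq}_-)_{\hC}\to\GW^{-\gq}(\ZZ)\to\L^{-\gq}(\ZZ)$, whose fibre has $\pi_1\cong\ZZ/4$ and $\pi_2=0$ by Lemma~\refthree{lemma:K-1Z-h}. The remaining inputs are $\GW^{-\gs}_\ast(\ZZ)$ from Theorem~\refthree{theorem:gwsz}, and the values $\L^{-\gq}_0(\ZZ)\cong\ZZ/2$, $\L^{-\gq}_1(\ZZ)=0$, $\L^{-\gq}_2(\ZZ)\cong\ZZ$, $\L^{-\gq}_3(\ZZ)=0$, obtained from the equivalence $\L^{-\gq}(\ZZ)\simeq\Sigma^2\L^{\gq}(\ZZ)$ of Remark~\refthree{remark:improve-two} together with Corollary~\refthree{corollary:genuine-is-quadratic} and Example~\refthree{example:L-of-integers}.

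Claim~\refthreeitem{item:gw-gqz-5} is then immediate: in the long exact sequence of $\GW^{-\gq}(\ZZ)\to\GW^{-\gs}(\ZZ)\to D$ one has $D_i=D_{i+1}=0$ for $i\geq 4$. Claim~\refthreeitem{item:gw-gqz-1} is the classical computation of $\pi_0\GW^{-\qdr}_\cl(\ZZ)$ recalled at the start of the section; alternatively it falls out of the long exact sequence of \eqref{equation:quadsequence} in degree $0$, using $\pi_0\K(\ZZ;\QF^{\gq}_-)_{\hC}\cong\K_0(\ZZ)\cong\ZZ$, $\L^{-\gq}_0(\ZZ)\cong\ZZ/2$, and splitness of the resulting extension.

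The crux is claims~\refthreeitem{item:gw-gqz-2} and~\refthreeitem{item:gw-gqz-3}, and the key observation is that the relevant connecting homomorphism vanishes. Since $\QF^{\gq}_-$ and $\QF^{\gs}_-$ are compatible with the same module with involution $\ZZ(-1)$, they share a duality, so the identity functor together with the natural transformation $\QF^{\gq}_-\Rightarrow\QF^{\gs}_-$ is a Poincaré morphism; it therefore induces a map of fibre sequences \eqref{equation:quadsequence} that is the identity on the $\K(\ZZ;\QF^{\gq}_-)_{\hC}$ terms. By naturality of the connecting map, the boundary $\partial^{\gq}\colon\L^{-\gq}_2(\ZZ)\to\pi_1\K(\ZZ;\QF^{\gq}_-)_{\hC}$ factors as
\[
\L^{-\gq}_2(\ZZ)\lto\L^{-\gs}_2(\ZZ)\st{\partial^{\gs}}{\lto}\pi_1\K(\ZZ;\QF^{\gq}_-)_{\hC},
\]
and the first map, under the compatible periodicity equivalences of Remark~\refthree{remark:improve-two}, is the multiplication-by-$8$ map $\L^{\gq}_0(\ZZ)=\ZZ\to\ZZ=\L^{\gs}_0(\ZZ)$ identified in the proof of Lemma~\refthree{lemma:q-to-s-cofibre-orthogonal}. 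As the target $\pi_1\K(\ZZ;\QF^{\gq}_-)_{\hC}\cong\ZZ/4$ is annihilated by $8$, we get $\partial^{\gq}=0$. Feeding this, together with $\pi_2\K(\ZZ;\QF^{\gq}_-)_{\hC}=0$ and $\L^{-\gq}_1(\ZZ)=0$, into the long exact sequence of \eqref{equation:quadsequence} in degrees $1$ and $2$ yields $\GW^{-\gq}_2(\ZZ)\cong\L^{-\gq}_2(\ZZ)\cong\ZZ$ and $\GW^{-\gq}_1(\ZZ)\cong\pi_1\K(\ZZ;\QF^{\gq}_-)_{\hC}\cong\ZZ/4$.

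Claim~\refthreeitem{item:gw-gqz-4} is then obtained by feeding $\GW^{-\gq}_2(\ZZ)\cong\ZZ$ back into the long exact sequence of $\GW^{-\gq}(\ZZ)\to\GW^{-\gs}(\ZZ)\to D$, which in degrees $2,3$ reads
\[
0\lto\GW^{-\gq}_3(\ZZ)\lto\GW^{-\gs}_3(\ZZ)\lto D_3\st{\delta}{\lto}\GW^{-\gq}_2(\ZZ);
\]
since $\GW^{-\gq}_2(\ZZ)\cong\ZZ$ is torsion-free and $D_3\cong\ZZ/2$, the map $\delta$ is zero, so $\GW^{-\gs}_3(\ZZ)\to D_3$ is surjective, and as $\GW^{-\gs}_3(\ZZ)\cong\ZZ/48$ by Theorem~\refthree{theorem:gwsz} its kernel $\GW^{-\gq}_3(\ZZ)$ is $\ZZ/24$. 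The main obstacle here is not in the theorem proper — the diagram chases above are routine once all the pieces are assembled — but lies in the prerequisite Lemma~\refthree{lemma:K-1Z-h}, whose computation of $\pi_1$ and $\pi_2$ of $\K(\ZZ;\QF^{\gq}_-)_{\hC}$ via the topological realisations of $\K$- and $\GW$-theory over $\RR$ is the delicate input; it is precisely the fact that $\pi_1\cong\ZZ/4$ is $8$-torsion that makes the naturality argument for $\partial^{\gq}=0$ go through.
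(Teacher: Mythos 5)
Your proposal is correct and follows essentially the same approach as the paper, with all the same computational inputs (the cofibre $C$ respectively $D=\Sigma^2C$, Lemma~\refthree{lemma:K-1Z-h}, the $\GW^{-\gs}$ table, and the low-degree $\L$-groups from Example~\refthree{example:L-of-integers}). The only genuine deviation is in part~\refthreeitem{item:gw-gqz-4}: the paper runs a comparison diagram of the $\K$-theoretic fibre sequences for $\QF^{\gq}_-$ and $\QF^{\gs}_-$, deducing $\GW^{-\gq}_3 \cong \pi_3\K(\ZZ;\QF^{\gq}_-)_{\hC}$ from the vanishing of $\L^{-\gq}_3$ and $\L^{-\gq}_4$ and then analysing the bottom row; you instead feed the torsion-freeness of $\GW^{-\gq}_2\cong\ZZ$ (already established) into the long exact sequence of $\GW^{-\gq}(\ZZ)\to\GW^{-\gs}(\ZZ)\to D$ to show the connecting map $D_3\to\GW^{-\gq}_2$ vanishes. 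Your route is slightly more economical since it reuses part~\refthreeitem{item:gw-gqz-3} rather than running a second diagram chase, but both rest on the same underlying ingredients and are of comparable difficulty. Likewise, for parts~\refthreeitem{item:gw-gqz-2} and~\refthreeitem{item:gw-gqz-3}, your "naturality of the connecting map" phrasing of the vanishing of $\partial^{\gq}$ is exactly the content of the paper's commutative diagram, and the observation that the multiplication-by-$8$ map kills $\ZZ/4$ is the paper's argument verbatim.
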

We remark that statements \refthreeitem{item:gw-gqz-2} and \refthreeitem{item:gw-gqz-3} have been shown previously by Krannich and Kupers using geometric methods, see \cite{KK}.
\begin{proof}
Part~\refthreeitem{item:gw-gqz-5} follows immediately from Lemma~\refthree{lemma:q-to-s-cofibre-symplectic} and Lemma~\refthree{lemma:q-to-s-cofibre-orthogonal}\refthreeitem{item:Cminus-one-Z-two}. Part~\refthreeitem{item:gw-gqz-1} is well known, see the discussion at the beginning of the section. For Part~\refthreeitem{item:gw-gqz-3} it suffices to note that by Lemma~\refthree{lemma:K-1Z-h} the map \(\GW^{-\gq}_2(\ZZ) \to \L^{-\gq}_2(\ZZ) \cong \L^{\gq}_0(\ZZ)\cong \ZZ\) is injective with finite cokernel.

Now to show \refthreeitem{item:gw-gqz-2} consider the following commutative diagram with exact rows:
\[
\begin{tikzcd} 
	\L^{-\gq}_2(\ZZ) \ar[d] \ar[r] & \pi_1 \K(\ZZ;\QF^{\gq}_-)_{\hC} \ar[d,"\cong"] \ar[r] & \GW^{-\gq}_1(\ZZ) \ar[d] \ar[r]& \L^{-\gq}_1(\ZZ) \ar[d] \\ 
	\L^{-\gs}_2(\ZZ)  \ar[r] & \pi_1 \K(\ZZ;\QF^{\gs}_-)_{\hC} \ar[r] & \GW^{-\gs}_1(\ZZ) \ar[r]& \L^{-\gs}_1(\ZZ) 
\end{tikzcd}
\]
Since \(\GW^{-\gs}_1(\ZZ) = 0\) by Table~\eqrefthree{table:gw} the bottom left hand map must be surjective. As in the proof of Lemma~\refthree{lemma:q-to-s-cofibre-symplectic}, the map \(\L^{-\gq}_2(\ZZ) \to \L^{-\gs}_2(\ZZ)\) identifies with the map \(\L^{\gq}_0(\ZZ) \to \L^{\gs}_0(\ZZ)\) and hence with the inclusion \(8\ZZ \hrar \ZZ\). Since \(\pi_1 \K(\ZZ,\QF^{\gq}_-)_{\hC} \cong \ZZ/4\) the upper left hand map must be \(0\). In addition \(\L^{-\gq}_1(\ZZ)\cong \L^{-\qdr}_1(\ZZ)\cong 0\) and so the upper middle map gives an isomorphism \(\GW^{-\gq}_1(\ZZ) \cong \ZZ/4\) by Lemma~\refthree{lemma:K-1Z-h}\refthreeitem{item:K-1Z-h-1}.

Finally, to prove \refthreeitem{item:gw-gqz-4} consider the commutative diagram
\[
\begin{tikzcd} 
	\L^{-\gq}_4(\ZZ) \ar[d] \ar[r] & \pi_3 \K(\ZZ;\QF^{\gq}_-)_{\hC} \ar[d,"\cong"] \ar[r] & \GW^{-\gq}_3(\ZZ) \ar[d] \ar[r]& \L^{-\gq}_3(\ZZ) \ar[d] \\ 
	\L^{-\gs}_4(\ZZ)  \ar[r] & \pi_3 \K(\ZZ;\QF^{\gs}_-)_{\hC} \ar[r] & \GW^{-\gs}_3(\ZZ) \ar[r,two heads]& \L^{-\gs}_3(\ZZ) 
\end{tikzcd}
\]
where the bottom right map is surjective by Lemma~\refthree{lemma:K-1Z-h}\refthreeitem{item:K-1Z-h-2}. 
Then \(\L^{-\gq}_3(\ZZ) \cong \L^{\gq}_1(\ZZ) = 0\) and \(\L^{-\gq}_4(\ZZ) \cong \L^{-\gs}_4(\ZZ)\cong \L^{\gs}_2(\ZZ) \cong 0\), which implies that the top middle horizontal map in the above diagram is an isomorphism and the bottom middle horizontal map is injective with cokernel \(\L^{-\gs}_3(\ZZ) \cong \L^{-\s}_3(\ZZ) \cong \ZZ/2\); see Proposition~\refthree{proposition:compute-L-groups}. Since \(\GW^{-\gs}_3(\ZZ) \cong \ZZ/48\) by Table~\eqrefthree{table:gw} this implies that \(\GW^{-\gq}_3(\ZZ) \cong \ZZ/24\), as claimed.
\end{proof}

\begin{remark}
\label{remark:BK-conjecture-number-rings}%
By Proposition~\refthree{proposition:BK-conjecture} and Corollary~\refthree{corollary:dedekind-gw}, we know that for the ring of integers $\nO$ in a number field $F$, the canonical map 
\[
\GW^{\s}_\cl(\nO;\eps) \lto \GW^{\s}_\cl(\nO[\tfrac{1}{2}];\eps)
\]
is a 2-local equivalence in degrees $\geq 1$.
In principle, one can then use the results of \cite{hermitian-dedekind} to calculate the 2-local Grothendieck-Witt groups of $\nO$. As before, the $p$-local homotopy for odd $p$ is controlled by the isomorphisms
\[
\GW^{\s}_{\cl,n}(\nO;\eps)[\tfrac{1}{2}] \cong \K_n(\nO;\eps)[\tfrac{1}{2}]_{\Ct} \oplus \L^{\s}_n(\nO;\eps)[\tfrac{1}{2}].
\]
One can then use Lemma~\refthree{lemma:kz-inv} which determines the $\Ct$-action on the $2$-inverted $\K$-groups of $\nO$. Moreover, by Corollary~\refthree{corollary:compute-L-groups-global-field}, the 2-inverted L-groups are only non-zero for $n = 4k$ and in this case are a free $\ZZ[\tfrac{1}{2}]$-module of rank equal to the number of real embeddings of the number field $F$, see \cite{milnor-symmetric}*{Chapter IV.4}.
\end{remark}

\begin{bibsection}

\bibsubsubsection{This paper series} \hfill \\

{\footnotesize%
\noindent B.~Calmès, E.~Dotto, Y.~Harpaz, F~Hebestreit, M.~Land, D.~Nardin, K.~Moi, T.~Nikolaus, and W.~Steimle,

\noindent\textit{Hermitian K-theory for stable $\infty$-categories},
}

\begin{biblist}
\bibitem[I]{Part-one}
\textit{Part I: Foundations},
Selecta Mathematica (N. S.)
{\bf 29} (2023), no.~1, paper~10, 269 p.

\bibitem[II]{Part-two}
\textit{Part II: Cobordism categories and additivity},
Acta Mathematica
{\bf 235} (2025), no.~2, 149-400

\bibitem[III]{Part-three}
\textit{Part III: Grothendieck-Witt groups of rings},
\href{https://arxiv.org/abs/2009.07225}{arXiv:2009.07225}, 2020.
to appear in Annals of Mathematics.

\bibitem[IV]{Part-four}
\textit{Part IV: Karoubi-Grothendieck-Witt theory},
in preparation.

\bibitem[V]{Part-five}
\textit{Part V: Poincaré motives},
in preparation.

\end{biblist}

\bibsubsubsection{Other references} \hfill \\

\begin{biblist}%

\bibselect{amsbib}

\end{biblist}
\end{bibsection}

\end{document}